\newcommand{\spann}{\text{span}}
\newcommand{\nrm}{\ \triangleleft \ }
\newcommand{\Ext}{\operatorname{Ext}^1}
\newcommand{\nin}{\ \not\in \ }
\def\sn{-\hspace{-.17cm}-}
\newtheorem{theorem}{Theorem}[section]
\newtheorem{lemma}[theorem]{Lemma}
\newtheorem{proposition}[theorem]{Proposition}
\newtheorem{corollary}[theorem]{Corollary}
\newtheorem{problem}[theorem]{Problem}
\theoremstyle{definition}
\newtheorem{definition}[theorem]{Definition}
\newtheorem{example}[theorem]{Example}
\newtheorem{remark}[theorem]{Remark}
\newcommand\xx{{\bf x}}
\newcommand{\HH}{{\mathbb H}}
\newcommand{\NN}{{\mathbb{N}}}
\newcommand{\ZZ}{{\mathbb{Z}}}
\newcommand{\RR}{{\mathbb{R}}}
\newcommand{\Tr}{\text{Tr}}
\newcommand{\Image}{\text{Im}}
\newcommand{\Ker}{\text{Ker\,}}
\newcommand{\Hom}{\text{Hom}}
\newcommand{\Id}{\text{Id}}
\newcommand{\Ind}{\text{Ind}}
\newcommand{\Rep}{\text{Rep}}
\newcommand{\tr}{\text{Tr}\,}
\newcommand{\eps}{\varepsilon}
\newcommand{\BA}{{\Bbb A}}
\newcommand{\BQ}{{\Bbb Q}}
\newcommand{\BZ}{{\Bbb Z}}
\providecommand{\abs}[1]{\lvert#1\rvert}
\newcommand{\ov}{\overline}
\newcommand{\ot}{\otimes}
\newcommand{\ben}{\begin{enumerate}}
\newcommand{\een}{\end{enumerate}}
\newcommand{\Rad}{{\text{Rad}}}
\newcommand{\mB}{{\mathcal B}}
\newcommand{\CC}{{\mathbb{C}}}
\newcommand{\cA}{{\mathcal A}}
\newcommand{\BC}{{\mathbb C}}
\newcommand{\End}{\operatorname{End}}
\newcommand{\Mat}{\operatorname{Mat}}
\begin{document}

\title{Introduction to representation theory}

\author{Pavel Etingof, Oleg Golberg, Sebastian Hensel,\\
Tiankai Liu, Alex Schwendner, Dmitry Vaintrob, and Elena Yudovina}

\maketitle

\tableofcontents

\vskip .1in
\centerline{\bf INTRODUCTION}

Very roughly speaking, representation theory studies symmetry in linear
spaces. It is a beautiful mathematical subject which has many
applications, ranging from number theory and
combinatorics to geometry, probability theory,
quantum mechanics and quantum field theory.

Representation theory was born in 1896 in the work of
the German mathematician F. G. Frobenius. This work
was triggered by a letter to Frobenius by R. Dedekind.
In this letter Dedekind made the following observation:
take the multiplication table of a finite group $G$ and turn
it into a matrix $X_G$ by replacing every entry $g$ of this table by a
variable $x_g$. Then the determinant of $X_G$
factors into a product of irreducible polynomials in $\lbrace{x_g\rbrace}$,
each of which occurs with multiplicity equal to its degree.
Dedekind checked this surprising fact in a few special cases, but
could not prove it in general. So he gave this problem to
Frobenius. In order to find a solution of this problem
(which we will explain below), Frobenius created representation
theory of finite groups. \footnote{For more on the history of
representation theory, see \cite{Cu}.}

The present lecture notes arose from a representation theory
course given by the first author to the remaining six authors
in March 2004 within the framework of the Clay Mathematics
Institute Research Academy for high school students,
and its extended version given by the first
author to MIT undergraduate math students in the Fall of 2008.
The lectures are supplemented by many problems and exercises,
which contain a lot of additional material; the more difficult
exercises are provided with hints.

The notes cover a number of standard topics in representation theory
of groups, Lie algebras, and quivers. We mostly follow
\cite{FH}, with the exception of the sections
discussing quivers, which follow \cite{BGP}.
We also recommend the comprehensive textbook \cite{CR}.
The notes should be accessible to students with a strong
background in linear algebra and a basic knowledge of abstract algebra.

{\bf Acknowledgements.} The authors are grateful to the Clay
Mathematics Institute for hosting the first version of this
course. The first author is very indebted to Victor Ostrik
for helping him prepare this course, and thanks Josh
Nichols-Barrer and Thomas Lam for helping run the course
in 2004 and for useful comments. He is also very grateful 
to Darij Grinberg for very careful reading of the text,
for many useful comments and corrections, and for suggesting 
the Exercises in Sections 1.10, 2.3, 3.5, 4.9, 4.26, and 6.8.

\newpage \section{Basic notions of representation theory}

\subsection{What is representation theory?}

In technical terms, representation theory studies representations
of associative algebras. Its general content can be very briefly
summarized as follows.

An {\bf associative algebra} over a field $k$ is a vector space $A$
over $k$ equipped with an associative bilinear multiplication
$a,b\mapsto ab$, $a,b\in A$. We will always consider associative algebras
with unit, i.e., with an element $1$
such that $1\cdot a=a\cdot 1=a$ for all $a\in A$.
A basic example of an associative algebra is the algebra ${\rm
End}V$ of linear operators from a vector space $V$ to itself.
Other important examples include algebras defined by generators
and relations, such as group algebras and universal enveloping algebras of
Lie algebras.

A {\bf representation} of an associative algebra $A$ (also called a
left $A$-module) is a vector space
$V$ equipped with a homomorphism $\rho: A\to {\rm End}V$, i.e., a
linear map preserving the multiplication and unit.

A {\bf subrepresentation} of a representation $V$ is a subspace $U\subset
V$ which is invariant under all operators $\rho(a)$, $a\in A$.
Also, if $V_1,V_2$ are two representations of $A$ then the {\bf direct sum}
$V_1\oplus V_2$ has an obvious structure of a representation of $A$.

A nonzero representation $V$ of $A$ is said to be {\bf irreducible} if its only
subrepresentations are $0$ and $V$ itself, and
{\bf indecomposable} if it cannot be written as a direct sum of two
nonzero subrepresentations. Obviously, irreducible implies
indecomposable, but not vice versa.

Typical problems of representation theory are as follows:

1. Classify irreducible representations of a given algebra $A$.

2. Classify indecomposable representations of $A$.

3. Do 1 and 2 restricting to finite dimensional representations.

As mentioned above, the algebra $A$ is often given to us by
generators and relations. For example, the universal enveloping
algebra $U$ of the Lie algebra ${\mathfrak{sl}}(2)$ is generated by $h,e,f$
with defining relations
\begin{equation}\label{sl2}
he-eh=2e,\quad hf-fh=-2f,\quad ef-fe=h.
\end{equation}
This means that the problem of finding,
say, $N$-dimensional representations of $A$ reduces to solving
a bunch of nonlinear algebraic equations with respect to a bunch
of unknown $N$ by $N$ matrices, for example system (\ref{sl2})
with respect to unknown matrices $h,e,f$.

It is really striking that such,
at first glance hopelessly complicated, systems of
equations can in fact be solved completely by methods of
representation theory! For example, we will prove the following
theorem.

\begin{theorem} Let $k=\Bbb C$ be the field of complex numbers. Then:

(i) The algebra $U$ has exactly one irreducible representation
$V_d$ of each dimension, up to equivalence; this representation is
realized in the space of homogeneous polynomials of two variables $x,y$
of degree $d-1$, and defined by the formulas
$$
\rho(h)=x\frac{\partial}{\partial x}-y\frac{\partial}{\partial y},\quad
\rho(e)=x\frac{\partial}{\partial y},\quad
\rho(f)=y\frac{\partial}{\partial x}.
$$

(ii) Any indecomposable finite dimensional representation of $U$
is irreducible. That is, any finite dimensional representation
of $U$ is a direct sum of irreducible representations.
\end{theorem}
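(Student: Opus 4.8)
The plan is to prove the two parts separately; part (i) is essentially a direct computation, while part (ii) carries all the real difficulty.

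\emph{Part (i).} First I would check that the displayed formulas really define a representation: one verifies directly that the first-order operators $\rho(h)=x\partial_x-y\partial_y$, $\rho(e)=x\partial_y$, $\rho(f)=y\partial_x$ on the space $V_d$ of homogeneous degree-$(d-1)$ polynomials in $x,y$ satisfy the relations (\ref{sl2}). For irreducibility, observe that the monomials $x^{d-1-j}y^j$, $0\le j\le d-1$, are eigenvectors of $\rho(h)$ with the pairwise distinct eigenvalues $d-1-2j$; hence any subrepresentation, being $\rho(h)$-stable, is the span of a subset of these monomials, and since $\rho(e)$ and $\rho(f)$ carry consecutive monomials to nonzero multiples of one another throughout the range, any subrepresentation containing one monomial contains all of them. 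Thus each $V_d$ is irreducible.

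Next, that every finite-dimensional irreducible $V$ is some $V_d$. Since $k=\CC$ is algebraically closed, $\rho(h)$ has an eigenvector, and (\ref{sl2}) gives $\rho(h)\rho(e)=\rho(e)(\rho(h)+2)$ and $\rho(h)\rho(f)=\rho(f)(\rho(h)-2)$, so $\rho(e)$ raises and $\rho(f)$ lowers the $\rho(h)$-eigenvalue by $2$; finite-dimensionality then forces a \emph{highest weight vector} $v\ne 0$ with $\rho(e)v=0$ and $\rho(h)v=\lambda v$. Setting $v_k=\rho(f)^k v$, an induction on $k$ using (\ref{sl2}) yields $\rho(h)v_k=(\lambda-2k)v_k$ and $\rho(e)v_k=k(\lambda-k+1)v_{k-1}$. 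The nonzero $v_k$ have distinct $\rho(h)$-eigenvalues, hence are linearly independent, so there is a least $N$ with $v_{N+1}=0\ne v_N$; applying the second identity with $k=N+1$ forces $\lambda=N$. Then $\spann(v_0,\dots,v_N)$ is a nonzero subrepresentation, so equals $V$, whence $\dim V=N+1$ and the entire action is pinned down; the assignment $v\mapsto x^N$ then identifies $V$ with $V_{N+1}$.

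\emph{Part (ii).} For complete reducibility I would use the Casimir element. For any finite-dimensional representation $V$ set $C=\rho(e)\rho(f)+\rho(f)\rho(e)+\tfrac12\rho(h)^2\in\End V$; a short computation with (\ref{sl2}) shows $C$ commutes with $\rho(e),\rho(f),\rho(h)$, so by Schur's lemma it acts on any irreducible by a scalar, and evaluating on the highest weight vector of $V_d$ gives the scalar $\tfrac{d^2-1}{2}$, which is zero exactly on the trivial representation $V_1$ and nonzero otherwise. It suffices to prove that every short exact sequence $0\to W\to V\to V/W\to 0$ of finite-dimensional representations splits (induction on $\dim V$ then gives the theorem). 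A standard argument reduces this to the case $\dim(V/W)=1$ with trivial action: work instead with the representation $\Hom_k(V/W,V)$ and, inside it, the subrepresentation $\mathcal H_1$ of maps $\phi$ for which $\phi$ composed with the projection $V\to V/W$ is a scalar, together with $\mathcal H_0=\Hom_k(V/W,W)$; then $\mathcal H_1/\mathcal H_0\cong\CC$ is trivial, and a one-dimensional subrepresentation of $\mathcal H_1$ not lying in $\mathcal H_0$ is precisely a module section of $V\to V/W$. Then, by induction on $\dim W$, one reduces further to $W$ irreducible.

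In the remaining case ($W$ irreducible, $V/W$ one-dimensional and trivial, $\dim V=\dim W+1$): if $W$ is nontrivial, $C$ acts on $W$ by a nonzero scalar and annihilates $V/W$, so $C(V)\subseteq W$ and $\Ker(C|_V)$ is a subrepresentation with $\Ker(C|_V)\cap W=0$ and $\dim\Ker(C|_V)\ge\dim V-\dim W=1$, hence a one-dimensional complement to $W$; if $W$ is trivial, so $\dim V=2$ with both $W$ and $V/W$ trivial, pick $v_0\notin W$, note $\rho(e)v_0,\rho(f)v_0,\rho(h)v_0\in W$, and use $[e,f]=h$ and $[h,e]=2e$ together with $\rho(e)W=\rho(f)W=\rho(h)W=0$ to conclude $\rho(h)v_0=\rho(e)v_0=\rho(f)v_0=0$, so $\CC v_0$ is a complement. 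The main obstacle is all of part (ii): checking centrality of $C$ is mechanical, but the reduction steps — endowing $\Hom_k(V/W,V)$ with its module structure and matching one-dimensional trivial subrepresentations there with splittings of the original sequence — are the conceptual heart of the argument, and the trivial-$W$ subcase must be handled by hand since there the Casimir gives no information.
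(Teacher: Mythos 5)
Your proof is correct, but in part (ii) it follows a genuinely different route from the one the notes take (the notes prove this theorem through the guided exercise on ${\mathfrak{sl}}(2)$, Problem \ref{4:1}). For the classification of irreducibles the two arguments are essentially the same highest-weight computation, though the notes work with generalized eigenspaces of $H$ and the identity $E^kF^kw=P_k(H)w$ so as to control arbitrary finite dimensional representations, while you only need honest eigenvectors since you assume irreducibility from the start. The real divergence is in complete reducibility: the notes take a counterexample $V$ of minimal dimension, use the Casimir $C=EF+FE+H^2/2$ to force all Jordan--H\"older factors of $V$ to be one and the same $V_\lambda$, and then decompose $V$ by hand, showing the $\lambda$-eigenspace of $H$ has dimension $n+1$ and that the strings $v_i, Fv_i,\dots,F^\lambda v_i$ span complementary subrepresentations --- a concrete, ${\mathfrak{sl}}(2)$-specific basis argument. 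You instead combine the same Casimir computation with the standard homological reduction (the Whitehead-style argument underlying Weyl's theorem): reduce splitting of an arbitrary extension to splitting of an extension with one-dimensional trivial quotient via the subrepresentations $\mathcal H_0\subset\mathcal H_1\subset \Hom_k(V/W,V)$, reduce further to $W$ irreducible by induction, split with $\Ker(C)$ when $W$ is nontrivial, and treat the trivial-by-trivial case directly using $[e,f]=h$, $[h,e]=2e$, $[h,f]=-2f$. Your route avoids the explicit eigenspace and basis bookkeeping and generalizes verbatim to any semisimple Lie algebra, at the price of introducing the module structure on $\Hom_k(V/W,V)$; the notes' route is more elementary and self-contained, staying entirely inside explicit ${\mathfrak{sl}}(2)$ weight-string combinatorics. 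Both hinge on the same two facts about $C$: it is central and acts on $V_d$ by $(d^2-1)/2$, which vanishes only for the trivial representation.
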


As another example consider the representation
theory of quivers.

A {\bf quiver} is a finite oriented graph $Q$. A {\bf
representation} of $Q$ over a field $k$ is an assignment
of a $k$-vector space $V_i$ to every vertex $i$ of $Q$, and of a
linear operator $A_h: V_i\to V_j$ to every directed edge $h$
going from $i$ to $j$ (loops and multiple
edges are allowed). We will show that a representation
of a quiver $Q$ is the same thing as a representation
of a certain algebra $P_Q$ called the path algebra of $Q$.
Thus one may ask: what are the indecomposable
finite dimensional representations of $Q$?

More specifically, let us say that $Q$ is {\bf of finite type}
if it has finitely many indecomposable representations.

We will prove the following striking theorem, proved by
P. Gabriel about 35 years ago:

\begin{theorem} The finite type property of $Q$ does not depend on
the orientation of edges. The connected graphs that yield quivers
of finite type are given by the following list:
\begin{itemize}

\item $A_n$ : {\large \vspace{-.55cm} $$\stackrel{}{\circ}\hspace{-.2cm}\sn\hspace{-.2cm}\stackrel{}{\circ} \dots \stackrel{\text{}}{\circ}\hspace{-.29cm}\sn\hspace{-.18cm}\stackrel{\text{}}{\circ}$$}

\item $D_n$: {\large \vspace{-.95cm} $$\stackrel{}{\circ}\hspace{-.2cm}\sn\hspace{-.2cm}\stackrel{}{\circ} \dots \stackrel{\text{}}{\circ}\hspace{-.29cm}\sn\hspace{-.23cm}\stackrel{\text{}}{\circ}$$
\vspace{-0.85cm}$$\hspace{1.03cm}|$$
\vspace{-0.9cm}$$\hspace{1.05cm}\circ\text{}$$}
\\
\item $E_6$  : {\large \vspace{-.55cm} $$\stackrel{}{\circ}\hspace{-.2cm}\sn\hspace{-.2cm}\stackrel{}{\circ} \hspace{-.2cm}\sn\hspace{-.2cm} \stackrel{\text{}}{\circ}\hspace{-.2cm}\sn\hspace{-.2cm}\stackrel{}{\circ}\hspace{-.2cm}\sn \hspace{-.2cm}\stackrel{}{\circ}$$
\vspace{-.87cm}$$\hspace{.04cm}|$$
\vspace{-.97cm}$$\hspace{-.02cm}\circ\text{}$$}

\item $E_7$  : {\large \vspace{-.55cm} $$\hspace{.5cm}\stackrel{}{\circ}\hspace{-.2cm}\sn\hspace{-.2cm}\stackrel{}{\circ} \hspace{-.2cm}\sn\hspace{-.2cm} \stackrel{\text{}}{\circ}\hspace{-.2cm}\sn\hspace{-.2cm}\stackrel{}{\circ}\hspace{-.2cm}\sn \hspace{-.2cm}\stackrel{}{\circ}\hspace{-.2cm}\sn\hspace{-.2cm}\stackrel{}{\circ}$$
\vspace{-0.9cm}$$\hspace{1.02cm}|$$
\vspace{-0.9cm}$$\hspace{1.02cm}\circ\text{}$$}

\item $E_8$  : {\large \vspace{-1.cm} $$\hspace{1cm}\stackrel{}{\circ}\hspace{-.2cm}\sn\hspace{-.2cm}\stackrel{}{\circ} \hspace{-.2cm}\sn\hspace{-.2cm} \stackrel{\text{}}{\circ}\hspace{-.2cm}\sn\hspace{-.2cm}\stackrel{}{\circ}\hspace{-.2cm}\sn \hspace{-.2cm}\stackrel{}{\circ}\hspace{-.2cm}\sn\hspace{-.2cm}\stackrel{}{\circ} \hspace{-.2cm}\sn\hspace{-.2cm}\stackrel{}{\circ}$$
\vspace{-1.00cm}$$\hspace{2.05cm}|$$
\vspace{-0.9cm}$$\hspace{2.05cm}\circ\text{}$$}

\end{itemize}

\end{theorem}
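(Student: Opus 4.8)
The plan is to translate the theorem into a statement about the \emph{Tits quadratic form} of the underlying graph. Write $Q_0$ for the set of vertices of $Q$ and, for $v\in\ZZ^{Q_0}$, put
\[
q(v)=\sum_{i\in Q_0}v_i^2-\sum_{\{i,j\}\text{ an edge}}v_iv_j,
\]
loops and multiple edges counted with multiplicity. This form depends only on the underlying graph, not on the orientation, so the whole theorem follows once we establish: (A) $Q$ is of finite type if and only if $q$ is positive definite; and (B) a connected graph has positive definite Tits form exactly when it is one of $A_n,D_n,E_6,E_7,E_8$. Indeed, (A) and (B) together say that being of finite type is a property of the underlying graph alone, which is the orientation-independence assertion, and they identify the graphs that occur.

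Assertion (B) is a finite computation. One checks directly that $q$ is positive definite on each graph in the list, and that each \emph{extended Dynkin graph} --- $\widetilde A_n$ (an $(n{+}1)$-cycle), $\widetilde D_n$, $\widetilde E_6$, $\widetilde E_7$, $\widetilde E_8$ --- carries a strictly positive integral vector $\delta$ with $q(\delta)=0$. Since the restriction of $q$ to the coordinate subspace on a set $S\subseteq Q_0$ is precisely the Tits form of the full subgraph on $S$, positive definiteness is inherited by full subgraphs; hence a graph with positive definite Tits form has no loop, no multiple edge, and no extended Dynkin graph as a full subgraph. A short case analysis then finishes (B): a graph with a cycle has $q\le 0$ on the indicator vector of that cycle; a tree with a vertex of valence $\ge 4$ contains $\widetilde D_4$; a tree with two vertices of valence $3$ contains some $\widetilde D_m$; a tree with a single valence-$3$ vertex is a ``three-legged star'' which contains $\widetilde E_6$, $\widetilde E_7$ or $\widetilde E_8$ unless its three arm lengths are those of $D_n,E_6,E_7$ or $E_8$; and a tree in which every valence is $\le 2$ is a path $A_n$.

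For the ``only if'' half of (A), realize the representations of $Q$ of dimension vector $v$ as the points of the affine space $\Rep(Q,v)=\bigoplus_{h\colon i\to j}\Hom(k^{v_i},k^{v_j})$, on which the connected group $\mathrm{GL}(v)=\prod_{i}\mathrm{GL}_{v_i}(k)$ acts with orbits the isomorphism classes. If $Q$ is of finite type then, since every finite-dimensional representation is a finite direct sum of indecomposables, there are only finitely many isomorphism classes of dimension vector $v$, hence finitely many $\mathrm{GL}(v)$-orbits in the irreducible variety $\Rep(Q,v)$; so one orbit is dense, and as every stabilizer contains the scalars we get $\dim\Rep(Q,v)\le\dim\mathrm{GL}(v)-1$, i.e. $q(v)\ge 1$ for every $v\in\ZZ^{Q_0}_{\ge 0}\setminus\{0\}$. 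Finally $q(w)\ge q(|w|)\ge 1$ for every $0\ne w\in\ZZ^{Q_0}$, since passing to absolute values does not decrease $-w_iw_j$; thus $q$, a rational form positive on all nonzero integral vectors, is positive definite.

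The ``if'' half of (A) --- positive definite (equivalently, underlying graph in the list) implies finite type, for \emph{every} orientation --- is the heart of the proof, and here I would use the reflection functors of Bernstein--Gelfand--Ponomarev. For a sink $i$ of $Q$, let $\sigma_iQ$ reverse the arrows incident to $i$ and let $\Sigma_i^{+}\colon\Rep(Q)\to\Rep(\sigma_iQ)$ replace $V_i$ by $\ker\bigl(\bigoplus_{j\to i}V_j\to V_i\bigr)$, with the evident new structure maps; define $\Sigma_i^{-}$ dually for a source. The properties to prove are: $\Sigma_i^{+}$ carries an indecomposable to an indecomposable, except that it kills the simple $S_i$; $\Sigma_i^{-}\Sigma_i^{+}$ is the identity on representations without an $S_i$-summand; and on the dimension vector of a representation it does not kill, $\Sigma_i^{\pm}$ acts by the simple reflection $s_i$ of the Weyl group $W$ of the root system of the underlying graph. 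A tree has no oriented cycles, so the vertices can be ordered $i_1,\dots,i_n$ with $i_t$ a sink of $\sigma_{i_{t-1}}\cdots\sigma_{i_1}Q$ for each $t$; reflecting once at each vertex in the order $i_1,\dots,i_n$ returns $Q$, so $\Phi^{+}=\Sigma_{i_n}^{+}\cdots\Sigma_{i_1}^{+}$ is an endofunctor of $\Rep(Q)$ acting on dimension vectors by a Coxeter element $c\in W$. Now take $V$ indecomposable with $\underline{\dim}\,V=v$. If $\Phi^{+}$ never annihilated $V$, then every $c^{k}v$ would be the dimension vector of a nonzero representation, hence would lie in $\ZZ^{Q_0}_{\ge 0}\setminus\{0\}$; but $W$ is finite, so $c^{m}v=v$ for some $m>0$, whence $\sum_{k=0}^{m-1}c^{k}v$ is a nonzero nonnegative vector fixed by $c$ --- impossible, since a Coxeter element of a finite Weyl group has no nonzero fixed vector (no eigenvalue $1$). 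So $\Phi^{+}$ kills $V$ after finitely many steps; working back from the last stage at which the image of $V$ is nonzero shows that $V$ is obtained from some simple $S_{i_\ell}$ by applying inverse reflection functors, so $v$ lies in the $W$-orbit of the simple root $\alpha_{i_\ell}$ and, being $\ge 0$, is a positive root. The same bookkeeping shows $\underline{\dim}$ is injective on indecomposables and surjective onto positive roots, so it is a bijection between the indecomposable representations and the finite set of positive roots, and $Q$ is of finite type. The main obstacle is exactly this last step: constructing the reflection functors and verifying their exactness and (quasi-)inverse properties, and importing the fact that a Coxeter element of a finite Weyl group has no fixed vector; the remaining arguments are bookkeeping with dimension vectors.
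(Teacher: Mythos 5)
Your proposal is correct and follows essentially the same route as the paper: the ``if'' direction via the Bernstein--Gelfand--Ponomarev reflection functors, admissible orderings, and the fact that a Coxeter element of the finite Weyl group has no nonzero fixed vector, and the ``only if'' direction via the orbit/dimension count for $\prod_i GL_{v_i}$ acting on the representation space together with the classification of graphs with positive definite Tits form (which the paper delegates to Problems \ref{5:1}--\ref{5:4}). The only cosmetic difference is that you package the converse as positivity of $q$ on integral vectors plus rationality, while the paper's problems use a Zariski-dense orbit and a field-embedding argument to reach the same inequality.
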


The graphs listed in the theorem are called (simply laced) {\bf Dynkin
diagrams}. These graphs arise in a multitude of classification
problems in mathematics, such as classification of simple Lie
algebras, singularities, platonic solids, reflection groups,
etc. In fact, if we needed to make contact with an alien
civilization and show them how sophisticated our
civilization is, perhaps showing them Dynkin diagrams would be
the best choice!

As a final example consider the representation theory of finite
groups, which is one of the most fascinating chapters
of representation theory. In this theory,
one considers representations of the group
algebra $A=\Bbb C[G]$ of a finite group
$G$ -- the algebra with basis $a_g, g\in G$
and multiplication law $a_ga_h=a_{gh}$.
We will show that any finite dimensional
representation of $A$ is a direct sum of irreducible
representations, i.e., the notions of an irreducible and
indecomposable representation are the same for $A$ (Maschke's theorem).
Another striking result discussed below is the Frobenius
divisibility theorem:
the dimension of any irreducible representation of $A$ divides the
order of $G$. Finally, we will show how to use representation
theory of finite groups to prove Burnside's theorem:
any finite group of order $p^aq^b$, where $p,q$ are primes, is
solvable. Note that this theorem does not mention
representations, which are used only in its proof;
a purely group-theoretical proof of this theorem (not using
representations) exists but is much more difficult!

\subsection{Algebras}
Let us now begin a systematic discussion of representation
theory.

Let $k$ be a field. Unless stated otherwise, we will always assume that
$k$ is algebraically closed, i.e., any nonconstant polynomial
with coefficients in $k$ has a root in $k$. The main example
is the field of complex numbers $\Bbb C$, but we will also
consider fields of characteristic $p$, such as the algebraic
closure $\overline{\Bbb F}_p$ of the finite field $\Bbb F_p$
of $p$ elements.

\begin{definition}
An associative algebra over $k$ is a vector space $A$ over $k$
together with a bilinear map $A\times A\to A$,
$(a,b)\mapsto ab$, such that $(ab)c=a(bc)$.
\end{definition}

\begin{definition}
A unit in an associative algebra $A$ is an element $1\in A$
such that $1a=a1=a$.
\end{definition}

\begin{proposition}
If a unit exists, it is unique.
\end{proposition}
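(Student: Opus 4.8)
The plan is to suppose, for contradiction or simply by hypothesis, that there are two units $1$ and $1'$ in $A$, and then show they must coincide. The entire argument rests on the observation that each unit acts as an identity from one side, so evaluating the product $1 \cdot 1'$ two ways pins down both elements simultaneously.

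Concretely, I would first recall the defining property from the preceding definition: an element $e$ is a unit if $ea = ae = a$ for every $a \in A$. Then I would apply this twice. Using that $1'$ is a unit and specializing $a = 1$, the left-multiplication identity gives $1 \cdot 1' = 1$. Using that $1$ is a unit and specializing $a = 1'$, the right-multiplication identity gives $1 \cdot 1' = 1'$. Comparing the two expressions for the same element $1 \cdot 1'$ yields $1 = 1'$, which is exactly the claim.

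There is essentially no obstacle here; the only point requiring any care is to make sure the two applications of the unit axiom use different halves of the identity (one uses $e a = a$, the other uses $a e = a$) so that the common middle term $1 \cdot 1'$ genuinely links them. No associativity is needed, and no completeness or algebraic-closedness assumptions on $k$ play any role. I would keep the write-up to two or three sentences and close with $\square$.
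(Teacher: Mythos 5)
Your proof is correct and is essentially the same one-line argument the paper gives: evaluate $1\cdot 1'$ using each unit's defining property and conclude $1 = 1\cdot 1' = 1'$. (Only a cosmetic quibble: to get $1\cdot 1'=1$ from $1'$ being a unit you are using the identity $a\cdot 1'=a$, i.e.\ the right-identity half, but your closing remark makes clear you understand which half is used where.)
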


\begin{proof}
Let $1,1'$ be two units. Then $1=11'=1'$.
\end{proof}

 From now on, by an algebra $A$ we will mean an associative algebra
with a unit. We will also assume that $A\ne 0$.

\begin{example} Here are some examples of algebras over $k$:

1. $A=k$.

2. $A=k[x_1,...,x_n]$ -- the algebra of polynomials
in variables $x_1,...,x_n$.

3. $A={\rm End}V$ -- the algebra of endomorphisms
of a vector space $V$ over $k$ (i.e., linear maps, or operators, from $V$ to
itself). The multiplication is given by composition of operators.

4. The free algebra $A=k\langle x_1,...,x_n\rangle$. A basis of
this algebra consists of words in letters $x_1,...,x_n$,
and multiplication in this basis is simply concatenation of words.

5. The group algebra $A=k[G]$ of a group $G$. Its basis is
$\lbrace{a_g,g\in G\rbrace}$, with multiplication law
$a_ga_h=a_{gh}$.
\end{example}

\begin{definition} An algebra $A$ is commutative if $ab=ba$ for
all $a,b\in A$.
\end{definition}

For instance, in the above examples, $A$ is commutative in cases
$1$ and $2$, but not commutative in cases 3 (if $\dim V>1$), and 4
(if $n>1$). In case 5, $A$ is commutative if and only if $G$ is
commutative.

\begin{definition}
A homomorphism of algebras $f: A\to B$
is a linear map such that $f(xy)=f(x)f(y)$ for all $x,y\in A$, and
$f(1)=1$.
\end{definition}

\subsection{Representations}

\begin{definition}
A representation of an algebra $A$ (also called a left $A$-module)
is a vector space $V$ together with a homomorphism of algebras
$\rho: A\to {\rm End}V$.
\end{definition}

Similarly, a right $A$-module is a space $V$ equipped
with an antihomomorphism $\rho: A\to {\rm End}V$;
i.e., $\rho$ satisfies $\rho(ab)=\rho(b)\rho(a)$ and
$\rho(1)=1$.

The usual abbreviated notation for $\rho(a)v$ is $av$ for a left
module and $va$ for the right module. Then the property that
$\rho$ is an (anti)homomorphism can be written as a kind of
associativity law: $(ab)v=a(bv)$ for left modules, and
$(va)b=v(ab)$ for right modules.

Here are some examples of representations.

\begin{example} 1. $V=0$.

2. $V=A$, and $\rho: A\to {\rm End}A$
is defined as follows: $\rho(a)$ is the operator of left
multiplication by $a$, so that $\rho(a)b=ab$ (the usual product).
This representation is called the {\it regular} representation of $A$.
Similarly, one can equip $A$ with a structure of a right
$A$-module by setting $\rho(a)b:=ba$.

3. $A=k$. Then a representation of $A$ is simply a vector space
over $k$.

4. $A=k\langle x_1,...,x_n\rangle$.
Then a representation of $A$ is just a vector space $V$ over $k$
with a collection of arbitrary linear operators
$\rho(x_1),...,\rho(x_n): V\to V$ (explain why!).
\end{example}

\begin{definition}
A subrepresentation of a representation $V$ of an algebra $A$
is a subspace $W\subset V$ which is invariant under all the operators
$\rho(a): V\to V$, $a\in A$.
\end{definition}

For instance, $0$ and $V$ are always subrepresentations.

\begin{definition}
A representation $V\ne 0$ of $A$ is irreducible (or simple)
if the only subrepresentations of $V$ are $0$ and $V$.
\end{definition}

\begin{definition} Let $V_1,V_2$ be two representations
of an algebra $A$. A homomorphism (or intertwining operator)
$\phi: V_1\to V_2$ is a linear operator which commutes with the
action of $A$, i.e., $\phi(av)=a\phi(v)$ for any $v\in V_1$.
A homomorphism $\phi$ is said to be an isomorphism of
representations if it is an isomorphism of vector spaces.
The set (space) of all homomorphisms of representations $V_1\to
V_2$ is denoted by $\Hom_A(V_1,V_2)$.
\end{definition}

Note that if a linear operator $\phi: V_1\to V_2$ is an isomorphism of
representations then so is the linear operator
$\phi^{-1}: V_2\to V_1$ (check it!).

Two representations between which there exists an isomorphism
are said to be isomorphic. For practical purposes, two isomorphic
representations may be regarded as ``the same'', although
there could be subtleties related to the fact that an isomorphism
between two representations, when it exists, is not unique.

\begin{definition} Let $V_1,V_2$ be representations of an
algebra $A$. Then the space $V_1\oplus V_2$ has an obvious
structure of a representation of $A$, given by $a(v_1\oplus
v_2)=av_1\oplus av_2$.
\end{definition}

\begin{definition} A nonzero representation
$V$ of an algebra $A$ is said to be indecomposable
if it is not isomorphic to a direct sum of two nonzero
representations.
\end{definition}

It is obvious that an irreducible representation is
indecomposable. On the other hand,
we will see below that the converse statement is
false in general.

One of the main problems of representation theory
is to classify irreducible and indecomposable representations of
a given algebra up to isomorphism. This problem is usually hard
and often can be solved only partially (say, for finite
dimensional representations). Below we will see a number of
examples in which this problem is partially or fully solved for
specific algebras.

We will now prove our first result -- Schur's lemma.
Although it is very easy to prove, it is fundamental in the
whole subject of representation theory.

\begin{proposition}\label{shul} (Schur's lemma)
Let $V_1,V_2$ be representations
of an algebra $A$ over any field $F$ (which need not be
algebraically closed). Let $\phi: V_1\to V_2$ be a nonzero
homomorphism of representations. Then:

(i) If $V_1$ is irreducible, $\phi$ is injective; 

(ii) If $V_2$ is irreducible, $\phi$ is surjective. 

Thus, if both $V_1$ and $V_2$ are irreducible, $\phi$ is an isomorphism.
\end{proposition}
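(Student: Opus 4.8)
The plan is to exploit the elementary fact that for any homomorphism of representations $\phi\colon V_1\to V_2$, the kernel $\Ker\phi\subseteq V_1$ and the image $\Image\phi\subseteq V_2$ are themselves subrepresentations. Granting this, both parts follow immediately from the definition of irreducibility together with the hypothesis $\phi\neq 0$.

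First I would check that $\Ker\phi$ is a subrepresentation of $V_1$: if $v\in\Ker\phi$ and $a\in A$, then $\phi(av)=a\phi(v)=a\cdot 0=0$, so $av\in\Ker\phi$; thus $\Ker\phi$ is invariant under all $\rho_{V_1}(a)$. Similarly, $\Image\phi$ is a subrepresentation of $V_2$: any element of $\Image\phi$ has the form $\phi(v)$, and $a\phi(v)=\phi(av)\in\Image\phi$. Both verifications are one-line computations using only that $\phi$ intertwines the actions.

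Next, for (i): assume $V_1$ is irreducible. Then $\Ker\phi$, being a subrepresentation of $V_1$, is either $0$ or $V_1$. If $\Ker\phi=V_1$ then $\phi=0$, contradicting the hypothesis; hence $\Ker\phi=0$ and $\phi$ is injective. For (ii): assume $V_2$ is irreducible. Then $\Image\phi$ is either $0$ or $V_2$. If $\Image\phi=0$ then $\phi=0$, again a contradiction; hence $\Image\phi=V_2$ and $\phi$ is surjective. Finally, if both $V_1$ and $V_2$ are irreducible, $\phi$ is simultaneously injective and surjective, hence a bijection; and since the inverse of an isomorphism of representations is again a homomorphism (noted just before the statement), $\phi$ is an isomorphism of representations.

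I do not expect any genuine obstacle here: the whole argument is a direct unwinding of definitions. The only point requiring a moment's care — and the place a careless writer might skip a step — is explicitly confirming that $\Ker\phi$ and $\Image\phi$ are invariant subspaces rather than merely subspaces, since that invariance is exactly what lets us invoke irreducibility. Note also that algebraic closedness of the field is nowhere used, consistent with the statement being given "over any field $F$."
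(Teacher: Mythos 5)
Your proof is correct and follows essentially the same route as the paper's: observe that $\Ker\phi$ and $\Image\phi$ are subrepresentations and invoke irreducibility together with $\phi\neq 0$. The only difference is that you spell out the invariance checks and the final remark about the inverse, which the paper leaves implicit.
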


\begin{proof}
(i) The kernel $K$ of $\phi$ is a subrepresentation of $V_1$.
Since $\phi\ne 0$, this subrepresentation cannot
be $V_1$. So by irreducibility of $V_1$ we have $K=0$.

(ii) The image $I$ of $\phi$
is a subrepresentation of $V_2$.
Since $\phi\ne 0$, this subrepresentation cannot
be $0$. So by irreducibility of $V_2$ we have $I=V_2$.
\end{proof}

\begin{corollary}\label{slacf} (Schur's lemma for algebraically closed fields)
Let $V$ be a finite dimensional irreducible representation
of an algebra $A$ over an algebraically closed field $k$,
and $\phi: V\to V$ is an intertwining operator.
Then $\phi=\lambda\cdot {\rm Id}$ for some $\lambda\in k$ (a scalar operator).
\end{corollary}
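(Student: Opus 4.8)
The plan is to reduce this to the already-proven Schur's lemma (Proposition \ref{shul}) by producing an eigenvalue. First I would use that $V$ is finite dimensional over an algebraically closed field $k$: the characteristic polynomial $\det(\phi - t\cdot\mathrm{Id})$ is a nonconstant polynomial in $t$ with coefficients in $k$, hence has a root $\lambda\in k$. This is exactly where algebraic closure and finite-dimensionality are used, and it is the only place the hypotheses beyond Proposition \ref{shul} enter.

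Next I would observe that $\psi := \phi - \lambda\cdot\mathrm{Id}$ is again an intertwining operator $V\to V$: it is linear, and since both $\phi$ and the scalar operator $\lambda\cdot\mathrm{Id}$ commute with the action of every $\rho(a)$, so does their difference. Because $\lambda$ is an eigenvalue of $\phi$, the kernel of $\psi$ is nonzero, so $\psi$ is not injective.

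Finally I would apply part (i) of Schur's lemma (Proposition \ref{shul}) with $V_1 = V_2 = V$, which is irreducible: a \emph{nonzero} homomorphism of representations out of an irreducible representation is injective. Since $\psi$ is not injective, it cannot be nonzero, so $\psi = 0$, i.e. $\phi = \lambda\cdot\mathrm{Id}$, as desired.

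There is really no serious obstacle here; the proof is short. The one conceptual point worth flagging is that the argument genuinely requires both hypotheses — algebraic closure (to guarantee the root $\lambda$ exists in $k$) and finite dimensionality (to have a characteristic polynomial / to guarantee $\phi$ has an eigenvalue at all) — and the statement is false if either is dropped. If anything, the "hard part" is simply remembering to invoke the eigenvalue rather than trying to argue directly.
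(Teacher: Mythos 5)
Your proof is correct and follows essentially the same route as the paper: pick an eigenvalue $\lambda$ of $\phi$ (using algebraic closure and finite dimensionality), note that $\phi-\lambda\cdot\mathrm{Id}$ is an intertwining operator that is not injective, and conclude from Proposition \ref{shul} that it is zero. The only cosmetic difference is that the paper phrases the last step as "not an isomorphism, hence zero" while you invoke part (i) on injectivity directly; these are the same argument.
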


{\bf Remark.} Note that this Corollary is false over the field of real numbers: 
it suffices to take $A=\Bbb C$ (regarded as an $\Bbb R$-algebra), and $V=A$. 

\begin{proof}
Let $\lambda$ be an eigenvalue of $\phi$ (a root of the
characteristic polynomial of $\phi$). It exists since $k$ is an
algebraically closed field. Then the operator
$\phi-\lambda{\rm Id}$ is an intertwining operator $V\to V$,
which is not an isomorphism (since its determinant is zero).
Thus by Proposition \ref{shul} this operator is zero, hence the result.
\end{proof}

\begin{corollary}
Let $A$ be a commutative algebra.
Then every irreducible finite dimensional representation $V$
of $A$ is 1-dimensional.
\end{corollary}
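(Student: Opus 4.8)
The plan is to show that \emph{every} linear subspace of $V$ is a subrepresentation, after which irreducibility will pin down the dimension. Recall that by the standing assumption of this subsection the field $k$ is algebraically closed, and $V$ is finite dimensional, so Corollary \ref{slacf} is available.

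First I would observe that for each $a\in A$ the operator $\rho(a)\colon V\to V$ is an intertwining operator. Indeed, for any $b\in A$ and $v\in V$, commutativity of $A$ gives $\rho(b)\bigl(\rho(a)v\bigr)=\rho(ba)v=\rho(ab)v=\rho(a)\bigl(\rho(b)v\bigr)$, so $\rho(a)$ commutes with the action of $A$. Hence Corollary \ref{slacf} applies to $\phi=\rho(a)$: there is a scalar $\lambda(a)\in k$ with $\rho(a)=\lambda(a)\cdot\mathrm{Id}_V$.

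Next I would use this to conclude that any subspace $W\subset V$ is invariant under all $\rho(a)$, simply because each $\rho(a)$ acts as a scalar, and scalar operators preserve every subspace; thus $W$ is a subrepresentation. Since $V$ is irreducible, its only subrepresentations are $0$ and $V$, so $V$ has no proper nonzero subspaces at all. As $V\neq 0$, this forces $\dim_k V=1$.

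There is no real obstacle here; the only point requiring care is that Corollary \ref{slacf} genuinely needs both that $k$ is algebraically closed and that $V$ is finite dimensional — both hypotheses are in force — and that one must invoke commutativity precisely at the step showing $\rho(a)$ is an intertwiner (over a noncommutative algebra this fails, and indeed the statement is false there).
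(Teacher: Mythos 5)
Your proof is correct and follows essentially the same route as the paper: show each $\rho(a)$ is an intertwiner using commutativity, apply Corollary \ref{slacf} to conclude $\rho(a)$ is scalar, and then observe that every subspace is a subrepresentation, so irreducibility forces $\dim V=1$. Your added remarks about where algebraic closure, finite dimensionality, and commutativity are used are accurate.
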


{\bf Remark.} Note that a 1-dimensional representation of any
algebra is automatically irreducible.

\begin{proof} Let $V$ be irreducible.
For any element $a\in A$, the operator $\rho(a): V\to V$ is an
intertwining operator. Indeed,
$$
\rho(a)\rho(b)v=\rho(ab)v=\rho(ba)v=\rho(b)\rho(a)v
$$
(the second equality is true since the algebra is commutative).
Thus, by Schur's lemma, $\rho(a)$ is a scalar operator
for any $a\in A$. Hence every subspace of $V$ is a
subrepresentation. But $V$ is irreducible, so $0$ and $V$ are the only subspaces of $V$.
This means that $\dim V=1$ (since $V\ne 0$).
\end{proof}

\begin{example} 1. $A=k$. Since representations of $A$ are simply
vector spaces, $V=A$ is the only irreducible
and the only indecomposable representation.

2. $A=k[x]$. Since this algebra is commutative, the irreducible
representations of $A$ are its 1-dimensional representations.
As we discussed above, they are defined by a single operator
$\rho(x)$. In the 1-dimensional case, this is just a number from
$k$. So all the irreducible representations of $A$ are
$V_\lambda=k$, $\lambda\in k$, in which the action of $A$ defined by
$\rho(x)=\lambda$. Clearly, these representations are pairwise
non-isomorphic.

The classification of indecomposable representations
of $k[x]$ is more interesting. To obtain it, recall that any linear
operator on a finite dimensional vector space $V$
can be brought to Jordan normal form.
More specifically, recall that the Jordan block
$J_{\lambda,n}$ is the operator
on $k^n$ which in the standard basis
is given by the formulas $J_{\lambda,n}e_i=\lambda e_i+e_{i-1}$
for $i>1$, and $J_{\lambda,n}e_1=\lambda e_1$.
Then for any linear operator $B: V\to V$
there exists a basis of $V$ such that the matrix of $B$ in this
basis is a direct sum of Jordan blocks.
This implies that all the indecomposable
representations of $A$ are $V_{\lambda,n}=k^n$, $\lambda\in k$,
with $\rho(x)=J_{\lambda,n}$. The fact that these representations
are indecomposable and pairwise non-isomorphic follows
from the Jordan normal form theorem (which in particular says
that the Jordan normal form of an operator is unique
up to permutation of blocks).

This example shows that an indecomposable representation
of an algebra need not be irreducible.

3. The group algebra $A=k[G]$, where $G$ is a group.
A representation of $A$ is the same thing as a representation of
$G$, i.e., a vector space $V$ together with a group homomorphism
$\rho: G\to {\rm Aut}(V)$, whre ${\rm Aut}(V)=GL(V)$ denotes the group 
of invertible linear maps from the space $V$ to itself. 
\end{example}

\begin{problem}\label{1:1}
Let $V$ be a nonzero finite dimensional representation of an algebra
$A$. Show that it has an irreducible subrepresentation.
Then show by example that this does not always hold
for infinite dimensional representations.
\end{problem}

\begin{problem}\label{1:2}
Let $A$ be an algebra over a
field $k$. The center $Z(A)$ of $A$
is the set of all elements $z\in A$ which commute with all
elements of $A$. For example, if $A$ is commutative then
$Z(A)=A$.

(a) Show that if $V$ is an irreducible finite dimensional
representation of $A$ then any element $z\in Z(A)$ acts
in $V$ by multiplication by some scalar $\chi_V(z)$.
Show that $\chi_V: Z(A)\to k$ is a homomorphism.
It is called the {\bf central character} of $V$.

(b) Show that if $V$ is an indecomposable finite dimensional representation
of $A$ then for any $z\in Z(A)$, the operator $\rho(z)$
by which $z$ acts in $V$ has only one eigenvalue $\chi_V(z)$,
equal to the scalar by which $z$ acts on some irreducible
subrepresentation of $V$. Thus $\chi_V: Z(A)\to k$ is a
homomorphism, which is again called the central character of $V$.

(c) Does $\rho(z)$ in (b) have to be a scalar operator?
\end{problem}

\begin{problem}\label{1:3}
Let $A$ be an associative algebra, and $V$ a representation of
$A$. By ${\rm End}_A(V)$ one denotes the algebra of all
homomorphisms of representations $V\to V$. Show that ${\rm End}_A(A)=
A^{op}$, the algebra $A$ with opposite multiplication.
\end{problem}

\begin{problem}\label{1:8}
Prove the following ``Infinite dimensional Schur's lemma''
(due to Dixmier):
Let $A$ be an algebra over $\Bbb C$ and $V$ be an irreducible
representation of $A$ with at most countable basis. Then any homomorphism
of representations $\phi: V\to V$ is a scalar operator.

Hint. By the usual Schur's lemma, the algebra $D:={\rm End}_A(V)$ is
an algebra with division. Show that $D$ is at most countably
dimensional. Suppose $\phi$ is not a scalar,
and consider the subfield $\Bbb C(\phi)\subset D$. Show that
$\Bbb C(\phi)$ is a transcendental extension of $\Bbb C$.
Derive from this that $\Bbb C(\phi)$ is uncountably dimensional
and obtain a contradiction.
\end{problem}

\subsection{Ideals}
A \emph{left ideal} of an algebra $A$ is a subspace $I\subseteq A$ such that $aI
\subseteq I$ for all $a\in A$.
Similarly, a \emph{right ideal} of an algebra $A$ is a subspace
$I\subseteq A$ such that $Ia \subseteq I$ for all $a\in A$.
A \emph{two-sided ideal} is a subspace that is both a left and a
right ideal.

Left ideals are the same as subrepresentations of the regular representation $A$.
Right ideals are the same as subrepresentations of the regular representation of
the opposite algebra $A^\text{op}$.

Below are some examples of ideals:
\begin{itemize}
\item
If $A$ is any algebra, $0$ and $A$ are two-sided ideals.
An algebra $A$ is called \emph{simple} if $0$ and $A$ are its only two-sided
ideals.
\item
If $\phi : A \to B$ is a homomorphism of algebras, then $\ker \phi$ is a two-sided
ideal of $A$.
\item
If $S$ is any subset of an algebra $A$, then the two-sided ideal
\emph{generated} by $S$ is denoted $\langle S\rangle$ and is the span of
elements of the form $asb$, where $a,b \in A$ and $s \in S$. Similarly we can
define $\langle S\rangle_\ell = \text{span}\{as\}$ and $\langle S\rangle_r =
\text{span}\{sb\}$, the left, respectively right, ideal generated
by $S$.
\end{itemize}

\subsection{Quotients}

Let $A$ be an algebra and $I$ a two-sided ideal in $A$.
Then $A/I$ is the set of (additive) cosets of $I$.
Let $\pi : A \to A/I$ be the quotient map.
We can define multiplication in $A/I$ by $ \pi(a)\cdot \pi(b) := \pi(ab). $
This is well defined because
if $\pi(a) = \pi(a')$ then
\[
\pi(a'b) = \pi(ab + (a'-a)b) = \pi(ab)+\pi((a'-a)b) = \pi(ab)
\]
because $(a'-a)b \in Ib \subseteq I = \ker\pi$, as $I$ is a right ideal;
similarly, if $\pi(b) = \pi(b')$ then
\[
\pi(ab') = \pi(ab + a(b'-b)) = \pi(ab)+\pi(a(b'-b)) = \pi(ab)
\]
because $a(b'-b) \in aI \subseteq I = \ker\pi$, as $I$ is also a left ideal.
Thus, $A/I$ is an algebra.

Similarly, if $V$ is a representation of $A$, and $W\subset V$ is a
subrepresentation, then $V/W$ is also a representation.
Indeed, let $\pi : V \to V/W$ be the quotient map, and
set $\rho_{V/W}(a) \pi(x) := \pi (\rho_V(a) x)$.

Above we noted that left ideals of $A$ are subrepresentations of
the regular representation of $A$, and vice versa. Thus, if $I$ is a left ideal in $A$, then
$A/I$ is a representation of $A$.

\begin{problem}\label{1:4}
Let $A=k[x_1,...,x_n]$ and $I\ne A$ be any ideal in $A$ containing
all homogeneous polynomials of degree $\ge N$. Show that
$A/I$ is an indecomposable representation of $A$.
\end{problem}

\begin{problem}\label{1:9}
Let $V\ne 0$ be a representation of $A$. We say that a vector
$v\in V$ is cyclic if it generates $V$, i.e., $Av=V$.
A representation admitting a cyclic vector is said to be cyclic.
Show that

(a) $V$ is irreducible if and only if all nonzero vectors of $V$
are cyclic.

(b) $V$ is cyclic if and only if it is isomorphic to $A/I$, where
$I$ is a left ideal in $A$.

(c) Give an example of an indecomposable representation
which is not cyclic.

Hint. Let $A=\Bbb C[x,y]/I_2$, where $I_2$ is the ideal
spanned by homogeneous polynomials of degree $\ge 2$
(so $A$ has a basis $1,x,y$). Let $V=A^*$ be the space of linear
functionals on $A$, with the action of $A$ given by
$(\rho(a)f)(b)=f(ba)$. Show that $V$ provides such an
example.
\end{problem}

\subsection{Algebras defined by generators and relations}

If $f_1, \dots, f_m$ are elements of the free algebra $k\langle x_1, \dots,
x_n\rangle$, we say that the algebra \linebreak
$A := k\langle x_1, \dots, x_n\rangle /
\langle\{f_1, \dots, f_m\}\rangle$ is \emph{generated by $x_1, \dots, x_n$ with
defining relations $f_1 = 0, \ \dots, \ f_m = 0$}.

\subsection{Examples of algebras}

\begin{enumerate}

\item
The Weyl algebra, $k\langle x, y\rangle/\langle yx-xy-1\rangle$.

\item
The $q$-Weyl algebra,
generated by $x, x^{-1}, y, y^{-1}$ with defining relations
$ yx = qx y$ and $ xx^{-1} = x^{-1} x = y y^{-1} = y^{-1} y = 1 $.
\end{enumerate}

\vskip .05in

{\bf Proposition.} (i) A basis for the Weyl algebra $A$ is $\{x^i
y^j, i,j\ge 0\}$.

(ii) A basis for the q-Weyl algebra $A_q$ is
$\{x^i y^j, i,j\in \Bbb Z\}$.
\vskip .05in

{\it Proof.} (i) First let us show that the elements $x^iy^j$ are a
spanning set for $A$. To do this, note that any word in $x,y$ can be
ordered to have all the $x$ on the left of the $y$, at the cost
of interchanging some $x$ and $y$. Since $yx-xy=1$, this will
lead to error terms, but these terms will be sums of monomials
that have a smaller number of letters $x,y$ than the original
word. Therefore, continuing this process, we can order everything
and represent any word as a linear combination of $x^iy^j$.

The proof that $x^iy^j$ are linearly independent is based on
representation theory. Namely, let $a$ be a variable,
and $E=t^ak[a][t,t^{-1}]$ (here $t^a$ is just a formal symbol,
so really $E=k[a][t,t^{-1}]$). Then $E$
is a representation of $A$
with action given by $xf = tf$ and $yf = \frac{df}{dt}$
(where $\frac{d(t^{a+n})}{dt}:=(a+n)t^{a+n-1}$). Suppose now that
we have a nontrivial linear relation $\sum c_{ij}x^iy^j=0$.
Then the operator
$$
L=\sum c_{ij}t^i\left(\frac{d}{dt}\right)^j
$$
acts by zero in $E$. Let us write $L$ as
$$
L=\sum_{j=0}^rQ_j(t)\left(\frac{d}{dt}\right)^j,
$$
where $Q_r\ne 0$. Then we have
$$
Lt^a=\sum_{j=0}^rQ_j(t)a(a-1)...(a-j+1)t^{a-j}.
$$
This must be zero, so we have
$\sum_{j=0}^rQ_j(t)a(a-1)...(a-j+1)t^{-j}=0$ in
$k[a][t,t^{-1}]$. Taking the leading term in $a$, we get
$Q_r(t)=0$, a contradiction.

(ii) Any word in $x,y,x^{-1},y^{-1}$ can be ordered at the cost
of multiplying it by a power of $q$. This easily implies both the
spanning property and the linear independence.
\vskip .05in

\vskip .05in

{\bf Remark.} The proof of (i) shows that the Weyl algebra $A$
can be viewed as the algebra of polynomial differential
operators in one variable $t$.

\vskip .05in

The proof of (i) also brings up the notion of a faithful
representation.

{\bf Definition.}
A representation $\rho : A \to \text{End } V$ is \emph{faithful} if $\rho$ is
injective.

For example, $k[t]$ is a faithful representation of the Weyl
algebra, if $k$ has characteristic zero (check it!), but not in
characteristic $p$, where $(d/dt)^pQ=0$ for any polynomial $Q$.
However, the representation $E=t^ak[a][t,t^{-1}]$, as we've seen,
is faithful in any characteristic.

\begin{problem}\label{1:5}
Let $A$ be the Weyl algebra, generated
by two elements $x,y$ with the relation
$$
yx-xy-1=0.
$$

(a) If ${\rm char} k=0$, what are the finite dimensional
representations of $A$? What are the two-sided ideals in $A$?

Hint. For the first question, use the fact that for
two square matrices $B,C$, $\Tr(BC)=\Tr(CB)$. For the second
question, show that any nonzero two-sided ideal in $A$ contains a
nonzero polynomial in $x$, and use this to characterize this ideal.

Suppose for the rest of the problem that ${\rm char} k=p$.

(b) What is
the center of $A$?

Hint. Show that $x^p$ and $y^p$ are central elements.

(c) Find all irreducible finite dimensional representations
of $A$.

Hint. Let $V$ be an irreducible finite dimensional representation of
$A$, and $v$ be an eigenvector of $y$ in $V$. Show that
$\lbrace{v,xv,x^2v,...,x^{p-1}v\rbrace}$ is a basis of $V$.
\end{problem}

\begin{problem}\label{1:6}
Let $q$ be a nonzero complex number, and $A$ be the
$q$-Weyl algebra over $\Bbb C$
generated by $x^{\pm 1}$ and $y^{\pm 1}$ with defining
relations $xx^{-1}=x^{-1}x=1,yy^{-1}=y^{-1}y=1$, and
$xy=qyx$.

(a) What is the center of $A$ for different $q$?
If $q$ is not a root of unity, what are the two-sided ideals in
$A$?

(b) For which $q$ does this algebra have finite dimensional
representations?

Hint. Use determinants.

(c) Find all finite dimensional irreducible representations
of $A$ for such $q$.

Hint. This is similar to part (c) of the previous problem.
\end{problem}

\subsection{Quivers}

\begin{definition}
A {\bf quiver} $Q$ is a directed graph, possibly with self-loops
and/or multiple edges between two vertices.
\end{definition}

\begin{example}
\[
\xymatrix@+=1cm{\bullet \ar[r] & \bullet & \bullet \ar[l] \\ & \bullet \ar[u]}
\]
\end{example}

We denote the set of vertices of the quiver $Q$ as $I$, and the set of edges as $E$. For an edge $h \in E$, let $h'$, $h''$ denote
the source and target of $h$, respectively:
\[
\xymatrix@+=1cm{\bullet  \save[]+<0cm,-0.3cm>*{h'}\restore \ar[r]_{h}
& \bullet \save[]+<0cm,-0.3cm>*{h''}\restore}
\]

\begin{definition} \label{quiverRep}
A representation of a quiver $Q$ is an assignment
to each vertex $i \in I$ of a vector space
$V_i$ and to each edge $h \in E$ of a
linear map $x_h : V_{h'} \longrightarrow V_{h''}$.
\end{definition}

It turns out that the theory of representations of quivers is
a part of the theory of representations of algebras in the sense
that for each quiver $Q$, there exists a certain algebra $P_Q$,
called the path algebra of $Q$, such that a representation of the
quiver $Q$ is ``the same'' as a representation of the algebra
$P_Q$. We shall first define the path algebra of a quiver and
then justify our claim that representations of these two objects
are ``the same''.

\begin{definition}
The {\bf path algebra} $P_Q$ of a quiver $Q$ is the algebra
whose basis is formed by oriented paths in $Q$, including the
trivial paths $p_i$, $i\in I$, corresponding to the vertices of
$Q$, and multiplication is concatenation of paths: $ab$
is the path obtained by first tracing $b$ and then $a$.
If two paths cannot be concatenated, the product is defined to be
zero.
\end{definition}

\begin{remark}
It is easy to see that for a finite quiver $\sum\limits_{i \in I} p_i=1$,
so $P_Q$ is an algebra with unit.
\end{remark}

\begin{problem}\label{1:6.5}
Show that the algebra $P_Q$ is
generated by $p_i$ for $i \in I$ and $a_h$ for $h \in E$
with the defining relations:
\begin{enumerate}
\item $p_i^2 = p_i$, $p_ip_j=0$ for $i \neq j$
\item $a_hp_{h'}=a_h$, $a_hp_j=0$ for $j \neq h'$
\item $p_{h''}a_h=a_h$, $p_ia_h=0$ for $i \neq h''$
\end{enumerate}
\end{problem}

We now justify our statement that a representation of a quiver is
the same thing as a representation of the path algebra of a quiver.

Let $\mathbf{V}$ be a representation of the path algebra
$P_Q$. From this representation, we can construct a
representation of $Q$ as follows: let $V_i = p_i \mathbf{V}$, and
for any edge $h$, let
$x_h = a_h |_{p_{h'}\mathbf{V}} : p_{h'}\mathbf{V} \longrightarrow
p_{h''}\mathbf{V}$ be the operator corresponding to the one-edge path
$h$.

Similarly, let $\left(V_i, x_h\right)$
be a representation of a quiver $Q$.
From this representation, we can construct a representation
of the path algebra $P_Q$:
let $\mathbf{V}=\bigoplus_i V_i$,
let $p_i : \mathbf{V} \rightarrow V_i \rightarrow \mathbf{V}$ be the
projection onto $V_i$, and for any path $p=h_1...h_m$
let $a_p=x_{h_1}...x_{h_m}: V_{h_m'}\to V_{h_1''}$
be the composition of the operators
corresponding to the edges occurring in $p$
(and the action of this operator on the other $V_i$ is zero). 

It is clear that the above assignments $\mathbf{V} \mapsto (p_i \mathbf{V})$ and $(V_i) \mapsto \bigoplus_i V_i$ are inverses
of each other. Thus, we have a bijection between isomorphism
classes of representations of the algebra $P_Q$ and of the quiver
$Q$.

\begin{remark}
In practice, it is generally easier to consider a representation
of a quiver as in Definition \ref{quiverRep}. 
\end{remark}

We lastly define several previous concepts in the context of quivers representations.

\begin{definition}
A subrepresentation of a representation $(V_i, x_h)$ of a quiver
$Q$ is a representation $(W_i, x'_h)$ where $W_i \subseteq V_i$
for all $i \in I$ and where $x_h(W_{h'}) \subseteq W_{h''}$ and
$x'_h = x_h |_{W_{h'}} : W_{h'} \longrightarrow W_{h''}$ for all
$h \in E$.
\end{definition}

\begin{definition}
The direct sum of two representations $(V_i, x_h)$ and $(W_i,
y_h)$ is the representation $(V_i \oplus W_i, x_h \oplus y_h)$.
\end{definition}

As with representations of algebras, a nonzero representation $(V_i)$ of
a quiver $Q$ is said to be irreducible if its only
subrepresentations are $(0)$ and $(V_i)$ itself, and
indecomposable if it is not isomorphic to a direct sum of two
nonzero representations.

\begin{definition} \label{quiverRepMor}
Let $(V_i, x_h)$ and $(W_i, y_h)$ be representations of the
quiver $Q$. A homomorphism $\varphi : (V_i) \longrightarrow
(W_i)$ of quiver representations is a collection of maps
$\varphi_i : V_i \longrightarrow W_i$ such that $y_h \circ
\varphi_{h'} = \varphi_{h''}
\circ x_h$ for all $h \in E$.
\end{definition}

\begin{problem}\label{1:7}
Let $A$ be a $\Bbb Z_+$-graded algebra, i.e., $A=\oplus_{n\ge 0}A[n]$,
and $A[n]\cdot A[m]\subset A[n+m]$. If $A[n]$ is finite
dimensional, it is useful to consider the Hilbert series
$h_A(t)=\sum \dim A[n]t^n$ (the generating function of dimensions of $A[n]$).
Often this series converges to a rational function,
and the answer is written in the form of such function.
For example, if $A=k[x]$ and $deg(x^n)=n$ then
$$
h_A(t)=1+t+t^2+...+t^n+...=\frac{1}{1-t}
$$

Find the Hilbert series of:

(a) $A=k[x_1,...,x_m]$ (where the
grading is by degree of polynomials);

(b) $A=k<x_1,...,x_m>$ (the grading is by length of words);

(c) $A$ is the exterior (=Grassmann) algebra
$\wedge_k [x_1,...,x_m]$, generated over some field $k$ by $x_1,...,x_m$ with
the defining relations $x_ix_j+x_jx_i=0$ and $x_i^2=0$ for all $i,j$
(the grading is by degree).

(d) $A$ is the path algebra $P_Q$ of a quiver $Q$ (the grading is
defined by $\deg(p_i)=0$, $\deg(a_h)=1$).

Hint. The closed answer is written in terms of the adjacency
matrix $M_Q$ of $Q$.
\end{problem}

\subsection{Lie algebras}

Let ${\frak g}$ be a vector space over a field $k$, and let
$[\,,] : {\frak g} \times {\frak g} \longrightarrow {\frak g}$ be
a skew-symmetric bilinear map. (That is, $[a,a]=0$, and hence
$[a,b]=-[b,a]$).

\begin{definition}
$\left({\frak g}, [\,,]\right)$ is a {\bf Lie algebra} if $[\,,]$
satisfies the Jacobi identity
\begin{equation} \label{Jacobi}
\bigl[\left[a,b\right],c\bigr]+\bigl[\left[b,c\right],a\bigr]+\bigl[\left[c,a\right],b\bigr]=0.
\end{equation}
\end{definition}

\begin{example} \label{LieEx}
Some examples of Lie algebras are:
\begin{enumerate}
\item Any space ${\frak g}$ with $[\,,]=0$ (abelian Lie algebra).
\item Any associative algebra $A$ with $[a,b]=ab-ba$ \label{uLieEx}.
\item Any subspace $U$ of an associative algebra $A$ such that
$[a,b] \in U$ for all $a,b \in U$.
\item The space ${\rm Der}(A)$ of derivations of an algebra $A$, i.e.
linear maps $D: A\to A$ which satisfy the Leibniz rule:
$$
D(ab)=D(a)b+aD(b).
$$
\end{enumerate}

\begin{remark} Derivations are important because they are
the ``infinitesimal version'' of automorphisms 
(i.e., isomorphisms onto itself). For example,
assume that $g(t)$ is a differentiable family of automorphisms
of a finite dimensional algebra $A$ over $\Bbb R$ or $\Bbb C$
parametrized by $t\in (-\epsilon,\epsilon)$ such that $g(0)={\rm
Id}$. Then $D:=g'(0): A\to A$ is a derivation (check
it!). Conversely, if $D: A\to A$ is a derivation, then
$e^{tD}$ is a 1-parameter family of automorphisms (give a proof!).

This provides a motivation for the notion of a Lie
algebra. Namely, we see that Lie algebras arise as spaces of infinitesimal
automorphisms (=derivations) of associative algebras. In fact,
they similarly arise as spaces of derivations of any kind of linear
algebraic structures, such as Lie algebras, Hopf algebras, etc.,
and for this reason play a very important role in algebra.
\end{remark}

Here are a few more concrete examples of Lie algebras:

\begin{enumerate}
\item $\mathbb{R}^3$ with $[u,v] = u \times v$,
the cross-product of $u$ and $v$.

\item ${\mathfrak{sl}}(n)$, the set of $n \times n$ matrices with trace $0$. \\
  For example, ${\mathfrak{sl}}(2)$ has the basis
  \begin{align*}
    e&=\begin{pmatrix} 0 & 1 \\ 0 & 0  \end{pmatrix}&
    f&=\begin{pmatrix} 0 & 0 \\ 1 & 0  \end{pmatrix}&
    h&=\begin{pmatrix} 1 & 0 \\ 0 & -1 \end{pmatrix}
  \end{align*}
  with relations 
$$
[h,e]=2e,\ [h,f]=-2f,\ [e,f]=h.
$$
\item The Heisenberg Lie algebra ${\mathcal H}$ of
matrices \label{HeisenbergLie}
  $\left(\begin{smallmatrix}
    0 & * & * \\
    0 & 0 & * \\
    0 & 0 & 0
  \end{smallmatrix}\right)$ \\
  It has the basis
  \begin{align*}
    x&=\begin{pmatrix}0&0&0\\0&0&1\\0&0&0\end{pmatrix}&
    y&=\begin{pmatrix}0&1&0\\0&0&0\\0&0&0\end{pmatrix}&
    c&=\begin{pmatrix}0&0&1\\0&0&0\\0&0&0\end{pmatrix}
  \end{align*}
  with relations $[y,x]=c$ and $[y,c]=[x,c]=0$.
\item The algebra ${\rm aff}(1)$ of matrices
  $\left(\begin{smallmatrix}
  * & * \\
  0 & 0
  \end{smallmatrix}\right)$ \\
  Its basis consists of
$X=\left(\begin{smallmatrix} 1 & 0 \\ 0 & 0 \end{smallmatrix}\right)$ and
  $Y=\left(\begin{smallmatrix} 0 & 1 \\ 0 & 0
\end{smallmatrix}\right)$, with $[X,Y]=Y$.
\item ${\mathfrak{so}}(n)$, the space of skew-symmetric $n \times n$
matrices, with $[a,b]=ab-ba$.
\end{enumerate}

{\bf Exercise.} Show that Example 1 is a special case of Example
5 (for $n=3$).

\end{example}

\begin{definition}
Let ${\frak g}_1, {\frak g}_2$ be Lie algebras. A homomorphism
$\varphi : {\frak g}_1 \longrightarrow {\frak g}_2$ of Lie
algebras is a linear map such that
$\varphi([a,b])=[\varphi(a),\varphi(b)]$.
\end{definition}

\begin{definition}
A representation of a Lie algebra ${\frak g}$ is a vector space
$V$ with a homomorphism of Lie algebras
$\rho : {\frak g} \longrightarrow \End V$.
\end{definition}

\begin{example} Some examples of representations of Lie algebras are:
\begin{enumerate}
\item $V=0$.
\item Any vector space $V$ with $\rho=0$ (the trivial representation).
\item The adjoint representation
$V={\frak g}$ with $\rho(a)(b):=[a,b].$ That this is a representation follows from Equation
  (\ref{Jacobi}). Thus, the meaning of the Jacobi identity is 
that it is equivalent to the existence of the adjoint representation.
\end{enumerate}
\end{example}

It turns out that a representation of a Lie algebra ${\frak g}$
is the same thing as a representation of a certain associative algebra
$\mathcal{U}({\frak g})$. Thus, as with quivers, we can view the
theory of representations of Lie algebras as a part of the theory
of representations of associative algebras.

\begin{definition}
Let ${\frak g}$ be a Lie algebra with basis $x_i$ and $[\,,]$
defined by $[x_i,x_j]=\sum_k c_{ij}^k x_k$. The {\bf universal
enveloping algebra} $\mathcal{U}({\frak g})$ is the associative
algebra generated by the $x_i$'s with the defining relations
$x_ix_j-x_jx_i=\sum_k c_{ij}^k x_k$.
\end{definition}

{\bf Remark.} This is not a very good definition since it depends
on the choice of a basis. Later we will give an equivalent
definition which will be basis-independent.

{\bf Exercise.} Explain why a representation of a Lie algebra 
is the same thing as a representation of its universal enveloping algebra. 

\begin{example}
The associative algebra $\mathcal{U}({\mathfrak{sl}}(2))$ is the algebra
generated by $e$, $f$, $h$ with relations
\begin{align*}
he-eh &= 2e &
hf-fh &=-2f &
ef-fe &= h.
\end{align*}
\end{example}
\begin{example}
The algebra $\mathcal{U}({\mathcal H})$, where ${\mathcal H}$
is the Heisenberg Lie algebra, is
the algebra generated by $x$, $y$, $c$ with the relations
\begin{align*}
yx-xy &= c &
yc-cy &= 0 &
xc-cx &= 0.
\end{align*}
Note that the Weyl algebra is the quotient of $\mathcal{U}({\mathcal H})$
by the relation $c=1$.
\end{example}

\subsection{Tensor products}\label{tenprod}

In this subsection we recall the notion of tensor product of
vector spaces, which will be extensively used below.

\begin{definition}\label{tenpro} The tensor product $V\otimes W$ of vector
spaces $V$ and $W$ over a field $k$ is the quotient of the
space $V*W$ whose basis is given by formal symbols $v\otimes w$,
$v\in V$, $w\in W$, by the subspace spanned by the
elements
$$
(v_1+v_2)\otimes w-v_1\otimes w-v_2\otimes w,
v\otimes (w_1+w_2)-v\otimes w_1-v\otimes w_2,
av\otimes w-a(v\otimes w),
v\otimes aw-a(v\otimes w),
$$
where $v\in V,w\in W,a\in k$.
\end{definition}

{\bf Exercise.} Show that $V\otimes W$ can be equivalently
defined as the quotient of the free abelian group $V\bullet W$
generated by $v\otimes w$, $v\in V,w\in W$ by the subgroup
generated by
$$
(v_1+v_2)\otimes w-v_1\otimes w-v_2\otimes w,
v\otimes (w_1+w_2)-v\otimes w_1-v\otimes w_2,
av\otimes w-v\otimes aw,
$$
where $v\in V,w\in W,a\in k$.

The elements $v\otimes w\in V\otimes W$, for $v\in V,w\in W$ are called pure tensors.
Note that in general, there are elements of $V\otimes W$ which are not pure tensors.

\vskip .1in

This allows one to define the tensor product of any number of
vector spaces, $V_1\otimes...\otimes V_n$. Note that
this tensor product is associative, in the sense that
$(V_1\otimes V_2)\otimes V_3$ can be naturally identified with
$V_1\otimes (V_2\otimes V_3)$.

In particular, people often consider tensor products of the form
$V^{\otimes n}=V\otimes...\otimes V$ ($n$ times) for a given
vector space $V$, and, more generally,
$E:=V^{\otimes n}\otimes (V^*)^{\otimes m}$.
This space is called the space of tensors of type $(m,n)$ on
$V$. For instance, tensors of type $(0,1)$ are vectors, of type
$(1,0)$ - linear functionals (covectors), of type $(1,1)$ - linear
operators, of type $(2,0)$ - bilinear forms, of type $(2,1)$ -
algebra structures, etc.

If $V$ is finite dimensional with basis $e_i$, $i=1,...,N$, and
$e^i$ is the dual basis of $V^*$, then a basis of $E$ is
the set of vectors
$$
e_{i_1}\otimes...\otimes e_{i_n}\otimes
e^{j_1}\otimes...\otimes e^{j_m},
$$
and a typical element of $E$ is
$$
\sum_{i_1,...,i_n,j_1,...,j_m=1}^NT^{i_1...i_n}_{j_1...j_m}
e_{i_1}\otimes...\otimes e_{i_n}\otimes
e^{j_1}\otimes...\otimes e^{j_m},
$$
where $T$ is a multidimensional table of numbers.

Physicists define a tensor as a collection of such
multidimensional tables $T_B$ attached to every basis $B$ in $V$,
which change according to a certain rule when the basis $B$ is
changed. Here it is important to distinguish upper and lower
indices, since lower indices of $T$ correspond to $V$ and upper
ones to $V^*$. The physicists don't write the sum sign, but remember
that one should sum over indices that repeat twice - once
as an upper index and once as lower. This convention is called
the {\it Einstein summation}, and it also stipulates
that if an index appears once, then there is no summation over
it, while no index is supposed to appear more than once as an
upper index or more than once as a lower index.

One can also define the tensor product of linear maps.
Namely, if $A: V\to V'$ and $B: W\to W'$ are linear maps, then
one can define the linear map $A\otimes B: V\otimes W\to
V'\otimes W'$ given by the formula $(A\otimes B)(v\otimes
w)=Av\otimes Bw$ (check that this is well defined!)

The most important properties of tensor products are summarized
in the following problem.

\begin{problem}\label{0:1}
(a) Let $U$ be any $k$-vector space.
Construct a natural bijection between
bilinear maps $V\times W\to U$ and linear maps
$V\otimes W\to U$.

(b) Show that if $\lbrace{v_i\rbrace}$ is a basis of $V$ and
$\lbrace{w_j\rbrace}$ is a basis of
$W$ then $\lbrace{v_i\otimes w_j\rbrace}$ is a basis of $V\otimes W$.

(c) Construct a natural isomorphism
$V^*\otimes W\to {\rm Hom}(V,W)$ in the case when $V$ is finite
dimensional (``natural'' means that the isomorphism is defined
without choosing bases).

(d) Let $V$ be a vector space over a field $k$.
Let $S^nV$ be the quotient of $V^{\otimes
n}$ ($n$-fold tensor product of $V$) by the subspace
spanned by the tensors $T-s(T)$ where
$T\in V^{\otimes n}$, and $s$ is some transposition.
Also let $\wedge^nV$ be the
quotient of $V^{\otimes
n}$ by the subspace spanned
by the tensors $T$ such that $s(T)=T$ for some transposition $s$.
These spaces are called the n-th symmetric, respectively
exterior, power of $V$.
If $\lbrace{v_i\rbrace}$ is a basis of $V$, can you
construct a basis of $S^nV,\wedge^nV$? If ${\rm dim}V=m$, what
are their dimensions?

(e) If $k$ has characteristic zero, find a natural identification
of $S^nV$ with the space of $T\in V^{\otimes n}$ such that $T=sT$
for all transpositions $s$, and of
$\wedge^nV$ with the space of $T\in V^{\otimes n}$ such that $T=-sT$
for all transpositions $s$.

(f) Let $A: V\to W$ be a linear operator. Then we have an operator
$A^{\otimes n}: V^{\otimes n}\to W^{\otimes n}$, and its
symmetric and exterior powers
$S^nA: S^nV\to S^nW$, $\wedge^nA: \wedge^nV\to
\wedge^nW$ which are defined in an obvious way.
Suppose $V=W$ and has dimension $N$, and assume that
the eigenvalues of $A$ are $\lambda_1,...,\lambda_N$.
Find $Tr(S^nA),Tr(\wedge^nA)$.

(g) Show that
$\wedge^N A=det(A){\rm Id}$, and use this equality to
give a one-line proof of the
fact that $\det(AB)=\det(A)\det(B)$.
\end{problem}

{\bf Remark.} Note that a similar definition to the above can be used
to define the tensor product $V\otimes _A W$, where $A$ is any
ring, $V$ is a right $A$-module, and $W$ is a left $A$-module.
Namely, $V\otimes_A W$ is the abelian group which is the quotient
of the group $V\bullet W$ freely generated by formal symbols $v\otimes
w$, $v\in V$, $w\in W$, modulo the relations
$$
(v_1+v_2)\otimes w-v_1\otimes w-v_2\otimes w,
v\otimes (w_1+w_2)-v\otimes w_1-v\otimes w_2,
va\otimes w-v\otimes aw, a\in A.
$$

\vskip .05in

{\bf Exercise.} 
Throughout this exercise, we let $k$ be an arbitrary field (not
necessarily of characteristic zero, and not necessarily
algebraically closed).

If $A$ and $B$ are two $k$-algebras, then an
\textbf{$\left(A,B\right)$-bimodule} will mean a $k$-vector space
$V$ with both a left $A$-module structure and a right $B$-module
structure which satisfy $\left(av\right)b=a\left(vb\right)$ for
any $v\in V$, $a\in A$ and $b\in B$. Note that both the notions
of "left $A$-module" and "right $A$-module" are particular cases
of the notion of bimodules; namely, a left $A$-module is the same
as an $\left(A,k\right)$-bimodule, and a right $A$-module is the
same as a $\left(k,A\right)$-bimodule.

Let $B$ be a $k$-algebra, $W$ a left $B$-module and $V$ a right
$B$-module. We denote by $V\otimes_B W$ the $k$-vector space
$\left(V\otimes_k W\right)\slash\left<vb\otimes w-v\otimes bw\mid
v\in V,\ w\in W,\ b\in B\right>$. We denote the projection of a
pure tensor $v\otimes w$ (with $v\in V$ and $w\in W$) onto the
space $V\otimes_B W$ by $v\otimes_B w$. (Note that this tensor
product $V\otimes_B W$ is the one defined in the Remark after
Problem\ref{0:1}.)

If, additionally, $A$ is another $k$-algebra, and if the right
$B$-module structure on $V$ is part of an
$\left(A,B\right)$-bimodule structure, then $V\otimes_B W$
becomes a left $A$-module by $a\left(v\otimes_B
w\right)=av\otimes_B w$ for any $a\in A$, $v\in V$ and $w\in W$.

Similarly, if $C$ is another $k$-algebra, and if the left
$B$-module structure on $W$ is part of a
$\left(B,C\right)$-bimodule structure, then $V\otimes_B W$
becomes a right $C$-module by $\left(v\otimes_B
w\right)c=v\otimes_B wc$ for any $c\in C$, $v\in V$ and $w\in W$.

If $V$ is an $\left(A,B\right)$-bimodule and $W$ is a
$\left(B,C\right)$-bimodule, then these two structures on
$V\otimes_B W$ can be combined into one
$\left(A,C\right)$-bimodule structure on $V\otimes_B W$.

(a) Let $A$, $B$, $C$, $D$ be four algebras. Let $V$ be an
$\left(A,B\right)$-bimodule, $W$ be a
$\left(B,C\right)$-bimodule, and $X$ a
$\left(C,D\right)$-bimodule. Prove that $\left(V\otimes_B
W\right)\otimes_C X\cong V\otimes_B\left(W\otimes_C X\right)$ as
$\left(A,D\right)$-bimodules. The isomorphism (from left to
right) is given by $\left(v\otimes_B w\right)\otimes_C x\mapsto
v\otimes_B\left(w\otimes_C x\right)$ for all $v\in V$, $w\in W$
and $x\in X$.

(b) If $A$, $B$, $C$ are three algebras, and if $V$ is an
$\left(A,B\right)$-bimodule and $W$ an
$\left(A,C\right)$-bimodule, then the vector space
$\mathrm{Hom}_A\left(V,W\right)$ (the space of all left
$A$-linear homomorphisms from $V$ to $W$) canonically becomes a
$\left(B,C\right)$-bimodule by setting
$\left(bf\right)\left(v\right)=f\left(vb\right)$ for all $b\in B$, $f\in\mathrm{Hom}_A\left(V,W\right)$ and $v\in V$
and
$\left(fc\right)\left(v\right)=f\left(v\right)c$ for all $c\in C$, $f\in\mathrm{Hom}_A\left(V,W\right)$ and $v\in V$.

Let $A$, $B$, $C$, $D$ be four algebras. Let $V$ be a
$\left(B,A\right)$-bimodule, $W$ be a
$\left(C,B\right)$-bimodule, and $X$ a
$\left(C,D\right)$-bimodule. Prove that
$\mathrm{Hom}_B\left(V,\mathrm{Hom}_C\left(W,X\right)\right)
\cong \mathrm{Hom}_C\left(W\otimes_B V,X\right)$ as
$\left(A,D\right)$-bimodules. The isomorphism (from left to
right) is given by $f\mapsto \left(w\otimes_B v\mapsto
f\left(v\right)w\right)$ for all $v\in V$, $w\in W$
and $f\in \mathrm{Hom}_B\left(V,\mathrm{Hom}_C\left(W,X\right)\right)$.

\subsection{The tensor algebra}

The notion of tensor product allows us to give more conceptual
(i.e., coordinate free) definitions of the free algebra, polynomial algebra, exterior
algebra, and universal enveloping algebra of a Lie algebra.

Namely, given a vector space $V$, define its {\it tensor algebra}
$TV$ over a field $k$ to be $TV=\oplus_{n\ge 0}V^{\otimes n}$,
with multiplication defined by $a\cdot b:=a\otimes b$, $a\in
V^{\otimes n}$, $b\in V^{\otimes m}$. Observe that
a choice of a basis $x_1,...,x_N$ in $V$ defines an isomorphism
of $TV$ with the free algebra $k<x_1,...,x_n>$.

Also, one can make the following definition.

\begin{definition}
(i) The symmetric algebra $SV$ of $V$ is the quotient of $TV$ by
the ideal generated by $v\otimes w-w\otimes v$, $v,w\in V$.

(ii) The exterior algebra $\wedge V$ of $V$ is the quotient of $TV$ by
the ideal generated by $v\otimes v$, $v\in V$.

(iii) If $V$ is a Lie algebra,
the universal enveloping algebra ${\mathfrak{U}}(V)$ of $V$ is the quotient of $TV$ by
the ideal generated by $v\otimes w-w\otimes v-[v,w]$, $v,w\in V$.
\end{definition}

It is easy to see that a choice of a basis $x_1,...,x_N$ in $V$ identifies
$SV$ with the polynomial algebra $k[x_1,...,x_N]$, $\wedge V$
with the exterior algebra $\wedge_k(x_1,...,x_N)$,
and the universal enveloping algebra ${\mathfrak{U}}(V)$ with one defined
previously.

Also, it is easy to see that we have decompositions
$SV=\oplus_{n\ge 0}S^nV$, $\wedge V=\oplus_{n\ge 0}\wedge^n V$.

\subsection{Hilbert's third problem}

\begin{problem}\label{0:3} It is known that if $A$ and $B$ are two
polygons of the same area then $A$ can be cut by finitely many
straight cuts into pieces from which one can make B. David Hilbert
asked in 1900 whether it is true for polyhedra in 3 dimensions.
In particular, is it true for a cube and a regular tetrahedron
of the same volume?

The answer is ``no'', as was found by Dehn in 1901.
The proof is very beautiful. Namely, to any
polyhedron $A$ let us attach
its ``Dehn invariant'' $D(A)$ in $V={\Bbb R}\otimes
({\Bbb R/\Bbb Q})$ (the tensor product of ${\Bbb Q}$-vector
spaces).
Namely,
$$
D(A)=\sum_a l(a)\otimes \frac{\beta(a)}{\pi},
$$ where $a$ runs
over edges of $A$, and $l(a),\beta(a)$ are the length of $a$ and the
angle at $a$.

(a) Show that if you cut $A$ into $B$ and $C$ by a straight cut,
then $D(A)=D(B)+D(C)$.

(b) Show that $\alpha={\rm arccos}(1/3)/\pi$ is not a rational number.

Hint. Assume that $\alpha=2m/n$, for integers $m,n$.
Deduce that roots of the equation $x+x^{-1}=2/3$
are roots of unity of degree n. Conclude that
$x^k+x^{-k}$ has denominator $3^k$ and get a contradiction.

(c) Using (a) and (b), show that the answer to Hilbert's question
is negative. (Compute the Dehn invariant of the regular
tetrahedron and the cube).
\end{problem}

\subsection{Tensor products and duals of representations of Lie algebras}

\begin{definition}
The tensor product of two representations
$V,W$ of a Lie algebra ${\mathfrak g}$
is the space $V\otimes W$ with $\rho_{V\otimes W}(x)=
\rho_V(x)\otimes Id+Id\otimes \rho_W(x)$.
\end{definition}

\begin{definition}
The dual representation $V^*$ to a representation $V$ of a Lie
algebra ${\mathfrak g}$ is the dual space $V^*$ to $V$
with $\rho_{V^*}(x)=-\rho_V(x)^*$.
\end{definition}

It is easy to check that these are indeed representations.

\begin{problem}
Let $V,W,U$ be finite dimensional representations
of a Lie algebra $\mathfrak{g}$. Show that
the space $\Hom_{\mathfrak{g}}(V\otimes W,U)$
is isomorphic to $\Hom_{\mathfrak{g}}(V,U\otimes W^*)$.
(Here $\Hom_{\mathfrak{g}}:=\Hom_{{\mathcal{U}}(\mathfrak{g})}$).
\end{problem}

\subsection{Representations of ${\mathfrak{sl}}(2)$}

This subsection is devoted to the representation theory of
${\mathfrak{sl}}(2)$, which is of central importance in many areas of mathematics.
It is useful to study this topic by solving the following
sequence of exercises, which every mathematician should do,
in one form or another.

\begin{problem}\label{4:1}
According to the above, a representation of ${\mathfrak{sl}}(2)$
is just a vector space $V$ with a triple of operators $E,F,H$ such that
$HE-EH=2E, HF-FH=-2F,EF-FE=H$ (the corresponding map $\rho$ is
given by $\rho(e)=E,\rho(f)=F$, $\rho(h)=H$).

Let $V$ be
a finite dimensional representation of ${\mathfrak{sl}}(2)$
(the ground field in this problem is ${\Bbb C}$).

(a) Take eigenvalues of $H$ and pick one with the biggest real part.
Call it $\lambda$. Let $\bar V(\lambda)$ be the generalized eigenspace corresponding
to $\lambda$. Show that $E|_{\bar V(\lambda)}=0$.

(b) Let $W$ be any representation of ${\mathfrak{sl}}(2)$ and
$w\in W$ be a nonzero vector such that $Ew=0$. For any
$k>0$ find a polynomial $P_k(x)$ of degree $k$ such that
$E^kF^kw=P_k(H)w$. (First compute $EF^kw$, then use induction in $k$).

(c) Let $v\in \bar V(\lambda)$ be a generalized eigenvector of $H$ with eigenvalue $\lambda$.
Show that there exists $N>0$ such that $F^Nv=0$.

(d) Show that $H$ is diagonalizable on $\bar V(\lambda)$. (Take $N$
to be such that $F^N=0$ on $\bar V(\lambda)$, and compute $E^NF^Nv$,
$v\in\bar V(\lambda)$, by (b). Use the fact that $P_k(x)$ does not have
multiple roots).

(e) Let $N_v$ be the smallest $N$ satisfying (c). Show that $\lambda=N_v-1$.

(f) Show that for each $N>0$, there exists a unique up to isomorphism
irreducible representation of ${\mathfrak{sl}}(2)$ of dimension $N$. Compute the matrices
$E,F,H$ in this representation using a convenient basis. (For $V$ finite
dimensional irreducible take $\lambda$ as in (a) and $v\in V(\lambda)$ an eigenvector
of $H$. Show that $v,Fv,...,F^\lambda v$ is a basis of $V$, and compute
the matrices of the operators $E,F,H$ in this basis.)

Denote the $\lambda+1$-dimensional irreducible representation from (f)
by $V_\lambda$. Below you will show that any finite dimensional representation
is a direct sum of $V_\lambda$.

(g) Show that the operator $C=EF+FE+H^2/2$ (the so-called Casimir operator)
commutes with $E,F,H$ and equals $\frac{\lambda(\lambda+2)}{2} Id$ on $V_\lambda$.

Now it will be easy to prove the direct sum decomposition.
Namely, assume the contrary, and
let $V$ be a reducible representation of the smallest dimension,
which is not a direct sum of smaller representations.

(h) Show that $C$ has only one eigenvalue on
$V$, namely $\frac{\lambda(\lambda+2)}{2}$
for some nonnegative integer $\lambda$. (use that the generalized eigenspace
decomposition of $C$ must be a decomposition of representations).

(i) Show that $V$ has a subrepresentation
$W=V_\lambda$ such that $V/W=nV_\lambda$
for some $n$ (use (h) and the fact that $V$ is the smallest
which cannot be decomposed).

(j) Deduce from (i) that the eigenspace $V(\lambda)$ of $H$ is $n+1$-dimensional.
If $v_1,...,v_{n+1}$ is its basis, show that $F^jv_i$, $1\le i\le n+1$,
$0\le j\le \lambda$ are linearly independent
and therefore form a basis of $V$ (establish
that if $Fx=0$ and $Hx=\mu x$ then
$Cx=\frac{\mu(\mu-2)}{2}x$ and hence $\mu=-\lambda$).

(k) Define $W_i=\text{span}(v_i,Fv_i,...,F^\lambda v_i)$. Show that $V_i$ are
subrepresentations of $V$ and derive a contradiction with the fact that
$V$ cannot be decomposed.

(l) (Jacobson-Morozov Lemma)
Let $V$ be a finite dimensional complex vector space and $A:V\to V$
a nilpotent operator. Show that there exists a unique, up to an isomorphism,
representation of ${\mathfrak{sl}}(2)$ on $V$ such that $E=A$. (Use the classification of
the representations and the Jordan normal form theorem)

(m) (Clebsch-Gordan decomposition)
Find the decomposition into irreducibles
of the representation $V_\lambda\otimes V_\mu$ of ${\mathfrak{sl}}(2)$.

Hint. For a finite dimensional representation $V$ of ${\mathfrak{sl}}(2)$
 it is useful to introduce the character $\chi_V(x)=Tr(e^{xH})$,
$x\in{\Bbb C}$. Show that $\chi_{V\oplus
W}(x)=\chi_V(x)+\chi_W(x)$ and
$\chi_{V\otimes W}(x)=\chi_V(x)\chi_W(x)$.
Then compute the character of $V_\lambda$ and
of $V_\lambda\otimes V_\mu$
and derive the decomposition. This decomposition is of fundamental
importance in quantum mechanics.

(n) Let $V={\Bbb C}^M\otimes {\Bbb C}^N$, and
$A=J_M(0)\otimes Id_N+Id_M\otimes J_N(0)$, where $J_n(0)$
is the Jordan block of size $n$ with eigenvalue zero
(i.e., $J_n(0)e_i=e_{i-1}$, $i=2,...,n$, and $J_n(0)e_1=0$).
Find the Jordan normal form of $A$ using (l),(m).
\end{problem}

\subsection{Problems on Lie algebras}

\begin{problem}\label{4:2} ({\bf Lie's Theorem})
The {\it commutant} $K({\frak g})$
of a Lie algebra ${\frak g}$ is the linear span of elements
$[x,y]$, $x,y\in {\frak g}$. This is an ideal in ${\frak g}$
(i.e., it is a subrepresentation of the adjoint representation).
A finite dimensional Lie algebra ${\frak g}$
over a field $k$
is said to be solvable if there exists $n$ such that $K^n({\frak
g})=0$. Prove the Lie theorem: if $k=\Bbb C$ and $V$ is a finite dimensional
irreducible representation of a solvable Lie algebra ${\frak g}$
then $V$ is 1-dimensional.

Hint. Prove the result by induction in dimension.
By the induction assumption, $K({\frak g})$ has a common
eigenvector $v$ in $V$, that is there is a linear function
$\chi :K({\frak g})\to \Bbb C$ such that $av=\chi (a)v$ for any
$a\in K({\frak g})$. Show that ${\frak g}$ preserves common
eigenspaces of $K({\frak g})$ (for this you will need to show
that $\chi ([x,a])=0$ for $x\in {\frak g}$ and $a\in K({\frak
g})$. To prove this, consider the smallest
vector subspace $U$ containing $v$ and invariant under $x$. This
subspace is invariant under $K({\frak g})$ and any $a\in K({\frak
g})$ acts with trace ${\rm dim} (U)\chi (a)$ in this subspace. In
particular $0={\rm Tr} ([x,a])={\rm dim} (U) \chi([x,a])$.).
\end{problem}

\begin{problem}\label{4:3}
Classify irreducible finite dimensional representations of the two dimensional
Lie algebra with basis $X,Y$ and commutation relation $[X,Y]=Y$.
Consider the cases of zero and positive characteristic.
Is the Lie theorem true in positive characteristic?
\end{problem}

\begin{problem}\label{4:4} (hard!)
For any element $x$ of a Lie algebra ${\frak g}$ let
$ad(x)$ denote the operator ${\frak g}\to {\frak g}, y\mapsto
[x,y]$. Consider the Lie algebra ${\frak g}_n$ generated by two
elements $x,y$ with the defining relations
$ad(x)^2(y)=ad(y)^{n+1}(x)=0$.

(a) Show that the Lie algebras ${\frak g}_1, {\frak g}_2, {\frak
g}_3$ are finite dimensional and find their dimensions.

(b) (harder!) Show that the Lie algebra ${\frak g}_4$ has infinite dimension.
Construct explicitly a basis of this algebra.
\end{problem}

\newpage \section{General results of representation theory}\label{Cha2}

\subsection{Subrepresentations in semisimple representations}

Let $A$ be an algebra.

\begin{definition} A semisimple
(or completely reducible) representation of $A$ is a direct sum of
irreducible representations.
\end{definition}

{\bf Example.} Let $V$ be an irreducible
representation of $A$ of dimension $n$. Then $Y=\End(V)$, with action of $A$
by left multiplication, is a semisimple representation of $A$,
isomorphic to $nV$ (the direct sum of $n$ copies of $V$).
Indeed, any basis $v_1,...,v_n$ of $V$ gives rise to an isomorphism
of representations $\End(V)\to nV$, given by $x\to
(xv_1,...,xv_n)$.

\vskip .1in

{\bf Remark.}
Note that by Schur's lemma, any semisimple representation $V$ of $A$
is canonically identified with $\oplus_X \Hom_A(X,V)\otimes X$,
where $X$ runs over all irreducible representations of $A$.
Indeed, we have a natural map $f: \oplus_X \Hom(X,V)\otimes X\to
V$, given by $g\otimes x\to g(x)$, $x\in X$, $g\in \Hom(X,V)$,
and it is easy to verify that this map is an isomorphism.

We'll see now how Schur's lemma allows us to classify subrepresentations
in finite dimensional semisimple representations.

\begin{proposition}\label{submo}
Let $V_i,1\le i\le m$ be irreducible finite dimensional
pairwise nonisomorphic representations of $A$, and
$W$ be a subrepresentation of $V=\oplus_{i=1}^m n_iV_i$.
Then $W$ is isomorphic to
$\oplus_{i=1}^m r_iV_i$, $r_i\le n_i$, and the inclusion $\phi:
W\to V$ is a direct sum of inclusions
$\phi_i: r_iV_i\to n_iV_i$ given by multiplication
of a row vector of elements of $V_i$
(of length $r_i$) by a certain $r_i$-by-$n_i$ matrix $X_i$
with linearly independent rows:
$\phi(v_1,...,v_{r_i})=(v_1,...,v_{r_i})X_i$.
\end{proposition}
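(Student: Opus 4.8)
The plan is to argue by induction on the total multiplicity $n=\sum_{i=1}^m n_i$ (the ``length'' of $V$), using Schur's lemma in the form of Corollary \ref{slacf} (recall that $k$ is algebraically closed), the existence of an irreducible subrepresentation in any nonzero finite dimensional representation (Problem \ref{1:1}), and a little linear algebra. Write $U_i:=n_iV_i$ for the isotypic components, so $V=\bigoplus_{i=1}^m U_i$. The idea of the inductive step is to peel off a single irreducible subrepresentation $S$ of $W$, split the ambient $V$ as $S\oplus V^\sharp$ with $V^\sharp$ of length $n-1$ and still a sum of copies of the $V_i$, show $W=S\oplus(W\cap V^\sharp)$, apply the inductive hypothesis to $W\cap V^\sharp\subseteq V^\sharp$, and reassemble. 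The base case $n=0$ (i.e.\ $V=0$) is vacuous.

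First I would record two preliminary observations. (1) Any irreducible subrepresentation $S\subseteq V$ is isomorphic to one of the $V_j$ and is contained in the single isotypic component $U_j$. Indeed, for each $i$ the projection $q_i\colon V\to U_i$ restricts on $S$ to a map whose image is a quotient of $S$, hence $0$ or isomorphic to $S$; composing a nonzero such image with a coordinate projection $U_i\to V_i$ and invoking Schur's lemma forces $V_i\cong S$, and since the $V_i$ are pairwise non-isomorphic this happens for exactly one index $j$; thus $q_i(S)=0$ for $i\ne j$, so $S\subseteq U_j$. (2) If $S\subseteq U_j=V_j^{\oplus n_j}$ is irreducible, then by Corollary \ref{slacf} each of the $n_j$ coordinate projections $S\to V_j$ is a scalar multiple of one fixed isomorphism, so $S=\{(c_1v,\dots,c_{n_j}v):v\in V_j\}$ for a nonzero row vector $c=(c_1,\dots,c_{n_j})\in k^{n_j}$. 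Completing $c$ to a basis of $k^{n_j}$ yields a splitting $U_j=S\oplus U_j''$ with $U_j''\cong(n_j-1)V_j$, in which $S\hookrightarrow U_j$ is multiplication by the $1\times n_j$ matrix $c$ and $U_j''\hookrightarrow U_j$ is multiplication by the $(n_j-1)\times n_j$ matrix $C'$ whose rows are the complementary basis vectors.

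For the inductive step: if $W=0$ we are done; otherwise pick an irreducible $S\subseteq W$, so by (1) $S\cong V_j$ and $S\subseteq U_j$. Using (2), write $V=S\oplus V^\sharp$ with $V^\sharp:=U_j''\oplus\bigoplus_{i\ne j}U_i\cong\bigoplus_i n_i^\sharp V_i$, where $n_j^\sharp=n_j-1$ and $n_i^\sharp=n_i$ for $i\ne j$, so $\sum_i n_i^\sharp=n-1$. Put $W^\sharp:=W\cap V^\sharp$; since $S\subseteq W$, writing $w=s+t$ with $s\in S$, $t\in V^\sharp$ for $w\in W$ shows $t=w-s\in W^\sharp$, hence $W=S\oplus W^\sharp$ as representations. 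By the inductive hypothesis $W^\sharp\cong\bigoplus_i r_i^\sharp V_i$ with $r_i^\sharp\le n_i^\sharp$ and inclusion into $V^\sharp$ a direct sum of matrix maps with matrices $X_i^\sharp$ of full row rank. Reassembling: for $i\ne j$ the $V_i$-component of $W$ is that of $W^\sharp$, so set $r_i:=r_i^\sharp\le n_i$, $X_i:=X_i^\sharp$; for $i=j$ the $V_j$-component of $W$ is $S\oplus r_j^\sharp V_j$, so $r_j:=r_j^\sharp+1\le n_j$, and its inclusion into $U_j=V_j^{\oplus n_j}$ is multiplication by $X_j:=\left(\begin{smallmatrix}c\\ X_j^\sharp C'\end{smallmatrix}\right)$ (first row from $S\hookrightarrow U_j$, the rest from the $V_j$-part of $W^\sharp$ composed with $U_j''\hookrightarrow U_j$). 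Finally one checks $X_j$ has linearly independent rows: $X_j^\sharp C'$ has full row rank $r_j^\sharp$ because $C'$ does, and its row space lies in the span of the basis vectors complementary to $c$, which does not contain $c$, so adjoining the row $c$ preserves independence.

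The main obstacle I anticipate is purely bookkeeping: in the reassembly one must keep straight how the inductive matrices $X_i^\sharp$ compose with the change-of-basis matrices $c$ and $C'$, and verify that the combined matrix in the $V_j$-component really does have independent rows. Conceptually the only delicate point is preliminary observation (1) — that an irreducible subrepresentation sits inside a single isotypic component — since this is exactly what allows $V$ to be re-split into a length-$(n-1)$ representation of the same shape, so that the induction closes.
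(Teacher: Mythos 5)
Your proof is correct and follows essentially the same route as the paper: induction on $\sum n_i$, extracting an irreducible subrepresentation of $W$ (Problem \ref{1:1}), locating it in a single isotypic component and describing its embedding by a nonzero row vector via Schur's lemma, splitting it off, and applying the inductive hypothesis to the complementary piece. The only difference is cosmetic: where the paper acts by $GL_{n_i}(k)$ to move the irreducible onto the first summand, you complete the vector $c$ to a basis and track the matrices $c$, $C'$, $X_j^\sharp$ explicitly, which is the same linear-algebra step in different clothing.
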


\begin{proof}
The proof is by induction in $n:=\sum_{i=1}^m n_i$. The base of induction
($n=1$) is clear. To perform the induction step,
let us assume that $W$ is nonzero, and
fix an irreducible subrepresentation $P\subset W$.
Such $P$ exists (Problem \ref{1:1}).
\footnote{Another proof of the existence of $P$, which does not
use the finite dimensionality of $V$, is by induction in $n$.
Namely, if $W$ itself is not irreducible,
let $K$ be the kernel of the projection of $W$ to the first summand
$V_1$. Then $K$ is a subrepresentation of
$(n_1-1)V_1\oplus...\oplus n_mV_m$, which is nonzero
since $W$ is not irreducible,
so $K$ contains an irreducible subrepresentation by the
induction assumption.}
Now, by Schur's lemma, $P$ is isomorphic to $V_i$ for some $i$,
and the inclusion $\phi|_P: P\to V$ factors through $n_iV_i$, and
upon identification of $P$ with $V_i$ is given by the formula
$v\mapsto (vq_1,...,vq_{n_i})$, where $q_l\in k$ are not all zero.

Now note that the group $G_i=GL_{n_i}(k)$
of invertible $n_i$-by-$n_i$ matrices
over $k$ acts on $n_iV_i$ by $(v_1,...,v_{n_i})\to
(v_1,...,v_{n_i})g_i$ (and by the identity on $n_jV_j$, $j\ne
i$), and therefore acts on the set of
subrepresentations of $V$, preserving the property
we need to establish: namely, under the action of $g_i$,
the matrix $X_i$ goes to $X_ig_i$, while $X_j,j\ne i$ don't change.
Take $g_i\in G_i$ such that $(q_1,...,q_{n_i})g_i=(1,0,...,0)$.
Then $Wg_i$ contains the first summand $V_i$ of $n_iV_i$ (namely,
it is $Pg_i$),
hence $Wg_i=V_i\oplus W'$, where $W'\subset
n_1V_1\oplus...\oplus(n_i-1)V_i\oplus...\oplus n_mV_m$
is the kernel of the projection of $Wg_i$ to the first
summand $V_i$ along the other summands.
Thus the required statement follows from the induction assumption.
\end{proof}

\begin{remark} In Proposition \ref{submo}, it is not important
that $k$ is algebraically closed, nor it matters that $V$ is finite
dimensional. If these assumptions are dropped,
the only change needed is that the entries of the matrix
$X_i$ are no longer in $k$ but in $D_i=\End_A(V_i)$, which is, as we
know, a division algebra. The proof of this generalized version
of Proposition \ref{submo} is the same as before (check it!).
\end{remark}

\subsection{The density theorem}

Let $A$ be an algebra over an algebraically closed field $k$.

\begin{corollary}\label{linalg}
Let $V$ be an irreducible finite dimensional representation of $A$,
and $v_1,...,v_n\in V$ be any linearly independent vectors.
Then for any $w_1,...,w_n\in V$ there exists an element $a\in A$
such that $av_i=w_i$.
\end{corollary}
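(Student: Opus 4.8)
The plan is to deduce this Corollary from Proposition~\ref{submo} applied to the representation $V^{\oplus n}=nV$ of $A$. First I would observe that the statement ``there exists $a\in A$ with $av_i=w_i$ for all $i$'' is precisely the assertion that the vector $(w_1,\dots,w_n)\in nV$ lies in the image of the map $\rho^{(n)}\colon A\to nV$ given by $a\mapsto (av_1,\dots,av_n)$. Let $W\subseteq nV$ denote this image; since $\rho^{(n)}$ is a homomorphism of $A$-modules (with $A$ acting on itself by left multiplication and on $nV$ diagonally), $W$ is a subrepresentation of $nV=n_1V_1$ in the notation of Proposition~\ref{submo}, with only the single irreducible $V_1:=V$ occurring and $n_1=n$. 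Hence by that Proposition, $W$ is isomorphic to $rV$ for some $r\le n$, and the inclusion $W\hookrightarrow nV$ is given, in suitable coordinates, by right multiplication by an $r\times n$ matrix $X$ over $k$ with linearly independent rows.

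The key step is then to show $r=n$, i.e.\ that $X$ is square and invertible, which forces $W=nV$ and finishes the proof. I would argue by contradiction: if $r<n$, then $W$ has positive codimension, so there is a nonzero linear functional on $nV$ vanishing on $W$; concretely, writing $nV=\operatorname{Hom}(k^n,V)$, there is a nonzero covector $(c_1,\dots,c_n)\in (k^n)^*$, i.e.\ scalars $c_i\in k$ not all zero, and a nonzero functional $\xi\in V^*$... actually the cleanest route is this: the orthogonal complement of $W$ under the pairing between $nV$ and $n(V^*)$ is a nonzero subrepresentation of $n(V^*)$, and by Proposition~\ref{submo} (applied to $V^*$, which is again irreducible) it contains an element of the form $(c_1\xi,\dots,c_n\xi)$ for some $0\ne\xi\in V^*$ and $c_i\in k$ not all zero. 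The condition that this annihilates all of $W$ reads $\sum_i c_i\,\xi(av_i)=0$ for every $a\in A$, i.e.\ $\xi\bigl(a(\sum_i c_i v_i)\bigr)=0$ for all $a\in A$. Since $\sum_i c_i v_i\ne 0$ by linear independence of the $v_i$, and $V$ is irreducible, the set $\{a(\sum c_i v_i):a\in A\}$ is a nonzero subrepresentation, hence all of $V$; so $\xi$ vanishes on $V$, contradicting $\xi\ne 0$.

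The main obstacle is setting up the duality argument correctly, in particular checking that the annihilator of a subrepresentation of $nV$ inside $n(V^*)$ is genuinely an $A$-subrepresentation — this uses that $A$ acts on $nV$ diagonally and on $n(V^*)$ by the transpose-inverse... no, more simply, $nV$ and $n(V^*)$ are naturally dual as $A$-modules only if we use the module $V^*$ with the \emph{transpose} action, not a representation of $A$ in the usual sense unless $A$ is replaced by $A^{\mathrm{op}}$. To sidestep this nuisance I would instead run the induction directly, mirroring the proof of Proposition~\ref{submo}: induct on $n$, the case $n=1$ being immediate from irreducibility of $V$ (for $v_1\ne 0$, $Av_1=V$); for the inductive step, use irreducibility to find $a$ with $av_n=w_n$, replace each $w_i$ by $w_i-av_i$ to reduce to the case $w_n=0$, then... this still requires care because killing $w_n$ may disturb the relation between the $v_i$. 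The genuinely clean argument is the one via Proposition~\ref{submo} with the rank computation, so I would present that, and the one real thing to verify carefully is that $r=n$ — equivalently that the evaluation map $A\to nV$ is surjective, which is exactly the ``density'' content the section is building toward.
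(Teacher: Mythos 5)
There is a genuine gap: after invoking Proposition \ref{submo} to write the image $W$ of $a\mapsto (av_1,\dots,av_n)$ as $rV$ with inclusion matrix $X$, you never actually prove $r=n$ (equivalently $W=nV$). Your closing sentence — that ``the one real thing to verify is that the evaluation map $A\to nV$ is surjective, which is exactly the density content the section is building toward'' — is circular: that surjectivity \emph{is} the statement being proved, and in the paper's logical order the Density Theorem is deduced from this Corollary, not the other way around. The missing idea is the paper's one-line trick, which uses the hypothesis of linear independence directly: since $1\in A$, the tuple $(v_1,\dots,v_n)$ itself lies in $W$, so $(v_1,\dots,v_n)=(u_1,\dots,u_r)X$ for some $u_j\in V$; if $r<n$ there is a nonzero $q=(q_1,\dots,q_n)$ with $Xq^{T}=0$, whence $\sum_i q_iv_i=(u_1,\dots,u_r)Xq^{T}=0$, contradicting linear independence. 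No duality and no appeal to density is needed.

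Your abandoned duality sketch would in fact work, and is a legitimately different route: if $W\subsetneq nV$ then $W^{\perp}\subset n(V^{*})$ is nonzero, it is a subrepresentation for the dual (right, i.e.\ $A^{\mathrm{op}}$) action since $\sum_i\xi_i(ax_i)$ vanishes whenever $\sum_i\xi_i(x_i)$ does on $W$, and $V^{*}$ is an irreducible finite dimensional $A^{\mathrm{op}}$-module; Proposition \ref{submo} is stated for an arbitrary algebra, so applying it (or just its first step: a nonzero subrepresentation of $nV^{*}$ contains a copy of $V^{*}$ embedded as $\xi\mapsto(c_1\xi,\dots,c_n\xi)$, by Schur) gives your element, and then $\xi\bigl(a\sum_i c_iv_i\bigr)=0$ for all $a$ forces $\xi=0$ since $A\bigl(\sum_i c_iv_i\bigr)=V$. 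So the $A^{\mathrm{op}}$ ``nuisance'' you flagged is harmless bookkeeping, not an obstacle. But since you explicitly withdrew that argument, and the induction you proposed instead is acknowledged to be stuck, the proposal as written does not contain a complete proof of the key step.
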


\begin{proof}
Assume the contrary. Then the image of the map $A\to nV$ given
by $a\to (av_1,...,av_n)$ is a proper subrepresentation,
so by Proposition \ref{submo} it
corresponds to an $r$-by-$n$ matrix $X$, $r<n$.
Thus, taking $a=1$, we see that there exist vectors
$u_1,...,u_r\in V$ such that $(u_1,...,u_r)X=(v_1,...,v_n)$.
Let $(q_1,...,q_n)$ be
a nonzero vector such that $X(q_1,...,q_n)^T=0$
(it exists because $r<n$).
Then $\sum q_iv_i=(u_1,...,u_r)X(q_1,...,q_n)^T=0$, i.e.
$\sum q_iv_i=0$
 - a contradiction with the linear independence of $v_i$.
\end{proof}

\begin{theorem}\label{DensityThm} (the Density Theorem).
(i) Let $V$ be an irreducible finite dimensional
representation of $A$. Then
the map $\rho: A\to {\rm End}V$ is surjective.

(ii) Let $V=V_1\oplus...\oplus V_r$, where $V_i$ are irreducible
pairwise nonisomorphic finite dimensional
representations of $A$. Then the map $\oplus_{i=1}^r\rho_i:
A\to \oplus_{i=1}^r\End(V_i)$ is surjective.
\end{theorem}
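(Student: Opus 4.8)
The plan is to deduce both parts from Corollary \ref{linalg} (the ``linear-algebra'' corollary just proved) together with Schur's lemma and Proposition \ref{submo}.

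For part (i): Fix a basis $v_1,\dots,v_n$ of the finite dimensional space $V$. Given any operator $T\in\End V$, apply Corollary \ref{linalg} with $w_i=Tv_i$: there exists $a\in A$ with $av_i=Tv_i$ for all $i$. Since $\rho(a)$ and $T$ agree on a basis of $V$, we get $\rho(a)=T$. Hence $\rho$ is surjective. This is immediate.

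For part (ii): The natural approach is to show that the image $B$ of $\bigoplus_i\rho_i$ in $\bigoplus_i\End(V_i)$ is all of the target. By part (i), the projection of $B$ to each individual factor $\End(V_i)$ is surjective, so $B$ is a subalgebra of $\bigoplus_i\End(V_i)$ mapping onto each factor. The content is that no ``diagonal relations'' can persist: because the $V_i$ are pairwise nonisomorphic irreducibles, the subalgebra $B$ cannot be forced into the graph of a nonzero algebra map relating two of the factors. I would make this precise by choosing, for each $i$, bases of $V_i$ and assembling all the vectors $v^{(i)}_1,\dots,v^{(i)}_{n_i}$ (with $n_i=\dim V_i$) into one list of vectors in $V=\bigoplus V_i$; these are linearly independent in $V$. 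Given an arbitrary target element $(T_1,\dots,T_r)\in\bigoplus_i\End(V_i)$, I want $a\in A$ with $\rho_i(a)v^{(i)}_j=T_iv^{(i)}_j$ for every $i$ and $j$ simultaneously. This is exactly a statement of the form ``for linearly independent vectors in $V$ we can prescribe the images arbitrarily,'' which is the content of Corollary \ref{linalg} — \emph{except} that Corollary \ref{linalg} as stated is for \emph{irreducible} $V$, whereas here $V=V_1\oplus\cdots\oplus V_r$ is only semisimple.

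So the real work is to upgrade Corollary \ref{linalg} to semisimple $V$, and that is the step I expect to be the main obstacle. The cleanest way is to redo the argument of Corollary \ref{linalg} verbatim using Proposition \ref{submo} in place of its irreducible special case: consider the map $A\to nV$, $a\mapsto(av_1,\dots,av_n)$ where now $v_1,\dots,v_n$ are linearly independent vectors in the semisimple representation $V$ (here one should allow repetitions of the $V_i$ in forming $nV$, or rather note that $nV=\bigoplus_i (nn_i)V_i$ is still a sum of the same irreducibles). If the image were a proper subrepresentation, Proposition \ref{submo} (applied blockwise to each isotypic component) would express the image via matrices $X_i$ with linearly independent rows whose total rank is less than $n$; taking $a=1$ and a nonzero vector in the common kernel of the $X_i$ produces a nontrivial linear relation $\sum q_i v_i=0$, contradicting linear independence. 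Alternatively, and perhaps more slickly, one can prove (ii) by induction on $r$: assume the image $B\subset\bigoplus_{i=1}^r\End(V_i)$ surjects onto $\bigoplus_{i=1}^{r-1}\End(V_i)$ and onto $\End(V_r)$; the kernel of the first projection restricted to $B$ is an ideal of $B$ whose image in $\End(V_r)$ is a two-sided ideal, hence $0$ or all of $\End(V_r)$ since $\End(V_r)$ is simple — and a Schur's-lemma argument using that $V_r\not\cong V_i$ for $i<r$ rules out the $0$ case, giving surjectivity. Either route works; I would present the first, since it reuses Proposition \ref{submo} with essentially no new ideas and keeps the exposition self-contained.
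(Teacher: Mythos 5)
Part (i) of your proposal is correct and is exactly the paper's argument. The gap is in part (ii), in the route you say you would present. The ``upgraded Corollary \ref{linalg}'' you aim for -- that in a semisimple $V$ one can prescribe arbitrary images of arbitrary linearly independent vectors -- is \emph{false}. Take $A=k\oplus k$ acting on $V=V_1\oplus V_2$, the two nonisomorphic one-dimensional irreducibles, and $v_1=(1,0)$, $v_2=(1,1)$: these are linearly independent, but no $a\in A$ sends $v_1$ to $(0,1)$, since $av_1$ always has vanishing second component. Correspondingly your derivation of a contradiction breaks down. First, once $r\ge 2$ the image of $a\mapsto(av_1,\dots,av_n)$ is \emph{automatically} a proper subrepresentation of $nV$ (if each $v_m$ lies in a single $V_{i(m)}$, the image sits inside $\oplus_m V_{i(m)}\subsetneq \oplus_m V$), so properness by itself yields nothing, and in particular does not force the total rank $\sum_i r_i$ below $n$. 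Second, the matrices $X_i$ furnished by Proposition \ref{submo} have $n\,n_i$ columns and act on different coordinate spaces, so ``a nonzero vector in the common kernel of the $X_i$'' is not meaningful; even in the example above, where both are $1\times 2$, the kernels (spanned by $(1,-1)$ and $(1,0)$) intersect trivially. Third, a kernel vector of a single $X_i$ only produces a relation among the $V_i$-isotypic components of the $v_m$, which need not be independent even when the $v_m$ are. What you actually need is only the block-respecting statement (your targets $T_iv^{(i)}_j$ lie in the same block as the sources), and for that the fix is to change the target module: map $A$ into $\oplus_i n_iV_i$ with the union of bases, or, as the paper does, observe that $\oplus_i\End(V_i)\cong\oplus_i d_iV_i$ as a representation of $A$ and apply Proposition \ref{submo} directly to the image $B$; the proposition forces $B$ to split blockwise, $B=\oplus_i B_i$, and part (i) gives $B_i=\End(V_i)$. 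That one-line blockwise argument is the paper's proof of (ii).

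Your alternative induction-on-$r$ route is sound in outline, but the step you summarize as ``a Schur's-lemma argument rules out the $0$ case'' is the real content and is not just Schur's lemma. If $B\cap\bigl(0\oplus\End(V_r)\bigr)=0$, you obtain a surjective algebra map $\oplus_{i<r}\End(V_i)\to\End(V_r)$ compatible with the $A$-actions, hence by simplicity an isomorphism $\End(V_j)\to\End(V_r)$ intertwining $\rho_j$ and $\rho_r$; to contradict $V_j\not\cong V_r$ you must still manufacture from this a nonzero $A$-intertwiner $V_j\to V_r$ (e.g.\ using a rank-one idempotent of $\End(V_j)$, or by invoking the classification of modules over matrix algebras, Theorem \ref{RepMatrix}, which in the text appears only after the density theorem). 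So if you take that route, the $0$ case needs an explicit construction, not a citation.
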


\begin{proof}
(i) Let $B$ be the image of $A$ in ${\rm End}(V)$. We want to show that $B=\End(V)$.
Let $c\in \End(V)$, $v_1,...,v_n$ be a basis of $V$, and
$w_i=cv_i$. By Corollary \ref{linalg},
there exists $a\in A$ such that $av_i=w_i$. Then $a$ maps to
$c$, so $c\in B$, and we are done.

(ii) Let $B_i$ be the image of $A$ in ${\rm End}(V_i)$,
and $B$ be the image of $A$ in $\oplus_{i=1}^r \End(V_i)$.
Recall that as a representation of $A$,
$\oplus_{i=1}^r \End(V_i)$ is semisimple: it is isomorphic to
$\oplus_{i=1}^r d_iV_i$, where $d_i=\dim V_i$.
Then by Proposition \ref{submo}, $B=\oplus_i B_i$.
On the other hand, (i) implies that $B_i=\End(V_i)$.
Thus (ii) follows.
\end{proof}

\subsection{Representations of direct sums of matrix algebras}

In this section we consider representations of algebras
$A=\bigoplus_i \Mat_{d_i}(k)$ for any field $k$.

\begin{theorem}\label{RepMatrix}
Let $A=\bigoplus_{i=1}^r \Mat_{d_i}(k)$.
Then the irreducible representations
of $A$ are $V_1=k^{d_1},\dots,V_r=k^{d_r}$, and any
finite dimensional representation of $A$ is a direct sum of
copies of $V_1,\dots,V_r$.
\end{theorem}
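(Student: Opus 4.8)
The plan is to peel off the matrix blocks using central idempotents, reducing to the case of a single matrix algebra $\Mat_d(k)$, and then to analyze $\Mat_d(k)$ directly with its matrix units.

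First I would check the easy half: each $V_i=k^{d_i}$, with $\Mat_{d_i}(k)$ acting by ordinary matrix--vector multiplication and all other blocks acting by zero, is irreducible, since for nonzero $v\in V_i$ and arbitrary $w\in V_i$ there is a matrix $X\in\Mat_{d_i}(k)$ with $Xv=w$; and the $V_i$ are pairwise nonisomorphic, because the identity element $e_j\in A$ of the $j$-th block is central, and acts as $\id$ on $V_i$ if $j=i$ and as $0$ otherwise. Now let $V$ be an arbitrary finite-dimensional representation of $A$. The $e_i$ form a complete orthogonal system of central idempotents ($e_i^2=e_i$, $e_ie_j=0$ for $i\ne j$, $\sum_i e_i=1$), so $V=\bigoplus_{i=1}^r e_iV$ as representations, and on $e_iV$ every block other than the $i$-th acts by zero; that is, $e_iV$ is a representation of $\Mat_{d_i}(k)$. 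Hence it suffices to prove that every finite-dimensional representation $V$ of $\Mat_d(k)$ is a direct sum of copies of $k^d$.

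For this I would use the matrix units $E_{jl}$, with $E_{jl}E_{lm}=E_{jm}$, $E_{jl}E_{mn}=0$ for $l\ne m$, and $\sum_{j=1}^d E_{jj}=1$. Set $W:=E_{11}V$ and choose a basis $w_1,\dots,w_m$ of $W$. Since the $E_{jj}$ are orthogonal idempotents summing to $1$, we have $V=\bigoplus_{j=1}^d E_{jj}V$; and since $E_{1j}E_{j1}=E_{11}$ and $E_{j1}E_{1j}=E_{jj}$, the map $E_{j1}\colon W\to E_{jj}V$ is a linear isomorphism (inverse $E_{1j}$). Consequently the vectors $E_{j1}w_s$ ($1\le j\le d$, $1\le s\le m$) form a basis of $V$. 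For each fixed $s$ put $W_s:=\spann\{E_{j1}w_s:1\le j\le d\}$; from $E_{lm}E_{j1}=\delta_{mj}E_{l1}$ one sees that $W_s$ is stable under all $E_{lm}$, hence is a subrepresentation, and the linear map $k^d\to W_s$ carrying the $j$-th standard basis vector to $E_{j1}w_s$ is an isomorphism of representations. Therefore $V=\bigoplus_{s=1}^m W_s\cong m\,k^d$.

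Finally, the classification of irreducibles falls out: if $V$ is irreducible, then $V=\bigoplus_i e_iV$ forces $V=e_iV$ for a single $i$, so $V$ is an irreducible representation of $\Mat_{d_i}(k)$, hence a direct sum of copies of $k^{d_i}$, hence (being irreducible) one copy, so $V\cong V_i$. The only step with real content is the matrix-units computation in the third paragraph --- checking that $E_{j1}$ identifies $E_{11}V$ with $E_{jj}V$ and that each ``column'' $W_s$ is a subrepresentation isomorphic to $k^d$; the rest is bookkeeping with idempotents. Note that algebraic closedness of $k$ is never used, consistent with the statement being made for an arbitrary field.
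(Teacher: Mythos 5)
Your proof is correct, but it takes a genuinely different route from the one in the text. The paper's proof dualizes: given a finite dimensional representation $X$, it builds a surjection $A^n\to X^*$, dualizes to embed $X$ into $A^n\cong \bigoplus_i nd_iV_i$, and then invokes Proposition \ref{submo} on subrepresentations of semisimple representations (i.e.\ the Schur's lemma machinery of this chapter) to conclude $X\cong\bigoplus_i m_iV_i$. You instead argue from scratch: the central idempotents $e_i$ split off the blocks, and inside a single $\Mat_d(k)$ the matrix units give $V=\bigoplus_j E_{jj}V$, the isomorphisms $E_{j1}\colon E_{11}V\to E_{jj}V$, and the column subrepresentations $W_s\cong k^d$ attached to a basis of $E_{11}V$. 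This is in fact precisely the alternative proof that the text outlines in the Exercise immediately following the theorem (``not using any of the previous results of Chapter 2''). What your route buys is self-containedness: it makes no appeal to Proposition \ref{submo} or Schur's lemma, it visibly never uses algebraic closedness (only the Remark after Proposition \ref{submo} rescues the paper's argument over a general field), and the same computation would even handle infinite dimensional modules and hence all irreducibles, not just finite dimensional ones. What the paper's route buys is economy and practice with the general semisimplicity formalism: once Proposition \ref{submo} is available, the duality trick reduces the theorem to two lines, and the same pattern (embed into a known semisimple module, then classify subrepresentations) recurs later, e.g.\ in the proof of Theorem \ref{cow}. One small point worth making explicit in your write-up: when you pass from an irreducible $A$-module $V$ with $V=e_iV$ to an irreducible $\Mat_{d_i}(k)$-module, note that a $\Mat_{d_i}(k)$-invariant subspace is automatically $A$-invariant because the other blocks act by zero; you use this silently.
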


In order to prove Theorem \ref{RepMatrix},
we shall need the notion of a dual representation.

\begin{definition}(Dual representation)
Let $V$ be a representation of any algebra $A$.
Then the dual representation $V^*$ is the
representation of the opposite algebra $A^{\text{op}}$
(or, equivalently, right $A$-module)
with the action
\begin{equation*}
(f \cdot a)(v): = f(av).
\end{equation*}
\end{definition}

\begin{proof}[Proof of Theorem \ref{RepMatrix}]
First, the given representations are clearly irreducible, as for any
$v\ne 0,w\in V_i$, there exists $a\in A$ such that $av=w$. Next, let
$X$ be an $n$-dimensional representation of $A$.
Then, $X^*$ is an $n$-dimensional representation of $A^{\text{op}}$. But
$\left(\operatorname{Mat}_{d_i}(k)\right)^{\text{op}}
\cong \operatorname{Mat}_{d_i}(k)$ with isomorphism $\varphi(X)=X^T$, as
$(BC)^T=C^TB^T$. Thus, $A \cong A^{\text{op}}$ and $X^*$ may be
viewed as an $n$-dimensional representation of $A$.
Define
\begin{equation*}
\phi : \underbrace{A \oplus \cdots
\oplus A}_{n \mbox{ copies}} \longrightarrow X^*
\end{equation*}
by
\begin{equation*}
\phi{\left(a_1,\dots,a_n\right)}=a_1y_1+\cdots+a_ny_n
\end{equation*}
where $\left\{y_i\right\}$ is a basis of $X^*$.
$\phi$ is clearly surjective, as $k \subset A$. Thus, the dual
map $\phi^* : X \longrightarrow {A^n}^*$ is injective.
But ${A^n}^* \cong A^n$ as representations of $A$ (check it!).
Hence, $\operatorname{Im} \phi^* \cong X$ is a subrepresentation
of $A^n$. Next, $\Mat_{d_i}(k)=d_iV_i$, so
$A=\oplus_{i=1}^rd_iV_i$,
$A^n=\oplus_{i=1}^rnd_iV_i$,
 as a representation of $A$.
Hence by Proposition \ref{submo},
$X=\oplus_{i=1}^r m_iV_i$, as desired.
\end{proof}

\textbf{Exercise.}
The goal of this exercise is to give an 
alternative proof of Theorem \ref{RepMatrix}, not
using any of the previous results of Chapter \ref{Cha2}.

Let
$A_{1}$, $A_{2}$, $...$, $A_{n}$ be $n$ algebras with units $1_{1}$, $1_{2}%
$, $...$, $1_{n}$, respectively. Let $A=A_{1}\oplus A_{2}\oplus...\oplus
A_{n}$. Clearly, $1_i1_j=\delta_{ij}1_i$, and
the unit of $A$ is $1=1_{1}+1_{2}+...+1_{n}$.

For every representation
$V$ of $A$, it is easy to see that $1_{i}V$ is a representation of $A_{i}$ for
every $i\in\left\{  1,2,...,n\right\}  $. Conversely, if $V_{1}$, $V_{2}$,
$...$, $V_{n}$ are representations of $A_{1}$, $A_{2}$, $...$, $A_{n}$,
respectively, then $V_{1}\oplus V_{2}\oplus...\oplus V_{n}$ canonically
becomes a representation of $A$ (with $\left(  a_{1},a_{2},...,a_{n}\right)
\in A$ acting on $V_{1}\oplus V_{2}\oplus...\oplus V_{n}$ as $\left(
v_{1},v_{2},...,v_{n}\right)  \mapsto\left(  a_{1}v_{1},a_{2}v_{2}%
,...,a_{n}v_{n}\right)  $).

\textbf{(a)}  
Show that a representation $V$ of $A$ is irreducible if and only if $1_{i}V$
is an irreducible representation of $A_{i}$ for exactly one $i\in\left\{
1,2,...,n\right\}  $, while $1_{i}V=0$ for all the other $i$. Thus, classify
the irreducible representations of $A$ in terms of those of $A_{1}$, $A_{2}$,
$...$, $A_{n}$.

\textbf{(b)} Let $d\in\mathbb{N}$. Show that the only
irreducible representation of $\operatorname*{Mat}_{d}(k)$ is
$k^{d}$, and every
finite dimensional representation of $\operatorname*{Mat}_{d}(k)$ is a direct
sum of copies of $k^{d}$.

\textit{Hint:} For every $\left(
i,j\right)  \in\left\{  1,2,...,d\right\}  ^{2}$, 
let $E_{ij}\in \operatorname*{Mat}_{d}(k)$ be the matrix 
with $1$ in the $i$th row of the $j$th column and 0's everywhere else.
Let $V$ be a finite dimensional representation of $\operatorname*{Mat}_{d}(k)$.
Show that $V=E_{11}V\oplus E_{22}V\oplus...\oplus E_{dd}V$, and that
$\Phi_{i}:E_{11}V\rightarrow E_{ii}V$, $v\mapsto E_{i1}v$ is an isomorphism
for every $i\in\left\{  1,2,...,d\right\}  $.\ For every $v\in E_{11}V$,
denote $S\left(  v\right)  =\left\langle
E_{11}v,E_{21}v,...,E_{d1}v\right
\rangle $. Prove that $S\left(  v\right)  $ is a subrepresentation of
$V$ isomorphic to $k^{d}$ (as a representation of 
$\operatorname*{Mat}_{d}(k)$), and that 
$v\in S\left(  v\right)  $. 
Conclude that $V=S\left(  v_{1}\right)
\oplus S\left(  v_{2}\right)  \oplus...\oplus S\left(  v_{k}\right)  $, where
$\left\{  v_{1},v_{2},...,v_{k}\right\}  $ is a basis of
$E_{11}V$.

\textbf{(c)} Conclude Theorem \ref{RepMatrix}.

\subsection{Filtrations}

Let $A$ be an algebra.
Let $V$ be a representation of $A$.
A (finite) {\it filtration} of $V$ is a sequence of subrepresentations
$0=V_0\subset V_1\subset...\subset V_n=V$.

\begin{lemma}\label{simp}
Any finite dimensional representation $V$ of an algebra $A$
admits a finite filtration $0=V_0\subset V_1\subset...\subset
V_n=V$ such that the successive quotients $V_i/V_{i-1}$
are irreducible.
\end{lemma}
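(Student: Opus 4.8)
The plan is to induct on $\dim V$. The base case is degenerate: if $\dim V \le 1$ then either $V = 0$, for which the empty filtration $0 = V_0 = V$ (i.e.\ $n=0$) works, or $V$ is one-dimensional, hence irreducible, and $0 \subset V$ does the job. So assume $\dim V \ge 2$ and that the lemma holds for all representations of strictly smaller dimension.

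For the inductive step I would distinguish two cases. If $V$ is irreducible, the filtration $0 \subset V$ already has the required property. Otherwise, by definition of reducibility $V$ contains a proper nonzero subrepresentation $W$ (one could instead invoke Problem~\ref{1:1} to get an irreducible $W$, but we do not need irreducibility of $W$). Now $\dim W < \dim V$ and $\dim(V/W) < \dim V$, so the induction hypothesis furnishes filtrations $0 = W_0 \subset W_1 \subset \cdots \subset W_k = W$ and $0 = \bar U_0 \subset \bar U_1 \subset \cdots \subset \bar U_\ell = V/W$ with irreducible successive quotients. Letting $\pi\colon V \to V/W$ be the quotient map, set $U_j := \pi^{-1}(\bar U_j)$ for $0 \le j \le \ell$; each $U_j$ is a subrepresentation of $V$ (preimage of a subrepresentation under a homomorphism), with $U_0 = W$ and $U_\ell = V$, and $\pi$ induces an isomorphism of representations $U_j/U_{j-1} \xrightarrow{\ \sim\ } \bar U_j/\bar U_{j-1}$, which is therefore irreducible. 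Splicing the two chains at $W$ gives
\[
0 = W_0 \subset W_1 \subset \cdots \subset W_k = W = U_0 \subset U_1 \subset \cdots \subset U_\ell = V,
\]
a filtration with irreducible successive quotients; relabeling it $0 = V_0 \subset \cdots \subset V_n = V$ with $n = k + \ell$ completes the induction.

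I do not expect any genuine obstacle here; this is a routine \emph{dévissage}. The only points that need care are the bookkeeping at the junction $W$ (the last quotient of the lower chain is $W/W_{k-1}$ and the first of the upper chain is $U_1/W \cong \bar U_1$, so no term is repeated and the chain stays strictly increasing, which in turn forces all quotients to be nonzero as irreducibility requires) and the two elementary verifications that $\pi^{-1}$ of a subrepresentation is a subrepresentation and that $\pi$ descends to the stated isomorphisms on graded pieces — both of which follow directly from the definition of the quotient representation $V/W$ recalled earlier in the text.
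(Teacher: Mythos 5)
Your proof is correct. It follows the same basic strategy as the paper's proof (induction on $\dim V$, pulling back a filtration of a quotient along the projection), but with one structural difference: the paper picks an \emph{irreducible} subrepresentation $V_1\subset V$ (whose existence is the content of Problem \ref{1:1} for finite dimensional representations) and applies the induction hypothesis only to $V/V_1$, whereas you take an arbitrary proper nonzero subrepresentation $W$ and apply the induction hypothesis to both $W$ and $V/W$, splicing the two chains at $W$. Your variant is slightly more self-contained, since it never needs the existence of an irreducible subrepresentation; the paper's variant is slightly shorter, since only one filtration has to be pulled back. Your verifications at the junction (strictness of the chain, the isomorphisms $U_j/U_{j-1}\cong \bar U_j/\bar U_{j-1}$ induced by $\pi$) are all sound.
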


\begin{proof}
The proof is by induction in $\dim(V)$. The base is clear, and
only the induction step needs to be justified. Pick an
irreducible subrepresentation $V_1\subset V$, and consider the
representation $U=V/V_1$. Then by the induction assumption
$U$ has a filtration $0=U_0\subset
U_1\subset...\subset U_{n-1}=U$ such that $U_i/U_{i-1}$
are irreducible. Define $V_i$ for $i\ge 2$ to be the preimages of
$U_{i-1}$ under the tautological projection $V\to V/V_1=U$.
Then $0=V_0\subset V_1\subset V_2\subset...\subset V_n=V$ is a
filtration of $V$ with the desired property.
\end{proof}

\subsection{Finite dimensional algebras}\label{FiniteDimSec}

\begin{definition}
The {\bf radical} of a finite dimensional algebra $A$ is the set of all elements of $A$ which act by $0$ in all irreducible
representations of $A$. It is denoted $\Rad(A)$.
\end{definition}

\begin{proposition}
$\Rad(A)$ is a two-sided ideal.
\end{proposition}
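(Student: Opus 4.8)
The plan is to show directly that $\Rad(A)$ is closed under left and right multiplication by arbitrary elements of $A$, using the defining property that an element lies in $\Rad(A)$ precisely when it acts by zero in every irreducible representation of $A$. First I would recall that $\Rad(A)$ is a subspace: if $a, b \in \Rad(A)$ and $\lambda \in k$, then for any irreducible representation $(V, \rho)$ we have $\rho(a) = \rho(b) = 0$, hence $\rho(a + \lambda b) = \rho(a) + \lambda \rho(b) = 0$, so $a + \lambda b \in \Rad(A)$.

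Next I would verify the two-sided ideal property. Fix $a \in \Rad(A)$ and $x \in A$, and let $(V, \rho)$ be any irreducible representation of $A$. Since $\rho$ is an algebra homomorphism, $\rho(xa) = \rho(x)\rho(a) = \rho(x) \cdot 0 = 0$ and likewise $\rho(ax) = \rho(a)\rho(x) = 0$. As this holds for every irreducible representation $V$, both $xa$ and $ax$ act by zero in all irreducible representations, i.e. $xa, ax \in \Rad(A)$. Combined with the subspace property, this shows $\Rad(A)$ is a two-sided ideal.

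I do not anticipate a genuine obstacle here: the statement is essentially immediate from the definition once one observes that ``acting by zero'' is preserved under composition with any operator $\rho(x)$. The only point worth a word of care is that the argument is uniform over the (possibly large) collection of all irreducible representations of $A$ — but since the condition defining $\Rad(A)$ is a universally quantified one, and each individual verification $\rho(xa) = \rho(ax) = 0$ uses nothing beyond $\rho$ being a homomorphism, there is no subtlety. (If one wanted, one could instead identify $\Rad(A)$ as the intersection of the annihilators of all irreducible representations, each annihilator being a two-sided ideal, and then use that an intersection of two-sided ideals is a two-sided ideal; but the hands-on argument above is shorter and self-contained.)
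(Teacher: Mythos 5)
Your proof is correct and is exactly the routine verification the paper has in mind when it dismisses the proposition with the one-word proof ``Easy'': for $a\in\Rad(A)$, $x\in A$, and any irreducible $(V,\rho)$ one has $\rho(xa)=\rho(x)\rho(a)=0$ and $\rho(ax)=\rho(a)\rho(x)=0$, plus the evident subspace property. Nothing further is needed.
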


\begin{proof}
Easy.
\end{proof}

\begin{proposition}\label{nilp} Let $A$ be a finite dimensional
algebra.

(i) Let $I$ be a nilpotent two-sided ideal in $A$, i.e., $I^n=0$ for some $n$.
Then $I\subset \Rad(A)$.

(ii) $\Rad(A)$ is a nilpotent ideal. Thus, $\Rad(A)$ is the
largest nilpotent two-sided ideal in $A$.
\end{proposition}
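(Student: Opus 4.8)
The plan is to prove (i) and (ii) essentially in one stroke, then combine them. For part (i), let $I$ be a nilpotent two-sided ideal and let $V$ be any irreducible representation of $A$. I want to show $I$ acts by zero on $V$. Consider $IV$, the span of all $xv$ with $x\in I$, $v\in V$; since $I$ is a two-sided ideal, $IV$ is a subrepresentation of $V$. By irreducibility, either $IV=0$ (and we are done) or $IV=V$. In the latter case, iterating gives $V=IV=I^2V=\cdots=I^nV=0$ since $I^n=0$, contradicting $V\neq 0$ (irreducible representations are nonzero by definition). Hence $IV=0$, so $I$ acts by zero on every irreducible $V$, i.e. $I\subset\Rad(A)$.

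For part (ii), the key point is that $\Rad(A)$ acts by zero on every irreducible representation, hence (using Lemma \ref{simp}) on every successive quotient of a composition series of any finite dimensional representation. Apply this to the regular representation $V=A$: take a filtration $0=A_0\subset A_1\subset\cdots\subset A_n=A$ by subrepresentations (left ideals) with irreducible quotients $A_i/A_{i-1}$, which exists by Lemma \ref{simp} since $A$ is finite dimensional. Since $\Rad(A)$ kills each quotient $A_i/A_{i-1}$, we have $\Rad(A)\cdot A_i\subseteq A_{i-1}$ for each $i$. Therefore $\Rad(A)^n\cdot A = \Rad(A)^n\cdot A_n\subseteq \Rad(A)^{n-1}A_{n-1}\subseteq\cdots\subseteq A_0=0$; in particular, multiplying $1\in A$ on the left, $\Rad(A)^n=\Rad(A)^n\cdot 1=0$. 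Thus $\Rad(A)$ is nilpotent. Combined with (i), any nilpotent two-sided ideal is contained in $\Rad(A)$, and $\Rad(A)$ itself is nilpotent, so $\Rad(A)$ is the largest nilpotent two-sided ideal.

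The only real subtlety I anticipate is the bookkeeping in part (ii): making sure that "$\Rad(A)$ annihilates each irreducible quotient" correctly translates into the chain of containments $\Rad(A)\cdot A_i\subseteq A_{i-1}$, and then that iterating this $n$ times down the filtration forces $\Rad(A)^n$ to annihilate all of $A$ — and finally that annihilating the regular module is the same as being zero as an element of $A$, via $r = r\cdot 1$. Everything else is a direct application of Lemma \ref{simp} and the definition of the radical; no algebraic closure or other hypotheses on $k$ are needed.
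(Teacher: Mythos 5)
Your proof is correct and follows essentially the same route as the paper: for (i) the paper also shows a nilpotent ideal must act by zero on an irreducible $V$ (it argues with a single cyclic vector, $Iv=V$ forcing some $x\in I$ with $xv=v$ and hence $x^n\ne 0$, while you iterate $IV=V$ to get $I^nV=V$, a cosmetic difference), and for (ii) the paper uses exactly your argument: a filtration of the regular representation with irreducible quotients from Lemma \ref{simp}, so that $\Rad(A)$ maps $A_{i}$ into $A_{i-1}$ and hence $\Rad(A)^n=0$. Your extra remark that annihilating the regular module gives $r=r\cdot 1=0$ just makes explicit a step the paper leaves implicit.
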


\begin{proof}
(i) Let $V$ be an irreducible representation of $A$.
Let $v\in V$. Then $Iv\subset V$ is a subrepresentation.
If $Iv\ne 0$ then $Iv=V$ so there is $x\in I$ such that
$xv=v$. Then $x^n\ne 0$, a contradiction. Thus $Iv=0$, so $I$
acts by $0$ in $V$ and hence $I\subset \Rad(A)$.

(ii) Let $0=A_0\subset A_1\subset...\subset A_n=A$ be a
filtration of the regular representation of $A$ by
subrepresentations such that $A_{i+1}/A_i$ are irreducible.
It exists by Lemma \ref{simp}. Let $x\in \Rad(A)$. Then $x$ acts
on $A_{i+1}/A_i$ by zero, so $x$ maps $A_{i+1}$ to $A_i$.
This implies that $\Rad(A)^n=0$, as desired.
\end{proof}

\begin{theorem} \label{cow}
A finite dimensional algebra $A$
has only finitely many
irreducible representations $V_i$ up to isomorphism, these representations
are finite dimensional, and
\begin{equation*}
A/\Rad(A) \cong \bigoplus_i \End V_i.
\end{equation*}
\end{theorem}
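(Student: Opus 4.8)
The plan is to pass to the quotient $\bar A := A/\Rad(A)$ and show it is a semisimple algebra that is forced to be a direct sum of matrix algebras, then read off the irreducible representations from Theorem \ref{RepMatrix}. The first observation is that $\Rad(A)$ acts by zero in every irreducible representation, so irreducible representations of $A$ are the same as irreducible representations of $\bar A$; hence it suffices to analyze $\bar A$. Also, any irreducible representation of $A$ is finite dimensional, since it is a quotient of the regular representation $A$ (indeed by Problem \ref{1:9}(a)--(b), any irreducible representation has a cyclic vector and so is a quotient of $A$, which is finite dimensional).

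Next I would apply the density theorem. Let $V_1, \dots, V_r$ be any finite collection of pairwise nonisomorphic irreducible representations of $A$; by Theorem \ref{DensityThm}(ii) the map $A \to \bigoplus_{i=1}^r \End(V_i)$ is surjective. Since $\dim A$ is finite and $\sum_i (\dim V_i)^2 = \sum_i \dim \End(V_i) \le \dim A$, there can be only finitely many isomorphism classes of irreducibles; call them $V_1, \dots, V_s$, and set $d_i = \dim V_i$. The surjection $\rho := \bigoplus_{i=1}^s \rho_i : A \to \bigoplus_{i=1}^s \End(V_i)$ has kernel exactly $\Rad(A)$: the kernel is contained in $\Rad(A)$ because anything in the kernel acts by zero on all $V_i$, and conversely $\Rad(A)$ is in the kernel by definition of the radical. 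Therefore $\rho$ descends to an isomorphism $A/\Rad(A) \xrightarrow{\ \sim\ } \bigoplus_{i=1}^s \End(V_i)$, which is the desired statement.

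I expect the main point requiring care to be the finiteness of the list of irreducibles, i.e. the dimension bound $\sum_i d_i^2 \le \dim A$ and the argument that no new irreducible can appear outside a maximal such list. The bound itself follows at once from surjectivity of $A \to \bigoplus \End(V_i)$ together with $\dim \End(V_i) = d_i^2$; the subtlety is only to phrase the stopping argument correctly: enlarge the list of pairwise nonisomorphic irreducibles as long as possible — it must terminate since $\dim A$ is finite — and then observe that any irreducible $W$ not on the final list would, together with the list, violate the bound, so in fact every irreducible is isomorphic to one of the $V_i$. The rest (that each $V_i$ is finite dimensional, that $\Ker\rho = \Rad(A)$) is immediate from the definitions and the earlier results, so no lengthy computation is needed.
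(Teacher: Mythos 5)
Your proposal is correct and follows essentially the same route as the paper: finite dimensionality of each irreducible via the regular representation, the density theorem to bound the number of pairwise nonisomorphic irreducibles by $\dim A$, and then the observation that the surjection $A\to\bigoplus_i\End V_i$ has kernel exactly $\Rad(A)$. The opening framing via $A/\Rad(A)$ and Theorem \ref{RepMatrix} is not actually needed in your argument, but the core steps coincide with the paper's proof.
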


\begin{proof}
First, for any irreducible representation $V$ of $A$, and for any nonzero $v \in V$, $Av \subseteq V$ is a finite dimensional
subrepresentation of $V$. (It is finite dimensional as $A$ is finite dimensional.) As $V$ is irreducible and $Av \neq 0$, $V=Av$ and $V$
is finite dimensional.

Next, suppose we have non-isomorphic irreducible representations
$V_1,V_2,\dots,V_r$.
By Theorem \ref{DensityThm}, the homomorphism \begin{equation*}\bigoplus_i
\rho_i : A \longrightarrow \bigoplus_i \End V_i\end{equation*} is
surjective. So $r\le \sum_i \dim \End V_i \le
\dim A$. Thus, $A$ has
only finitely many non-isomorphic irreducible representations
(at most $\dim A$).

Now, let $V_1,V_2,\dots,V_r$ be all non-isomorphic irreducible finite dimensional representations of $A$. By Theorem \ref{DensityThm},
the homomorphism \begin{equation*}\bigoplus_i \rho_i : A \longrightarrow \bigoplus_i \End V_i\end{equation*} is surjective. The
kernel of this map, by definition, is exactly $\Rad(A)$.
\end{proof}

\begin{corollary}
$\sum_i \left(\dim V_i\right)^2 \leq \dim A$,
where the $V_i$'s are the irreducible representations of $A$.
\end{corollary}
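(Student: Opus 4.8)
The plan is to deduce this immediately from Theorem \ref{cow}. First I would recall that for a finite dimensional vector space $V_i$, the space $\End V_i$ has dimension $(\dim V_i)^2$. Then I would invoke Theorem \ref{cow}, which gives an isomorphism of algebras (in particular of vector spaces) $A/\Rad(A) \cong \bigoplus_i \End V_i$, where the sum runs over the finitely many irreducible representations $V_i$ of $A$ up to isomorphism. Taking dimensions of both sides yields
\[
\dim\big(A/\Rad(A)\big) = \sum_i \dim \End V_i = \sum_i (\dim V_i)^2.
\]

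The remaining step is to observe that $\dim\big(A/\Rad(A)\big) = \dim A - \dim \Rad(A) \le \dim A$, since $\Rad(A)$ is a subspace of $A$ (it is even a two-sided ideal, but we only need it to be a subspace here). Combining this inequality with the displayed equality gives $\sum_i (\dim V_i)^2 \le \dim A$, as claimed. There is essentially no obstacle: the entire content has been packaged into Theorem \ref{cow}, and this corollary is just the numerical shadow of that structural statement. The only thing to be slightly careful about is that the sum on the left is genuinely finite — but this too is part of Theorem \ref{cow}, which asserts $A$ has only finitely many irreducible representations up to isomorphism.
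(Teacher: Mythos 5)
Your proof is correct and is essentially identical to the paper's: both take dimensions in the isomorphism $A/\Rad(A)\cong\bigoplus_i\End V_i$ from Theorem \ref{cow}, use $\dim\End V_i=(\dim V_i)^2$, and conclude via $\dim\Rad(A)\ge 0$. Nothing is missing.
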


\begin{proof}
As $\dim \End V_i = \left(\dim V_i\right)^2$, Theorem \ref{cow} implies that $\dim A - \dim \Rad(A) = \sum_i \dim \End V_i = \sum_i
\left(\dim V_i\right)^2$. As $\dim \Rad(A) \geq 0$, $\sum_i \left(\dim V_i\right)^2 \leq \dim A$.
\end{proof}

\begin{example}
1. Let $A=k[x]/(x^n)$. This algebra has a unique
irreducible representation, which is a 1-dimensional space $k$,
in which $x$ acts by zero. So the radical $\Rad(A)$ is the ideal
$(x)$.

2. Let $A$ be the algebra of upper triangular $n$ by $n$
matrices. It is easy to check that the
irreducible representations of $A$ are $V_i$, $i=1,...,n$,
which are 1-dimensional, and any matrix $x$ acts by $x_{ii}$.
So the radical $\Rad(A)$ is the ideal of strictly upper
triangular matrices (as it is a nilpotent ideal and contains the
radical). A similar result holds for block-triangular
matrices.
\end{example}

\begin{definition}
A finite dimensional algebra $A$ is said to be {\bf semisimple} if $\Rad(A)=0$.
\end{definition}

\begin{proposition}\label{semisi}
For a finite dimensional algebra $A$, the following are equivalent:
\begin{enumerate}
\item $A$ is semisimple. \label{moo1}
\item $\sum_i \left(\dim V_i\right)^2 = \dim A$, where the $V_i$'s are the irreducible representations of $A$. \label{moo2}
\item $A \cong \bigoplus_i \operatorname{Mat}_{d_i}(k)$ for some $d_i$. \label{moo3}
\item Any finite dimensional representation of $A$ is completely reducible (that is, isomorphic to a direct sum of irreducible
representations). \label{moo4}
\item $A$ is a completely reducible representation of $A$. \label{moo5}
\end{enumerate}
\end{proposition}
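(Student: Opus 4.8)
The plan is to establish $(\ref{moo1}) \Leftrightarrow (\ref{moo2})$ from the dimension count already in hand, and then to run the cycle $(\ref{moo1}) \Rightarrow (\ref{moo3}) \Rightarrow (\ref{moo4}) \Rightarrow (\ref{moo5}) \Rightarrow (\ref{moo1})$, invoking the structure theorems of the preceding subsections for all but the last arrow.

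For $(\ref{moo1}) \Leftrightarrow (\ref{moo2})$ I would quote the Corollary to Theorem \ref{cow}, which gives $\dim A - \dim \Rad(A) = \sum_i (\dim V_i)^2$; hence $\Rad(A) = 0$ precisely when $\sum_i (\dim V_i)^2 = \dim A$. (Here the standing assumption that $k$ is algebraically closed is what makes $\dim \End V_i = (\dim V_i)^2$.) For $(\ref{moo1}) \Rightarrow (\ref{moo3})$: if $\Rad(A) = 0$, Theorem \ref{cow} gives $A = A/\Rad(A) \cong \bigoplus_i \End V_i \cong \bigoplus_i \Mat_{d_i}(k)$ with $d_i = \dim V_i$. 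For $(\ref{moo3}) \Rightarrow (\ref{moo4})$: this is literally Theorem \ref{RepMatrix} --- a direct sum of matrix algebras has every finite dimensional representation a direct sum of copies of the standard modules $k^{d_i}$, hence completely reducible. For $(\ref{moo4}) \Rightarrow (\ref{moo5})$: apply $(\ref{moo4})$ to the (finite dimensional) regular representation.

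The one step requiring an actual argument is $(\ref{moo5}) \Rightarrow (\ref{moo1})$. Write $A = \bigoplus_j W_j$ with each $W_j$ an irreducible subrepresentation of the regular representation. Each $W_j$ is in particular an irreducible representation of $A$, so by the very definition of the radical, $\Rad(A)$ acts by zero on $W_j$; therefore $\Rad(A) W_j = 0$ for all $j$, and summing, $\Rad(A) \cdot A = \Rad(A) \cdot \bigoplus_j W_j = 0$. Evaluating at the unit, $\Rad(A) = \Rad(A) \cdot 1 = 0$, i.e. $(\ref{moo1})$ holds.

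I expect no serious obstacle here: once Theorems \ref{DensityThm}, \ref{RepMatrix}, and \ref{cow} are available, the whole proposition is a matter of chaining them together, and the only point demanding care is the small computation in $(\ref{moo5}) \Rightarrow (\ref{moo1})$ --- noting that the summands of the regular representation are themselves irreducible representations of $A$, so the defining property of $\Rad(A)$ applies to them, and that $1 \in A$ lets one deduce $\Rad(A)$ itself is zero from $\Rad(A) \cdot A = 0$.
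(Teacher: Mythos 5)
Your proof is correct, and the one place where you genuinely diverge from the paper is the final arrow. The paper closes the cycle with $(5)\Rightarrow(3)$: it writes $A=\bigoplus_i n_iV_i$, computes $\End_A(A)\cong\bigoplus_i \Mat_{n_i}(k)$ using Schur's lemma, and then invokes $\End_A(A)\cong A^{\rm op}$ (Problem \ref{1:3}) together with $\Mat_n(k)^{\rm op}\cong \Mat_n(k)$ to conclude that $A$ is a direct sum of matrix algebras. You instead prove $(5)\Rightarrow(1)$ directly: each irreducible summand $W_j$ of the regular representation is in particular an irreducible representation of $A$, so $\Rad(A)$ annihilates it by the very definition of the radical; hence $\Rad(A)\cdot A=0$, and evaluating at the unit gives $\Rad(A)=0$. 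This is perfectly valid — your cycle $(1)\Rightarrow(3)\Rightarrow(4)\Rightarrow(5)\Rightarrow(1)$ together with $(1)\Leftrightarrow(2)$ does yield all the equivalences — and it is more elementary for that step, needing only the definition of $\Rad(A)$ and the existence of $1\in A$, with no appeal to Schur's lemma or to $\End_A(A)\cong A^{\rm op}$. What the paper's route buys is that it extracts the matrix-algebra decomposition (3) directly from the module decomposition in (5), without passing back through the radical; on your route, (3) is reached from (5) only indirectly, via (1) and another application of Theorem \ref{cow}. Everything else — $(1)\Leftrightarrow(2)$ from the dimension count $\dim A-\dim\Rad(A)=\sum_i(\dim V_i)^2$, $(1)\Rightarrow(3)$ from Theorem \ref{cow}, $(3)\Rightarrow(4)$ from Theorem \ref{RepMatrix}, and $(4)\Rightarrow(5)$ by specializing to the regular representation — coincides with the paper's argument.
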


\begin{proof}
As $\dim A - \dim \Rad(A) = \sum_i \left(\dim V_i\right)^2$, clearly $\dim A = \sum_i \left(\dim V_i\right)^2$ if and only if $\Rad(A) = 0$. Thus,
$(\ref{moo1}) \Leftrightarrow (\ref{moo2})$.

Next, by Theorem \ref{cow}, if $\Rad(A)=0$, then clearly $A \cong \bigoplus_i \operatorname{Mat}_{d_i}(k)$ for $d_i = \dim V_i$. Thus,
$(\ref{moo1}) \Rightarrow (\ref{moo3})$. Conversely, if $A \cong
\bigoplus_i \operatorname{Mat}_{d_i}(k)$, then
by Theorem \ref{RepMatrix},
$\Rad(A)=0$, so $A$ is semisimple.
Thus $(\ref{moo3})\Rightarrow(\ref{moo1})$.

Next, $(\ref{moo3})\Rightarrow(\ref{moo4})$ by Theorem
\ref{RepMatrix}.
Clearly $(\ref{moo4})\Rightarrow(\ref{moo5})$. To see that
$(\ref{moo5})\Rightarrow(\ref{moo3})$, let $A=\bigoplus_i n_i V_i$. Consider $\End_A(A)$ (endomorphisms of $A$ as a representation of
$A$). As the $V_i$'s are pairwise non-isomorphic, by Schur's lemma, no copy of $V_i$ in $A$ can be mapped to a distinct $V_j$. Also, again by
Schur's lemma, $\End_A{\left(V_i\right)}=k$. Thus, $\End_A(A)\cong\bigoplus_i \operatorname{Mat}_{n_i}(k)$. But $\End_A(A)\cong A^{\text{op}}$
by Problem \ref{1:3}, so $A^{\text{op}}\cong\bigoplus_i \operatorname{Mat}_{n_i}(k)$. Thus, $A \cong \left(\bigoplus_i
\operatorname{Mat}_{n_i}(k)\right)^{\text{op}} = \bigoplus_i
\operatorname{Mat}_{n_i}(k)$, as desired.
\end{proof}

\subsection{Characters of representations}

Let $A$ be an algebra and $V$ a finite-dimensional representation
of $A$ with
action $\rho$.
Then the \emph{character} of $V$ is the linear function
$\chi_V : A \to k$ given by
\[
\chi_V(a) = \text{tr}|_V (\rho(a)).
\]
If $[A,A]$ is the span of commutators $[x,y]:= xy-yx$ over all $x,y\in
A$, then $[A,A] \subseteq \ker \chi_V$.
Thus, we may view the character as a mapping $\chi_V : A/[A,A] \to k$.

\vskip .05in

{\bf Exercise.} Show that if $W\subset V$ are finite dimensional
representations of $A$, then $\chi_V=\chi_W+\chi_{V/W}$.

\begin{theorem}\label{char}

(i) Characters of (distinct)
irreducible finite-dimensional representations of $A$ are
linearly independent.

(ii) If $A$ is a finite-dimensional semisimple algebra, then these
characters form
a basis of
$(A/[A,A])^*.$
\end{theorem}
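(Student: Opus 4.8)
The plan is to handle the two parts separately, using the Density Theorem as the main engine. For part (i), I would argue directly: suppose $V_1,\dots,V_r$ are pairwise nonisomorphic irreducible finite-dimensional representations with a linear dependence $\sum_i c_i \chi_{V_i} = 0$ among their characters, with not all $c_i$ zero. By the Density Theorem \ref{DensityThm}(ii), the map $\bigoplus_i \rho_i : A \to \bigoplus_i \End(V_i)$ is surjective. Hence for each fixed $j$ I can choose $a \in A$ whose image is the identity in $\End(V_j)$ and zero in $\End(V_i)$ for $i \neq j$; then $\chi_{V_i}(a) = \delta_{ij}\dim V_j$, so evaluating $\sum_i c_i\chi_{V_i}$ at this $a$ gives $c_j \dim V_j = 0$, forcing $c_j = 0$. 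Since $j$ was arbitrary, all $c_i$ vanish, a contradiction. This proves linear independence.

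For part (ii), assume $A$ is finite-dimensional semisimple. By Proposition \ref{semisi}, $A \cong \bigoplus_{i=1}^r \Mat_{d_i}(k)$ with the $V_i = k^{d_i}$ the complete list of irreducibles. The characters $\chi_{V_i}$ all kill $[A,A]$ and so descend to $(A/[A,A])^*$; by part (i) they are linearly independent there. It therefore suffices to show $\dim (A/[A,A])^* = r$, i.e. $\dim A/[A,A] = r$. For a matrix algebra $\Mat_d(k)$, the commutator subspace $[\Mat_d(k),\Mat_d(k)]$ is exactly the trace-zero subspace (the $E_{ij}$ with $i\neq j$ are commutators, and $E_{ii}-E_{jj} = [E_{ij},E_{ji}]$, so $[\Mat_d,\Mat_d]$ is $(d^2-1)$-dimensional and contained in the kernel of trace, hence equals it by dimension count), so $\Mat_d(k)/[\Mat_d(k),\Mat_d(k)] \cong k$ is one-dimensional, spanned by the image of the identity. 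Since $[A,A]$ for the direct sum is the direct sum of the pieces $[\Mat_{d_i}(k),\Mat_{d_i}(k)]$, we get $\dim A/[A,A] = r$. Combining with linear independence, the $\chi_{V_i}$ form a basis of $(A/[A,A])^*$.

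I expect the main obstacle — really the only nontrivial point — to be the computation $[\Mat_d(k),\Mat_d(k)] = \mathfrak{sl}_d(k)$, the trace-zero matrices. One inclusion ($\subseteq$) is immediate since $\tr$ vanishes on commutators. For the reverse, one exhibits enough commutators: $[E_{ij},E_{jj}] = E_{ij}$ for $i\neq j$ produces all off-diagonal matrix units, and $[E_{ij},E_{ji}] = E_{ii}-E_{jj}$ produces a spanning set of the trace-zero diagonal matrices; together these span $\mathfrak{sl}_d(k)$, which already has the maximal possible dimension $d^2-1$ for a subspace of the trace-zero matrices, so equality holds. (Here one should note $k$ is a field, so this dimension count is valid in any characteristic, including the case $p \mid d$ where the identity itself is trace-zero — the argument still gives $[\Mat_d,\Mat_d] = \mathfrak{sl}_d$, and $\Mat_d/\mathfrak{sl}_d \cong k$ via any linear functional not vanishing on $\mathfrak{sl}_d$, for instance the $(1,1)$-entry.) Everything else is assembly: quoting the Density Theorem for (i) and Proposition \ref{semisi} for the structure of $A$ in (ii), plus the elementary fact that characters kill commutators, which was already observed in the text preceding the theorem.
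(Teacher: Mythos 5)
Your overall strategy coincides with the paper's: part (i) rests on the surjectivity of $\bigoplus_i \rho_i: A\to \bigoplus_i \End(V_i)$ from the Density Theorem, and part (ii) on the computation $[\Mat_d(k),\Mat_d(k)]=\mathfrak{sl}_d(k)$ via commutators of matrix units, followed by the semisimple decomposition $A\cong \bigoplus_{i=1}^r \Mat_{d_i}(k)$ and the dimension count $\dim A/[A,A]=r$. Your part (ii) is complete and correct, including the observation that the argument and the isomorphism $\Mat_d(k)/\mathfrak{sl}_d(k)\cong k$ survive in characteristic $p$ dividing $d$.

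In part (i), however, one step fails in positive characteristic. You test the relation $\sum_i c_i\chi_{V_i}=0$ against an element $a$ whose image is $\mathrm{Id}_{V_j}$ in $\End(V_j)$ and $0$ in the other factors, and conclude from $c_j\dim V_j=0$ that $c_j=0$. But $\dim V_j$ here is an element of $k$, and if $\mathrm{char}\,k=p$ divides $\dim V_j$ (for instance $A=\Mat_p(k)$ over $k=\overline{\mathbb{F}}_p$, whose unique irreducible has dimension $p$), you get $c_j\cdot 0=0$ and no information; the theorem is stated over an arbitrary algebraically closed field, so this case is not excluded. The repair is immediate and is in effect what the paper's proof does: by surjectivity the values $\mathrm{Tr}(M_i)$ can be prescribed independently and range over all of $k$, so instead of the identity take $a$ mapping to a rank-one idempotent (say $E_{11}$) in $\End(V_j)$ and $0$ elsewhere; then $\chi_{V_i}(a)=\delta_{ij}$, giving $c_j=0$ directly, with no appeal to $\dim V_j$ being nonzero in $k$.
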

\begin{proof}
(i) If $V_1, \dots, V_r$ are nonisomorphic irreducible finite-dimensional
representations of $A$, then
$
\rho_{V_1}\oplus\dots\oplus\rho_{V_r}:A\to\text{End }V_1\oplus\dots\oplus\text{End }V_r
$
is surjective by the density theorem, so $\chi_{V_1}, \dots, \chi_{V_r}$ are
linearly independent.
(Indeed, if $\sum \lambda_i\chi_{V_i}(a) = 0$ for all $a\in A$, then $\sum
\lambda_i \text{Tr}(M_i) = 0$ for all $M_i\in \text{End}_k V_i$. But each
$\text{tr}(M_i)$ can range independently over $k$, so it must be that $\lambda_1 =
\dots = \lambda_r = 0$.)

(ii)
First we prove that
$[\text{Mat}_d (k), \text{Mat}_d (k)] = sl_d(k)$, the set of all matrices with
trace 0. It is clear that $[\text{Mat}_d (k), \text{Mat}_d (k)]
\subseteq sl_d(k)$. If we denote by $E_{ij}$ the matrix with $1$ in
the $i$th row of the $j$th column and 0's everywhere else, we have
$[E_{ij}, E_{jm}] = E_{im}$ for $i\neq m$,
and $[E_{i,i+1}, E_{i+1,i}] = E_{ii} - E_{i+1,i+1}.$
Now $\{E_{im}\} \cup \{E_{ii}-E_{i+1,i+1}\}$ forms a basis in $sl_d(k)$, so
indeed $[\text{Mat}_d (k),\text{Mat}_d (k)] = sl_d(k)$, as claimed.

By semisimplicity, we can write $A = \text{Mat}_{d_1}(k) \oplus \dots
\oplus \text{Mat}_{d_r}(k).$ Then $[A,A] = sl_{d_1}(k)\oplus \dots \oplus
sl_{d_r}(k)$, and $A/[A,A] \cong k^r$. By Theorem \ref{RepMatrix},
there are exactly $r$ irreducible representations of $A$ (isomorphic to
$k^{d_1}, \dots, k^{d_r}$, respectively), and therefore $r$
linearly independent
characters on the $r$-dimensional vector space $A/[A,A]$.
Thus, the characters
form a basis.
\end{proof}

\subsection{The Jordan-H\"older theorem}

We will now state and prove two important theorems
about representations of finite dimensional algebras -
the Jordan-H\"older theorem and the Krull-Schmidt theorem.

\begin{theorem} (Jordan-H\"older theorem).
Let $V$ be a finite dimensional representation of $A$,
and $0=V_0\subset V_1\subset...\subset V_n=V$,
$0=V_0'\subset...\subset V_m'=V$ be filtrations
of $V$, such that the representations
$W_i:=V_i/V_{i-1}$ and $W_i':=V_i'/V_{i-1}'$ are
irreducible for all $i$. Then $n=m$, and there exists a
permutation $\sigma$ of $1,...,n$ such that $W_{\sigma(i)}$ is
isomorphic to $W_i'$.
\end{theorem}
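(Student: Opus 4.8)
The plan is to induct on $\dim V$. If $V=0$ or $V$ is irreducible the statement is immediate, since in those cases each filtration has only the forced terms. For the inductive step I would compare the two penultimate terms $V_{n-1}$ and $V_{m-1}'$, which are \emph{maximal} proper subrepresentations of $V$: indeed the top quotients $W_n = V/V_{n-1}$ and $W_m' = V/V_{m-1}'$ are irreducible, so any subrepresentation strictly between $V_{n-1}$ and $V$ equals $V$, and similarly for $V_{m-1}'$.

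First the easy case: if $V_{n-1}=V_{m-1}'$, then truncating both filtrations produces two filtrations of $V_{n-1}$ with irreducible quotients; since $\dim V_{n-1} < \dim V$, the induction hypothesis gives $n-1=m-1$ and a matching of $\{W_i\}_{i<n}$ with $\{W_i'\}_{i<m}$ up to permutation, and appending the common top factor $W_n=W_m'$ settles this case. Now the main case $V_{n-1}\ne V_{m-1}'$: then $V_{n-1}+V_{m-1}'$ is a subrepresentation strictly containing $V_{n-1}$, hence equals $V$ by maximality. Put $K:=V_{n-1}\cap V_{m-1}'$, which is a subrepresentation. The usual Noether isomorphism, checked to be $A$-equivariant, gives $V_{n-1}/K \cong V/V_{m-1}' = W_m'$ and $V_{m-1}'/K \cong V/V_{n-1} = W_n$, both irreducible. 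Using Lemma \ref{simp}, I fix a filtration $0=K_0\subset K_1\subset\dots\subset K_\ell=K$ of $K$ with irreducible quotients, and let $S$ be the multiset of these quotients. Then $0\subset K_1\subset\dots\subset K_\ell\subset V_{n-1}$ is such a filtration of $V_{n-1}$ with factor multiset $S\cup\{W_m'\}$, and $0\subset K_1\subset\dots\subset K_\ell\subset V_{m-1}'$ is one of $V_{m-1}'$ with factor multiset $S\cup\{W_n\}$.

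Finally I would invoke the induction hypothesis twice. Applied to $V_{n-1}$ (which has smaller dimension), comparing its truncated original filtration with the newly built one yields $n-1=\ell+1$ and an equality of multisets $\{W_1,\dots,W_{n-1}\}=S\cup\{W_m'\}$. Applied to $V_{m-1}'$, it yields $m-1=\ell+1$ and $\{W_1',\dots,W_{m-1}'\}=S\cup\{W_n\}$. In particular $n=m$, and then
\[
\{W_1,\dots,W_n\} = S\cup\{W_m'\}\cup\{W_n\} = \{W_1',\dots,W_m'\}
\]
as multisets, which provides the required permutation $\sigma$. The only delicate point is the bookkeeping: one must track composition factors as multisets (not just lengths) and be careful that the Noether isomorphism is genuinely an isomorphism of $A$-modules; beyond that the argument is a routine double induction.
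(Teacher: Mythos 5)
Your proof is correct, and it is essentially the mirror image of the argument given in the text. The notes' general (second) proof works from the bottom of the two filtrations: if the first irreducible pieces $W_1$ and $W_1'$ differ as subspaces, then $W_1\cap W_1'=0$, so $W_1\oplus W_1'$ embeds in $V$; one filters $U=V/(W_1\oplus W_1')$ using Lemma \ref{simp} and applies the induction hypothesis to the two quotients $V/W_1$ and $V/W_1'$, each of which carries two filtrations with irreducible successive quotients. You instead work from the top: you compare the maximal proper subrepresentations $V_{n-1}$ and $V_{m-1}'$, use $V_{n-1}+V_{m-1}'=V$ together with the second isomorphism theorem to identify $V_{n-1}/K\cong W_m'$ and $V_{m-1}'/K\cong W_n$ where $K=V_{n-1}\cap V_{m-1}'$, filter $K$ using Lemma \ref{simp}, and apply the induction hypothesis to the two subrepresentations $V_{n-1}$ and $V_{m-1}'$. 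The two arguments are dual and of the same difficulty; yours trades the observation ``distinct irreducible subrepresentations intersect trivially'' for ``distinct maximal subrepresentations sum to everything'' plus the Noether isomorphism, and, like the paper's general proof (but unlike its character-theoretic first proof), it is valid in any characteristic. The only micro-step worth writing out is why $V_{n-1}+V_{m-1}'$ strictly contains $V_{n-1}$ when $V_{n-1}\ne V_{m-1}'$: otherwise $V_{m-1}'\subsetneq V_{n-1}\subsetneq V$, contradicting the maximality of $V_{m-1}'$. With that line added, the multiset bookkeeping you describe goes through verbatim.
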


\begin{proof} {\bf First proof} (for $k$ of characteristic zero).
The character of $V$ obviously equals the sum of
characters of $W_i$, and also the sum of characters of $W_i'$.
But by Theorem \ref{char}, the characters of irreducible
representations are linearly independent, so the multiplicity of
every irreducible representation $W$ of $A$ among $W_i$ and among
$W_i'$ are the same. This implies the theorem.
\footnote{This proof does not work in characteristic $p$
because it only implies that the multiplicities of $W_i$ and $W_i'$
are the same modulo $p$, which is not sufficient. In fact,
the character of the representation $pV$, where $V$ is any
representation, is zero.}

{\bf Second proof} (general). The proof is by induction on $\dim
V$. The base of induction is clear, so let us prove the induction
step. If $W_1= W_1'$ (as subspaces), we are done, since
by the induction assumption the theorem holds for $V/W_1$.
So assume $W_1\ne W_1'$. In this case $W_1\cap W_1'=0$ (as $W_1,W_1'$ are
irreducible), so we have an
embedding $f: W_1\oplus W_1'\to V$. Let $U=V/(W_1\oplus W_1')$,
and $0=U_0\subset U_1\subset...\subset U_p=U$ be a filtration of
$U$ with simple quotients $Z_i=U_i/U_{i-1}$ (it exists by Lemma \ref{simp}).
Then we see that:

1) $V/W_1$ has a filtration with successive quotients
$W_1',Z_1,...,Z_p$, and another filtration
with successive quotients $W_2,....,W_n$.

2) $V/W_1'$ has a filtration with successive quotients
$W_1,Z_1,...,Z_p$, and another filtration
with successive quotients $W_2',....,W_n'$.

By the induction assumption, this means that the
collection of irreducible representations with multiplicities
$W_1,W_1',Z_1,...,Z_p$ coincides on one hand with
$W_1,...,W_n$, and on the other hand, with $W_1',...,W_m'$.
We are done.
\end{proof}

The Jordan-H\"older theorem shows that the number $n$
of terms in a filtration of $V$ with irreducible successive
quotients does not depend on the choice of a filtration, and
depends only on $V$. This number is called the {\it length}
of $V$. It is easy to see that $n$ is also the maximal length of a
filtration of $V$ in which all the inclusions are strict.

The sequence of the irreducible representations $W_1,...,W_n$ 
enumerated in the order they appear from some filtration 
of $V$ as successive quoteints is 
called a {\bf Jordan-H\"older series} of $V$. 

\subsection{The Krull-Schmidt theorem}

\begin{theorem} (Krull-Schmidt theorem)
Any finite dimensional representation of $A$
can be uniquely (up to an isomorphism and order of summands) decomposed into
a direct sum of indecomposable representations.
\end{theorem}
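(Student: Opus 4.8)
The plan is to prove existence and uniqueness separately, with uniqueness being the real content. Existence is an easy induction on $\dim V$: if $V$ is indecomposable we are done; otherwise write $V = V' \oplus V''$ with both summands nonzero, hence of strictly smaller dimension, and apply the inductive hypothesis to each. So suppose
$$
V \cong V_1 \oplus \cdots \oplus V_n \cong U_1 \oplus \cdots \oplus U_m
$$
with all $V_i, U_j$ indecomposable; we must produce a bijection matching isomorphic summands. I would argue by induction on $\dim V$, and the main tool will be a lemma about endomorphism rings: \emph{the endomorphism algebra $\End_A(W)$ of a finite dimensional indecomposable representation $W$ is local}, i.e., its non-invertible elements form a (two-sided) ideal, equivalently every endomorphism of $W$ is either invertible or nilpotent. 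This is the Fitting lemma: for $\varphi \in \End_A(W)$, the descending chain $\Image \varphi \supseteq \Image \varphi^2 \supseteq \cdots$ and ascending chain $\Ker \varphi \subseteq \Ker \varphi^2 \subseteq \cdots$ stabilize (finite dimension), giving $n$ with $W = \Image \varphi^n \oplus \Ker \varphi^n$; indecomposability forces one summand to be $0$, so $\varphi^n$ is either an isomorphism (whence $\varphi$ is) or $0$ (whence $\varphi$ is nilpotent). Since sums of nilpotents in a local ring stay non-invertible, one gets: if a sum $\psi_1 + \cdots + \psi_r$ of endomorphisms of $W$ is an isomorphism, then some $\psi_k$ is an isomorphism.

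With the local-ring lemma in hand, the uniqueness step goes as follows. Let $\iota_i : V_i \hookrightarrow V$, $\pi_i : V \twoheadrightarrow V_i$ be the inclusions and projections for the first decomposition, and $\iota_j', \pi_j'$ for the second, so $\sum_j \iota_j' \pi_j' = \id_V$. Restricting and corestricting to the summand $V_1$, we get
$$
\sum_{j=1}^m (\pi_1 \iota_j')(\pi_j' \iota_1) = \id_{V_1} \in \End_A(V_1).
$$
Since $\End_A(V_1)$ is local, some term $(\pi_1 \iota_j')(\pi_j' \iota_1)$ is an isomorphism of $V_1$; reindex so $j = 1$. Then $\pi_1' \iota_1 : V_1 \to U_1$ is a split injection and $\pi_1 \iota_1' : U_1 \to V_1$ a split surjection. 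I claim $\alpha := \pi_1' \iota_1 : V_1 \to U_1$ is in fact an isomorphism: it is injective (split mono), and its image is a direct summand of the indecomposable $U_1$, hence all of $U_1$ since the image is nonzero (as $\pi_1\iota_1'\alpha$ is invertible). So $V_1 \cong U_1$.

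The final step is to peel off this common summand and induct. Using the isomorphism $\alpha$ one shows $V = \iota_1' \alpha(V_1) \oplus \big(\bigoplus_{j \ge 2} \iota_j'(U_j)\big)$ — i.e. one can replace the first summand $U_1$ of the second decomposition by the copy of $V_1$ sitting inside $V$ via $\iota_1'\alpha$, a standard "exchange" argument using that $\pi_1 \iota_1' \alpha$ is invertible. Quotienting $V$ by this copy of $V_1$ (equivalently, passing to the complement) yields a representation $\bar V$ of dimension $\dim V - \dim V_1 < \dim V$ with two decompositions into indecomposables, $\bar V \cong V_2 \oplus \cdots \oplus V_n$ and $\bar V \cong U_2 \oplus \cdots \oplus U_m$. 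By the inductive hypothesis $n - 1 = m - 1$ and there is a matching permutation of $\{2, \dots, n\}$; together with $V_1 \cong U_1$ this finishes the proof.

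The main obstacle is entirely in the local-ring lemma and the exchange argument: once one knows $\End_A(W)$ is local for $W$ indecomposable and finite dimensional (Fitting), everything else is bookkeeping. In particular the step "$\sum \psi_k = $ iso $\Rightarrow$ some $\psi_k = $ iso" needs the observation that in a (not necessarily commutative) ring where non-units form an ideal, a sum of non-units is a non-unit — this is where finite-dimensionality of $V$, hence of each $\End_A(V_i)$, is really used. No characteristic or algebraic-closedness hypothesis on $k$ is needed.
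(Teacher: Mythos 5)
Your proof follows essentially the same route as the paper's: you form the sum $\sum_{j}(\pi_1\iota_j')(\pi_j'\iota_1)=\id_{V_1}$, use the dichotomy that an endomorphism of a finite dimensional indecomposable module is either an isomorphism or nilpotent to extract a $j$ for which the $j$-th term is invertible, conclude $V_1\cong U_1$, then peel off this summand and induct on dimension. The one genuine difference is in the dichotomy lemma: you prove it via Fitting's lemma ($W=\Image\,\varphi^n\oplus\Ker\varphi^n$ for large $n$), which works over an arbitrary field, whereas the paper decomposes $W$ into generalized eigenspaces of $\varphi$ and so leans on the standing assumption that $k$ is algebraically closed; your variant is more general and worth keeping. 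One caveat: passing from the dichotomy to locality of $\End_A(W)$ (i.e., that a sum of non-invertible endomorphisms is non-invertible) is not automatic and is exactly the content of part (ii) of the paper's lemma; it needs the extra line that if $a$ is nilpotent and $a+b$ is invertible, then $(a+b)^{-1}a$ is non-invertible, hence nilpotent, so $(a+b)^{-1}b=1-(a+b)^{-1}a$ is invertible and therefore so is $b$. As phrased, ``sums of nilpotents in a local ring stay non-invertible'' assumes the locality you are trying to establish.

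The only real flaw is in the peeling step. Since $\alpha=\pi_1'\iota_1$ maps $V_1$ isomorphically onto $U_1$, the subspace $\iota_1'\alpha(V_1)$ is exactly $\iota_1'(U_1)$, so your displayed decomposition $V=\iota_1'\alpha(V_1)\oplus\bigoplus_{j\ge 2}\iota_j'(U_j)$ is literally the second decomposition again; it carries no new information and does not yield $\bar V\cong V_2\oplus\cdots\oplus V_n$ after quotienting. The exchange you want is $V=\iota_1(V_1)\oplus\bigoplus_{j\ge 2}\iota_j'(U_j)$, with the copy of $V_1$ taken from the \emph{first} decomposition: this holds because $\pi_1'$ restricted to $\iota_1(V_1)$ is the isomorphism $\alpha$, so $\iota_1(V_1)\cap\Ker\pi_1'=0$, and the dimensions add up since $\dim V_1=\dim U_1$. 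Quotienting by $\iota_1(V_1)$ then identifies $V_2\oplus\cdots\oplus V_n$ with $U_2\oplus\cdots\oplus U_m$, and the induction closes. (The paper does this bookkeeping slightly differently, showing directly that the composite $\bigoplus_{i\ge 2}\iota_i(V_i)\hookrightarrow V\twoheadrightarrow \bigoplus_{j\ge 2}\iota_j'(U_j)$ has zero kernel because $\pi_1\iota_1'$ is an isomorphism.) With these two repairs your argument is complete.
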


\begin{proof} It is clear that a decomposition of $V$
into a direct sum of indecomposable representations
exists, so we just need to prove uniqueness.
We will prove it by induction on $\dim V$. Let
$V=V_1\oplus...\oplus V_m=V_1'\oplus...\oplus V_n'$.
Let $i_s: V_s\to V$, $i_s': V_s'\to V$,
$p_s: V\to V_s$, $p_s': V\to V_s'$ be the natural maps
associated to these decompositions. Let $\theta_s=p_1i_s'p_s'i_1: V_1\to
V_1$. We have $\sum_{s=1}^n \theta_s=1$.
Now we need the following lemma.

\begin{lemma} Let $W$ be a finite dimensional indecomposable representation of
$A$. Then

(i) Any homomorphism $\theta: W\to W$ is either an isomorphism or
nilpotent;

(ii) If $\theta_s: W\to W$, $s=1,...,n$ are nilpotent
homomorphisms, then so is $\theta:=\theta_1+...+\theta_n$.
\end{lemma}

\begin{proof} (i) Generalized eigenspaces of $\theta$ are
subrepresentations of $W$, and $W$ is their direct sum.
Thus, $\theta$ can have only one eigenvalue $\lambda$. If $\lambda$ is zero,
$\theta$ is nilpotent, otherwise it is an isomorphism.

(ii) The proof is by induction in $n$. The base is clear.
To make the induction step ($n-1$ to $n$), assume that $\theta$ is not nilpotent.
Then by (i) $\theta$  is an isomorphism,
so $\sum_{i=1}^n\theta^{-1}\theta_i=1$.
The morphisms $\theta^{-1}\theta_i$ are not isomorphisms, so they
are nilpotent. Thus
$1-\theta^{-1}\theta_n=\theta^{-1}\theta_1+...+\theta^{-1}\theta_{n-1}$
is an isomorphism, which is a contradiction with the induction
assumption.
\end{proof}

By the lemma, we find that for some $s$, $\theta_s$ must be an
isomorphism; we may assume that $s=1$. In this case,
$V_1'={\rm Im}(p_1'i_1)\oplus {\rm
Ker}(p_1i_1')$, so since $V_1'$ is indecomposable, we get
that $f:=p_1'i_1: V_1\to V_1'$ and $g:=p_1i_1': V_1'\to V_1$ are
isomorphisms.

Let $B=\oplus_{j>1}V_j$, $B'=\oplus_{j>1}V_j'$; then we have
$V=V_1\oplus B=V_1'\oplus B'$. Consider the map $h: B\to B'$
defined as a composition of the natural maps $B\to V\to B'$
attached to these decompositions. We claim that $h$ is an
isomorphism. To show this, it suffices to show that ${\rm
Ker}h=0$ (as $h$ is a map between spaces of the same dimension).
Assume that $v\in {\rm Ker}h\subset B$. Then $v\in V_1'$. On the
other hand, the projection of $v$ to $V_1$ is zero, so
$gv=0$. Since $g$ is an isomorphism, we get $v=0$, as desired.

Now by the induction assumption, $m=n$, and $V_j\cong
V_{\sigma(j)}'$ for some
permutation $\sigma$ of $2,...,n$. The theorem is proved.
\end{proof}

{\bf Exercise.} Let $A$ be the algebra of
real-valued continuous functions on $\Bbb R$ which are periodic with period $1$.
Let $M$ be the $A$-module of continuous functions $f$ on $\Bbb R$ 
which are antiperiodic with period $1$, i.e., $f(x+1)=-f(x)$. 

(i) Show that $A$ and $M$ are indecomposable $A$-modules. 

(ii) Show that $A$ is not isomorphic to $M$ but $A\oplus A$ is isomorphic to $M\oplus M$. 

{\bf Remark.} Thus, we see that in general, the Krull-Schmidt theorem fails for infinite dimensional modules. 
However, it still holds for modules of {\it finite length}, i.e., modules $M$ such that any filtration 
of $M$ has length bounded above by a certain constant $l=l(M)$. 

\subsection{Problems}

\begin{problem}\label{2:1}{\bf Extensions of representations.}
Let $A$ be an algebra, and
$V,W$ be a pair of representations of $A$. We would like to
classify representations $U$ of $A$ such that
$V$ is a subrepresentation of $U$, and $U/V=W$.
Of course, there is an obvious example $U=V\oplus W$, but are
there any others?

Suppose we have a representation $U$ as above.
As a vector space, it can be (non-uniquely)
identified with $V\oplus W$, so that
for any $a\in A$ the corresponding operator $\rho_U(a)$ has block
triangular form
$$
\rho_U(a)=\begin{pmatrix} \rho_V(a) & f(a)\\
0 & \rho_W(a)\end{pmatrix},
$$
where $f: A\to {\rm Hom}_k(W,V)$ is a linear map.

(a) What is the necessary and sufficient condition on $f(a)$
under which $\rho_U(a)$ is a representation? Maps $f$ satisfying
this condition are called (1-)cocycles (of $A$ with coefficients
in ${\rm Hom}_k(W,V)$). They form a vector space denoted $Z^1(W,V)$.

(b) Let $X: W\to V$ be a linear map.
The coboundary of $X$, $dX$, is defined to
be the function $A\to {\rm Hom}_k(W,V)$
given by $dX(a)=\rho_V(a)X-X\rho_W(a)$.
Show that $dX$ is a cocycle, which vanishes if and only if
$X$ is a homomorphism of representations.
Thus coboundaries form a subspace $B^1(W,V)\subset Z^1(W,V)$,
which is isomorphic to ${\rm Hom}_k(W,V)/{\rm Hom}_A(W,V)$.
The quotient $Z^1(W,V)/B^1(W,V)$ is denoted ${\rm Ext}^1(W,V)$.

(c) Show that if $f,f'\in Z^1(W,V)$ and $f-f'\in B^1(W,V)$
then the corresponding extensions $U,U'$ are isomorphic
representations of $A$. Conversely, if $\phi: U\to U'$
is an isomorphism such that
$$
\phi(a)=\begin{pmatrix} 1_V & *\\
0 & 1_W\end{pmatrix}
$$
then $f-f'\in B^1(V,W)$. Thus, the space ${\rm Ext^1}(W,V)$
``classifies'' extensions of $W$ by $V$.

(d) Assume that $W,V$ are finite dimensional
irreducible representations of $A$.
For any $f\in Ext^1(W,V)$, let $U_f$ be the corresponding
extension. Show that $U_{f}$ is isomorphic
to $U_{f'}$ as representations if and only if
$f$ and $f'$ are proportional.
Thus isomorphism classes (as representations)
of nontrivial extensions of $W$ by $V$
(i.e., those not isomorphic to $W\oplus V$)
are parametrized by the projective space ${\Bbb
P}{\rm Ext}^1(W,V)$. In particular, every extension
is trivial if and only if ${\rm Ext}^1(W,V)=0$.
\end{problem}

\begin{problem}\label{2:2}
 (a) Let $A=\bold C[x_1,...,x_n]$, and $V_a,V_b$ be one-dimensional
representations in which $x_i$ act by $a_i$ and $b_i$,
respectively ($a_i,b_i\in \bold C$). Find ${\rm Ext}^1(V_a,V_b)$
and classify 2-dimensional representations of $A$.

(b) Let $B$ be the algebra over $\bold C$ generated by
$x_1,...,x_n$ with the defining relations $x_ix_j=0$ for all
$i,j$. Show that for $n>1$ the algebra
$B$ has infinitely many non-isomorphic indecomposable representations.
\end{problem}

\begin{problem}\label{2:3} Let $Q$ be a quiver without oriented cycles,
and $P_Q$ the path algebra of $Q$.
Find irreducible representations of $P_Q$ and compute ${\rm
Ext}^1$ between them. Classify 2-dimensional representations of
$P_Q$.
\end{problem}

\begin{problem}\label{2:4}
Let $A$ be an algebra, and $V$ a representation of $A$.
Let $\rho: A\to {\rm End}V$. A formal deformation of $V$ is a formal series
$$
\tilde\rho=\rho_0+t\rho_1+...+t^n\rho_n+...,
$$
where $\rho_i: A\to {\rm End}(V)$ are linear maps,
$\rho_0=\rho$, and $\tilde\rho(ab)=\tilde\rho(a)\tilde\rho(b)$.

If $b(t)=1+b_1t+b_2t^2+...$, where $b_i\in {\rm End}(V)$,
and $\tilde\rho$ is a formal deformation of $\rho$,
then $b\tilde\rho b^{-1}$ is also a deformation of $\rho$,
which is said to be isomorphic to $\tilde\rho$.

(a) Show that if ${\rm Ext}^1(V,V)=0$, then any deformation
of $\rho$ is trivial, i.e., isomorphic to $\rho$.

(b) Is the converse to (a) true? (consider the algebra of dual
numbers $A=k[x]/x^2$).
\end{problem}

\begin{problem}{\bf The Clifford algebra.} Let $V$ be a finite
dimensional complex vector space equipped with a symmetric bilinear
form $(,)$. The {\it Clifford algebra} ${\rm Cl}(V)$ is the quotient
of the tensor algebra $TV$ by the ideal generated by
the elements $v\otimes v-(v,v)1$, $v\in V$. More explicitly, if
$x_i, 1\le i\le N$ is a basis of $V$ and $(x_i,x_j)=a_{ij}$ then
${\rm Cl}(V)$ is generated by $x_i$ with defining relations
$$
x_ix_j+x_jx_i=2a_{ij}, x_i^2=a_{ii}.
$$
Thus, if $(,)=0$, ${\rm Cl}(V)=\wedge V$.

(i) Show that if $(,)$ is nondegenerate then ${\rm Cl}(V)$ is semisimple,
and has one irreducible representation of dimension $2^n$ if
$\dim V=2n$ (so in this case ${\rm Cl}(V)$ is a matrix algebra), and
two such representations if $\dim(V)=2n+1$ (i.e., in this case
${\rm Cl}(V)$ is a direct sum of two matrix algebras).

Hint. In the even case, pick a basis $a_1,...,a_n,b_1,...,b_n$ of
$V$ in which $(a_i,a_j)=(b_i,b_j)=0$, $(a_i,b_j)=\delta_{ij}/2$, and
construct a representation of ${\rm Cl}(V)$ on $S:=\wedge
(a_1,...,a_n)$ in which $b_i$ acts as ``differentiation'' with
respect to $a_i$. Show that $S$ is irreducible.
In the odd case the situation is similar,
except there should be an additional basis vector $c$ such that
$(c,a_i)=(c,b_i)=0$, $(c,c)=1$, and the action of $c$ on $S$ may
be defined either by $(-1)^{\rm degree}$ or
by $(-1)^{\rm degree+1}$, giving two representations $S_+,S_-$
(why are they non-isomorphic?). Show that there is no other
irreducible representations by finding a spanning set
of ${\rm Cl}(V)$ with $2^{\dim V}$ elements.

(ii) Show that ${\rm Cl}(V)$ is semisimple if and only if $(,)$ is
nondegenerate. If $(,)$ is degenerate, what is
${\rm Cl}(V)/\Rad({\rm Cl}(V))$?
\end{problem}

\subsection{Representations of tensor products}

Let $A,B$ be algebras. Then $A\otimes B$ is also an algebra, with
multiplication $(a_1\otimes b_1)(a_2\otimes b_2)=a_1a_2\otimes b_1b_2$. 

{\bf Exercise.} Show that ${\rm Mat}_m(k)\otimes {\rm Mat}_n(k)\cong {\rm Mat}_{mn}(k)$.  

The following theorem describes irreducible finite
dimensional representations of $A\otimes B$ in terms of
irreducible finite dimensional representations of $A$ and those
of $B$.

\begin{theorem}\label{produ}
(i) Let $V$ be an irreducible finite dimensional representation of $A$ and $W$ an
irreducible finite dimensional representation of $B$. Then
$V\otimes W$ is an irreducible representation of $A\otimes B$.

(ii) Any irreducible finite dimensional representation $M$ of $A\otimes B$
has the form (i) for unique $V$ and $W$.
\end{theorem}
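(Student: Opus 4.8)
The plan is to read off part (i) from the Density Theorem and to obtain part (ii) by first showing that $M$ is isotypic as a module over $A$. For part (i): since $V$ and $W$ are irreducible and finite dimensional, Theorem~\ref{DensityThm}(i) makes $\rho_V\colon A\to\End V$ and $\rho_W\colon B\to\End W$ surjective. Under the canonical identification $\End(V\otimes W)\cong\End V\otimes\End W$, the image of $A\otimes B$ contains every $\rho_V(a)\otimes\rho_W(b)$; as $a,b$ vary these exhaust all $S\otimes T$ with $S\in\End V$, $T\in\End W$, which span $\End V\otimes\End W$, so $A\otimes B\to\End(V\otimes W)$ is onto. But an algebra that surjects onto the full endomorphism algebra of a vector space has no proper nonzero invariant subspace: given $0\neq u$ in such a subspace and any target $u'$, pick an operator sending $u$ to $u'$ and lift it to $A\otimes B$. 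Hence $V\otimes W$ is irreducible.

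For the existence half of part (ii), I would view $A\otimes 1$ and $1\otimes B$ as commuting subalgebras of $A\otimes B$, so that $M$ carries commuting $A$- and $B$-module structures and, in particular, each operator $\rho_M(1\otimes b)$ is $A$-linear. The key first step is that $M$ is $V$-isotypic over $A$ for a single irreducible $A$-module $V$: the $A$-socle of $M$ is nonzero by Problem~\ref{1:1}, and it is stable under every $1\otimes b$ because the image of an irreducible $A$-submodule under the $A$-map $\rho_M(1\otimes b)$ is again irreducible or zero; thus the socle is an $A\otimes B$-submodule and equals $M$, and the same argument applied to a single $A$-isotypic component (likewise $B$-stable) shows that component is all of $M$. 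Now set $W:=\Hom_A(V,M)$, a finite dimensional $B$-module via $(b\cdot\varphi)(v):=\rho_M(1\otimes b)\varphi(v)$, well defined since $1\otimes b$ is $A$-linear. Because $M$ is a finite direct sum of copies of $V$, the evaluation map $V\otimes W\to M$, $v\otimes\varphi\mapsto\varphi(v)$, is an isomorphism of $A$-modules (the canonical decomposition of a semisimple module recorded at the start of Section~\ref{Cha2}), and a direct check shows it also intertwines the $B$-actions, hence is an isomorphism of $A\otimes B$-modules. Finally $W$ is irreducible over $B$, since a proper nonzero $B$-submodule $W'$ would yield the proper nonzero $A\otimes B$-submodule $V\otimes W'$ of $M$.

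For uniqueness, suppose $V\otimes W\cong V'\otimes W'$ with $V,V'$ irreducible over $A$ and $W,W'$ over $B$. Restricting to $A\otimes 1$ identifies the two isotypic $A$-modules $V^{\oplus\dim W}$ and $(V')^{\oplus\dim W'}$, so the Krull--Schmidt theorem forces $V\cong V'$ and $\dim W=\dim W'$; restricting instead to $1\otimes B$ gives $W\cong W'$ in the same way.

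The substance of the argument is the opening move of part (ii) — that irreducibility over $A\otimes B$ forces $M$ to be isotypic as an $A$-module; I expect those two short socle/isotypic-component arguments to be the crux, with everything afterward reducing to routine verification that the evaluation isomorphism respects the $A$- and $B$-actions separately. A point to keep in mind is that no semisimplicity hypothesis on $A$ or $B$ is available or needed: the $A$-semisimplicity of $M$ is a consequence, not an assumption, and finite dimensionality of $V$ and $W$ comes for free from that of $M$.
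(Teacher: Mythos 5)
Your part (i) is the same argument as the paper's (density theorem plus surjectivity onto $\End(V\otimes W)$), but your part (ii) follows a genuinely different route. The paper first replaces $A$ and $B$ by their images in $\End M$, so that both may be assumed finite dimensional, then computes $\Rad(A\otimes B)=\Rad(A)\otimes B+A\otimes\Rad(B)$ (nilpotency of this ideal plus semisimplicity of the quotient, via Proposition \ref{nilp}), and finally reads off both existence and uniqueness from the structure of $(A/\Rad A)\otimes(B/\Rad B)$ as a direct sum of matrix algebras, using $\Mat_m(k)\otimes\Mat_n(k)\cong\Mat_{mn}(k)$. You instead work directly with the module: the socle argument showing that irreducibility over $A\otimes B$ forces $M$ to be $A$-semisimple and in fact $V$-isotypic, followed by the recovery $W=\Hom_A(V,M)$ and the evaluation isomorphism $V\otimes W\to M$, with uniqueness from Krull--Schmidt applied to the restrictions to $A$ and to $B$. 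Both are correct; your version avoids the radical computation and the structure theory of semisimple algebras entirely (a double-centralizer-style argument in miniature), while the paper's version yields the extra structural fact about $\Rad(A\otimes B)$ and the semisimple quotient, which is of independent interest. One small caveat to keep in mind: the step where $V\otimes\Hom_A(V,M)\to M$ is an isomorphism uses $\End_A(V)=k$, i.e., Schur's lemma over the algebraically closed ground field (this is exactly what the Remark at the start of Section \ref{Cha2} that you invoke rests on), so algebraic closedness is being used there just as it is in the paper's proof; your closing remark that no hypotheses beyond finite dimensionality of $M$ are needed should be read with that standing assumption in force.
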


\begin{remark}
Part (ii) of the theorem typically fails for infinite dimensional
representations; e.g. it fails when $A$ is the Weyl algebra in
characteristic zero. Part (i) also may fail. E.g. let $A=B=V=W=\Bbb
C(x)$. Then (i) fails, as $A\otimes B$ is not a field.
\end{remark}

\begin{proof}
(i) By the density theorem, the maps $A\to \End V$ and $B\to \End
W$ are surjective. Therefore, the map $A\otimes B\to \End
V\otimes \End W=\End (V\otimes W)$ is surjective.
Thus, $V\otimes W$ is irreducible.

(ii) First we show the existence of $V$ and $W$. Let $A',B'$ be
the images of $A,B$ in $\End M$. Then $A',B'$ are finite
dimensional algebras, and $M$ is a representation of $A'\otimes
B'$, so we may assume without loss of generality that $A$ and $B$
are finite dimensional.

In this case, we claim that
$\Rad (A\otimes B)=\Rad(A)\otimes B+A\otimes \Rad(B)$.
Indeed, denote the latter by $J$. Then $J$ is a nilpotent ideal
in $A\otimes B$, as $\Rad(A)$ and $\Rad(B)$ are nilpotent.
On the other hand, $(A\otimes B)/J=(A/\Rad(A))\otimes (B/\Rad(B))$,
which is a product of two semisimple algebras, hence semisimple.
This implies $J\supset \Rad(A\otimes B)$. Altogether, by
Proposition \ref{nilp}, we see that $J=\Rad(A\otimes B)$,
proving the claim.

Thus, we see that
$$
(A\otimes B)/\Rad(A\otimes B)=A/\Rad(A)\otimes B/\Rad(B).
$$
Now, $M$ is an irreducible representation of $(A\otimes B)/\Rad(A\otimes B)$,
so it is clearly of the form $M=V\otimes W$, where $V$ is an irreducible
representation of $A/\Rad(A)$ and $W$ is an irreducible
representation of $B/\Rad(B)$, and $V,W$ are uniquely determined
by $M$ (as all of the algebras involved are direct sums of matrix algebras).
\end{proof}

\newpage \section{Representations of finite groups: basic results}

Recall that a {\bf representation} of a group $G$ over
a field $k$ is a $k$-vector space $V$ together with a group
homomorphism  $\rho: G\to GL(V)$. As we have explained above, a
representation of a group $G$ over $k$ is the same thing as a representation of
its group algebra $k[G]$.

In this section, we begin a systematic development of
representation theory of finite groups.

\subsection{Maschke's Theorem}

\begin{theorem}\label{maschk}(Maschke)
Let $G$ be a finite group and $k$ a field whose
characteristic does not divide $\left| G \right|$. Then:

(i) The algebra $k[G]$ is semisimple.

(ii) There is an isomorphism of algebras $\psi: k[G]\to \oplus_i
{\rm End}V_i$ defined by $g\mapsto \oplus_i g|_{V_i}$, where $V_i$ are the irreducible
representations of $G$. In particular, this is an isomorphism of
representations of $G$ (where $G$ acts on both sides by left
multiplication). Hence, the regular representation
$k[G]$ decomposes into irreducibles as $\oplus_i \dim(V_i)V_i$, and
one has
$$
|G|=\sum_i \dim(V_i)^2.
$$
(the ``sum of squares formula'').
\end{theorem}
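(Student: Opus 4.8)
The plan is to deduce part (i) from complete reducibility of finite dimensional representations via the averaging trick, and then read off part (ii) from the structure theory of semisimple algebras already developed.

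First I would show that every finite dimensional representation $V$ of $G$ is completely reducible. Given a subrepresentation $W\subseteq V$, pick any $k$-linear projection $P_0:V\to W$ with $P_0|_W=\id$, and form the averaged operator
$$
P=\frac{1}{|G|}\sum_{g\in G}\rho(g)P_0\rho(g)^{-1}.
$$
Here the hypothesis $\operatorname{char}k\nmid|G|$ is exactly what makes $1/|G|$ meaningful; this is the one essential point, and everything around it is formal. One checks that $P$ still maps $V$ onto $W$ and restricts to the identity on $W$ (since $W$ is $G$-stable, each $\rho(g)P_0\rho(g)^{-1}$ does), and that $P$ commutes with all $\rho(h)$ by reindexing the sum. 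Hence $P$ is a $G$-equivariant projection, $\ker P$ is a subrepresentation, and $V=W\oplus\ker P$. An induction on $\dim V$ then shows $V$ is a direct sum of irreducibles.

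In particular the regular representation $k[G]$, which is finite dimensional since $G$ is finite, is completely reducible as a representation of $k[G]$; by Proposition \ref{semisi} (condition \ref{moo5} $\Rightarrow$ \ref{moo1}) this gives $\Rad(k[G])=0$, proving (i). For (ii), let $V_1,\dots,V_r$ be the finitely many, finite dimensional irreducible representations of $k[G]$, which exist by Theorem \ref{cow}. The algebra map $\psi=\bigoplus_i\rho_i:k[G]\to\bigoplus_i\End V_i$, $a\mapsto\bigoplus_i a|_{V_i}$, is surjective by the density theorem (Theorem \ref{DensityThm}(ii)), and its kernel is by definition $\Rad(k[G])=0$; hence $\psi$ is an isomorphism of algebras. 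It is moreover an isomorphism of representations of $G$, both sides acting by left multiplication, since $\psi$ is left $k[G]$-linear. Now each $\End V_i$ is isomorphic to $\dim(V_i)V_i$ as a representation of $k[G]$ (a choice of basis $v_1,\dots,v_{\dim V_i}$ of $V_i$ identifies $x\mapsto(xv_1,\dots,xv_{\dim V_i})$), so $k[G]\cong\bigoplus_i\dim(V_i)V_i$ as a $G$-representation. Comparing dimensions on the two sides of $\psi$ yields $|G|=\dim k[G]=\sum_i\dim(V_i)^2$.

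The only genuine obstacle is the construction of the $G$-invariant complement, and there the work is entirely in verifying that the averaged projection $P$ is $G$-equivariant and still a projection onto $W$; once complete reducibility is in hand, everything else is a direct application of Proposition \ref{semisi}, Theorem \ref{cow}, and the density theorem.
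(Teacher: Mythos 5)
Your proposal is correct and follows essentially the same route as the paper: the averaged projection $\overline{P}=\frac{1}{|G|}\sum_{g}\rho(g)P_0\rho(g)^{-1}$ produces a $G$-invariant complement, giving complete reducibility of every finite dimensional representation, and then (i) and (ii) are read off from Proposition \ref{semisi} together with Theorem \ref{cow} and the density theorem, exactly the structure theory the paper invokes. Your derivation of (ii) is merely a slightly more explicit unwinding of the citation to Proposition \ref{semisi} in the paper's proof, not a different argument.
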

\begin{proof}
By Proposition \ref{semisi}, (i) implies (ii), and to prove (i),
it is sufficient
to show that if $V$ is a finite-dimensional representation of $G$ and
$W\subset V$ is any subrepresentation,
then there exists a subrepresentation $W'
\subset V$ such that $V = W \oplus W'$ as representations.

Choose any complement $\hat{W}$ of $W$ in $V$. (Thus
$V = W \oplus \hat{W}$ as \emph{vector spaces}, but not
necessarily as \emph{representations}.) Let $P$ be the projection along
$\hat{W}$ onto $W$, i.e., the operator on $V$ defined by $P|_W = \text{Id}$ and
$P|_{\hat{W}} = 0$. Let
\[
\overline{P} := \frac{1}{|G|} \sum_{g\in G} \rho(g)P\rho(g^{-1}),
\]
where $\rho(g)$ is the action of $g$ on $V$, and let
\[
W' = \ker\overline{P}.
\]
Now $\overline{P}|_W = \text{Id}$ and $\overline{P}(V) \subseteq W$, so
$\overline{P}^2 = \overline{P}$, so $\overline{P}$ is a projection along $W'$.
Thus, $V = W \oplus W'$ as vector spaces.

Moreover, for any $h\in G$ and any $y \in W'$,
\[
\overline{P} \rho(h) y = \frac{1}{|G|}\sum_{g\in G} \rho(g)P\rho(g^{-1}h)y
= \frac{1}{|G|}\sum_{\ell\in G} \rho(h\ell)P\rho(\ell^{-1})y
= \rho(h) \overline{P} y = 0,
\]
so $\rho(h) y \in \ker\overline{P}= W'$.
Thus, $W'$ is invariant under the action of $G$ and is therefore a
subrepresentation of $V$.
Thus, $V = W
\oplus W'$ is the desired decomposition into subrepresentations.
\end{proof}

The converse to Theorem \ref{maschk}(i) also holds. 

\begin{proposition}
If $k[G]$ is semisimple, then the characteristic of $k$ does not
divide $|G|$.
\end{proposition}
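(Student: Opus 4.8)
The plan is to argue by contraposition: assuming $p:=\mathrm{char}(k)$ divides $|G|$, I would exhibit a nonzero nilpotent two-sided ideal in $k[G]$, which by Proposition \ref{nilp}(i) lies in $\Rad(k[G])$, forcing $\Rad(k[G])\neq 0$ and hence $k[G]$ to be non-semisimple. Note that $k[G]$ is finite dimensional (as $G$ is finite), so the results of Section \ref{FiniteDimSec} apply.

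The witness is the element $t:=\sum_{g\in G}g\in k[G]$. First I would record two elementary computations. Since $g_0t=\sum_g g_0g=\sum_g g=t$ and likewise $tg_0=t$ for every $g_0\in G$, extending linearly shows that for every $a=\sum_g a_g g\in k[G]$ one has $at=\left(\sum_g a_g\right)t=ta$; in particular the one-dimensional subspace $kt$ is a two-sided ideal. Second, $t^2=\sum_{g,h\in G}gh=|G|\sum_{g\in G}g=|G|\,t=0$ in $k[G]$, because for each fixed product each group element is hit exactly $|G|$ times, and $|G|=0$ in $k$ by hypothesis.

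Thus $kt$ is a two-sided ideal with $(kt)^2=0$, so it is nilpotent, and Proposition \ref{nilp}(i) gives $kt\subseteq\Rad(k[G])$. Since the basis elements $\{g\}_{g\in G}$ of $k[G]$ are linearly independent, $t\neq 0$, so $\Rad(k[G])\neq 0$; by definition $k[G]$ is not semisimple. This is the contrapositive of the claim. There is no serious obstacle here; the only point needing a little care is verifying that $kt$ is closed under left \emph{and} right multiplication by all of $k[G]$ (not merely by group elements), which follows at once from linearity once it is checked on the basis.

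Alternatively, one could bypass the radical: the same $t$ is a nonzero central element (being fixed under left and right multiplication by every $g\in G$) with $t^2=0$, whereas by Proposition \ref{semisi} a semisimple $k[G]$ would be isomorphic to $\bigoplus_i\Mat_{d_i}(k)$, whose center $\bigoplus_i k$ contains no nonzero nilpotent element --- a contradiction. Either route is short; I would present the radical version, since Proposition \ref{nilp} is the closest tool at hand.
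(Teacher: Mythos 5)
Your proof is correct, and it takes a genuinely different route from the one in the text. You argue by contraposition: if $\mathrm{char}\,k$ divides $|G|$, the element $t=\sum_{g\in G}g$ spans a nonzero two-sided ideal $kt$ with $t^2=|G|t=0$, so by Proposition \ref{nilp}(i) this nilpotent ideal lies in $\Rad(k[G])$, which therefore is nonzero; all the supporting computations ($g_0t=tg_0=t$, hence $at=ta=(\sum_g a_g)t$, and the counting argument for $t^2$) are right, and $k[G]$ is finite dimensional so the machinery of Section \ref{FiniteDimSec} applies. The text instead argues directly: assuming semisimplicity it writes $k[G]=\bigoplus_i \End V_i$ with $V_1=k$ the trivial representation, uses Schur's lemma to pin down the essentially unique homomorphisms $\Lambda:k\to k[G]$, $\Lambda(1)=\sum_g g$, and $\epsilon:k[G]\to k$, $\epsilon(g)=1$, and derives a contradiction from $\epsilon\circ\Lambda(1)=|G|=0$, since the inclusion of the summand $k$ must admit a splitting. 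Both arguments ultimately revolve around the same element $\sum_{g\in G}g$: you exploit the identity $t^2=|G|t$, the text exploits $\epsilon(t)=|G|$. Your version is somewhat more economical, needing only the definition $\Rad=0$ and the fact that nilpotent ideals sit inside the radical (or, in your alternative, Proposition \ref{semisi}(3) plus the absence of nonzero nilpotents in the center $\bigoplus_i k$ of a sum of matrix algebras), whereas the text's version leans on the full decomposition into matrix algebras and Schur's lemma; in exchange, your route also yields the concrete extra fact that $kt\subseteq\Rad(k[G])$ in the modular case, which is exactly the observation the text reuses later in the exercise on counting irreducible representations when $|G|=0$ in $k$.
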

\begin{proof}
Write
$
k[G] = \bigoplus_{i=1}^r \text{End } V_i,
$
where the $V_i$ are irreducible representations and
$V_1 = k$ is the trivial one-dimensional representation. Then
\[
k[G] = k\oplus\bigoplus_{i=2}^r\text{End }V_i = k\oplus\bigoplus_{i=2}^r d_iV_i,
\]
where $d_i = \dim V_i$. By Schur's Lemma,
\begin{gather*}
\text{Hom}_{k[G]} (k, k[G]) = k\Lambda \\
\text{Hom}_{k[G]} (k[G], k) = k\epsilon,
\end{gather*}
for nonzero homomorphisms of representations $\epsilon:k[G]\to k$ and $\Lambda:k\to k[G]$ unique
up to scaling. We can take $\epsilon$ such that $\epsilon (g) = 1$ for all $g
\in G$, and $\Lambda$ such that $\Lambda (1) = \sum_{g\in G} g$. Then
\[
\epsilon\circ\Lambda (1)=\epsilon\biggl(\sum_{g\in G}g\biggr)=\sum_{g\in G}1=|G|.
\]
If $|G| = 0$, then $\Lambda$ has no left inverse, as
$(a\epsilon)\circ \Lambda(1)=0$ for any $a\in k$.
This is a contradiction.
\end{proof}

\begin{example} If $G=\Bbb Z/p\Bbb Z$ and $k$ has characteristic
$p$, then every irreducible representation of $G$ over $k$ is
trivial (so $k[\Bbb Z/p\Bbb Z]$ indeed is not
semisimple). Indeed, an irreducible representation of this group
is a 1-dimensional space, on which the generator acts by a
$p$-th root of unity, and every $p$-th root of unity in $k$
equals $1$, as $x^p-1=(x-1)^p$ over $k$.
\end{example}

\begin{problem}
Let $G$ be a group of order $p^n$. Show that
every irreducible representation of $G$ over a field $k$ of
characteristic $p$ is trivial.
\end{problem}

\subsection{Characters}

If $V$ is a finite-dimensional
representation of a finite group $G$, then its character $\chi_V:
G\to k$ is
defined by the formula $\chi_V(g) = \text{tr}|_V(\rho(g))$.
Obviously, $\chi_V(g)$ is simply the restriction
of the character $\chi_V(a)$ of $V$ as a representation of the
algebra $A=k[G]$ to the basis $G\subset A$, so it carries
exactly the same information.
The character is a \emph{central} or \emph{class function}:
$\chi_V(g)$ depends only on the conjugacy class of $g$; i.e.,
$\chi_V(hgh^{-1}) = \chi_V(g)$.

\begin{theorem}
If the characteristic of $k$ does not divide $|G|$,
characters of irreducible representations of $G$ form a basis in the space
$F_c(G,k)$ of class functions on $G$.
\end{theorem}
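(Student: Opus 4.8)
The plan is to deduce the statement from Theorem \ref{char}(ii) applied to the group algebra $A=k[G]$. This algebra is finite dimensional (since $G$ is finite) and, because $\chara k$ does not divide $|G|$, it is semisimple by Maschke's theorem \ref{maschk}(i). Hence Theorem \ref{char}(ii) applies and tells us that the characters $\chi_{V_i}$ of the irreducible representations form a basis of $(A/[A,A])^{*}=(k[G]/[k[G],k[G]])^{*}$. So the entire task reduces to identifying this space canonically with $F_c(G,k)$, compatibly with characters.

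First I would use that $G$ is a basis of $k[G]$, so that the dual space $k[G]^{*}$ is canonically $\Map(G,k)$, and that for a subspace $W\subseteq k[G]$ the pullback along the quotient identifies $(k[G]/W)^{*}$ with the subspace of $\Map(G,k)$ consisting of functions vanishing on $W$. Thus it suffices to show that a function $f\colon G\to k$ vanishes on $[k[G],k[G]]$ precisely when it is constant on conjugacy classes, i.e. that the annihilator of $[k[G],k[G]]$ in $\Map(G,k)$ is exactly $F_c(G,k)$.

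The key step is the description of $[k[G],k[G]]$ itself. Since the commutator is bilinear, $[k[G],k[G]]$ is spanned by the elements $gh-hg$ with $g,h\in G$. Now $gh$ and $hg$ are always conjugate in $G$ (indeed $hg=g^{-1}(gh)g$), so $[k[G],k[G]]$ lies in the span $S$ of all differences $x-y$ with $x,y\in G$ conjugate; conversely, if $y=zxz^{-1}$, then setting $a=zx$ and $b=z^{-1}$ gives $ab=y$ and $ba=x$, so $x-y=ba-ab=-[a,b]\in[k[G],k[G]]$. Hence $[k[G],k[G]]=S$, and a functional $f\in\Map(G,k)$ annihilates $S$ iff $f(x)=f(y)$ for every conjugate pair $x,y$, i.e. iff $f$ is a class function. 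This gives the desired identification $(k[G]/[k[G],k[G]])^{*}=F_c(G,k)$ inside $\Map(G,k)$.

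Finally I would note that under $k[G]^{*}=\Map(G,k)$ the $k[G]$-character $\chi_V$ of a representation $V$ (which lies in $(k[G]/[k[G],k[G]])^{*}$ by the remark preceding Theorem \ref{char}) restricts on the basis $G$ to the function $g\mapsto \text{tr}|_V(\rho(g))$, that is, to the group-theoretic character $\chi_V$. Therefore the identification above carries the basis $\{\chi_{V_i}\}$ of $(k[G]/[k[G],k[G]])^{*}$ onto the set of irreducible group characters, which is consequently a basis of $F_c(G,k)$. The only mildly delicate point is the computation $[k[G],k[G]]=S$; everything else is bookkeeping, and the hypothesis on $\chara k$ is used only to invoke Maschke's theorem so that Theorem \ref{char}(ii) is available.
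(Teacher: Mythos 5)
Your proposal is correct and follows essentially the same route as the paper: invoke Maschke's theorem to get semisimplicity of $k[G]$, apply Theorem \ref{char}(ii) to see that the irreducible characters form a basis of $(A/[A,A])^*$ for $A=k[G]$, and then identify $(A/[A,A])^*$ with $F_c(G,k)$. Your explicit verification that $[k[G],k[G]]$ is spanned by differences of conjugate group elements (via $hg=g^{-1}(gh)g$ and the converse substitution $a=zx$, $b=z^{-1}$) is just a spelled-out form of the paper's observation that the annihilator condition $f(gh)=f(hg)$ characterizes class functions.
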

\begin{proof}
By the Maschke theorem, $k[G]$ is semisimple, so by Theorem
\ref{char}, the
characters are linearly independent and are a basis of $(A/[A,A])^*$, where $A =
k[G]$. It suffices to note that, as vector spaces over $k$,
\begin{align*}
(A/[A,A])^* & \cong \{\varphi \in \text{Hom}_k (k[G],k) \mid gh-hg \in
\ker \varphi \ \forall g,h\in G\} \\
& \cong \{f \in \text{Fun} (G, k) \mid f(gh) = f(hg) \ \forall g,h\in G\},
\end{align*}
which is precisely $F_c(G,k)$.
\end{proof}
\begin{corollary}
The number of isomorphism classes of 
irreducible representations of $G$ equals the number of conjugacy
classes of $G$ (if $|G|\ne 0$ in $k$).
\end{corollary}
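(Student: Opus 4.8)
The plan is to deduce this immediately from the theorem proved just above, which asserts that when the characteristic of $k$ does not divide $|G|$ the characters $\chi_{V_1},\dots,\chi_{V_r}$ of the pairwise non-isomorphic irreducible representations of $G$ form a basis of the space $F_c(G,k)$ of class functions on $G$. Since a basis of a vector space has cardinality equal to the dimension of that space, it suffices to identify, on one hand, the number of isomorphism classes of irreducible representations with the number of these basis vectors, and on the other hand, the dimension of $F_c(G,k)$ with the number of conjugacy classes of $G$.

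First I would note that distinct isomorphism classes of irreducible representations contribute distinct elements of this basis: the $\chi_{V_i}$ are linearly independent by Theorem~\ref{char}(i), hence in particular pairwise distinct and nonzero, so their number is exactly the number of isomorphism classes of irreducibles. (One also uses here that $G$, being finite, has only finitely many irreducible representations up to isomorphism, which already follows from Theorem~\ref{cow} applied to $A=k[G]$, so that speaking of a ``basis'' in the naive finite sense is legitimate.)

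Next I would compute $\dim_k F_c(G,k)$. By definition a class function is a function $f\colon G\to k$ with $f(hgh^{-1})=f(g)$ for all $g,h\in G$, i.e. a function constant on each conjugacy class of $G$. Fixing representatives $g_1,\dots,g_c$ of the conjugacy classes of $G$, the evaluation map $f\mapsto (f(g_1),\dots,f(g_c))$ is a $k$-linear isomorphism from $F_c(G,k)$ onto $k^c$, since a class function is determined by, and may take arbitrary prescribed values on, the $g_j$. Hence $\dim_k F_c(G,k)=c$, the number of conjugacy classes of $G$.

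Combining these observations with the theorem gives that the number of isomorphism classes of irreducible representations of $G$ equals $\dim_k F_c(G,k)$, which equals the number of conjugacy classes of $G$, as claimed. I do not expect any real obstacle: the substantive content is entirely contained in the previously proved theorem, and what remains are the elementary facts that linearly independent vectors are distinct and that the space of class functions on a finite group has dimension equal to the number of conjugacy classes.
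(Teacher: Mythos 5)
Your argument is correct and is exactly the intended deduction: the paper states the corollary without proof precisely because it follows immediately from the preceding theorem by counting, on one side, the basis given by irreducible characters and, on the other, the dimension of $F_c(G,k)$, which equals the number of conjugacy classes. Your added remarks on finiteness (via Theorem \ref{cow}) and on distinctness of the characters are fine but not a departure from the paper's route.
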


{\bf Exercise.} Show that if $|G|=0$ in $k$ then the number of isomorphism classes of irreducible representations 
of $G$ over $k$ is strictly less than the number of conjugacy classes in $G$. 

Hint. Let $P=\sum_{g\in G}g\in k[G]$. Then $P^2=0$. So $P$ has zero trace in every 
finite dimensional representation of $G$ over $k$. 

\begin{corollary}
Any representation of $G$ is determined by
its character if $k$ has characteristic 0; namely,
$\chi_V = \chi_W$ implies
$V \cong W$.
\end{corollary}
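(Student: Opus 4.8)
The plan is to combine Maschke's theorem with the linear independence of irreducible characters proved in Theorem \ref{char}(i). Since the character is only defined for finite-dimensional representations, I take $V$ and $W$ to be finite-dimensional. First I would invoke Theorem \ref{maschk}: as $k$ has characteristic $0$, it certainly does not divide $|G|$, so $k[G]$ is semisimple, and hence by Proposition \ref{semisi} every finite-dimensional representation of $G$ is completely reducible. Letting $V_1,\dots,V_r$ be the irreducible representations of $G$ (finitely many, by Theorem \ref{cow}), I write $V\cong\bigoplus_i m_iV_i$ and $W\cong\bigoplus_i n_iV_i$ with $m_i,n_i\in\ZZ_{\ge 0}$.

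Next I would compute characters. Using additivity of the character on direct sums (the Exercise preceding Theorem \ref{char}, applied repeatedly), $\chi_V=\sum_i m_i\chi_{V_i}$ and $\chi_W=\sum_i n_i\chi_{V_i}$, where the coefficients $m_i,n_i$ are understood as the images of these integers in $k$. The hypothesis $\chi_V=\chi_W$ then gives $\sum_i (m_i-n_i)\chi_{V_i}=0$ as a function $G\to k$ (equivalently, as a functional on $k[G]$).

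Now I would apply Theorem \ref{char}(i): the characters $\chi_{V_1},\dots,\chi_{V_r}$ of the distinct irreducible representations are linearly independent over $k$. Hence $m_i-n_i=0$ in $k$ for every $i$. Because $k$ has characteristic $0$, the canonical map $\ZZ\to k$ is injective, so $m_i=n_i$ as nonnegative integers for all $i$. Therefore $V\cong\bigoplus_i m_iV_i\cong\bigoplus_i n_iV_i\cong W$, as claimed.

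There is really no hard step here; the statement is a direct corollary of what has already been established. The only point that genuinely uses the characteristic-$0$ hypothesis, beyond guaranteeing semisimplicity, is the last one: passing from the equality $m_i=n_i$ in $k$ back to an equality of integer multiplicities. In characteristic $p$ this fails, since, as the footnote to the Jordan--H\"older theorem observes, $\chi_{pV}=0$; this is exactly why the corollary is stated only for $\operatorname{char}k=0$.
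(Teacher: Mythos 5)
Your proof is correct and follows exactly the route the paper intends for this corollary, which it leaves implicit: complete reducibility from Maschke's theorem, additivity of characters, linear independence of irreducible characters from Theorem \ref{char}(i), and the injectivity of $\ZZ\to k$ in characteristic $0$ to recover equal integer multiplicities. Your closing remark about why the statement fails in characteristic $p$ matches the paper's own footnote to the Jordan--H\"older theorem.
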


\subsection{Examples}

The following are examples of representations of finite groups over $\CC$.

\begin{enumerate}
\item
Finite abelian groups $G = \ZZ_{n_1} \times \dots \times \ZZ_{n_k}$.
Let $G^\vee$ be the set of irreducible representations of $G$.
Every element of $G$ forms a conjugacy class, so $|G^\vee| = |G|$.
Recall that all irreducible representations
over $\CC$ (and algebraically closed fields in general)
of commutative algebras and groups
are one-dimensional.
Thus, $G^\vee$ is an abelian group: if $\rho_1, \rho_2 : G \to \CC^\times$
are irreducible representations then so are
$\rho_1(g)\rho_2(g)$ and $\rho_1(g)^{-1}$.
$G^\vee$ is called the \emph{dual} or \emph{character group} of $G$.

For given $n\geq 1$, define $\rho:\ZZ_n\to\CC^\times$ by
$\rho(m)=e^{2\pi im/n}$. Then $\ZZ_n^\vee =
\{\rho^k:k=0,\dots ,n-1\}$, so $\ZZ_n^\vee\cong\ZZ_n$.
In general,
\[
(G_1\times G_2\times\dots\times G_n)^\vee
    = G_1^\vee\times G_2^\vee\times\dots\times G_n^\vee,
\]
so $G^\vee \cong G$ for any finite abelian group $G$.
This isomorphism is, however, noncanonical: the particular decomposition of $G$ as
$\ZZ_{n_1}\times\dots\times\ZZ_{n_k}$ is not unique as far as which elements of
$G$ correspond to $\ZZ_{n_1}$, etc. is concerned.
On the other hand, $G\cong (G^\vee)^\vee$ is a canonical isomorphism, given by
$\varphi:G\to (G^\vee)^\vee$, where $\varphi(g)(\chi) = \chi(g)$.

\item
The symmetric group $S_3$.
In $S_n$, conjugacy classes are determined by cycle decomposition sizes: two
permutations are conjugate if and only if they have the same number of cycles of each
length. For $S_3$, there are 3 conjugacy classes, so there are 3 different
irreducible representations over $\CC$. If their dimensions are $d_1, d_2, d_3$,
then $d_1^2+d_2^2+d_3^2 = 6$, so $S_3$ must have two 1-dimensional and
one 2-dimensional representations. The 1-dimensional representations are the
trivial representation $\Bbb C_+$ given by
$\rho(\sigma) = 1$ and the sign representation
$\Bbb C_-$ given by $\rho(\sigma)=(-1)^\sigma$.

The 2-dimensional representation can be visualized as representing the
symmetries of the equilateral triangle with vertices 1, 2, 3 at
the points
$(\cos 120^\circ, \sin 120^\circ)$,
$(\cos 240^\circ, \sin 240^\circ)$,
$(1, 0)$ of the coordinate plane, respectively. Thus, for example,
\begin{align*}
\rho((12)) &= \left( \begin{matrix} 1 & 0 \\ 0 & -1 \end{matrix} \right), &
\rho((123)) & = \left( \begin{matrix} \cos 120^\circ & -\sin 120^\circ \\
\sin 120^\circ & \cos 120^\circ \end{matrix} \right).
\end{align*}
To show that this representation is irreducible,
consider any subrepresentation $V$. $V$ must be the span of a subset of the
eigenvectors of $\rho((12))$, which are the nonzero multiples of $(1,0)$ and
$(0,1)$. $V$ must also be the span of a subset of the eigenvectors of
$\rho((123))$, which are different vectors.
Thus, $V$ must be either $\CC^2$ or $0$.

\item
The quaternion group $Q_8 = \{\pm 1, \pm i, \pm j, \pm k\}$,
with defining relations
\begin{align*}
i&=jk=-kj, & j&=ki=-ik, & k&=ij=-ji, & -1&=i^2=j^2=k^2.
\end{align*}
The 5 conjugacy classes are
$\{1\}, \{-1\}, \{\pm i\}, \{\pm j\}, \{\pm k\}$, so there are 5 different
irreducible representations, the sum of the squares of whose dimensions is 8,
so their dimensions must be 1, 1, 1, 1, and 2.

The center $Z(Q_8)$ is $\{\pm 1\}$, and $Q_8/Z(Q_8) \cong \ZZ_2\times\ZZ_2$.
The four 1-dimensional irreducible representations of $\ZZ_2\times\ZZ_2$
can be ``pulled back'' to $Q_8$. That is, if $q : Q_8 \to Q_8/Z(Q_8)$ is the
quotient map, and $\rho$ any representation of $Q_8/Z(Q_8)$, then $\rho\circ q$ gives
a representation of $Q_8$.

The 2-dimensional representation is $V = \CC^2$, given by
$\rho(-1) = -\text{Id}$ and
\begin{align}\label{Pauli}
\rho(i) &= \left( \begin{matrix} 0 & 1 \\ -1 & 0\end{matrix}\right), &
\rho(j) &= \left( \begin{matrix} \sqrt{-1} & 0 \\ 0 &
-\sqrt{-1}\end{matrix}\right), &
\rho(k) &= \left( \begin{matrix} 0 & -\sqrt{-1} \\ -\sqrt{-1} & 0
\end{matrix}\right).
\end{align}
These are the Pauli matrices, which arise in quantum mechanics.

\vskip .05in

{\bf Exercise.} Show that the 2-dimensional irreducible
representation of $Q_8$ can be realized in the space of functions
$f:Q_8\to \Bbb C$ such that $f(gi)=\sqrt{-1} f(g)$ (the action of
$G$ is by right multiplication, $g\circ f(x)=f(xg)$).

\item
The symmetric group $S_4$. The order of $S_4$
is 24, and there are 5 conjugacy classes:
\linebreak $e, (12), (123), (1234), (12)(34)$.
Thus the sum of the squares of the dimensions of 5 irreducible representations
is 24. As with $S_3$, there are two of dimension 1: the trivial and sign
representations, $\CC_+$ and $\CC_-$.
The other three must then have dimensions 2, 3,
and 3.
Because $S_3 \cong S_4/\Bbb Z_2\times \Bbb Z_2$, where
$\Bbb Z_2\times \Bbb Z_2$ is $\{e, (12)(34), (13)(24),
(14)(23)\}$,
the 2-dimensional representation of $S_3$ can be pulled back to the
2-dimensional representation of $S_4$, which we will call $\CC^2$.

We can consider $S_4$ as the group of rotations of a cube acting by permuting
the interior diagonals (or, equivalently, on a regular octahedron permuting
pairs of opposite faces); this gives the 3-dimensional representation $\CC^3_+$.

The last 3-dimensional representation is $\CC^3_-$, the product of $\CC^3_+$ with the
sign representation. $\CC^3_+$ and $\CC^3_-$ are different, for if $g$ is a transposition,
$\det g|_{\CC^3_+} = 1$ while
$\det g|_{\CC^3_-} = (-1)^3 = -1$. Note that another realization
of $\CC^3_-$ is by action of $S_4$ by symmetries (not necessarily
rotations) of the regular tetrahedron. Yet another realization 
of this representation 
is the space of functions on the set of 4 elements (on which $S_4$ acts by permutations)
with zero sum of values. 
\end{enumerate}

\subsection{Duals and tensor products of representations}

If $V$ is a representation of a group $G$, then $V^*$ is also a
representation, via
\[
\rho_{V^*}(g) = (\rho_V(g)^*)^{-1} = (\rho_V(g)^{-1})^* = \rho_V(g^{-1})^*.
\]
The character is
$
\chi_{V^*}(g) = \chi_V(g^{-1}).
$

We have
$\chi_V(g) = \sum \lambda_i$, where the $\lambda_i$ are the eigenvalues of
$g$ in $V$. These eigenvalues must be roots of unity because
$
\rho(g)^{|G|} = \rho(g^{|G|}) = \rho(e) = \text{Id}.
$
Thus for complex representations
\[
\chi_{V^*}(g) = \chi_V(g^{-1}) = \sum \lambda_i^{-1} = \sum \overline{\lambda_i}
= \overline{\sum \lambda_i} = \overline{\chi_V(g)}.
\]
In particular, $V \cong V^*$ as \emph{representations} (not just as vector
spaces) if and only if $\chi_V(g) \in \RR$ for all $g\in G$.

If $V,W$ are representations of $G$, then $V \otimes W$ is also a
representation, via
\[
\rho_{V\otimes W} (g) = \rho_V (g) \otimes \rho_W (g).
\]
Therefore, $\chi_{V\otimes W}(g)=\chi_V(g)\chi_W(g)$.

An interesting problem discussed below
is to decompose $V\otimes W$ (for irreducible $V,W$) into the direct sum of
irreducible representations.

\subsection{Orthogonality of characters}
We define a positive definite Hermitian
inner product on ${\rm F}_c(G,\CC)$ (the space of central functions) by
\[
        (f_1,f_2) = \frac{1}{|G|}\sum_{g\in G}{f_1(g)\overline{f_2(g)}}.
\]
The following theorem says that characters of irreducible representations of $G$ form an orthonormal basis of $F_c(G,\CC)$ under this inner product.
\begin{theorem}\label{orthogonality-of-characters}
For any representations $V,W$
\[
	(\chi_V,\chi_W) = \dim\Hom_G(W,V),
\]
and
\[
	(\chi_V,\chi_W) =
	\left\{
		\begin{array}{l}
			1, \; {\rm if} \; V\cong W, \\
			0, \; {\rm if} \; V\ncong W
		\end{array}
	\right.
\]
if $V,W$ are irreducible.
\end{theorem}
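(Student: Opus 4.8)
The plan is to reduce everything to a single computation: for any two finite-dimensional representations $V,W$ of $G$ over $\CC$, I will show that
\[
(\chi_V,\chi_W)=\dim\Hom_G(W,V).
\]
Once this is established, the second assertion follows immediately from Schur's lemma (Proposition \ref{shul} and Corollary \ref{slacf}): if $V,W$ are irreducible then $\Hom_G(W,V)$ is $0$ when $V\ncong W$ and is one-dimensional (spanned by a scalar, by the algebraically-closed version of Schur) when $V\cong W$. So the orthonormality statement is a corollary of the first formula, and the first formula is the real content.

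To prove the formula, the key step is to realize $\dim\Hom_G(W,V)$ as the trace of a projector. Consider the vector space $\Hom_k(W,V)$, which carries a representation of $G$ by $(g\cdot\phi)=\rho_V(g)\circ\phi\circ\rho_W(g)^{-1}$; its $G$-invariants are exactly $\Hom_G(W,V)$. Now introduce the averaging operator
\[
P=\frac{1}{|G|}\sum_{g\in G}\rho_{\Hom_k(W,V)}(g)
\]
on $\Hom_k(W,V)$. A short check (using a reindexing $g\mapsto hg$ in the sum, exactly as in the proof of Maschke's theorem) shows $P^2=P$ and that the image of $P$ is precisely the subspace of $G$-invariants $\Hom_G(W,V)$. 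Since $P$ is a projector onto that subspace, $\operatorname{tr}(P)=\dim\Hom_G(W,V)$. On the other hand, $\operatorname{tr}(P)=\frac{1}{|G|}\sum_{g\in G}\operatorname{tr}\big(\rho_{\Hom_k(W,V)}(g)\big)=\frac{1}{|G|}\sum_{g\in G}\chi_{\Hom_k(W,V)}(g)$.

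It remains to identify this character. Using the natural isomorphism $\Hom_k(W,V)\cong V\otimes W^*$ of representations (from Problem \ref{0:1}(c), which is $G$-equivariant for the representation structures defined in the previous subsection), together with $\chi_{V\otimes W^*}(g)=\chi_V(g)\chi_{W^*}(g)$ and $\chi_{W^*}(g)=\overline{\chi_W(g)}$ for complex representations of a finite group (both recalled just above), we get $\chi_{\Hom_k(W,V)}(g)=\chi_V(g)\overline{\chi_W(g)}$. Hence
\[
\dim\Hom_G(W,V)=\operatorname{tr}(P)=\frac{1}{|G|}\sum_{g\in G}\chi_V(g)\overline{\chi_W(g)}=(\chi_V,\chi_W),
\]
which is the desired identity. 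The main obstacle — really the only point requiring care — is verifying that $P$ is idempotent with image equal to the invariants; this is the same averaging trick used in Maschke's theorem and uses that $|G|$ is invertible in $\CC$. Everything else is bookkeeping with characters of tensor products and duals that has already been set up in the text.
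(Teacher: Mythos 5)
Your proof is correct and follows essentially the same route as the text: both identify $(\chi_V,\chi_W)$ with the trace of the averaging operator $P=\frac{1}{|G|}\sum_{g\in G}g$ acting on $V\otimes W^*\cong \Hom_k(W,V)$, recognize $P$ as the projector onto the $G$-invariants, and conclude $(\chi_V,\chi_W)=\dim (V\otimes W^*)^G=\dim\Hom_G(W,V)$, with Schur's lemma then giving orthonormality for irreducibles. The only cosmetic difference is that you verify the projector property by the direct Maschke-style averaging computation on $\Hom_k(W,V)$, whereas the text deduces it from how the central element $P$ acts on irreducible summands.
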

\begin{proof}
By the definition
\begin{eqnarray*}
	(\chi_V,\chi_W)
	&=& \frac1{|G|}\sum_{g\in G}\chi_V(g)\overline{\chi_W(g)} = \frac1{|G|}\sum_{g\in G}\chi_V(g)\chi_{W^*}(g) \\
	&=&\frac1{|G|}\sum_{g\in G}{\chi_{V\otimes W^*}(g)} = \tr|_{V\otimes W^*}(P),
\end{eqnarray*}
where $P = \frac1{|G|}\sum_{g\in G}g \in Z(\CC[G]).$
(Here $Z(\CC[G])$ denotes the center of $\CC[G]$).
If $X$ is an irreducible representation of $G$ then
\[
	P|_{X} =
	\left\{
		\begin{array}{l}
			{\rm Id,} \; {\rm if} \; X = \CC, \\
			0, \; {\rm} \; X \ne \CC.
		\end{array}
	\right.
\]
Therefore, for any representation $X$ the operator $P|_X$
is the $G$-invariant projector onto the subspace $X^G$ of
$G$-invariants in $X$. Thus,

\begin{eqnarray*}
	\tr|_{V\otimes W^*}(P)
	&=& \dim\Hom_G(\CC, V\otimes W^*) \\
	&=& \dim(V\otimes W^*)^G = \dim\Hom_G(W,V).
\end{eqnarray*}
\end{proof}

Theorem \ref{orthogonality-of-characters} gives a powerful method of checking 
if a given complex representation $V$ of a finite group $G$ 
is irreducible. Indeed, it implies that 
$V$ is irreducible if and only if $(\chi_V,\chi_V)=1$. 

\textbf{Exercise.}
Let $G$ be a finite group. Let $V_{i}$ be the irreducible
complex representations of $G$.

For every $i$, let 
$$
\psi_{i}=\dfrac{\dim
V_{i}}{\left\vert G\right\vert }\sum\limits_{g\in G}\chi_{V_{i}}\left(
g\right)  \cdot g^{-1}\in \Bbb C\left[  G\right].  $$

\textbf{(i)} Prove that
$\psi_{i}$ acts on $V_{j}$ as the identity if $j=i$, and as the null map if
$j\neq i$.

\textbf{(ii)} Prove that $\psi_i$ are {\bf idempotents}, i.e., $\psi_{i}^{2}=\psi_{i}$ for any
$i$, and $\psi_{i}\psi_{j}=0$ for any $i\neq j$.

\textit{Hint:}
In \textbf{(i)}, notice that $\psi_{i}$ commutes with any element of $k\left[
G\right]  $, and thus acts on $V_{j}$ as an intertwining operator. Corollary
\ref{slacf} thus yields that $\psi_{i}$ acts on $V_{j}$ as a
scalar. Compute this scalar by taking its trace in $V_j$. 

\vskip .1in

Here is another ``orthogonality formula'' for characters, in
which summation is taken over irreducible representations
rather than group elements.

\begin{theorem}\label{column-orthogonality}
Let $g,h\in G$, and let $Z_g$ denote the centralizer of $g$ in $G$. Then
$$
\sum_{V}\chi_V(g)\overline{\chi_V(h)}=
\left\{
		\begin{array}{l}
			|Z_g| \text{ if }g\text{ is conjugate to
}h \\
			0, \text{ otherwise }
		\end{array}\right.
$$
where the summation is taken over all irreducible representations
of $G$.
\end{theorem}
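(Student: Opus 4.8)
The plan is to derive this "column orthogonality" relation from the "row orthogonality" of Theorem \ref{orthogonality-of-characters} by a change-of-basis argument in the space $F_c(G,\CC)$ of class functions. The key observation is that there are two natural bases for $F_c(G,\CC)$: the irreducible characters $\chi_V$, and the indicator functions of conjugacy classes. Row orthogonality tells us the $\chi_V$ are orthonormal; the desired statement is essentially the assertion that the transition matrix between these two bases is (up to normalization) unitary, read off on the "other side."

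First I would set up notation: let $C_1,\dots,C_r$ be the conjugacy classes of $G$, pick representatives $g_1,\dots,g_r$, and recall from the corollary to the previous section that the number of irreducible representations equals $r$. For a class function $f$, write $f(C_j)$ for its common value on $C_j$. The inner product becomes $(f_1,f_2)=\frac1{|G|}\sum_{j=1}^r |C_j|\, f_1(C_j)\overline{f_2(C_j)}$. Now form the $r\times r$ matrix $U$ with entries $U_{Vj} = \sqrt{|C_j|/|G|}\;\chi_V(g_j)$, rows indexed by irreducibles $V$, columns by classes $C_j$. Row orthogonality (Theorem \ref{orthogonality-of-characters}) says precisely that the rows of $U$ are orthonormal in the standard Hermitian inner product on $\CC^r$, i.e. $U U^* = I$. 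Since $U$ is square, it follows that $U^* U = I$ as well, i.e. the columns of $U$ are orthonormal: $\sum_V \overline{U_{Vj}}\,U_{Vk} = \delta_{jk}$. Writing this out gives $\frac{\sqrt{|C_j||C_k|}}{|G|}\sum_V \overline{\chi_V(g_j)}\chi_V(g_k) = \delta_{jk}$, so $\sum_V \chi_V(g_j)\overline{\chi_V(g_j)} = |G|/|C_j|$ and $\sum_V \chi_V(g_j)\overline{\chi_V(g_k)}=0$ for $j\neq k$. Finally I would invoke the orbit–stabilizer theorem, $|C_j| = |G|/|Z_{g_j}|$, so $|G|/|C_j| = |Z_{g_j}|$, and note that $\chi_V$ is a class function so the value depends only on the classes of $g$ and $h$; this yields exactly the claimed formula.

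The main obstacle — really the only nontrivial point — is justifying the passage from $UU^*=I$ to $U^*U=I$, which requires knowing that $U$ is a \emph{square} matrix, i.e. that the number of irreducible representations of $G$ equals the number of conjugacy classes. This is exactly the content of the corollary following the theorem on characters spanning $F_c(G,\CC)$ (valid since $\CC$ has characteristic $0$), so it is available to us. One should also double-check that the $r$ rows of $U$ are genuinely linearly independent as vectors in $\CC^r$ — but orthonormality already gives this for free, so there is no circularity. Everything else is bookkeeping: expanding the orthonormality of columns and substituting $|Z_g|=|G|/|C|$.
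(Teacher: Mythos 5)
Your argument is correct, and there is no circularity: the only nontrivial input beyond row orthogonality (Theorem \ref{orthogonality-of-characters}) is that the number of irreducible representations equals the number of conjugacy classes, which is established earlier (as a corollary of the fact that irreducible characters form a basis of $F_c(G,\CC)$), so the matrix $U$ is indeed square and $UU^*=I$ forces $U^*U=I$. The bookkeeping with $|C_j|=|G|/|Z_{g_j}|$ and the class-function property is also fine.

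However, this is not the route the paper's main proof takes; it is, almost verbatim, the alternative proof the paper records in the remark immediately following its proof (the unitary matrix $U_{V,g}=\chi_V(g)/\sqrt{|Z_g|}$, rows orthonormal by row orthogonality, hence columns orthonormal). The paper's primary proof is instead a direct computation: using $\overline{\chi_V(h)}=\chi_{V^*}(h)$ and Maschke's theorem in the form $\CC[G]\cong\oplus_V V\otimes V^*\cong\oplus_V \End V$, it identifies $\sum_V\chi_V(g)\chi_{V^*}(h)$ with the trace of the operator $x\mapsto gxh^{-1}$ on $\CC[G]$, and then evaluates that trace in the basis of group elements: the diagonal entries vanish unless $x=gxh^{-1}$ has solutions, which happens exactly when $g$ and $h$ are conjugate, in which case the number of solutions is $|Z_g|$. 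The trace computation buys a self-contained, structural explanation of why the centralizer order appears (as a fixed-point count), and it does not need the equality of the two counts ($r$ irreducibles versus $r$ classes) as an input; your linear-algebra argument is shorter and makes the ``rows orthonormal $\Rightarrow$ columns orthonormal'' duality transparent, but it leans on that counting corollary and on the square-matrix trick.
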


\begin{proof} As noted above,
$\overline{\chi_V(h)}=\chi_{V^*}(h)$, so
the left hand side equals (using Maschke's theorem):
$$
\sum_{V}\chi_V(g)\chi_{V^*}(h)=
{\rm Tr}|_{\oplus_V V\otimes V^*}(g\otimes (h^*)^{-1})=
$$
$$
{\rm Tr}|_{\oplus_V {\rm End}V}(x\mapsto gxh^{-1})=
{\rm Tr}|_{\CC[G]}(x\mapsto gxh^{-1}).
$$
If $g$ and $h$ are not conjugate, this trace is clearly zero,
since the matrix of the operator $x\mapsto gxh^{-1}$ in the basis of
group elements has zero diagonal entries. On the other hand, if
$g$ and $h$ are in the same conjugacy class, the trace is equal
to the number of elements $x$ such that $x=gxh^{-1}$, i.e., the
order of the centralizer $Z_g$ of $g$. We are done.
\end{proof}

{\bf Remark.} Another proof of this result is as follows.
Consider the matrix $U$ whose rows are labeled by irreducible
representations of $G$ and columns by conjugacy classes, with
entries $U_{V,g}=\chi_V(g)/\sqrt{|Z_g|}$. Note that
the conjugacy class of $g$ is $G/Z_g$,
thus $|G|/|Z_g|$ is the number of
elements conjugate to $G$. Thus,
by Theorem \ref{orthogonality-of-characters},
the rows of the matrix $U$ are orthonormal. This means that
$U$ is unitary and hence its columns are also orthonormal,
which implies the statement.

\subsection{Unitary representations. Another proof of Maschke's theorem for
complex representations}

\begin{definition}
A unitary finite dimensional
representation of a group $G$ is a representation of $G$ on a
complex finite dimensional 
vector space $V$ over $\CC$ equipped
with a $G$-invariant  positive definite Hermitian
form\footnote{We agree that Hermitian forms are linear in the
first argument and antilinear in the second one.}  $(,)$,
i.e.,  such that $\rho_V(g)$ are unitary operators: $(\rho_V(g)v, \rho_V(g)w) = (v,w).$
\end{definition}

\begin{theorem}\label{uni}
If $G$ is finite, then any finite dimensional representation of
$G$ has a unitary structure.
If the representation is irreducible, this
structure is unique up to scaling by a positive real number.
\end{theorem}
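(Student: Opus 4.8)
The plan is to prove existence of a unitary structure by the same averaging trick used above in the proof of Maschke's theorem, and to prove uniqueness in the irreducible case by Schur's lemma.

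For existence, pick any positive definite Hermitian form $B(\cdot,\cdot)$ on $V$ (for instance the standard form in some basis) and set
\[
(v,w):=\frac{1}{|G|}\sum_{g\in G}B(\rho_V(g)v,\rho_V(g)w).
\]
I would then check: this is Hermitian, being a finite average of Hermitian forms; it is positive definite, since for $v\neq 0$ every summand $B(\rho_V(g)v,\rho_V(g)v)$ is strictly positive; and it is $G$-invariant, because for $h\in G$ the map $g\mapsto gh$ is a bijection of $G$, so $(\rho_V(h)v,\rho_V(h)w)=(v,w)$. The last point says precisely that each $\rho_V(g)$ is unitary for $(\cdot,\cdot)$. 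As in Maschke's theorem, finiteness of $G$ enters through the factor $1/|G|$.

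For uniqueness when $V$ is irreducible, suppose $(\cdot,\cdot)_1$ and $(\cdot,\cdot)_2$ are two $G$-invariant positive definite Hermitian forms. Since $V$ is finite dimensional and $(\cdot,\cdot)_1$ is nondegenerate, there is a unique operator $A\in\End(V)$ with $(v,w)_2=(Av,w)_1$ for all $v,w$, and $A$ is self-adjoint and positive definite with respect to $(\cdot,\cdot)_1$. Using the $G$-invariance of both forms (equivalently, that $\rho_V(g)$ is unitary for each, so that its adjoint equals $\rho_V(g^{-1})$), one checks that $A\rho_V(g)=\rho_V(g)A$ for all $g$, i.e. that $A$ is an intertwining operator. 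By Corollary \ref{slacf}, $A=\lambda\Id$ for some $\lambda\in\CC$, and positive definiteness of $A$ forces $\lambda=(Av,v)_1/(v,v)_1>0$ for $v\neq 0$. Hence $(\cdot,\cdot)_2=\lambda(\cdot,\cdot)_1$ with $\lambda$ a positive real number, which is the asserted uniqueness.

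The only step that needs genuine care is verifying that $A$ is an intertwining operator: one must keep track of linearity versus antilinearity in the two arguments of the forms and be precise about which adjoint is used. The rest — the reindexing in the averaging and the positivity bookkeeping — is routine. It is also worth pointing out that irreducibility is essential for uniqueness: on a reducible representation such as the trivial $2$-dimensional one, every positive definite Hermitian form is $G$-invariant, so uniqueness fails there.
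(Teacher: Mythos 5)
Your proof is correct and follows essentially the same route as the text: existence by averaging an arbitrary positive definite Hermitian form over $G$, and uniqueness in the irreducible case by writing $(v,w)_2=(Av,w)_1$, checking that $A$ is an intertwiner, and applying Corollary \ref{slacf} to get $A=\lambda\,\mathrm{Id}$ with $\lambda>0$. The only cosmetic difference is the normalizing factor $1/|G|$ in the average (harmless) and that you spell out the verification that $A$ commutes with the $\rho_V(g)$, which the text leaves implicit.
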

\begin{proof}
Take any positive definite form $B$ on $V$ and define another form $\ov{B}$ as follows:
\[
	\ov{B}(v,w) = \sum_{g\in G}B(\rho_V(g)v, \rho_V(g)w)
\]
Then $\ov{B}$ is a positive definite Hermitian form on $V,$
and $\rho_V(g)$ are unitary operators.
If $V$ is an irreducible representation and $B_1, B_2$ are two
positive definite Hermitian forms on $V,$
then $B_1(v,w) = B_2(Av,w)$ for some homomorphism $A: V\to V$
(since any positive definite Hermitian form is
nondegenerate).
By Schur's lemma, $A = \lambda {\rm Id},$ and clearly $\lambda>0$.
\end{proof}

Theorem \ref{uni} implies that if $V$ is a finite dimensional
representation of a finite group $G$, then the {\it complex conjugate
representation} $\overline{V}$ (i.e., the same space $V$ with the same addition
and the same action of $G$, but complex conjugate
action of scalars) is isomorphic to the dual representation
$V^*$. Indeed, a homomorphism of representations $\overline{V}\to V^*$
is obviously the same thing as an invariant sesquilinear form on
$V$ (i.e. a form additive on both arguments which is linear on the 
first one and antilinear on the second one), 
and an isomorphism is the same thing as a nondegenerate
invariant sesquilinear form. So one can use a unitary structure on $V$ to
define an isomorphism $\overline{V}\to V^*$.

\begin{theorem}\label{uni1}
A finite dimensional
unitary representation $V$ of any group $G$ is completely reducible.
\end{theorem}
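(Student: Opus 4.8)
The plan is to mimic the structure of the second proof of Maschke's theorem given above: reduce complete reducibility to the statement that every subrepresentation has a $G$-invariant complement, and then induct on $\dim V$. So suppose $V$ is a finite dimensional unitary representation of $G$ with invariant Hermitian form $(,)$, and let $W\subset V$ be any subrepresentation. I claim the orthogonal complement
$$
W^\perp=\{v\in V:\ (v,w)=0\ \text{for all}\ w\in W\}
$$
is also a subrepresentation and that $V=W\oplus W^\perp$ as representations.

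First I would check $W^\perp$ is $G$-invariant. Take $v\in W^\perp$, $w\in W$, and $g\in G$. Using that $\rho_V(g)$ is unitary, $(\rho_V(g)v,w)=(v,\rho_V(g)^{-1}w)=(v,\rho_V(g^{-1})w)$; since $W$ is a subrepresentation, $\rho_V(g^{-1})w\in W$, so this inner product is $0$. Hence $\rho_V(g)v\in W^\perp$, proving $W^\perp$ is a subrepresentation. Next, $W\cap W^\perp=0$ because the form is positive definite (if $v$ lies in both then $(v,v)=0$, so $v=0$), and $\dim W+\dim W^\perp=\dim V$ by the standard fact about orthogonal complements of a nondegenerate form. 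Therefore $V=W\oplus W^\perp$ as representations.

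Now I would run the induction on $\dim V$. The base case $\dim V=0$ (or $\dim V=1$) is trivial, since a $1$-dimensional representation is automatically irreducible. For the inductive step: if $V$ is irreducible we are done; otherwise there is a subrepresentation $W$ with $0\ne W\ne V$, and by the above $V=W\oplus W^\perp$ with both summands nonzero of strictly smaller dimension. The restriction of $(,)$ to $W$ (resp. to $W^\perp$) is again a $G$-invariant positive definite Hermitian form, so $W$ and $W^\perp$ are unitary representations, and by the induction hypothesis each decomposes as a direct sum of irreducibles; concatenating these decompositions exhibits $V$ as a direct sum of irreducibles. I do not expect any real obstacle here — the only point requiring a moment's care is the verification that $\rho_V(g^{-1})w$ genuinely lands in $W$ (which is exactly what "$W$ is a subrepresentation" gives) and that the restricted form stays positive definite so the induction is legitimately on unitary representations.
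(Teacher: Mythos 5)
Your proof is correct and follows essentially the same route as the paper: the paper's argument also takes an arbitrary subrepresentation $W$, observes that its orthogonal complement $W^\perp$ under the invariant Hermitian form is a subrepresentation with $V=W\oplus W^\perp$, and concludes complete reducibility. You have simply spelled out the details (invariance of $W^\perp$ via unitarity, positive definiteness giving $W\cap W^\perp=0$, and the induction on dimension) that the paper leaves implicit.
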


\begin{proof}
Let $W$ be a subrepresentation of $V$. Let $W^\perp$ be the
orthogonal complement of $W$ in $V$ under the Hermitian inner
product. Then $W^\perp$ is a subrepresentation of $W$, and
$V=W\oplus W^\perp$. This implies that $V$ is completely
reducible.
\end{proof}

Theorems \ref{uni} and \ref{uni1} imply Maschke's theorem for
complex representations (Theorem \ref{maschk}). Thus, we have obtained a new proof of this
theorem over the field of complex numbers. 

\begin{remark} Theorem \ref{uni1} shows that for infinite groups $G$,
a finite dimensional representation may fail
to admit a unitary structure
(as there exist finite dimensional representations,
e.g. for $G=\Bbb Z$, which are indecomposable but not
irreducible).
\end{remark}

\subsection{Orthogonality of matrix elements}

Let $V$ be an irreducible representation of a finite group
$G,$ and $v_1, v_2, \ldots, v_n$ be an orthonormal basis of $V$
under the invariant Hermitian form.
The matrix elements of $V$ are $t^V_{ij}(x) = (\rho_V(x)v_i, v_j).$

\begin{proposition}\label{matr-orthogonality}
	\begin{itemize}
		\item[(i)] Matrix elements of nonisomorphic
irreducible representations are orthogonal in ${\rm Fun}(G,\CC)$ under the form $(f,g) =
			\frac1{|G|}\sum_{x\in G}f(x)\ov{g(x)}.$
		\item[(ii)] $(t^V_{ij}, t^V_{i'j'}) = \delta_{ii'}\delta_{jj'}\cdot \frac1{\dim V}$
	\end{itemize}
Thus, matrix elements of irreducible representations of
$G$ form an orthogonal basis of ${\rm Fun}(G,\Bbb C)$.
\end{proposition}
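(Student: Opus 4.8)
The plan is to derive both orthogonality relations from Schur's lemma by means of an averaging operator, in the spirit of the proof of Theorem \ref{orthogonality-of-characters}; the present statement is essentially that theorem refined so as to keep track of basis vectors rather than only traces. As a first step I would use Theorem \ref{uni} to fix a $G$-invariant positive definite Hermitian form on each irreducible representation (so that the matrix elements $t^V_{ij}$ make sense), and record the identity $\rho_V(g)^{-1}=\rho_V(g)^*$, which is what lets us pass between a matrix element and its complex conjugate.

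The key construction: given irreducible representations $V$ and $W$ with orthonormal bases $v_1,\dots,v_n$ and $w_1,\dots,w_m$, and fixed indices $i,i'$, consider the operator $A\colon W\to V$ given by $A(w)=(w,w_{i'})_W\,v_i$ (of rank $\le 1$), and its average
$$
\widehat A:=\frac{1}{|G|}\sum_{g\in G}\rho_V(g)\,A\,\rho_W(g^{-1}).
$$
A one-line reindexing in $g$ gives $\rho_V(h)\widehat A=\widehat A\,\rho_W(h)$ for all $h\in G$, so $\widehat A\in\Hom_G(W,V)$. Expanding $\widehat A$ and using unitarity of $\rho_W$ to rewrite $(\rho_W(g^{-1})w_{j'},w_{i'})_W=(w_{j'},\rho_W(g)w_{i'})_W$, I would verify that for all $j,j'$,
$$
(\widehat A\,w_{j'},v_j)_V=\frac{1}{|G|}\sum_{g\in G}t^V_{ij}(g)\,\overline{t^W_{i'j'}(g)}=(t^V_{ij},t^W_{i'j'}).
$$
Thus the whole proposition reduces to understanding this one intertwiner $\widehat A$.

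For part (i), if $V\not\cong W$ then $\Hom_G(W,V)=0$ by Schur's lemma (Proposition \ref{shul}), so $\widehat A=0$ and all the inner products above vanish. For part (ii) put $W=V$ with the same orthonormal basis; then $\widehat A=\lambda\cdot{\rm Id}_V$ for some $\lambda\in\CC$ by Corollary \ref{slacf}. Taking traces and using $\tr(\rho_V(g)A\rho_V(g)^{-1})=\tr(A)$ I get $\lambda\dim V=\tr(A)=\delta_{ii'}$ (in the chosen basis $A$ is the matrix unit $E_{ii'}$), so $\lambda=\delta_{ii'}/\dim V$, whence $(t^V_{ij},t^V_{i'j'})=\lambda(v_{j'},v_j)_V=\delta_{ii'}\delta_{jj'}/\dim V$. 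Finally, by (i) and (ii) the collection of all matrix elements of all irreducible representations consists of pairwise orthogonal nonzero elements of ${\rm Fun}(G,\CC)$, hence is linearly independent; since its cardinality is $\sum_V(\dim V)^2=|G|=\dim{\rm Fun}(G,\CC)$ by Maschke's theorem (Theorem \ref{maschk}), it must be a basis.

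I expect the only delicate point to be the bookkeeping in the key computation — correctly matching the adjoints and complex conjugates of the Hermitian forms so that $(\widehat A\,w_{j'},v_j)$ reproduces $(t^V_{ij},t^W_{i'j'})$ exactly; once that identity is in place, everything else is an immediate application of results already proved.
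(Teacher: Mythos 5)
Your proof is correct, and it is a genuine variant of the one in the text. The paper computes $(t^V_{ij},t^W_{i'j'})$ as $\bigl(P(v_i\otimes w_{i'}^*),\,v_j\otimes w_{j'}^*\bigr)$, where $P=\frac{1}{|G|}\sum_{x\in G}x$ acts on $V\otimes W^*$, and then uses that $P$ is the projector onto $G$-invariants: this gives zero at once when $V\ncong W$, and for $V=W$ the paper writes down the explicit decomposition $V\otimes V^*=\CC\oplus L$ (the trivial line spanned by $\sum_k v_k\otimes v_k^*$ and the traceless part) and reads off the projection of $v_i\otimes v_{i'}^*$, yielding $\delta_{ii'}\delta_{jj'}/\dim V$. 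You instead average the rank-one operator $A\colon W\to V$, $A(w)=(w,w_{i'})v_i$, note that $\widehat A$ is an intertwiner whose matrix entries are exactly the inner products in question, and finish by Schur's lemma: $\widehat A=0$ if $V\ncong W$, and $\widehat A=\lambda\,\mathrm{Id}$ with $\lambda$ determined by the trace identity $\lambda\dim V=\tr A=\delta_{ii'}$ when $V=W$. Under the canonical isomorphism $V\otimes W^*\cong\Hom(W,V)$ the two arguments are the same averaging construction, but the mechanics differ: the paper's route fits seamlessly with its proof of Theorem \ref{orthogonality-of-characters} (everything is a statement about $P$ acting on tensor products) and exhibits the invariant projection formula explicitly, while your route replaces the explicit identification of $(V\otimes V^*)^G$ by an appeal to Corollary \ref{slacf} plus a one-line trace computation, which is arguably more economical and is the classical operator-averaging proof. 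Your bookkeeping with the Hermitian conventions (linearity in the first argument, unitarity giving $(\rho_W(g^{-1})w_{j'},w_{i'})=\overline{t^W_{i'j'}(g)}$) checks out, and the concluding basis argument — pairwise orthogonal nonzero functions, with $\sum_V(\dim V)^2=|G|$ by Maschke — matches the paper's final step.
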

\begin{proof}
Let $V$ and $W$ be two irreducible representations
of $G.$ Take $\{v_i\}$ to be an orthonormal basis of $V$ and
$\{w_i\}$ to be an orthonormal basis of $W$
under their positive definite invariant Hermitian forms.
Let $w_i^*\in W^*$ be the linear function on $W$ defined by
taking the inner product with $w_i$: $w_i^*(u)=(u,w_i)$.
Then for $x\in G$ we have
$(xw_i^*,w_j^*)=\overline{(xw_i,w_j)}$. Therefore,
putting $P = \frac{1}{|G|}\sum_{x\in G}x,$ we have
\[
(t_{ij}^V,t_{i'j'}^W)=	|G|^{-1}\sum_{x\in G}
( xv_i, v_j ) \ov{( xw_{i'},
w_{j'})} = |G|^{-1}\sum_{x\in G} ( xv_i, v_j)( xw_{i'}^*, w_{j'}^*) = ( P(v_i \otimes w_{i'}^*), v_j\otimes w_{j'}^* )
\]
If $V \ne W,$ this is zero, since $P$ projects to the trivial
representation, which does not occur in $V\otimes W^*$.
If $V = W,$ we need to consider
$( P(v_i \otimes v_{i'}^*), v_j \otimes v_{j'}^*).$
We have a $G$-invariant decomposition
\begin{eqnarray*}
	V\otimes V^* &=& \CC \oplus L \\
	\CC &=& \spann(\sum v_k \otimes v_k^*) \\
	L &=& \spann_{a: \sum_k a_{kk}=0}(\sum_{k,l} a_{kl} v_k \otimes v_l^*),
\end{eqnarray*}
and $P$ projects to the first summand along the second one. 
The projection of $v_i\otimes v_{i'}^*$ to $\CC \subset \CC \oplus L$ is thus
\[
	\frac{\delta_{ii'}}{\dim V} \sum v_k \otimes v_k^*
\]
This shows that
\[
	( P(v_i \otimes v_{i'}^*), v_j
\otimes v_{j'}^*) = \frac{\delta_{ii'}\delta_{jj'}}{\dim V}
\]
which finishes the proof of (i) and (ii). The last statement
follows immediately from the sum of squares formula.
\end{proof}

\subsection{Character tables, examples}

The characters of all the irreducible representations of a finite
group can be arranged into a character table, with conjugacy
classes of elements as the columns, and characters as the rows.
More specifically, the
first row in a character table lists representatives of conjugacy classes, the
second one the numbers of elements in the conjugacy classes, and
the other rows list the values of the characters on the conjugacy
classes. Due to Theorems \ref{orthogonality-of-characters} and
\ref{column-orthogonality} the rows and columns of a character
table are orthonormal with respect
to the appropriate inner products.

Note that in any character table, the row corresponding to the
trivial representation consists of ones, and the column
corresponding to the neutral element consists of the dimensions
of the representations.

Here is, for example, the character table of $S_3:$
\begin{tabular}{|c|c|c|c|}
 \hline
 	$S_3$ & Id & $(12)$ & $(123)$ \\
 \hline
	\# & 1 & 3 & 2 \\
 \hline
	$\CC_+$ & 1 & 1 & 1 \\
 \hline
	$\CC_{-}$ & 1 & -1 & 1 \\
 \hline
	$\CC^2$ & 2 & 0 & -1 \\
 \hline
\end{tabular}

It is obtained by explicitly computing traces in the irreducible
representations.

For another example consider $A_4,$ the group of even
permutations of $4$ items.
There are three one-dimensional representations (as $A_4$ has a
normal subgroup $\Bbb Z_2\oplus \Bbb Z_2$, and $A_4/\Bbb
Z_2\oplus \Bbb Z_2=\Bbb Z_3$). Since there are four
conjugacy classes in total, there is one more irreducible representation of dimension $3.$ Finally, the character table is

\begin{tabular}{|c|c|c|c|c|}
 \hline
 	$A_4$ & Id & $(123)$ & $(132)$ & $(12)(34)$ \\
 \hline
	\# & 1 & 4 & 4 & 3 \\
 \hline
	$\CC$ & 1 & 1 & 1 & 1 \\
 \hline
	$\CC_{\epsilon}$ & 1 & $\epsilon$ & $\epsilon^2$ & 1 \\
 \hline
	$\CC_{\epsilon^2}$ & 1 & $\epsilon^2$ & $\epsilon$ & 1 \\
 \hline
	$\CC^3$ & 3 & 0 & 0 & $-1$ \\
 \hline
\end{tabular}

where $\epsilon = \exp(\frac{2\pi i}3).$

The last row can be computed using the orthogonality of rows.
Another way to compute the last row is to note that
$\Bbb C^3$ is the representation of $A_4$ by rotations
of the regular tetrahedron: in this case $(123),(132)$ are the
rotations by $120^0$ and $240^0$ around a perpendicular to a face
of the tetrahedron, while $(12)(34)$ is the rotation by $180^0$
around an axis perpendicular to two opposite edges.

\begin{example}The following three character tables are of $Q_8,$ $S_4,$ and $A_5$ respectively.

\begin{tabular}{|c|c|c|c|c|c|}
 \hline
 	$Q_8$ & 1 & -1 & $i$ & $j$ & $k$ \\
 \hline
	\# & 1 & 1 & 2 & 2 & 2 \\
 \hline
	$\CC_{++}$ & 1 & 1 & 1 & 1 & 1 \\
 \hline
	$\CC_{+-}$ & 1 & 1 & 1 & -1 & -1 \\
 \hline
	$\CC_{-+}$ & 1 & 1 & -1 & 1 & -1 \\
 \hline
	$\CC_{--}$ & 1 & 1 & -1 & -1 & 1 \\
 \hline
	$\CC^2$ & 2 & -2 & 0 & 0 & 0 \\
 \hline
\end{tabular}

\begin{tabular}{|c|c|c|c|c|c|}
 \hline
 	$S_4$ & Id & $(12)$ & $(12)(34)$ & $(123)$ & $(1234)$ \\
 \hline
	\# & 1 & 6 & 3 & 8 & 6 \\
 \hline
	$\CC_+$ & 1 & 1 & 1 & 1 & 1\\
 \hline
	$\CC_{-}$ & 1 & -1 & 1 & 1 & -1 \\
 \hline
	$\CC^2$ & 2 & 0 & 2 & -1 & 0 \\
 \hline
	$\CC_+^3$ & 3 & -1 & -1 & 0 & 1 \\
 \hline
	$\CC_{-}^3$ & 3 & 1 & -1 & 0 & -1 \\
 \hline
\end{tabular}

\begin{tabular}{|c|c|c|c|c|c|}
 \hline
 	$A_5$ & Id & $(123)$ & $(12)(34)$ & $(12345)$ & (13245) \\
 \hline
	\# & 1 & 20 & 15 & 12 & 12 \\
 \hline
	$\CC$ & 1 & 1 & 1 & 1 & 1 \\
 \hline
	$\CC^3_+$ & 3 & 0 & -1 & $\frac{1+\sqrt5}2$ & $\frac{1-\sqrt5}2$ \\
 \hline
	$\CC^3_-$ & 3 & 0 & -1 & $\frac{1-\sqrt5}2$ & $\frac{1+\sqrt5}2$ \\
 \hline
	$\CC^4$ & 4 & 1 & 0 & -1 & -1 \\
 \hline
	$\CC^5$ & 5 & -1 & 1 & 0 & 0 \\
 \hline
\end{tabular}

Indeed, the computation of the characters of the 1-dimensional
representations is straightforward.

The character of the
2-dimensional representation of $Q_8$ is obtained
from the explicit formula (\ref{Pauli}) for this representation, or by using
the orthogonality.

For $S_4$, the 2-dimensional irreducible representation
is obtained from the 2-dimensional irreducible representation of
$S_3$ via the surjective homomorphism $S_4\to S_3$, which allows
to obtain its character from the character table of $S_3$.

The character of the 3-dimensional representation $\CC^3_+$ is computed from
its geometric realization by rotations of the cube. Namely,
by rotating the cube, $S_4$ permutes the main diagonals. Thus
(12) is the rotation by $180^0$ around an axis that is perpendicular to two
opposite edges, (12)(34) is the rotation
by $180^0$ around an axis that is perpendicular to two
opposite faces, (123) is the rotation around a main diagonal by
$120^0$, and $(1234)$ is the rotation by $90^0$ around an axis
that is perpendicular to two
opposite faces; this allows us to compute the traces easily,
using the fact that the trace of a rotation by the angle $\phi$
in $\Bbb R^3$ is $1+2\cos \phi$.
Now the character of $\CC^3_-$ is found by multiplying the
character of $\CC^3_+$ by the character of the sign
representation.

Finally, we explain how to obtain the character table of $A_5$
(even permutations of 5 items).
The group $A_5$ is the group of rotations of the regular
icosahedron. Thus it has a 3-dimensional
``rotation representation''  $\CC^3_+$, in which
(12)(34) is  the rotation by $180^0$ around an axis perpendicular to
two opposite edges, (123) is the rotation by $120^0$ around an axis
perpendicular to two opposite faces, and (12345), (13254) are
the rotations by $72^0$, respectively $144^0$, around axes going
through two opposite vertices. The character of this
representation is computed from this description in a
straightforward way.

Another representation of $A_5$, which is also 3-dimensional,
is $\CC^3_+$ twisted by the automorphism of $A_5$ given by
conjugation by $(12)$ inside $S_5$. This representation
is denoted by $\CC^3_-$. It has the same character as $\CC^3_+$,
except that the conjugacy classes (12345) and (13245) are
interchanged.

There are two remaining irreducible representations, and
by the sum of squares formula their dimensions are 4 and 5.
So we call them $\CC^4$ and $\CC^5$.

The representation $\CC^4$ is
realized on the space of functions on the set
$\lbrace{1,2,3,4,5\rbrace}$ with zero sum of values, where $A_5$
acts by permutations (check that it is irreducible!).
The character of this representation is
equal to the character of the 5-dimensional permutation
representation minus the character of the 1-dimensional trivial
representation (constant functions). The former at an element $g$
equals to the number of items among 1,2,3,4,5 which are fixed by
$g$.

The representation $\CC^5$ is realized on the space of functions
on pairs of opposite vertices of the icosahedron which
has zero sum of values (check that it is irreducible!).
The character of this representation is
computed similarly to the character of $\CC^4$, or from the
orthogonality formula.
\end{example}

\subsection{Computing tensor product multiplicities using character tables}

Character tables allow us to compute the tensor product
multiplicities
$N_{ij}^k$ using
\[
	V_i \otimes V_j = \sum N_{ij}^k V_k, \quad N_{ij}^k = (\chi_i\chi_j, \chi_k)
\]
\begin{example} The following tables represent computed tensor product multiplicities of irreducible representations of $S_3, S_4,$ and $A_5$ respectively.
\begin{tabular}{|c|c|c|c|}
\hline
$S_3$ & $\CC_+$ & $\CC_{-}$ & $\CC^2$ \\
\hline
 $\CC_+$ & $\CC_+$ & $\CC_{-}$ & $\CC^2$ \\
\hline
$\CC_{-}$ & & $\CC_+$ & $\CC^2$ \\
\hline
$\CC^2$ & & & $\CC_+ \oplus \CC_- \oplus \CC^2$ \\
\hline
\end{tabular}

\begin{tabular}{|c|c|c|c|c|c|}
 \hline
$S_4$ & $\CC_+$ & $\CC_-$ & $\CC^2$ & $\CC^3_+$ & $\CC^3_-$ \\
 \hline
 $\CC_+$ & $\CC_+$ & $\CC_-$ & $\CC^2$ & $\CC^3_+$ & $\CC^3_-$ \\
 \hline
 $\CC_-$ & & $\CC_+$ & $\CC^2$ & $\CC^3_-$ & $\CC^3_+$ \\
 \hline
 $\CC^2$   & & & $\CC_+ \oplus \CC_- \oplus \CC^2$ & $\CC^3_+ \oplus \CC^3_-$ & $\CC^3_+ \oplus \CC^3_-$ \\
 \hline
 $\CC^3_+$ & & & & $\CC_+\oplus \CC^2 \oplus \CC_+^3 \oplus \CC_-^3$ & $\CC_- \oplus \CC^2 \oplus \CC_+^3 \oplus \CC_-^3$ \\
 \hline
 $\CC^3_-$ & & & & & $\CC_+ \oplus \CC^2 \oplus \CC_+^3 \oplus \CC_-^3$ \\
 \hline
\end{tabular}

\begin{tabular}{|c|c|c|c|c|c|}
 \hline
$A_5$ & $\CC$ & $\CC^3_+$ & $\CC^3_-$ & $\CC^4$ & $\CC^5$  \\
 \hline
  $\CC$ & $\CC$ &  $\CC_3^+$ & $\CC^3_-$ & $\CC^4$ & $\CC^5$  \\
 \hline
$\CC^3_+$ & & $\CC \oplus \CC^5 \oplus \CC^3_+$ & $\CC^4 \oplus \CC^5$ & $\CC^3_- \oplus \CC^4 \oplus \CC^5$ & $\CC^3_+ \oplus \CC^3_- \oplus \CC^4 \oplus \CC^5$  \\
 \hline
$\CC^3_-$ & & & $\CC \oplus \CC^5 \oplus \CC^3_+$ & $\CC^3_+ \oplus \CC^4 \oplus \CC^5$ & $\CC^3_+ \oplus \CC^3_- \oplus \CC^4 \oplus \CC^5$ \\
 \hline
$\CC^4$ & & & & $\CC^3_+ \oplus \CC^3_- \oplus \CC \oplus \CC^4 \oplus \CC^5$ & $\CC^3_+ \oplus \CC^3_- \oplus 2\CC^5 \oplus \CC^4$  \\
 \hline
$\CC^5$ & & & & & $\CC \oplus \CC^3_+ \oplus \CC^3_- \oplus 2\CC^4 \oplus 2\CC^5$ \\
 \hline
\end{tabular}
\end{example}

\subsection{Problems}
\begin{problem}\label{2:5} Let $G$ be the group of symmetries of a
regular N-gon (it has 2N elements).

(a) Describe all irreducible complex
representations  of this group (consider the cases of odd and
even $N$)

(b) Let $V$ be the 2-dimensional complex representation of
$G$ obtained by complexification of the standard representation on
the real plane (the plane of the polygon). Find the decomposition
of $V\otimes V$ in a direct sum of irreducible representations.
\end{problem}

\begin{problem}\label{2:6}
Let $G$ be the group of 3 by 3 matrices over $\Bbb F_p$ which are upper
triangular and have ones on the diagonal, under multiplication
 (its order is $p^3$). It is called the Heisenberg
group. For any complex number $z$ such that $z^p=1$
we define a representation of $G$ on the space $V$ of complex functions on
$\Bbb F_p$, by
$$
(\rho\begin{pmatrix} 1&1&0\\ 0&1&0\\
0&0&1\end{pmatrix}f)(x)=f(x-1),
$$
$$
(\rho\begin{pmatrix} 1&0&0\\ 0&1&1\\
0&0&1\end{pmatrix}f)(x)=z^xf(x).
$$
(note that $z^x$ makes sense since $z^p=1$).

(a) Show that such a representation exists and is unique, and compute $\rho(g)$
for all $g\in G$.

(b) Denote this representation by $R_z$. Show that $R_z$ is irreducible
if and only if $z\ne 1$.

(c) Classify all 1-dimensional representations of $G$.
Show that $R_1$ decomposes into a direct sum of 1-dimensional representations,
where each of them occurs exactly once.

(d) Use (a)-(c) and the ``sum of squares'' formula to classify all
irreducible  representations of $G$.
\end{problem}

\begin{problem}\label{3:1}
Let $V$ be a finite dimensional complex vector space, and $GL(V)$
be the group of invertible linear transformations of $V$.
Then $S^nV$ and $\Lambda^mV$ ($m\le \text{dim}(V)$) are representations
of $GL(V)$ in a natural way. Show that they are irreducible representations.

Hint: Choose a basis $\lbrace{e_i\rbrace}$ in
$V$. Find a diagonal element $H$ of $GL(V)$
such that $\rho(H)$ has distinct eigenvalues.
(where $\rho$ is one of the above representations). This shows that
if $W$ is a subrepresentation, then it is spanned by a subset S
of a basis of eigenvectors of $\rho(H)$.
Use the invariance of $W$ under the operators $\rho(1+E_{ij})$ (where
$E_{ij}$ is defined by $E_{ij}e_k=\delta_{jk}e_i$) for all
$i\ne j$
to show that if the subset $S$ is nonempty, it is necessarily
the entire basis.
\end{problem}

\begin{problem}\label{3:2}
Recall that the adjacency matrix of a graph $\Gamma$
(without multiple edges) is the matrix in which the $ij$-th entry
is $1$ if
the vertices $i$ and $j$ are connected with an edge,
and zero otherwise.
Let $\Gamma$ be a finite graph whose
automorphism group is nonabelian.
Show that the adjacency matrix of $\Gamma$
must have repeated eigenvalues.
\end{problem}

\begin{problem}\label{3:3}
Let $I$ be the set of vertices of a regular icosahedron ($|I|=12$).
Let ${\rm Fun}(I)$ be the space of complex functions on $I$.
Recall that the group $G=A_5$ of even permutations of 5 items acts
on the icosahedron, so we have a 12-dimensional representation of
$G$ on ${\rm Fun}(I)$.

(a) Decompose this representation in
a direct sum of irreducible representations (i.e., find
the multiplicities of occurrence of all irreducible representations).

(b) Do the same for the representation of $G$ on the space of
functions on the set of faces and the set of edges of the
icosahedron.
\end{problem}

\begin{problem}\label{3:4}
Let $\Bbb F_q$ be a finite field with $q$ elements, and $G$ be the group
of nonconstant inhomogeneous linear transformations, $x\to ax+b$, over $\Bbb F_q$
(i.e., $a\in \Bbb F_q^\times, b\in \Bbb F_q$). Find all irreducible
complex representations of $G$, and compute their characters.
Compute the tensor products of irreducible representations.

Hint. Let $V$ be the representation of $G$ on the space of
functions on $\Bbb F_q$ with sum of all values equal to zero.
Show that $V$ is an irreducible representation of $G$.
\end{problem}

\begin{problem}\label{3:5}
Let $G=SU(2)$ (unitary 2 by 2 matrices with determinant 1),
and $V={\Bbb C}^2$ the standard 2-dimensional representation of $SU(2)$.
We consider $V$ as a real representation, so it is 4-dimensional.

(a) Show that $V$ is irreducible (as a real representation).

(b) Let ${\Bbb H}$ be the
subspace of ${\rm End}_{\Bbb R}(V)$ consisting of endomorphisms
of $V$ as a real representation. Show that ${\Bbb H}$ is 4-dimensional
and closed under multiplication. Show that every nonzero element
in ${\Bbb H}$ is invertible, i.e., ${\Bbb H}$ is an algebra with division.

(c) Find a basis $1,i,j,k$ of ${\Bbb H}$ such that $1$ is the unit and
$i^2=j^2=k^2=-1$, $ij=-ji=k,jk=-kj=i,ki=-ik=j$.
Thus we have that $Q_8$ is a subgroup of the group ${\Bbb H}^\times$
of invertible elements of ${\Bbb H}$ under multiplication.

The algebra ${\Bbb H}$ is called the quaternion algebra.

(d) For $q=a+bi+cj+dk$, $a,b,c,d\in {\Bbb R}$, let $\bar q=a-bi-cj-dk$,
and $||q||^2=q\bar q=a^2+b^2+c^2+d^2$. Show that
$\overline{q_1q_2}=\bar q_2\bar q_1$, and $||q_1q_2||=||q_1||\cdot||q_2||$.

(e) Let $G$ be the group of quaternions of norm 1. Show that this group
is isomorphic to $SU(2)$.
(Thus geometrically $SU(2)$ is the 3-dimensional sphere).

(f) Consider the action of $G$ on the space $V\subset {\Bbb H}$
spanned by $i,j,k$, by $x\to qxq^{-1}$, $q\in G$, $x\in V$.
Since this action preserves the norm on $V$, we have a homomorphism
$h:SU(2)\to SO(3)$, where $SO(3)$ is the group of rotations of
the three-dimensional Euclidean space. Show that this homomorphism is surjective
and that its kernel is $\{1,-1\}$.
\end{problem}

\begin{problem}\label{6:2} It is known that the classification of finite subgroups of $SO(3)$
is as follows:

1) the cyclic group $\Bbb Z/n\Bbb Z$, $n\ge 1$, generated by a rotation by
$2\pi/n$ around an axis;

2) the dihedral group $D_n$ of order $2n$, $n\ge 2$ (the group of
rotational symmetries in 3-space of a plane containing a regular
$n$-gon\footnote{A regular 2-gon is just a line segment.};

3) the group of rotations of the regular tetrahedron ($A_4$).

4) the group of rotations of the cube or regular octahedron ($S_4$).

5) the group of rotations of a regular dodecahedron or
icosahedron ($A_5$).

(a) Derive this classification.

Hint. Let $G$ be a finite subgroup of $SO(3)$. Consider the action
of $G$ on the unit sphere. A point of the sphere preserved by
some nontrivial element of $G$ is called a pole. Show that every nontrivial
element of $G$ fixes a unique pair of opposite poles, and that the subgroup of
$G$ fixing a particular pole P is cyclic, of some order $m$
(called the order of P). Thus the orbit of $P$ has $n/m$
elements, where $n=|G|$. Now let $P_1,...,P_k$ be the poles
representing all the orbits of $G$ on the set of poles,
and $m_1,...,m_k$ be their orders. By counting nontrivial
elements of $G$, show that
$$
2(1-\frac{1}{n})=\sum_i (1-\frac{1}{m_i}).
$$
Then find all possible $m_i$ and $n$ that can satisfy this
equation and classify the corresponding groups.

(b) Using this classification, classify finite subgroups of $SU(2)$ (use
the homomorphism $SU(2)\to SO(3)$).
\end{problem}

\begin{problem}\label{3:6}
Find the characters and tensor products of irreducible complex
representations of the Heisenberg group from Problem \ref{2:6}.
\end{problem}

\begin{problem}\label{6:1}
Let $G$ be a finite group, and $V$ a complex representation
of $G$ which is faithful, i.e., the corresponding map $G\to GL(V)$
is injective. Show that any irreducible representation of $G$
occurs inside $S^nV$ (and hence inside $V^{\otimes n}$) for some $n$.
\end{problem}

Hint. Show that there exists a vector $u\in V^*$ whose stabilizer
in $G$ is $1$. Now define the map $SV\to {\rm Fun}(G,\Bbb C)$ sending a
polynomial $f$ on $V^*$ to the function $f_u$ on $G$ given by
$f_u(g)=f(gu)$. Show that this map is surjective and use this to
deduce the desired result.

\begin{problem}\label{3:8}
This problem is about an application of representation theory
to physics (elasticity theory). We first describe the physical
motivation and then state the mathematical problem.

Imagine a material which occupies a
certain region $U$ in the physical space $V={\Bbb R}^3$
(a space with a positive definite inner product).
Suppose the material is deformed. This means, we have applied
a diffeomorphism (=change of coordinates)
$g:U\to U'$. The question in elasticity theory
is how much stress in the material this deformation will cause.

For every point $P\in U$, let $A_P:V\to V$ be defined by
$A_P=dg(P)$. $A_P$ is nondegenerate, so it has a polar
decomposition $A_P=D_PO_P$, where $O_P$ is orthogonal and $D_P$ is symmetric.
The matrix $O_P$ characterizes the rotation part of $A_P$ (which clearly
produces no stress), and $D_P$ is the distortion part, which actually
causes stress. If the deformation is small, $D_P$ is close to 1,
so $D_P=1+d_P$, where $d_P$ is a small symmetric matrix,
i.e., an element of $S^2V$. This matrix is
called the deformation tensor at $P$.

Now we define the stress tensor, which characterizes stress.
Let $v$ be a small nonzero vector in $V$, and $\sigma$ a small disk
perpendicular to $v$ centered at $P$ of area $||v||$. Let $F_v$ be the force
with which the part of the material on the $v$-side of $\sigma$
acts on the part on the opposite side. It is easy to deduce from
Newton's laws that $F_v$ is linear in $v$, so there exists
a linear operator $S_P:V\to V$ such that $F_v=S_Pv$.
It is called the stress tensor.

An elasticity law is an equation $S_P=f(d_P)$, where $f$ is a function.
The simplest such law is a linear law (Hooke's law): $f: S^2V\to
End(V)$ is a linear function. In general, such a function
is defined by $9\cdot 6=54$ parameters, but we will show there are actually
only two essential ones -- the compression modulus $K$ and the shearing modulus $\mu$.
For this purpose we will use representation theory.

Recall that the group $SO(3)$ of rotations acts on $V$, so $S^2V$,
$End(V)$ are representations of this group.
The laws of physics must be invariant under this group
(Galileo transformations), so $f$ must be a homomorphism
of representations.

(a) Show that $End(V)$ admits a decomposition ${\Bbb R}\oplus V\oplus W$,
where ${\Bbb R}$ is the trivial representation, $V$ is
the standard 3-dimensional
representation, and $W$ is a 5-dimensional representation
of SO(3). Show that
$S^2V={\Bbb R}\oplus W$

(b) Show that $V$ and $W$ are irreducible, even after complexification.
Deduce using Schur's lemma
that $S_P$ is always symmetric, and for $x\in {\Bbb R},y\in W$ one has
$f(x+y)=Kx+\mu y$ for some real numbers $K,\mu$.

In fact, it is clear from physics that $K,\mu$ are positive.
Physically, the compression modulus $K$ characterises resistance 
of the material to compression or dilation, while the shearing modulus
$\mu$ characterizes its resistance to changing the shape of the object without 
changing its volume. For instance, clay (used for sculpting) has a large compression modulus 
but a small shearing modulus. 

\end{problem}

\newpage \section{Representations of finite groups: further results}

\subsection{Frobenius-Schur indicator}
Suppose that $G$ is a finite group and $V$ is an irreducible
representation of $G$ over $\CC.$
We say that $V$ is
\begin{itemize}
	\item[-] of complex type, if $V \ncong V^*,$
	\item[-] of real type, if $V$
has a nondegenerate symmetric form invariant under $G$,
	\item[-] of quaternionic type, if $V$ has a nondegenerate
skew form invariant under $G.$
\end{itemize}

\begin{problem} (a) Show that $\End_{\RR[G]}V$ is $\CC$ for
$V$ of complex type, $\Mat_2(\RR)$ for $V$ of real type, and
$\HH$ for $V$ of quaternionic type, which motivates the names above.

Hint. Show that the complexification $V_\CC$ of $V$ decomposes as
$V\oplus V^*$. Use this to compute the dimension
of $\End_{{\Bbb R}[G]} V$ in all three cases. Using the fact that
$\Bbb C\subset \End_{{\Bbb R}[G]} V$, prove the result in the
complex case. In the remaining two cases, let $B$ be the
invariant bilinear form on $V$, and $(,)$ the invariant positive Hermitian
form (they are defined up to a nonzero complex scalar and a
positive real scalar, respectively), and define the operator
$j:V\to V$ such that $B(v,w)=(v,jw)$. Show that $j$ is complex
antilinear ($ji=-ij$), and $j^2=\lambda\cdot Id$, where $\lambda$ is a real
number, positive in the real case and negative in the
quaternionic case (if $B$ is renormalized, $j$ multiplies by a
nonzero complex number $z$ and $j^2$ by $z\bar z$, as $j$ is
antilinear).  Thus $j$ can be normalized so that $j^2=1$ for the
real case, and $j^2=-1$ in the quaternionic case. Deduce the
claim from this.

(b) Show that $V$ is of real type if and only if $V$ is the
complexification of a representation $V_{\Bbb R}$ over the field
of real numbers.
\end{problem}

\begin{example}
For $\Bbb Z/n\Bbb Z$ all irreducible representations are
of complex type, except the trivial one and, if $n$ is even,
the ``sign'' representation, $m\to (-1)^m$, which are of real type.
For $S_3$ all three irreducible representations $\CC_+, \CC_-,
\CC^2$ are of real type. For $S_4$ there are
five irreducible representations $\CC_+,$
$\CC_-,$ $\CC^2,$ $\CC_+^3,$ $\CC^3_-,$ which are all
of real type. Similarly, all five irreducible representations of $A_5$ --
$\CC,$ $\CC^3_+,$ $\CC^3_-,$
$\CC^4,$ $\CC^5$ are of real type. As for $Q_8,$ its
one-dimensional representations are of real type, and the two-dimensional one
is of quaternionic type.
\end{example}

\begin{definition}
The Frobenius-Schur indicator $FS(V)$ of an irreducible representation
$V$ is $0$ if it is of complex type, $1$ if it is of real type,
and $-1$ if it is of quaternionic type.
\end{definition}

\begin{theorem}({Frobenius-Schur})\label{frobenius-schur-indicator}
The number of involutions
(=elements of order $\le 2$)
in $G$ is equal to $\sum_V \dim(V)FS(V)$, i.e., the sum of
dimensions of all representations of $G$ of real type minus the sum
of dimensions of its representations of quaternionic type.
\end{theorem}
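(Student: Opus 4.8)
The plan is to prove the stronger ``local'' identity that for every irreducible complex representation $V$ of $G$ one has
\[
FS(V) \;=\; \frac{1}{|G|}\sum_{g\in G}\chi_V(g^2),
\]
and then to multiply this by $\dim V$, sum over $V$, and recognize the resulting sum via column orthogonality. Granting the displayed lemma, and writing $\chi_V(1)=\dim V$, we get
\[
\sum_V \dim(V)\,FS(V) \;=\; \frac{1}{|G|}\sum_{g\in G}\ \sum_V \chi_V(1)\,\chi_V(g^2).
\]
By Theorem \ref{column-orthogonality} applied to the pair $1,\,g^2$ (using that $\chi_V(1)$ is real and that the set of irreducible characters is stable under $V\mapsto V^*$, so that $\sum_V\chi_V(1)\chi_V(g^2)$ equals $\sum_V\chi_V(1)\overline{\chi_V(g^2)}$ up to conjugation), the inner sum equals $|G|$ when $g^2=1$ and $0$ otherwise. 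Hence the whole expression collapses to $\#\{g\in G:\ g^2=1\}$, which is precisely the number of elements of order $\le 2$ in $G$. So everything is reduced to the lemma.

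To prove the lemma I would first recall the $G$-invariant decomposition $V\otimes V = S^2V\oplus\wedge^2 V$, and the character formulas: writing $\lambda_1,\dots,\lambda_d$ for the eigenvalues of $\rho_V(g)$, the identities $\sum_{i\le j}\lambda_i\lambda_j=\frac12\big((\sum_i\lambda_i)^2+\sum_i\lambda_i^2\big)$ and $\sum_{i<j}\lambda_i\lambda_j=\frac12\big((\sum_i\lambda_i)^2-\sum_i\lambda_i^2\big)$ give $\chi_{S^2V}(g)=\frac12(\chi_V(g)^2+\chi_V(g^2))$ and $\chi_{\wedge^2V}(g)=\frac12(\chi_V(g)^2-\chi_V(g^2))$, so that $\chi_{S^2V}(g)-\chi_{\wedge^2V}(g)=\chi_V(g^2)$ for all $g$. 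Taking the inner product of both sides with the trivial character and using $(\chi_W,1)=\dim W^G$ (Theorem \ref{orthogonality-of-characters} with $W$ and the trivial representation, or the invariant projector $P=\frac1{|G|}\sum_g g$), this yields
\[
\frac{1}{|G|}\sum_{g\in G}\chi_V(g^2) \;=\; \dim(S^2V)^G-\dim(\wedge^2V)^G .
\]

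It remains to identify the right-hand side with $FS(V)$, which is the one genuinely delicate step. Both $S^2V$ and $\wedge^2V$ are subrepresentations of $V\otimes V\cong \Hom(V^*,V)$, and by Schur's lemma (Propositions \ref{shul} and \ref{slacf}) the space $\Hom_G(V^*,V)$ has dimension $0$ if $V\ncong V^*$ and dimension $1$ if $V\cong V^*$; thus $\dim(V\otimes V)^G=\dim(S^2V)^G+\dim(\wedge^2V)^G\le 1$, so at most one of the two summands is nonzero. Translating invariant tensors into invariant bilinear forms (an invariant element of $V^*\otimes V^*=(V\otimes V)^*$ is exactly a $G$-invariant bilinear form on $V$, symmetric part in $S^2(V^*)$, skew part in $\wedge^2(V^*)$; and real/quaternionic type is preserved under $V\mapsto V^*$, so one may argue with $V^*$ throughout), together with the fact that a nonzero invariant bilinear form on an irreducible $V$ is automatically nondegenerate (its radical is a subrepresentation), we get the three cases: if $V\ncong V^*$ (complex type) both dimensions are $0$ and the difference is $0=FS(V)$; if $V$ has an invariant symmetric form (real type) then $\dim(S^2V)^G=1$, forcing $\dim(\wedge^2V)^G=0$, difference $1=FS(V)$; if $V$ has an invariant skew form (quaternionic type) the difference is $0-1=-1=FS(V)$. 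One also notes these three cases are exhaustive and exclusive, again from the $\le 1$ bound: $V\cong V^*$ produces a nonzero invariant form, hence exactly one of a nonzero symmetric or a nonzero skew one. This establishes the lemma, and with the first paragraph, the theorem.

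The main obstacle I anticipate is exactly the bookkeeping of that last paragraph: carefully matching the abstract ``type'' of $V$ (defined in terms of invariant forms on $V$) with the vanishing of $(S^2V)^G$ versus $(\wedge^2V)^G$, keeping the $V$/$V^*$ distinction straight, and verifying that nondegeneracy comes for free. The rest — the character identity for $S^2$ and $\wedge^2$, and the final application of column orthogonality — is routine given the results already proved in the excerpt.
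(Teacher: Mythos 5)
Your proposal is correct and follows essentially the same route as the paper's proof: the identity $\chi_V(g^2)=\chi_{S^2V}(g)-\chi_{\wedge^2V}(g)$, averaging over $G$ to get $\dim(S^2V)^G-\dim(\wedge^2V)^G=FS(V)$, and then counting involutions via $\sum_V\dim V\,\chi_V(g^2)$ using column orthogonality (the character of the regular representation). The only difference is that you spell out the Schur-lemma bookkeeping identifying $\dim(S^2V)^G-\dim(\wedge^2V)^G$ with the Frobenius--Schur indicator, which the paper asserts more briefly; that added detail is sound.
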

\begin{proof}
Let $A: V\to V$ have eigenvalues $\lambda_1, \lambda_2, \ldots, \lambda_n.$ We have
\begin{eqnarray*}
\Tr|_{S^2V} (A\ot A)&=& \sum_{i\le j}\lambda_i\lambda_j \\
\Tr|_{\Lambda^2V} (A\ot A) &=& \sum_{i < j}\lambda_i\lambda_j
\end{eqnarray*}

Thus,
\[
	\Tr|_{S^2V}(A\ot A) - \Tr|_{\Lambda^2V}(A\ot A) =
\sum_{1\le i \le n}\lambda_i^2={\rm Tr}(A^2).
\]

Thus for $g\in G$ we have
\[
	\chi_V(g^2) = \chi_{S^2V}(g) - \chi_{\Lambda^2V}(g)
\]
Therefore,
\[
	|G|^{-1}\chi_V(\sum_{g\in G}g^2) =
\chi_{S^2V}(P)-\chi_{\wedge^2V}(P)= \dim (S^2V)^G-\dim (\wedge^2
V)^G=
	\left\{
		\begin{array}{rl}
			1, & {\rm if}\; V\; {\rm is\; of\; real\; type} \\
			-1, & {\rm if}\; V\; {\rm is\; of\;
quaternionic\;
type}\\
			0, & {\rm if}\; V\; {\rm is\; of\;
complex\; type}
	\end{array}
	\right.
\]
Finally, the number of involutions in $G$ equals
\[
	\frac1{|G|}\sum_{V}\dim V\chi_V(\sum_{g\in G}g^2)
=\sum_{{\rm real}\; V}\dim V - \sum_{{\rm quat}\; V}\dim V.
\]
\end{proof}

\begin{corollary}
Assume that all representations of a finite group $G$ are
defined over real numbers (i.e., all complex representations
of $G$ are obtained by complexifying real representations).
Then the sum of dimensions of irreducible representations of
$G$ equals the number of involutions in $G$.
\end{corollary}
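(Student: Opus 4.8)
The plan is to derive this immediately from the Frobenius--Schur theorem (Theorem \ref{frobenius-schur-indicator}); all one needs is that the hypothesis forces $FS(V)=1$ for every irreducible representation $V$ of $G$.

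First I would verify that every irreducible complex representation $V$ of $G$ is of real type. By assumption $V\cong W\otimes_{\RR}\CC$ for some representation $W$ of $G$ over $\RR$. Since $G$ is finite, averaging an arbitrary positive definite symmetric bilinear form on $W$ over $G$ yields a $G$-invariant, symmetric, positive definite (hence nondegenerate) bilinear form on $W$; extending it $\CC$-bilinearly to $V=W\otimes_{\RR}\CC$ produces a nondegenerate symmetric $G$-invariant bilinear form on $V$. Therefore $V$ is of real type, i.e. $FS(V)=1$. (This is exactly the easy direction of part (b) of the Problem preceding Theorem \ref{frobenius-schur-indicator}, so one may alternatively just cite that part.)

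Then I would apply Theorem \ref{frobenius-schur-indicator}: the number of elements of $G$ of order $\le 2$ equals $\sum_V \dim(V)\,FS(V)$, the sum being over all irreducible representations $V$ of $G$. Substituting $FS(V)=1$ for every $V$, the right-hand side becomes $\sum_V \dim(V)$, the sum of the dimensions of all irreducible representations of $G$, which is precisely the assertion.

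There is essentially no hard step here; the only point requiring care is the implication ``$V$ is a complexification of a real representation $\Rightarrow V$ is of real type'', which rests on the averaging argument above (equivalently, on part (b) of the cited Problem). One could also reformulate the hypothesis directly as ``$FS(V)=1$ for all irreducible $V$'', in which case the corollary is a literal specialization of Theorem \ref{frobenius-schur-indicator} with nothing further to prove.
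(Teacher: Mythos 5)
Your proof is correct and follows exactly the route the paper intends: the corollary is stated as an immediate consequence of Theorem \ref{frobenius-schur-indicator}, once one knows (as in part (b) of the preceding Problem, or by your averaging argument) that a complexified real representation carries a nondegenerate invariant symmetric form and hence has $FS(V)=1$. Nothing further is needed.
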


{\bf Exercise.} Show that any nontrivial finite group of odd order has 
an irreducible representation which is not defined over $\Bbb R$ 
(i.e., not realizable by real matrices). 

\subsection{Frobenius determinant}

Enumerate the elements of a finite group $G$ as follows:
$g_1,g_2,\ldots,g_n.$ Introduce $n$ variables indexed with the
elements of $G:$
\[
	x_{g_1},x_{g_2},\ldots,x_{g_n}.
\]

\begin{definition}
Consider the matrix $X_G$ with entries $a_{ij}=x_{g_ig_j}.$
The determinant of $X_G$ is some polynomial of degree $n$ of
$x_{g_1},x_{g_2},\ldots,x_{g_n}$ that is called {\it the Frobenius determinant}.
\end{definition}

The following theorem, discovered by Dedekind and proved by
Frobenius, became the starting point for creation of
representation theory (see \cite{Cu}).

\begin{theorem}
 \label{frobdet}
\[
       \det X_G = \prod_{j=1}^{r}{P_j(\xx)^{\deg{P_j}}}
\]
for some pairwise non-proportional irreducible polynomials
$P_j(\xx),$ where $r$ is the number of conjugacy classes of $G$.
\end{theorem}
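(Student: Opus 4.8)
The plan is to recognize $X_G$, up to an overall sign, as the matrix of an operator on the group algebra $\CC[G]$ and then factor that operator via Maschke's theorem. Let $x = \sum_{g\in G} x_g\, g \in \CC[G]$ be the ``generic'' element, whose coefficients are the indeterminates $x_{g_1},\dots,x_{g_n}$ (here $n=|G|$). A one-line computation shows that $X_G = (x_{g_ig_j})_{i,j}$ is the matrix, in the basis $g_1,\dots,g_n$, of the operator $T:\CC[G]\to\CC[G]$, $T(g) = x\cdot g^{-1}$: indeed $T(g_j) = \sum_{h\in G} x_h\, h g_j^{-1} = \sum_i x_{g_ig_j}\, g_i$. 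Writing $T = L_x\circ\iota$, where $L_x$ is left multiplication by $x$ and $\iota(g)=g^{-1}$ is inversion, we get $\det X_G = \varepsilon\cdot\det(L_x)$, where $\varepsilon=\pm 1$ is the sign of the permutation $g\mapsto g^{-1}$ of $G$.

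Next I would decompose $L_x$. By Maschke's theorem (Theorem \ref{maschk}), $\CC[G]\cong\bigoplus_{i=1}^r \End V_i$ as algebras, where $V_1,\dots,V_r$ are the irreducible complex representations of $G$; moreover $r$ equals the number of conjugacy classes of $G$, since the number of irreducibles equals the number of conjugacy classes. Under this isomorphism $x$ corresponds to $\bigl(\rho_{V_1}(x),\dots,\rho_{V_r}(x)\bigr)$ with $\rho_{V_i}(x) = \sum_g x_g\,\rho_{V_i}(g)\in\Mat_{d_i}(\CC)$ and $d_i=\dim V_i$, and $L_x$ preserves each summand $\End V_i$, acting there as left multiplication by $\rho_{V_i}(x)$. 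Since left multiplication by a matrix $A$ on $\Mat_d(\CC)$ has determinant $(\det A)^d$, we obtain
\[
\det(L_x) = \prod_{i=1}^r \bigl(\det\rho_{V_i}(x)\bigr)^{d_i}.
\]
Set $P_i(\xx) := \det\rho_{V_i}(x)$. Each entry of $\rho_{V_i}(x)$ is a linear form in the $x_g$, so $P_i$ is homogeneous, and it is nonzero (it equals $1$ when $x_e=1$ and all other coordinates vanish), hence homogeneous of degree exactly $d_i = \deg P_i$. Therefore $\det X_G = \varepsilon\prod_i P_i(\xx)^{\deg P_i}$; absorbing $\varepsilon$ into the degree-one factor coming from the trivial representation (i.e.\ replacing $\sum_g x_g$ by $\varepsilon\sum_g x_g$) gives the asserted product.

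It remains to show the $P_i$ are irreducible and pairwise non-proportional. For irreducibility, the density theorem (Theorem \ref{DensityThm}) tells us the linear map $\CC^n\to\Mat_{d_i}(\CC)$, $(x_g)_g\mapsto\rho_{V_i}(x)$, is surjective; choosing linear coordinates on $\CC^n$ so that it becomes the projection onto the $d_i^2$ matrix-entry coordinates, $P_i$ becomes the generic determinant polynomial of a $d_i\times d_i$ matrix, viewed as a polynomial in $n\ge d_i^2$ variables. A linear change of coordinates preserves irreducibility, and an irreducible polynomial remains irreducible after adjoining independent variables (we work in the UFD $\CC[x_{g_1},\dots,x_{g_n}]$); hence it suffices to invoke the classical fact that the generic determinant is irreducible, and each $P_i$ is irreducible. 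For pairwise non-proportionality, fix $i\ne j$ and let $x\in\CC[G]$ be the element corresponding under the Maschke isomorphism to the tuple that is the identity matrix $I_{d_j}$ in the $j$-th slot and $0$ in every other slot; then $P_i$ vanishes at this point while $P_j = 1$ there, so $P_i$ and $P_j$ cannot be proportional.

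The main obstacle is the irreducibility step, and within it the classical irreducibility of the generic $d\times d$ determinant. This can be proved by induction on $d$ via cofactor expansion along the first row: any factorization $\det=QR$ is degree one in each entry of the first row, which forces all those entries into a single factor, say $Q$; then $R$ involves only rows $2,\dots,d$, so comparing $QR$ with the expansion $\det=\sum_j x_{1j}M_{1j}$ shows $R$ divides each minor $M_{1j}$, and since these minors are irreducible by induction and pairwise non-associate (they involve different column sets), $R$ must be a scalar. Everything else — the identification of $X_G$ with the matrix of $L_x\circ\iota$, the Maschke decomposition, and the evaluation argument for non-proportionality — is routine bookkeeping.
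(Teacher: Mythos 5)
Your proof is correct, and at the top level it follows the same route as the text: identify $\det X_G$ (up to a sign coming from the column permutation $g\mapsto g^{-1}$) with the determinant of multiplication by the generic element $\sum_g x_g\,g$ on the regular representation, factor it via Maschke's theorem as $\prod_i\bigl(\det\rho_{V_i}(x)\bigr)^{\dim V_i}$, and reduce everything to irreducibility of the generic determinant. The differences are in the supporting steps. The text makes a single global linear change of variables: since $\CC[G]\cong\bigoplus_i\End V_i$, the $x_g$ can be traded for coordinates $y_{i,jk}$ (all matrix entries over all $i$ at once), so each $P_i$ becomes the generic $d_i\times d_i$ determinant in its own \emph{disjoint} set of variables, which gives irreducibility and pairwise non-proportionality in one stroke. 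You instead argue factor by factor: the density theorem makes $x\mapsto\rho_{V_i}(x)$ a surjective linear map, so after a coordinate change $P_i$ is a generic determinant with dummy variables adjoined (and you rightly note that irreducibility survives adjoining variables and invertible linear substitutions), and you settle non-proportionality by evaluating at the element mapping to $(0,\dots,I_{d_j},\dots,0)$ under the Maschke isomorphism -- both steps are valid, just slightly more work than the disjoint-variables observation. For the classical lemma itself, the text specializes to $t\cdot\mathrm{Id}+\sum_i x_iE_{i,i+1}$, whose determinant $t^n-(-1)^nx_1\cdots x_n$ is visibly irreducible, and invokes homogeneity, whereas your cofactor-expansion induction proves the same lemma from scratch; both are fine. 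You are also more careful than the text about the sign, absorbing it into the linear factor attached to the trivial representation, which is a nice touch.
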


We will need the following simple lemma.

\begin{lemma} \label{trivial-lemma-det}
Let $Y$ be an $n\times n$ matrix with entries
$y_{ij}.$ Then $\det{Y}$ is an irreducible polynomial of
$\{y_{ij}\}.$
\end{lemma}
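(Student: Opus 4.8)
The plan is to prove that $\det Y = \det(y_{ij})$, viewed as a polynomial in the $n^2$ independent variables $y_{ij}$ over a field $k$, is irreducible in $k[y_{11},\dots,y_{nn}]$.

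First I would record the structural fact that makes this easy: $\det Y$ is homogeneous of degree $1$ in the entries of any single row (and likewise any single column). Indeed, expanding along the $i$-th row, $\det Y = \sum_{j=1}^n y_{ij} C_{ij}$, where the cofactor $C_{ij}$ does not involve any variable from row $i$ at all. So $\det Y$ is an affine-linear (in fact linear) function of the variables $y_{i1},\dots,y_{in}$ for each fixed $i$.

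Now suppose for contradiction that $\det Y = PQ$ with $P, Q$ nonconstant polynomials. Fix a row index $i$. Since $\det Y$ has degree $1$ in the variables $\{y_{i1},\dots,y_{in}\}$, and the degree in these variables is additive under multiplication, exactly one of $P, Q$ — say $P$ — has degree $1$ in the row-$i$ variables and $Q$ has degree $0$ in them, i.e.\ $Q$ does not involve any entry of row $i$. This assignment "which factor sees row $i$" is made for each $i\in\{1,\dots,n\}$. The key step is a pigeonhole/connectivity argument: I claim $P$ must see \emph{every} row. If not, there are rows $i \ne i'$ with $Q$ seeing row $i$ and $P$ seeing row $i'$ (using that $Q$ is nonconstant, so $Q$ sees some row). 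But then look at a variable $y_{i j}$ from row $i$ and $y_{i' j'}$ from row $i'$. The coefficient of $y_{ij}y_{i'j'}$ in $\det Y$ (for $i \ne i'$) is $\pm$ the $(n-2)\times(n-2)$ minor obtained by deleting rows $i,i'$ and columns $j,j'$, which is a nonzero polynomial; so $\det Y$ genuinely contains a monomial divisible by $y_{ij}y_{i'j'}$ with $y_{ij}$ a row-$i$ variable appearing in $Q$ and $y_{i'j'}$ a row-$i'$ variable appearing in $P$ — consistent so far. To get the contradiction I instead argue that $Q$, not involving any entry of the rows $P$ sees, and $P$ not involving any entry of the rows $Q$ sees, forces $\det Y$ to factor as a product over a partition of the row set into two nonempty blocks $R_1 \sqcup R_2$, with $P \in k[y_{ij}: i \in R_1]$ and $Q \in k[y_{ij} : i \in R_2]$. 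Specialize all variables outside rows $R_1\cup\{$one row of $R_2\}$ appropriately — more cleanly: set $y_{ij} = \delta_{ij}$ (Kronecker) for all $i \in R_2$ except keep row $i_0 \in R_2$ free. Then $Q$ becomes a polynomial in the row-$i_0$ variables alone, $P$ is unaffected in its variables, and $\det Y$ specializes to a polynomial that, expanded along row $i_0$, is $\sum_j y_{i_0 j}(\pm\text{minor})$; but each such minor still involves variables of $P$'s rows, contradicting $PQ$ having $P$ independent of $y_{i_0 j}$ and $Q$ independent of $P$'s rows. The short way to finish: choose the specialization $y_{ij}=\delta_{ij}$ for all $i\notin\{1\}$, leaving only row $1$ free; then $\det Y$ specializes to $y_{11}$ (a single variable, irreducible), while $P\cdot Q$ specializes to a product of two polynomials — forcing one of them to become a unit. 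Running this with each row in turn shows: after setting all rows but row $i$ to identity rows, one of $P,Q$ specializes to a constant. Whichever factor specializes to a nonconstant must then "be" (up to the specialization) $y_{ii}$, hence has degree $1$ and involves only row $i$; but a single nonconstant factor cannot simultaneously reduce to $y_{11}$ under one specialization and to $y_{22}$ under another unless it involves both rows $1$ and $2$, contradicting that the \emph{other} factor is the nonconstant one for at least one of those specializations when $n\ge 2$. Cleanly: $P$ nonconstant means $P$ involves some $y_{i_0 j_0}$; specialize all rows $\ne i_0$ to identity rows; then $\det Y \rightsquigarrow y_{i_0 i_0}$, so $P$ specializes to a nonconstant dividing $y_{i_0 i_0}$, hence $P \rightsquigarrow c\, y_{i_0 i_0}$ and $Q\rightsquigarrow c^{-1}$; thus $Q$ has degree $0$ in every row $\ne i_0$ for which this works — but the same argument applied to a variable of $Q$ gives $P \rightsquigarrow$ constant under a \emph{different} row-specialization, and comparing degrees against $\deg \det Y = n$ and $\deg P + \deg Q = n$ with each of $P,Q$ forced to have degree $\le 1$ in every row shows $P$ has total degree $n$ (one from each row) and $Q$ has total degree $0$, i.e.\ $Q$ is constant — contradiction.

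The main obstacle is organizing this bookkeeping cleanly: the genuinely load-bearing inputs are (a) $\det Y$ is multilinear in the rows, so any factor has row-degree $0$ or $1$ in each row and the factorization induces a partition of the rows; and (b) the off-diagonal $2\times 2$ coefficient structure (the $(n-2)$-minors are nonzero) shows $\det Y$ does not split along any partition of its rows into two nonempty blocks. I expect to present it via (b): assume $\det Y = PQ$ nontrivially, get a partition $R_1\sqcup R_2$ of rows with $P$ using only $R_1$-entries and $Q$ only $R_2$-entries; pick $i\in R_1$, $i'\in R_2$, $j,j'$ arbitrary, and observe the monomial $\pm y_{ij}y_{i'j'}\cdot(\text{minor deleting }i,i',j,j')$ appears in $\det Y$ with the minor involving entries from \emph{both} blocks $R_1$ and $R_2$ (provided $n\ge 3$; the case $n\le 2$ is a direct check), so this monomial cannot occur in any product $P\cdot Q$ of the asserted form — contradiction. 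Hence $\det Y$ is irreducible.
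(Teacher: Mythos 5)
Your overall strategy --- use multilinearity of $\det Y$ in the rows to show that a factorization $\det Y=PQ$ with $P,Q$ nonconstant forces a partition $R_1\sqcup R_2$ of the row indices, with $P$ involving only variables from rows in $R_1$ and $Q$ only variables from rows in $R_2$ --- is sound, and it is a genuinely different route from the one in the text (which specializes $Y$ to $t\cdot\mathrm{Id}+\sum_i x_iE_{i,i+1}$, computes the determinant $t^n-(-1)^nx_1\cdots x_n$, and uses that factors of a homogeneous polynomial are homogeneous). But your finishing step has a real gap. Knowing only that $P$ lives on the rows $R_1$ and $Q$ on the rows $R_2$ does not exclude the monomials you point to: a monomial of $PQ$ is a product of a monomial of $P$ (variables from rows in $R_1$, including possibly $y_{ij}$) with a monomial of $Q$ (variables from rows in $R_2$, including possibly $y_{i'j'}$), so it can perfectly well be divisible by $y_{ij}y_{i'j'}$ and contain further variables from \emph{both} blocks. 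Hence the assertion ``this monomial cannot occur in any product $P\cdot Q$ of the asserted form'' is unjustified; with rows alone, the obstruction is not visible at the level of which variables occur. (Moreover, your claim that the complementary minor involves entries of both blocks fails when a block is a singleton, e.g.\ $R_1=\{i\}$.) The earlier ``identity-row'' specialization passage does not repair this: the fact that $Q$ specializes to a nonzero constant under one substitution gives no degree information about $Q$ itself, so the conclusion that $Q$ is constant does not follow from what you wrote.

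The missing ingredient is the one you mention in passing but never use: multilinearity in the \emph{columns}. The same degree-additivity argument gives a partition $C_1\sqcup C_2$ of the column indices such that $P$ involves only variables $y_{ij}$ with $i\in R_1$, $j\in C_1$, and $Q$ only $y_{ij}$ with $i\in R_2$, $j\in C_2$; if $P,Q$ are nonconstant, all four sets are nonempty. Now pick $i\in R_1$ and $j\in C_2$. The variable $y_{ij}$ genuinely occurs in $\det Y$ (its coefficient is $\pm$ an $(n-1)\times(n-1)$ minor in independent variables, which is nonzero), but it occurs in neither $P$ nor $Q$, hence not in $PQ$ --- a clean contradiction. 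With that correction your argument is complete and is the standard ``bidegree'' proof; it is more self-contained than the paper's proof, which in exchange is much shorter, resting on the irreducibility of $t^n-(-1)^nx_1\cdots x_n$ and homogeneity of the factors.
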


\begin{proof}
Let $Y=t\cdot Id+\sum_{i=1}^{n}x_iE_{i,i+1}$, 
where $i+1$ is computed modulo $n$, and $E_{i,j}$ are the elementary matrices. 
Then $\det(Y)=t^n-(-1)^nx_1...x_n$, which is obviously irreducible. 
Hence $\det(Y)$ is irreducible (since factors of a homogeneous 
polynomial are homogeneous).  
\end{proof}

Now we are ready to proceed to the proof of Theorem \ref{frobdet}.

\begin{proof}
Let $V=\BC[G]$ be the regular representation of $G.$ Consider the
operator-valued polynomial
\[
        L(\xx) = \sum_{g\in G}{x_g\rho(g)},
\]
where $\rho(g)\in {\rm End}V$ is induced by $g.$
The action of $L(\xx)$ on an element $h\in G$ is
\[
       L(\xx)h = \sum_{g\in G}{x_g\rho(g)h} = \sum_{g\in G}{x_g}gh = \sum_{z\in
       G}{x_{zh^{-1}}z}
\]
So the matrix of the
linear operator $L(\xx)$ in the basis
$g_1,g_2,\ldots,g_n$ is $X_G$ with permuted columns and hence has
the same determinant up to sign.

Further, by Maschke's theorem, we have
\[{\rm det}_V{L(\xx)} = \prod_{i=1}^r({\rm det}_{V_i}L(\xx))^{\dim{V_i}},\]
where $V_i$ are the irreducible representations of $G$.
We set $P_i = {\rm det}_{V_i}L(\xx).$
Let $\{e_{im}\}$ be bases of $V_i$ and $E_{i,jk} \in \End{V_i}$
be the matrix units in these bases. Then $\{E_{i,jk}\}$ is a
basis of $\BC[G]$ and
\[
	L(\xx)|_{V_i} = \sum_{j,k}{y_{i,jk}}E_{i,jk},
\]
where $y_{i,jk}$ are new coordinates on $\BC[G]$ related to $x_g$
by a linear transformation. Then
\[
	P_i(\xx) =
	\det|_{V_i}L(\xx) = \det(y_{i,jk})
\]
Hence, $P_i$ are irreducible (by Lemma \ref{trivial-lemma-det})
and not proportional to each other (as they depend on different
collections of variables $y_{i,jk}$).
The theorem is proved.
\end{proof}

\subsection{Algebraic numbers and algebraic integers}

We are now passing to deeper results in representation theory
of finite groups. These results require the theory of algebraic
numbers, which we will now briefly review.

\begin{definition}\label{polynomial}
$z \in \BC$ is an {\bf algebraic number} (respectively,
an {\bf algebraic integer}), if $z$
is a root of a monic polynomial with rational (respectively,
integer) coefficients.
\end{definition}

\begin{definition}\label{eigenvalue}
$z \in \BC$ is an {\bf algebraic number}, (respectively,
an {\bf algebraic integer}), if $z$
is an eigenvalue of a matrix with rational (respectively, integer) entries.
\end{definition}

\begin{proposition}
Definitions (\ref{polynomial}) and (\ref{eigenvalue}) are
equivalent.
\end{proposition}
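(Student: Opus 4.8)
The plan is to prove the two implications separately, showing that each definition implies the other. Both directions are elementary linear algebra once set up correctly.

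First I would show Definition \ref{eigenvalue} implies Definition \ref{polynomial}. Suppose $z$ is an eigenvalue of an $n \times n$ matrix $M$ with rational (respectively integer) entries. Then $z$ is a root of the characteristic polynomial $\chi_M(t) = \det(t \cdot \mathrm{Id} - M)$. This is a monic polynomial of degree $n$ whose coefficients are polynomial expressions (with integer coefficients) in the entries of $M$, hence rational (respectively integer). So $z$ is an algebraic number (respectively algebraic integer) in the sense of Definition \ref{polynomial}.

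Next I would show Definition \ref{polynomial} implies Definition \ref{eigenvalue}. Suppose $z$ is a root of a monic polynomial $p(t) = t^n + a_{n-1}t^{n-1} + \cdots + a_1 t + a_0$ with rational (respectively integer) coefficients. The standard trick is to exhibit $z$ as an eigenvalue of the companion matrix of $p$: the $n \times n$ matrix
\[
C_p = \begin{pmatrix} 0 & 0 & \cdots & 0 & -a_0 \\ 1 & 0 & \cdots & 0 & -a_1 \\ 0 & 1 & \cdots & 0 & -a_2 \\ \vdots & & \ddots & & \vdots \\ 0 & 0 & \cdots & 1 & -a_{n-1} \end{pmatrix}.
\]
Its entries are $0$, $1$, and the $-a_i$, hence rational (respectively integer). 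A direct computation (expanding the determinant along the first row, or by induction on $n$) shows that the characteristic polynomial of $C_p$ is exactly $p(t)$, so $z$, being a root of $p$, is an eigenvalue of $C_p$. This completes the equivalence.

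I do not expect any serious obstacle here; the only mildly technical point is the verification that the companion matrix has characteristic polynomial $p(t)$, which is a routine determinant expansion, and the observation that the coefficients of a characteristic polynomial of an integer matrix are integers (they are, up to sign, sums of principal minors). Both are standard facts I would state rather than belabor.
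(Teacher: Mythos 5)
Your proposal is correct and follows essentially the same route as the paper: the characteristic polynomial gives the implication from the eigenvalue definition to the polynomial definition, and the companion matrix (whose characteristic polynomial is $p$) gives the converse. The only difference is cosmetic indexing of the coefficients in the companion matrix, so there is nothing to add.
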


\begin{proof}
To show (\ref{eigenvalue}) $\Rightarrow$ (\ref{polynomial}),
notice that $z$ is a root of the characteristic polynomial of the
matrix (a monic polynomial
with rational, respectively integer, coefficients).\\
To show (\ref{polynomial}) $\Rightarrow$ (\ref{eigenvalue}),
suppose $z$ is a root of
$$p(x)=x^n+a_1x^{n-1}+\dotsc+a_{n-1}x+a_n.$$ Then the characteristic
polynomial of the following matrix (called the {\bf companion
matrix}) is $p(x)$:
\medskip\\
$$\begin{pmatrix}
0 & 0 & 0 & \dotsc & 0 & -a_n\\
1 & 0 & 0 & \dotsc & 0 & -a_{n-1}\\
0 & 1 & 0 & \dotsc & 0 & -a_{n-2}\\
& & & \vdots\\
0 & 0 & 0 & \dotsc & 1 & -a_1\\
\end{pmatrix}.$$\\
\medskip\\
Since $z$ is a root of the characteristic polynomial of
this matrix, it is an eigenvalue of this matrix.
\end{proof}

The set of algebraic numbers is denoted by $\overline{\Bbb Q}$,
and the set of algebraic integers by $\BA$.

\begin{proposition}\label{A_ring}
(i) $\mathbb{A}$ is a ring.

(ii) $\overline{\Bbb Q}$ is a field.
Namely, it is an algebraic closure of the field
of rational numbers.
\end{proposition}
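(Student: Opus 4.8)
The plan is to exploit the eigenvalue characterization (Definition \ref{eigenvalue}), which makes closure under the ring operations almost automatic via tensor (Kronecker) products of matrices. First I would show (i) and the ring part of (ii) together: suppose $z$ is an eigenvalue of an integer matrix $A$, say $A$ is $m\times m$ with $Av=zv$, and $w$ is an eigenvalue of an integer matrix $B$, say $B$ is $n\times n$ with $Bu=wu$. Then the $mn\times mn$ matrices $A\otimes I_n + I_m\otimes B$ and $A\otimes B$ again have integer entries, and $v\otimes u$ is an eigenvector of them with eigenvalues $z+w$ and $zw$ respectively; moreover $-A$ has $-z$ as an eigenvalue, and $0,1$ are eigenvalues of the $1\times 1$ matrices $(0),(1)$. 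Hence the set of algebraic integers is closed under addition, negation and multiplication and contains $0$ and $1$; since it sits inside the commutative ring $\mathbb{C}$, it is a commutative ring with unit. Running the identical argument with rational matrices shows $\overline{\mathbb{Q}}$ is a commutative ring with unit.

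Next I would show $\overline{\mathbb{Q}}$ is a field, using Definition \ref{polynomial}. If $0\ne z\in\overline{\mathbb{Q}}$ is a root of $x^n+a_1x^{n-1}+\dots+a_n$ with $a_i\in\mathbb{Q}$, we may assume $a_n\ne 0$ (otherwise factor out the largest power of $x$, legitimate since $z\ne 0$). Dividing the relation $z^n+a_1z^{n-1}+\dots+a_n=0$ by $a_n z^n$ shows that $z^{-1}$ is a root of the monic rational polynomial $x^n+(a_{n-1}/a_n)x^{n-1}+\dots+(a_1/a_n)x+1/a_n$, so $z^{-1}\in\overline{\mathbb{Q}}$. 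Thus $\overline{\mathbb{Q}}$ is a field, and by definition every one of its elements is algebraic over $\mathbb{Q}$, so it is an algebraic extension of $\mathbb{Q}$.

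It remains to prove that $\overline{\mathbb{Q}}$ is algebraically closed, which is the main obstacle: the equivalence of the two definitions covers everything except the fact that ``algebraic over algebraic is algebraic.'' For this I would first establish the auxiliary criterion that $z\in\mathbb{C}$ is algebraic over $\mathbb{Q}$ if and only if $z$ lies in some finite-dimensional $\mathbb{Q}$-subspace $W\subseteq\mathbb{C}$ with $zW\subseteq W$. For the forward direction take $W=\mathbb{Q}\text{-span}(1,z,\dots,z^{n-1})$ where $n$ is the degree of a vanishing polynomial; for the converse, pick $0\ne w\in W$, note that $w,zw,z^2w,\dots$ all lie in the finite-dimensional space $W$ hence satisfy a nontrivial $\mathbb{Q}$-linear relation, and cancel $w$ (valid because $\mathbb{C}$ has no zero divisors) to get a polynomial relation for $z$. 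Granting this, if $z$ is a root of $x^n+c_1x^{n-1}+\dots+c_n$ with $c_i\in\overline{\mathbb{Q}}$, then $R:=\mathbb{Q}[c_1,\dots,c_n]$ is finite-dimensional over $\mathbb{Q}$ (adjoin the $c_i$ one at a time, each step being a finite-degree extension), and $W:=R+Rz+\dots+Rz^{n-1}$ is a finite-dimensional $\mathbb{Q}$-subspace satisfying $zW\subseteq W$ since $z^n=-(c_1z^{n-1}+\dots+c_n)\in W$; hence $z\in\overline{\mathbb{Q}}$. Since $\mathbb{C}$ is algebraically closed, every nonconstant polynomial over $\overline{\mathbb{Q}}$ has a root in $\mathbb{C}$, which by the previous sentence lies in $\overline{\mathbb{Q}}$; so $\overline{\mathbb{Q}}$ is algebraically closed, and being algebraic over $\mathbb{Q}$ it is an algebraic closure of $\mathbb{Q}$.
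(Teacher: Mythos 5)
Your proof is correct and follows essentially the same route as the paper: the Kronecker-product/eigenvalue argument for closure under sums and products, the reversed polynomial $x^d p(1/x)$ for inverses, and the ``algebraic iff contained in a finite-dimensional $\QQ$-stable subspace'' criterion for algebraic closedness. The only difference is that you spell out in full the last step, which the paper dismisses as easy via the equivalent remark that $\alpha$ is algebraic if and only if it generates a finite extension of $\QQ$.
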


\begin{proof}
We will be using definition (\ref{eigenvalue}). Let $\alpha$ be an
eigenvalue of $$\cA \in {\rm Mat}_n(\BC)$$ with eigenvector $v$,
let $\beta$ be an eigenvalue of $$\mB \in {\rm Mat}_m(\BC)$$ with
eigenvector $w$. Then $\alpha\pm \beta$ is an eigenvalue of $$\cA \ot
\Id_m \pm \Id_n \ot \mB,$$ and $\alpha\beta$ is an eigenvalue of
$$\cA \ot \mB.$$ The corresponding eigenvector is in both cases $v
\ot w$. This shows that both $\BA$ and $\overline{\Bbb Q}$ are
rings. To show that the latter is a field, it suffices to note
that if $\alpha\ne 0$ is a root of a polynomial $p(x)$ of degree
$d$, then $\alpha^{-1}$ is a root of $x^dp(1/x)$.
The last statement is easy, since a number $\alpha$ is algebraic
if and only if it defines a finite extension of $\Bbb Q$.
\end{proof}

\begin{proposition}\label{AcapQ_Z}
$\BA \cap \BQ = \BZ$.
\end{proposition}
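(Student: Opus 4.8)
The plan is to prove this as the classical rational root theorem. The inclusion $\BZ\subseteq \BA\cap\BQ$ is immediate: any $n\in\BZ$ is a root of the monic integer polynomial $x-n$, and of course $n\in\BQ$. So the content is the reverse inclusion $\BA\cap\BQ\subseteq\BZ$.

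To that end, I would take $z\in\BA\cap\BQ$ and use Definition \ref{polynomial} (legitimate, since the two definitions of algebraic integer were just shown equivalent): $z$ is a root of some monic polynomial
\[
p(x)=x^n+a_1x^{n-1}+\dotsc+a_{n-1}x+a_n
\]
with $a_i\in\BZ$. Write $z=p/q$ in lowest terms, with $p,q\in\BZ$, $q\ge 1$, and $\gcd(p,q)=1$. Substituting and clearing denominators by multiplying the relation $p(z)=0$ through by $q^n$ gives
\[
p^n+a_1p^{n-1}q+\dotsc+a_{n-1}pq^{n-1}+a_nq^n=0 .
\]

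The key step is then to observe that every term after the first is divisible by $q$, so $q\mid p^n$. Since $\gcd(p,q)=1$ implies $\gcd(p^n,q)=1$, this forces $q=1$, i.e. $z=p\in\BZ$. I do not anticipate a real obstacle here — the only point requiring the tiniest bit of care is the elementary number-theoretic fact that $\gcd(p,q)=1\Rightarrow\gcd(p^n,q)=1$ (which follows from unique factorization, or directly from Bézout), and the fact that a nonzero rational has a representation in lowest terms with positive denominator. Everything else is a one-line manipulation.
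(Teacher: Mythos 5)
Your proof is correct and is essentially the paper's own argument: both reduce to the rational root theorem by writing $z=p/q$ in lowest terms and observing that in the monic relation the leading term is the only one not killed by $q$ (you phrase this by clearing denominators and deducing $q\mid p^n$, the paper by tracking the power of $q$ in each term's denominator), forcing $q=\pm 1$. The easy inclusion $\BZ\subseteq\BA\cap\BQ$ via $x-n$ is also the same in both.
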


\begin{proof}
We will be using definition (\ref{polynomial}). Let $z$ be a root of
$$p(x)=x^n+a_1x^{n-1}+\dotsc+a_{n-1}x+a_n,$$ and suppose
$$z=\frac{p}{q} \in \BQ, \gcd(p,q)=1.$$ Notice that the leading
term of $p(x)$ will have $q^n$ in the denominator, whereas all the other
terms will have a lower power of $q$ there. Thus, if $q \ne
\pm1,$ then $p(z) \notin \BZ,$ a contradiction. Thus, $z \in
\BA \cap \BQ \Rightarrow z \in \BZ.$ The reverse inclusion
follows because $n \in \BZ$ is a root of $x-n$.
\end{proof}

Every algebraic number $\alpha$ has a {\bf minimal polynomial}
$p(x)$, which is the monic polynomial with rational coefficients
of the smallest degree such that $p(\alpha)=0$.
Any other polynomial $q(x)$ with rational coefficients such that
$q(\alpha)=0$ is divisible by $p(x)$. Roots of $p(x)$ are called
the {\bf algebraic conjugates} of $\alpha$; they are roots
of any polynomial $q$ with rational coefficients
such that $q(\alpha)=0$.

Note that any algebraic conjugate of an algebraic integer is
obviously also an algebraic integer.
Therefore, by the Vieta theorem,
the minimal polynomial of an algebraic integer
has integer coefficients.

Below we will need the following lemma:

\begin{lemma}\label{conju}
If $\alpha_1,...,\alpha_m$ are algebraic numbers,
then all algebraic conjugates to $\alpha_1+...+\alpha_m$ are of
the form $\alpha_1'+...+\alpha_m'$, where $\alpha_i'$ are some
algebraic conjugates of $\alpha_i$.
\end{lemma}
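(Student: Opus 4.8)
The plan is to realize $\alpha_1 + \dots + \alpha_m$ as an eigenvalue of a matrix built from matrices having $\alpha_1, \dots, \alpha_m$ as eigenvalues, in such a way that \emph{every} eigenvalue of the big matrix has the stated shape, and then pass to the Galois-theoretic reformulation. Concretely, for each $i$ let $\alpha_i$ be an eigenvalue of a matrix $\mathcal{A}_i$ with rational entries (Definition \ref{eigenvalue}); by passing to the companion matrix of the minimal polynomial of $\alpha_i$ I may even assume that the full set of eigenvalues of $\mathcal{A}_i$ (with multiplicity) is exactly the set of algebraic conjugates of $\alpha_i$. Then, exactly as in the proof of Proposition \ref{A_ring}, the matrix
\[
M = \mathcal{A}_1 \otimes \mathrm{Id} \otimes \dots \otimes \mathrm{Id} \;+\; \mathrm{Id} \otimes \mathcal{A}_2 \otimes \dots \otimes \mathrm{Id} \;+\;\dots\;+\; \mathrm{Id} \otimes \dots \otimes \mathrm{Id} \otimes \mathcal{A}_m
\]
has rational entries, and its eigenvalues are precisely the sums $\alpha_1' + \dots + \alpha_m'$ where $\alpha_i'$ ranges over the eigenvalues of $\mathcal{A}_i$ (since the $\mathcal{A}_i$ can be put in upper-triangular form over $\overline{\BQ}$ simultaneously with the tensor factors, the eigenvalues of a sum of commuting operators are the sums of eigenvalues). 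In particular $\alpha_1 + \dots + \alpha_m$ is an eigenvalue of $M$, so it is a root of the characteristic polynomial $\chi_M(x) \in \BQ[x]$.

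The key point is then: any algebraic conjugate $\gamma$ of $\alpha_1 + \dots + \alpha_m$ is a root of the minimal polynomial of $\alpha_1 + \dots + \alpha_m$, which divides $\chi_M(x)$; hence $\gamma$ is itself an eigenvalue of $M$, and therefore $\gamma = \alpha_1' + \dots + \alpha_m'$ for some eigenvalues $\alpha_i'$ of $\mathcal{A}_i$, i.e. for some algebraic conjugates $\alpha_i'$ of $\alpha_i$. That is exactly the claim.

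The step I expect to be the main obstacle is the assertion that the eigenvalues of $M$ are \emph{exactly} the sums of eigenvalues of the $\mathcal{A}_i$ — the inclusion ``$\supseteq$'' is the easy direction used already in Proposition \ref{A_ring}, but here I need the reverse inclusion as well. This follows because over $\overline{\BQ}$ one can conjugate each $\mathcal{A}_i$ to upper-triangular form $T_i$; then each summand $\mathrm{Id} \otimes \dots \otimes \mathcal{A}_i \otimes \dots \otimes \mathrm{Id}$ becomes $\mathrm{Id} \otimes \dots \otimes T_i \otimes \dots \otimes \mathrm{Id}$ in the tensor basis, all of these are simultaneously upper-triangular, and so $M$ becomes upper-triangular with diagonal entries exactly the sums $\alpha_1' + \dots + \alpha_m'$; an upper-triangular matrix has its diagonal entries as its complete multiset of eigenvalues. (Alternatively, one can argue purely field-theoretically: embed everything in a Galois extension $L/\BQ$ containing all the $\alpha_i$, let $\sigma \in \mathrm{Gal}(L/\BQ)$ be arbitrary, and note $\sigma(\alpha_1 + \dots + \alpha_m) = \sigma(\alpha_1) + \dots + \sigma(\alpha_m)$ with each $\sigma(\alpha_i)$ an algebraic conjugate of $\alpha_i$; since every algebraic conjugate of $\alpha_1 + \dots + \alpha_m$ arises as $\sigma(\alpha_1 + \dots + \alpha_m)$ for some such $\sigma$, this gives the result directly — and this is probably the cleaner writeup.) Either route is short; the only care needed is to make sure ``algebraic conjugate'' is being used consistently with the definition given just before the lemma.
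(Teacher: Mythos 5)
Your proposal is correct and follows essentially the same route as the paper: the paper also realizes $\alpha_1+\alpha_2$ as an eigenvalue of $A_1\otimes \Id+\Id\otimes A_2$ with $A_i$ rational matrices whose characteristic polynomials are the minimal polynomials of the $\alpha_i$, notes that every conjugate of the sum is then also an eigenvalue of this rational matrix, and that all its eigenvalues are sums of conjugates (the paper reduces to $m=2$ and leaves the ``all eigenvalues are sums'' step as an assertion, whereas you treat all $m$ summands at once and supply the simultaneous-triangularization justification). Your closing Galois-theoretic sketch is a valid alternative, but the main argument you give is the paper's.
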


\begin{proof}
It suffices to prove this for two summands.
If $\alpha_i$ are eigenvalues of rational matrices $A_i$ of
smallest size (i.e., their characteristic polynomials are the
minimal polynomials of $\alpha_i$), then
$\alpha_1+\alpha_2$ is an eigenvalue of $A:=A_1\otimes \Id+\Id\otimes
A_2$. Therefore, so is any algebraic conjugate to $\alpha_1+\alpha_2$. But
all eigenvalues of $A$ are of the form $\alpha_1'+\alpha_2'$, so
we are done.
\end{proof}

\begin{problem}
(a) Show that for any finite group $G$ 
there exists a finite Galois extension $K\subset \Bbb C$ of $\Bbb Q$ 
such that any finite dimensional complex representation of $G$ 
has a basis in which the matrices of the group elements 
have entries in $K$. 

Hint. Consider the representations of $G$ over the field $\overline{\Bbb Q}$ 
of algebraic numbers. 

(b) Show that if $V$ is an irreducible complex representation of a
finite group $G$ of dimension $>1$ then there exists
$g\in G$ such that $\chi_V(g)=0$.

Hint: Assume the contrary. Use orthonormality of characters to
show that the arithmetic mean of the numbers $|\chi_V(g)|^2$ for
$g\ne 1$ is $<1$. Deduce that their product $\beta$ satisfies
$0<\beta <1$. Show that all conjugates of $\beta$ satisfy the
same inequalities (consider the Galois conjugates of the representation $V$, 
i.e. representations obtained from $V$ by the action of 
the Galois group of $K$ over $\Bbb Q$ on the matrices of group elements in the 
basis from part (a)). Then derive a contradiction.

{\bf Remark.} Here is a modification of this argument, 
which does not use (a). 
Let $N=|G|$. For any $0<j<N$ coprime to $N$, show that the map $g\mapsto g^j$ 
is a bijection $G\to G$. Deduce that $\prod_{g\ne 1} |\chi_V(g^j)|^2=\beta$.
Then show that $\beta\in K:=\Bbb Q(\zeta)$, $\zeta=e^{2\pi i/N}$, 
and does not change under the automorphism of $K$ 
given by $\zeta\mapsto \zeta^j$. Deduce that $\beta$ is an integer, 
and derive a contradiction.  
\end{problem}

\subsection{Frobenius divisibility}

\begin{theorem}
Let $G$ be a finite group, and let $V$ be an irreducible
representation of $G$ over $\BC$.
Then $$\dim V\text{ divides }\abs{G}.$$
\end{theorem}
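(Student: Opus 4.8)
The plan is to use the theory of algebraic integers developed in the preceding subsection, together with the central characters and the orthogonality relations for characters. The key invariant to exploit is the quantity $\abs{G}/\dim V$, which we want to show is an algebraic integer; since it is obviously rational, Proposition \ref{AcapQ_Z} will then force it to be an ordinary integer, giving the divisibility.

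First I would introduce, for each conjugacy class $C$ of $G$, the element $\Omega_C = \sum_{g\in C} g \in \BC[G]$. These elements lie in the center $Z(\BC[G])$ (since conjugation permutes each class), so by Problem \ref{1:2}(a) each $\Omega_C$ acts on the irreducible representation $V$ as a scalar $\omega_C$. Computing the trace of this scalar operator gives $\omega_C = \abs{C}\chi_V(g_C)/\dim V$, where $g_C$ is a representative of $C$. The crucial structural fact is that the $\Omega_C$ span a subring of $Z(\BC[G])$: the product $\Omega_C \Omega_{C'}$ is a sum of elements of $G$ which is again conjugation-invariant, hence a $\ZZ_{\ge 0}$-linear combination $\sum_{C''} a_{CC'}^{C''}\Omega_{C''}$ with \emph{nonnegative integer} coefficients $a_{CC'}^{C''}$ (they count group elements). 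Consequently the scalars $\omega_C$ satisfy $\omega_C\omega_{C'} = \sum_{C''} a_{CC'}^{C''}\omega_{C''}$, which exhibits the $\ZZ$-module spanned by the $\omega_C$ as a finitely generated $\ZZ$-module closed under multiplication; hence each $\omega_C$ is an algebraic integer. (Equivalently: $\omega_C$ is an eigenvalue of the integer matrix $(a_{CC'}^{C''})_{C',C''}$, which is exactly Definition \ref{eigenvalue}.)

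Next I would combine this with the first orthogonality relation for characters. We have $(\chi_V,\chi_V) = 1$, i.e. $\frac{1}{\abs{G}}\sum_{g\in G}\chi_V(g)\overline{\chi_V(g)} = 1$. Grouping the sum over conjugacy classes and using $\overline{\chi_V(g)} = \chi_V(g^{-1})$, this reads $\sum_C \abs{C}\chi_V(g_C)\chi_V(g_C^{-1}) = \abs{G}$. Dividing by $\dim V$ gives
\[
\frac{\abs{G}}{\dim V} = \sum_C \omega_C\,\chi_V(g_C^{-1}).
\]
Each $\omega_C$ is an algebraic integer by the previous step, and each $\chi_V(g_C^{-1})$ is a sum of roots of unity (eigenvalues of $\rho_V(g_C^{-1})$, which has finite order), hence also an algebraic integer by Proposition \ref{A_ring}(i). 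Therefore the right-hand side lies in $\BA$, so $\abs{G}/\dim V \in \BA \cap \BQ = \BZ$ by Proposition \ref{AcapQ_Z}, which is exactly the assertion $\dim V \mid \abs{G}$.

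The main obstacle — and the step most worth spelling out carefully — is establishing that the $\omega_C$ are algebraic integers, i.e. verifying that the class sums $\Omega_C$ multiply with nonnegative integer structure constants and translating this into an integer-matrix eigenvalue statement. Everything else (the trace computation for $\omega_C$, the rewriting of $(\chi_V,\chi_V)=1$, and the roots-of-unity observation) is routine given the results already in the text. One should also take a moment to note that $V$ is finite-dimensional here, which is automatic since $G$ is finite and $\BC[G]$ is finite-dimensional, so $\dim V$ makes sense and the traces above are legitimate.
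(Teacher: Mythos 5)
Your proposal is correct and follows essentially the same route as the text: show that the scalars $\omega_C=\abs{C}\chi_V(g_C)/\dim V$ by which the central class sums act are algebraic integers, use $(\chi_V,\chi_V)=1$ to write $\abs{G}/\dim V=\sum_C \omega_C\,\chi_V(g_C^{-1})\in\BA$, and conclude from $\BA\cap\BQ=\BZ$. The only (harmless) variation is in the integrality step: you realize $\omega_C$ as an eigenvalue of the nonnegative-integer matrix of class-sum structure constants, whereas the text simply notes that every element of $\BZ[G]$ is integral over $\BZ$ because $\BZ[G]$ is a finitely generated $\BZ$-module.
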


\begin{proof}
Let $C_1, C_2, \dotsc, C_n$ be the conjugacy classes of $G$.
Set $$\lambda_i = \chi_V(g_{C_i})\frac{\abs{C_i}}{\dim V},$$
where $g_{C_i}$ is a representative of $C_i$.

\begin{proposition} \label{alint}
The numbers $\lambda_i$ are algebraic integers for all
$i$.
\end{proposition}

\begin{proof}
Let $C$ be a conjugacy class in $G$, and
$P=\sum_{h\in C}h$. Then $P$ is a central element of $\Bbb
Z[G]$, so it acts on $V$ by some scalar $\lambda$,
which is an algebraic integer (indeed, since $\Bbb Z[G]$ is a
finitely generated $\Bbb Z$-module, any element of $\Bbb Z[G]$
is integral over $\Bbb Z$, i.e., satisfies a monic polynomial
equation with integer coefficients).
On the other hand, taking the trace of $P$ in $V$, we
get $|C|\chi_V(g)=\lambda\dim V$, $g\in C$, so
$\lambda=\frac{|C|\chi_V(g)}{\dim V}$.
\end{proof}

Now, consider
$$
\sum_{i} \lambda_i \ov{\chi_V(g_{C_i})}.
$$
This is an algebraic integer, since:

(i) $\lambda_i$ are
algebraic integers by Proposition \ref{alint},

(ii) $\chi_V(g_{C_i})$ is a sum of roots of unity
(it is the sum of eigenvalues of the matrix of $\rho(g_{C_i})$,
and since $g_{C_i}^{\abs{G}}=e$ in $G$, the eigenvalues of
$\rho(g_{C_i})$ are roots of unity), and

(iii) $\BA$ is a ring
(Proposition \ref{A_ring}).

On the other hand, from
the definition of $\lambda_i$,\\
$$\sum_{C_i} \lambda_i \ov{\chi_V(g_{C_i})} =
\sum_{i} \frac{\abs{C_i} \chi_V(g_{C_i}) \ov{ \chi_V(g_{C_i}) } }{\dim V}.$$\\
Recalling that $\chi_V$ is a class function, this is equal to
$$\sum_{g \in G} \frac{ \chi_V(g) \ov{ \chi_V(g) } }{\dim V} =
\frac {\abs{G}(\chi_V, \chi_V)} {\dim V}.$$ Since $V$ is an
irreducible representation, $(\chi_V, \chi_V)=1,$ so $$\sum_{C_i} \lambda_i
\ov{\chi_V(g_{C_i})} = \frac {\abs{G}} {\dim V}.$$\\
Since $\frac{\abs{G}} {\dim V} \in \BQ$ and $\sum_{C_i}
\lambda_i \ov{\chi_V(g_{C_i})} \in \BA,$ by Proposition \ref{AcapQ_Z}
$\frac {\abs{G}}{\dim V} \in \BZ.$
\end{proof}

\subsection{Burnside's Theorem}

\begin{definition} \rm A group $G$ is called \emph{solvable} if there exists a series of nested normal subgroups
$$\{e\}=G_1 \nrm G_2 \nrm \ldots \nrm G_n=G$$ where $G_{i+1} / G_i$ is abelian
for all $1 \le i \le n-1$.
\end{definition}

\begin{remark} \rm
Such groups are called solvable because they first arose as Galois groups
of polynomial equations which are solvable in radicals.
\end{remark}

\begin{theorem}[Burnside]
Any group $G$ of order $p^a q^b$, where $p$ and $q$ are prime and $a,b \ge 0$, is solvable.
\end{theorem}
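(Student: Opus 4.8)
The plan is to argue by strong induction on $|G|$, the crux being to produce a nontrivial proper normal subgroup. If $a=0$ or $b=0$ then $G$ has prime-power order (and the trivial group is included here), hence is solvable, since groups of prime-power order have nontrivial center and one inducts on the order. So assume $a,b\ge 1$ and that the theorem holds for all groups of strictly smaller order of the form $p^{a'}q^{b'}$. If $G$ has a normal subgroup $N$ with $1\ne N\ne G$, then $|N|$ and $|G/N|$ are again of the form $p^{\ast}q^{\ast}$ and strictly smaller, so $N$ and $G/N$ are solvable by induction and therefore so is $G$. Thus it suffices to exhibit such an $N$ (if $G$ is abelian it is solvable outright). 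To do so, take a Sylow $q$-subgroup $Q$; it is a nontrivial $q$-group, so $Z(Q)\ne 1$, and we may pick $g\in Z(Q)$ with $g\ne e$. Then $Q\subseteq C_G(g)$, so $[G:C_G(g)]$ is a power of $p$; equivalently, the conjugacy class $C$ of $g$ has size $p^k$. If $k=0$ then $g\in Z(G)$, so $Z(G)$ is a nontrivial normal subgroup (proper, or else $G$ is abelian). Hence the whole matter reduces to the following.

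\textbf{Key Lemma.} A nonabelian finite simple group $G$ has no element $g\ne e$ whose conjugacy class has size $p^k$ with $k\ge 1$.

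I would prove this via character theory, the main arithmetic input being: if $V$ is an irreducible complex representation of $G$ with $\dim V=n$ and $g$ lies in a conjugacy class $C$ with $\gcd(|C|,n)=1$, then either $\chi_V(g)=0$ or $\rho_V(g)$ is a scalar operator. Indeed, by the computation in the proof of Proposition \ref{alint}, $\tfrac{|C|\chi_V(g)}{n}$ is an algebraic integer; choosing integers $u,v$ with $u|C|+vn=1$ and setting $\alpha:=\tfrac{\chi_V(g)}{n}=u\cdot\tfrac{|C|\chi_V(g)}{n}+v\chi_V(g)$, we get $\alpha\in\BA$, using that $\chi_V(g)$ is a sum of $n$ roots of unity and that $\BA$ is a ring (Proposition \ref{A_ring}). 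Now $|\alpha|\le 1$, with equality only when the $n$ eigenvalues of $\rho_V(g)$ coincide, i.e. $\rho_V(g)$ is scalar. Each algebraic conjugate $\alpha'$ of $\alpha$ is again $\tfrac1n$ times a sum of $n$ roots of unity (Lemma \ref{conju}), hence also satisfies $|\alpha'|\le 1$; so the product $\beta$ of all conjugates of $\alpha$ is a Galois-invariant algebraic integer, so $\beta\in\BA\cap\BQ=\BZ$ (Proposition \ref{AcapQ_Z}) with $|\beta|\le 1$. Therefore $\beta\in\{0,\pm1\}$: if $\beta=0$ then $\chi_V(g)=0$, and if $|\beta|=1$ then $|\alpha|=1$ and $\rho_V(g)$ is scalar.

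Now suppose for contradiction that $G$ is nonabelian simple and $g\ne e$ has $|C|=p^k$ with $k\ge1$. Since $G$ is nonabelian simple, it has no nontrivial one-dimensional representation. Let $V$ be a nontrivial irreducible representation with $p\nmid\dim V$; then $\gcd(|C|,\dim V)=1$, so by the arithmetic input $\chi_V(g)=0$ or $\rho_V(g)$ is scalar. In the latter case $\rho_V(\langle g\rangle)$ lies in the center of $\rho_V(G)$, and since $\ker\rho_V\triangleleft G$ and $V$ is nontrivial force $\ker\rho_V=1$, we conclude $g\in Z(G)=1$, a contradiction. Hence $\chi_V(g)=0$ for all nontrivial irreducible $V$ with $p\nmid\dim V$. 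Applying column orthogonality (Theorem \ref{column-orthogonality}) with $h=e$, and using that $g$ is not conjugate to $e$,
\[
0=\sum_V\dim(V)\,\chi_V(g)=1+\sum_{p\mid\dim V}\dim(V)\,\chi_V(g),
\]
where the trivial representation contributes the $1$ and the nontrivial irreducibles with $p\nmid\dim V$ contribute $0$ by the previous step. Writing $\dim V=p\,m_V$, the remaining sum equals $p\gamma$ with $\gamma=\sum m_V\chi_V(g)\in\BA$, so $\gamma=-1/p\in\BA\cap\BQ=\BZ$, contradicting Proposition \ref{AcapQ_Z}. This proves the Key Lemma, and with it Burnside's theorem.

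The main obstacle is the arithmetic input on characters — in particular the Galois-theoretic step showing that the norm $\beta=\prod\alpha'$ is a rational integer of absolute value at most $1$, together with the resulting dichotomy $\chi_V(g)=0$ versus $\rho_V(g)$ scalar. Everything else — Sylow's theorem, nontriviality of the center of a $q$-group, the inductive bookkeeping on orders, and column orthogonality — is routine given the results already developed in the notes.
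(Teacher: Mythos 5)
Your proof is correct, and its arithmetic core is exactly the one in the notes: the dichotomy ``$\chi_V(g)=0$ or $\rho_V(g)$ is scalar'' when $\gcd(|C|,\dim V)=1$ (Theorem \ref{thm:rep}, via Proposition \ref{alint}, the ring property of $\BA$, Lemma \ref{lem:2}, and $\BA\cap\BQ=\BZ$), combined with column orthogonality at $h=e$. Where you diverge is in the packaging, in two places. First, for the key step the notes prove Theorem \ref{thm:2} for an arbitrary finite group: Lemma \ref{lem:3} produces a nontrivial $V$ with $p\nmid\dim V$ and $\chi_V(g)\ne 0$, the dichotomy then forces scalar action, and a proper nontrivial normal subgroup is exhibited explicitly as $H=\langle ab^{-1}:a,b\in C\rangle$; you instead assume simplicity from the start, use faithfulness to kill the scalar branch ($\rho_V(g)$ scalar $\Rightarrow g\in Z(G)=1$), conclude $\chi_V(g)=0$ for every nontrivial $V$ with $p\nmid\dim V$, and reach the contradiction $-1/p\in\BA\cap\BQ$. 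These are the same computation reorganized; the notes' version buys a slightly more informative statement (an explicit normal subgroup, no simplicity hypothesis), yours is the common streamlined textbook form. Second, to produce a conjugacy class of size $p^k$ you invoke Sylow's theorem and the nontriviality of the center of a $q$-group, running a strong induction; the notes instead pass to a minimal nonsolvable counterexample (hence simple) and use the class equation modulo $pq$ to force a nontrivial center, never using Sylow. Your route imports Sylow (not otherwise needed in these notes) but avoids the counting argument; both are standard and both complete the proof.
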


This famous result in group theory was proved by the British
mathematician William Burnside in the early 20-th century, using representation
theory (see \cite{Cu}). Here is this proof, presented in modern language.

Before proving Burnside's theorem we will prove several other results
which are of independent interest.

\begin{theorem}\label{thm:rep}
Let $V$ be an irreducible representation of a finite group $G$ and let  $C$
be a conjugacy class of $G$ with $\gcd(|C|, \dim(V))=1$.
Then for any $g \in C$, either $\chi_V(g)=0$ or $g$ acts as a scalar on $V$.
\end{theorem}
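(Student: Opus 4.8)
The plan is to run Burnside's classical argument. Fix $g\in C$, write $d=\dim V$ and $n=|C|$, and set $\alpha=\chi_V(g)/d$. First I would record two basic facts about $\alpha$. On the one hand, the proof of Proposition \ref{alint} already shows that $n\chi_V(g)/d$ is an algebraic integer; since $\gcd(n,d)=1$ there are integers $a,b$ with $an+bd=1$, and then $\alpha=a\cdot\frac{n\chi_V(g)}{d}+b\,\chi_V(g)$. As $\chi_V(g)$ is a sum of $d$ roots of unity it lies in $\BA$, so $\alpha$ is a $\BZ$-linear combination of algebraic integers and hence $\alpha\in\BA$ by Proposition \ref{A_ring}. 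On the other hand, writing $\chi_V(g)$ as a sum of $d$ roots of unity gives $|\alpha|\le 1$, with equality precisely when all eigenvalues of $\rho(g)$ coincide.

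Next I would control the Galois conjugates of $\alpha$. By Lemma \ref{conju}, every algebraic conjugate of $\chi_V(g)$ is again a sum of $d$ roots of unity, hence every algebraic conjugate $\alpha'$ of $\alpha$ also satisfies $|\alpha'|\le 1$. Let $\beta$ be the product of all algebraic conjugates of $\alpha$ (including $\alpha$ itself), so that, up to sign, $\beta$ is the constant term of the minimal polynomial of $\alpha$. Since $\alpha$ is an algebraic integer this minimal polynomial has integer coefficients (Vieta), so $\beta\in\BZ$; and $|\beta|\le 1$ because it is a product of complex numbers each of absolute value at most $1$. Therefore $\beta\in\{-1,0,1\}$.

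Finally I would split into the two cases. If $\beta=0$, then one of the conjugates of $\alpha$ is $0$; since conjugates share a minimal polynomial, that polynomial is $x$, so $\alpha=0$ and thus $\chi_V(g)=0$. If $|\beta|=1$, then every factor in the product $\beta$ must have absolute value exactly $1$, in particular $|\alpha|=1$, i.e.\ $|\chi_V(g)|=d$. By the equality case of the triangle inequality, the $d$ eigenvalues of $\rho(g)$ — all roots of unity — must then be equal to a common value $\varepsilon$; and since $\rho(g)$ has finite order it is diagonalizable, so $\rho(g)=\varepsilon\cdot\Id$, i.e.\ $g$ acts as a scalar. This establishes the dichotomy.

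I do not expect a serious obstacle: the argument is short once the facts from the preceding subsections are available. The step that needs the most care is the passage from ``$\alpha\in\BA$ and $|\alpha|\le 1$'' to ``$\alpha=0$ or $g$ acts as a scalar'': this genuinely requires that \emph{all} Galois conjugates of $\alpha$ are bounded by $1$, which is exactly what Lemma \ref{conju} supplies and why $\alpha\in\BA$ alone is not enough. It is also worth spelling out the elementary remark that a finite-order operator on a complex vector space with a single eigenvalue is already a scalar operator.
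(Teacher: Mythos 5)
Your proof is correct, and it is essentially the paper's own argument (Burnside's): the same Bezout step shows $\chi_V(g)/\dim V$ is an algebraic integer, and the same bound on all Galois conjugates together with the integrality of their product forces either $\chi_V(g)=0$ or all eigenvalues of $\rho(g)$ to coincide. The only difference is presentational — the paper packages the conjugate-product step as a separate lemma on averages of roots of unity and argues by assuming the eigenvalues are not all equal, while you organize the same ingredients by cases on the norm $\beta\in\{-1,0,1\}$; you also spell out the details (integer constant term via Vieta, irreducibility of the minimal polynomial when $\beta=0$, the triangle-inequality equality case) that the paper leaves implicit.
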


The proof will be based on the following lemma.

\begin{lemma}\label{lem:2}
If $\varepsilon_1, \varepsilon_2\ldots  \varepsilon_n$ are roots of unity such that
$\displaystyle \frac{1}{n}(\varepsilon_1+\varepsilon_2+\ldots
+\varepsilon_n)$ is an algebraic integer, then either
$\varepsilon_1=\ldots =\varepsilon_n$ or $\varepsilon_1+\ldots +\varepsilon_n=0$.
\end{lemma}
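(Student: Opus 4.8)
The statement is a classical fact about vanishing sums of roots of unity, and the natural tool is Galois theory applied to the average $\alpha := \frac{1}{n}(\varepsilon_1 + \cdots + \varepsilon_n)$. First I would observe that all the $\varepsilon_i$ lie in a cyclotomic field $K = \mathbb{Q}(\zeta)$ for a suitable root of unity $\zeta$ (take $\zeta$ of order equal to the lcm of the orders of the $\varepsilon_i$), so $K/\mathbb{Q}$ is a finite Galois extension and $\alpha \in K$. Each $\varepsilon_i$ has absolute value $1$, so by the triangle inequality $|\alpha| \le 1$, with equality only if all the $\varepsilon_i$ are equal (equality in the triangle inequality for complex numbers of equal modulus). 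So assume not all $\varepsilon_i$ coincide; then $|\alpha| < 1$ strictly.

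The key step is to apply Galois conjugation. For any $\sigma \in \mathrm{Gal}(K/\mathbb{Q})$, the image $\sigma(\varepsilon_i)$ is again a root of unity (it is a power of $\zeta$), hence has absolute value $1$, so $|\sigma(\alpha)| = \frac{1}{n}|\sigma(\varepsilon_1) + \cdots + \sigma(\varepsilon_n)| \le 1$ by the same triangle inequality. Now consider the norm $\beta := \prod_{\sigma \in \mathrm{Gal}(K/\mathbb{Q})} \sigma(\alpha)$. This is a rational number (it is fixed by the whole Galois group), and moreover it is an algebraic integer: $\alpha$ is an algebraic integer by hypothesis, each conjugate $\sigma(\alpha)$ is a root of the same minimal polynomial hence also an algebraic integer, and $\mathbb{A}$ is a ring by Proposition \ref{A_ring}. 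By Proposition \ref{AcapQ_Z}, $\beta \in \mathbb{Z}$.

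Finally I would combine the two bounds: $|\beta| = \prod_\sigma |\sigma(\alpha)| \le 1$, since each factor is $\le 1$ in absolute value. Since $\beta$ is an integer with $|\beta| \le 1$, either $\beta = 0$ or $|\beta| = 1$. If $\beta = 0$, then some conjugate $\sigma(\alpha) = 0$, and applying $\sigma^{-1}$ gives $\alpha = 0$, i.e. $\varepsilon_1 + \cdots + \varepsilon_n = 0$, which is the desired conclusion. If $|\beta| = 1$, then each factor must have $|\sigma(\alpha)| = 1$ (a product of numbers each of modulus $\le 1$ equal to $1$ forces each to have modulus exactly $1$); in particular $|\alpha| = 1$, contradicting the strict inequality $|\alpha| < 1$ established above. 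Hence the only surviving case (when the $\varepsilon_i$ are not all equal) is $\varepsilon_1 + \cdots + \varepsilon_n = 0$.

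The main obstacle, such as it is, is purely bookkeeping: making sure the triangle-inequality equality condition is stated cleanly (equal-modulus complex numbers sum to something of the same total modulus only when they are all equal) and being careful that Galois conjugates of roots of unity are again roots of unity of the same order-dividing modulus, so the norm argument goes through. No deep input is needed beyond the ring structure of $\mathbb{A}$ and $\mathbb{A} \cap \mathbb{Q} = \mathbb{Z}$, both already available.
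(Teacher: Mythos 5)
Your proof is correct and follows essentially the same route as the paper: bound the average by $1$ (strictly, if the $\varepsilon_i$ are not all equal), note that algebraic conjugates of roots of unity are roots of unity so every conjugate of the average also has modulus at most $1$, and conclude that the norm is a rational integer of absolute value less than $1$, hence zero, forcing the sum to vanish. The only cosmetic difference is that you take the product over the full Galois group and split into the cases $\beta=0$ and $|\beta|=1$, where the paper multiplies the algebraic conjugates and observes directly that the product has absolute value strictly less than $1$.
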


\begin{proof}
Let $a=\frac{1}{n}(\varepsilon_1+\ldots +\varepsilon_n)$.
If not all $\varepsilon_i$ are equal, then $|a|<1$.
Moreover, since any algebraic conjugate of a root of unity is
also a root of unity, $|a'|\le 1$ for any algebraic conjugate
$a'$ of $a$. But the product of all algebraic conjugates of $a$
is an integer. Since it has absolute value $<1$, it must equal
zero. Therefore, $a=0$.
\end{proof}

\emph{Proof of theorem \ref{thm:rep}.}

Let $\dim V=n$.
Let $\varepsilon_1, \varepsilon_2,\ldots \varepsilon_n$
be the eigenvalues of $\rho_V(g)$. They are roots of unity,
so $\chi_V (g)$ is an algebraic integer.
Also, by Proposition \ref{alint}, $\frac{1}{n}|C| \chi_V (g)$
is an algebraic integer. Since $\gcd(n, |C|)=1$, 
there exist integers $a,b$ such that $a|C|+bn=1$. This implies that
$$
\frac {\chi_V(g)}
{n}=\frac{1}{n}(\varepsilon_1+\ldots +\varepsilon_n).
$$
is an algebraic integer. Thus, by Lemma \ref{lem:2},
we get that either
$\varepsilon_1=\ldots =\varepsilon_n$ or $\varepsilon_1+\ldots +\varepsilon_n=
\chi_V(g)=0$. In the first case,
since $\rho_V(g)$ is diagonalizable, it must be scalar.
In the second case, $\chi_V (g)=0$. The theorem is proved.

\begin{theorem}\label{thm:2}
Let $G$ be a finite group, and let $C$
be a conjugacy class in $G$ of order $p^k$ where $p$ is prime and $k >0$.
Then $G$ has a proper nontrivial normal subgroup (i.e., $G$ is not simple). 
\end{theorem}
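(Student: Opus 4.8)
\emph{Proof idea.} The plan is to argue by contradiction: assume $G$ is simple (has no proper nontrivial normal subgroup) and derive a contradiction from character theory. Fix $g\in C$. Since $|C|=p^k>1$, the class $C$ is not $\{e\}$, so $g\neq e$; also $G$ cannot be abelian, for a simple abelian group is cyclic of prime order and then all of its conjugacy classes are singletons, contradicting $|C|=p^k$.

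The first step is to compute $\chi_V(g)$ for each irreducible complex representation $V$ of $G$, using simplicity. For the trivial representation $\chi_V(g)=1$. If $V$ is nontrivial, then $\ker\rho_V$ is a normal subgroup $\neq G$, hence trivial, so $\rho_V$ is faithful; moreover $\dim V>1$, since a faithful one-dimensional representation would embed $G$ into $k^\times$ and make $G$ abelian. The set of $h\in G$ acting on $V$ by a scalar operator is the preimage under $\rho_V$ of the centre of $GL(V)$, hence a normal subgroup, and it is not all of $G$ (that would again force $G$ abelian), so by simplicity it is trivial; in particular $g$ does not act by a scalar on $V$. Hence, by Theorem \ref{thm:rep}, whenever $\gcd(|C|,\dim V)=1$, i.e.\ whenever $p\nmid\dim V$, we must have $\chi_V(g)=0$ for every nontrivial irreducible $V$.

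The second step combines this with column orthogonality. Applying Theorem \ref{column-orthogonality} to the pair $(g,e)$, and using that $g$ is not conjugate to $e$ together with $\chi_V(e)=\dim V$, gives
\[
\sum_V \dim(V)\,\chi_V(g)=0,
\]
the sum being over all irreducible complex representations of $G$. Separating out the trivial representation (contribution $1$) and the nontrivial ones with $p\nmid\dim V$ (contribution $0$ by Step~1) leaves
\[
1+p\sum_{\substack{V\ \text{nontrivial}\\ p\,\mid\,\dim V}}\frac{\dim V}{p}\,\chi_V(g)=0 .
\]
In this last sum each $\dim V/p$ is an integer and each $\chi_V(g)$, being a sum of roots of unity, is an algebraic integer; since $\mathbb{A}$ is a ring (Proposition \ref{A_ring}), the whole sum is an algebraic integer $S$. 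But the identity says $S=-1/p\in\mathbb{Q}\setminus\mathbb{Z}$, contradicting $\mathbb{A}\cap\mathbb{Q}=\mathbb{Z}$ (Proposition \ref{AcapQ_Z}). This contradiction shows $G$ is not simple, as claimed.

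I expect the genuinely delicate part to be Step~1 rather than Step~2: one must verify that simplicity really excludes all the "bad" possibilities, namely a non-faithful $\rho_V$, a nontrivial one-dimensional representation, and $g$ acting as a scalar; and it is exactly here that the nonabelianness of $G$ (forced by $|C|>1$) is used. Once Step~1 is secured, Theorem \ref{thm:rep}, the column orthogonality relation, and the fact that a rational algebraic integer is an ordinary integer assemble the contradiction mechanically.
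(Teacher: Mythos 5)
Your proof is correct, and it uses the same three ingredients as the paper --- the column orthogonality relation $\sum_V \dim V\,\chi_V(g)=0$ (Theorem \ref{column-orthogonality}), Theorem \ref{thm:rep}, and the fact that a rational algebraic integer is an integer (Propositions \ref{A_ring}, \ref{AcapQ_Z}) --- but it is organized in the opposite logical direction. The paper argues directly, without assuming simplicity: from the same orthogonality identity it extracts (Lemma \ref{lem:3}) a nontrivial irreducible $V$ with $p\nmid\dim V$ and $\chi_V(g)\ne 0$, concludes from Theorem \ref{thm:rep} that $g$ (hence every element of $C$) acts on $V$ by one and the same scalar, and then exhibits the normal subgroup explicitly as $H=\langle ab^{-1}\colon a,b\in C\rangle$, which acts trivially on $V$ (so $H\ne G$) and is nontrivial because $|C|>1$. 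You instead assume $G$ simple and use that hypothesis to kill the ``scalar'' alternative in Theorem \ref{thm:rep}: simplicity forces every nontrivial irreducible to be faithful of dimension $>1$ and forces the (normal) subgroup of elements acting by scalars to be trivial, so $\chi_V(g)=0$ for every nontrivial $V$ with $p\nmid\dim V$, and the orthogonality relation collapses to $1+pS=0$ with $S$ an algebraic integer --- a contradiction. What your route buys is a cleaner finish (no explicit construction of $H$, since the normal subgroups $\ker\rho_V$ and the scalar-acting subgroup do the work inside the contradiction); what the paper's route buys is a constructive conclusion, producing the proper nontrivial normal subgroup rather than merely refuting simplicity, at the cost of the small extra verification that conjugate elements act by the same scalar. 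Both arguments are complete and both plug equally well into the deduction of Burnside's theorem, which only needs non-simplicity.
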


\begin{proof}
Choose an element $g \in C$.
Since $g\ne e$, by orthogonality of columns of the character table,
\begin{equation}\label{ort}
\sum_{V \in {\rm Irr}G} \dim V \chi_V(g)=0.
\end{equation}

We can divide
${\rm Irr}G$ into three parts:
\begin{enumerate}
\item
the trivial representation,
\item
$D$, the set of irreducible representations whose dimension is
divisible by $p$, and
\item
$N$, the set of non-trivial irreducible representations
whose dimension is not divisible by $p$.
\end{enumerate}

\begin{lemma}\label{lem:3}
There exists $V \in N$ such that $\chi_V(g)\ne 0$.
\end{lemma}

\begin{proof}
If $V \in D$, the number $\frac{1}{p} \dim(V) \chi_V(g)$ is
an algebraic integer, so
$$
a=\sum_{V\in D} \frac{1}{p}\dim (V) \chi_V(g)
$$
is an algebraic integer.

Now, by (\ref{ort}), we have
$$
0= \chi_{\Bbb C}(g)+\sum_{V \in D}\dim V \chi_V(g)+\sum_{V \in N} \dim V
\chi_V(g)=1+pa+\sum_{V \in N} \dim V
\chi_V(g).
$$
This means that the last summand is nonzero.
\end{proof}

Now pick $V\in N$ such that $\chi_V(g)\ne 0$; it exists by Lemma \ref{lem:3}.
Theorem \ref{thm:rep} implies that $g$ (and hence any element of $C$) acts
by a scalar in $V$. Now let $H$ be the subgroup of $G$ generated by elements
$ab^{-1}$, $a,b\in C$. It is normal and acts trivially in $V$,
so $H\ne G$, as $V$ is nontrivial.
Also $H\ne 1$, since $|C|>1$.
\end{proof}

\emph{Proof of Burnside's theorem.}

Assume Burnside's theorem is false. Then there exists a
nonsolvable group $G$ of order $p^aq^b$. Let $G$ be the smallest such group. 
Then $G$ is simple, and by Theorem \ref{thm:2}, it cannot have a
conjugacy class of order $p^k$ or $q^k$, $k\ge 1$.
So the order of any conjugacy class in $G$ is either $1$ or
is divisible by $pq$. Adding the orders of conjugacy classes and equating the
sum to $p^aq^b$, we see that there has to be more than one
conjugacy class consisting just of one element. So
$G$ has a nontrivial center, which gives a contradiction.

\subsection{Representations of products}

\begin{theorem}
Let $G,H$ be finite groups, $\lbrace{V_i\rbrace}$ be the
irreducible representations of $G$ over a field $k$ (of any characteristic),
and $\lbrace{W_j\rbrace}$ be the irreducible
representations of $H$ over $k$. Then the irreducible
representations of $G\times H$ over $k$ are $\lbrace{ V_i\otimes
W_j\rbrace}$.
\end{theorem}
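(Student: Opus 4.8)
The plan is to reduce everything to the structure theorem for finite-dimensional algebras (Theorem~\ref{cow}) together with the theorem on representations of tensor products of algebras (Theorem~\ref{produ}). The key observation is the algebra isomorphism $k[G\times H]\cong k[G]\otimes k[H]$, which one checks directly: both sides have a basis indexed by pairs $(g,h)\in G\times H$, and the multiplication rule $(g_1,h_1)(g_2,h_2)=(g_1g_2,h_1h_2)$ matches $(a_{g_1}\otimes a_{h_1})(a_{g_2}\otimes a_{h_2})=a_{g_1g_2}\otimes a_{h_1h_2}$ on the nose. Since a representation of a group is the same as a representation of its group algebra, classifying irreducible representations of $G\times H$ is the same as classifying irreducible representations of $k[G]\otimes k[H]$.

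First I would record the algebra isomorphism $k[G\times H]\cong k[G]\otimes k[H]$ and note that under this identification the outer tensor product $V_i\otimes W_j$ (with $G$ acting on the first factor through $\rho_{V_i}$ and $H$ on the second through $\rho_{W_j}$) corresponds exactly to the tensor product representation of $k[G]\otimes k[H]$ in the sense of Section~\ref{tenprod}: the element $a_g\otimes a_h$ acts as $\rho_{V_i}(g)\otimes\rho_{W_j}(h)$. Then Theorem~\ref{produ}(i) immediately gives that each $V_i\otimes W_j$ is irreducible, and Theorem~\ref{produ}(ii) gives that every irreducible finite-dimensional representation of $k[G]\otimes k[H]$ is of this form for unique $i,j$ — which is precisely the claim. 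I would also remark that $G$ and $H$ being finite means $k[G]$ and $k[H]$ are finite-dimensional, so the finite-dimensionality hypotheses in Theorem~\ref{produ} are automatic, and moreover any representation of a finite group over $k$ decomposes into finitely many irreducibles' worth of data so there is no issue about which representations are "finite dimensional": every irreducible representation of $G\times H$ is finite-dimensional since $k[G\times H]$ is.

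The one point requiring a little care — and the place I expect to spend the most words — is verifying that the outer tensor product construction of Definition-style notation used for groups matches the tensor-product-of-algebra-representations construction, since the excerpt's Theorem~\ref{produ} is phrased for associative algebras $A\otimes B$ with $\rho_{V\otimes W}(a\otimes b)=\rho_V(a)\otimes\rho_W(b)$, whereas for groups one writes $\rho_{V\otimes W}(g,h)=\rho_V(g)\otimes\rho_W(h)$. Tracing this through the isomorphism $k[G\times H]\cong k[G]\otimes k[H]$, the group element $(g,h)$ maps to $a_g\otimes a_h$, and on this basis element the two prescriptions agree; since $k[G\times H]$ is spanned by such elements and both maps are linear, the two representation structures coincide. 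With that identification in hand, the theorem is exactly Theorem~\ref{produ} applied to $A=k[G]$, $B=k[H]$, and there is nothing further to prove. I would close by noting that this works over a field of any characteristic precisely because Theorem~\ref{produ} and Theorem~\ref{cow} were proved without any semisimplicity or characteristic assumption.
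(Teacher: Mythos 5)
Your proposal is correct and follows the same route as the paper, which proves this theorem simply by citing Theorem \ref{produ} via the identification $k[G\times H]\cong k[G]\otimes k[H]$. You have merely spelled out the details of that identification, which the paper leaves implicit.
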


\begin{proof}
This follows from Theorem \ref{produ}.
\end{proof}

\subsection{Virtual representations}

\begin{definition}
A {\it virtual representation}
of a finite group $G$ is an integer linear combination
of irreducible representations of $G$, $V=\sum n_iV_i$, $n_i\in
\Bbb Z$ (i.e., $n_i$ are not assumed to be nonnegative).
The character of $V$ is $\chi_V:=\sum n_i\chi_{V_i}$.
\end{definition}

The following lemma is often very useful (and will be used
several times below).

\begin{lemma}\label{virtu}
Let $V$ be a virtual representation
 with character $\chi_V$.
If $(\chi_V,\chi_V)=1$ and $\chi_V(1)>0$ then $\chi_V$ is a
character of an irreducible representation of $G$.
\end{lemma}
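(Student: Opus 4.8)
The plan is to expand $V$ in terms of the irreducible representations of $G$ and exploit the orthonormality of irreducible characters. Write $V = \sum_i n_i V_i$ with $n_i \in \mathbb{Z}$, where $V_i$ runs over the (pairwise nonisomorphic) irreducible complex representations of $G$, so that $\chi_V = \sum_i n_i \chi_{V_i}$. By Theorem \ref{orthogonality-of-characters}, the characters $\chi_{V_i}$ form an orthonormal system with respect to the Hermitian inner product $(\cdot,\cdot)$ on $F_c(G,\CC)$, whence
\[
(\chi_V,\chi_V) = \sum_i n_i^2.
\]
The hypothesis $(\chi_V,\chi_V)=1$ therefore forces $\sum_i n_i^2 = 1$, and since the $n_i$ are integers this is only possible if exactly one of them, say $n_j$, equals $\pm 1$ while all the others vanish. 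Thus $\chi_V = \pm \chi_{V_j}$ for a single irreducible $V_j$.

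It then remains only to fix the sign. Evaluating at the identity gives $\chi_V(1) = \pm \chi_{V_j}(1) = \pm \dim V_j$. Since $\dim V_j > 0$, and by hypothesis $\chi_V(1) > 0$, the sign must be $+$. Hence $\chi_V = \chi_{V_j}$ is the character of the irreducible representation $V_j$, as claimed.

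There is no genuine obstacle in this argument: it is an immediate consequence of the orthonormality of irreducible characters (Theorem \ref{orthogonality-of-characters}) together with the elementary fact that the only way to write $1$ as a sum of squares of integers is $1 = (\pm 1)^2$. The role of the positivity hypothesis $\chi_V(1) > 0$ is precisely to rule out the spurious alternative $\chi_V = -\chi_{V_j}$, which would otherwise also satisfy $(\chi_V,\chi_V)=1$.
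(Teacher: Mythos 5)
Your proof is correct and is essentially identical to the paper's own argument: expand the virtual representation over the irreducibles, use orthonormality of characters to force $\sum_i n_i^2=1$, and use $\chi_V(1)>0$ to pin down the sign. Nothing to add.
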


\begin{proof}
Let $V_1, V_2, \dotsc, V_m$
be the irreducible representations of $G$, and
$V=\sum n_iV_i$. Then by orthonormality of characters,
$(\chi_V,\chi_V)=\sum_i n_i^2$. So $\sum_i n_i^2=1$, meaning that
$n_i=\pm 1$ for exactly one $i$, and $n_j=0$ for $j\ne i$.
But $\chi_V(1)>0$, so $n_i=+1$ and we are done.
\end{proof}

\subsection{Induced Representations}

Given a representation $V$ of a group $G$ and a subgroup $H \subset G$, there is a natural way to
construct a representation of $H$. The restricted representation of $V$ to $H$, $\mathrm{Res}_H^GV$ is
the representation given by the vector space $V$ and the action
$\rho_{\mathrm{Res}_H^GV}=\rho_V|_H$.

There is also a natural, but more complicated way to construct a
representation of a group $G$ given a representation $V$ of its subgroup $H$.

\begin{definition} \rm  \label{dfn:ind}

If $G$ is a group, $H \subset G$, and $V$ is a representation of $H$, then the \emph{induced representation}
$Ind_H^GV$ is the representation of $G$ with
$${\rm Ind}_H^GV=\{f:G \to V|f(hx)=\rho_V(h)f(x) \forall x \in
G, h \in H\}
$$
and the action $g(f)(x)=f(xg) \space\ \forall g \in G$.
\end{definition}

\begin{remark}\rm
In fact, $\mathrm{Ind}_H^G V$ is naturally isomorphic to
$\mathrm{Hom}_H(k[G], V)$.
\end{remark}

Let us check that $\Ind_H^GV$ is indeed a representation:

$g(f)(hx)=f(hxg)=\rho_V(h)f(xg)=\rho_V(h)g(f)(x)$, and
$g(g'(f))(x)=g'(f)(xg)=f(xgg')=(gg')(f)(x)$
for any $g, g', x \in G$ and
$h \in H$.

\begin{remark}\label{natiso}\rm

Notice that if we choose a representative  $x_{\sigma}$ from
every right $H$-coset $\sigma$ of $G$, then any $f\in \mathrm{Ind}_H^GV$ is
uniquely determined by $\{f(x_{\sigma})\}$.

Because of this, $$\dim(\mathrm{Ind}_H^GV)=\dim V \cdot \frac{|G|}{|H|}.$$
\end{remark}

\begin{problem}
Check that if $K\subset H\subset G$ are groups
and $V$ a representation of $K$ then $\Ind_H^G\Ind_K^H V$ is
isomorphic to $\Ind_K^G V$.
\end{problem}

{\bf Exercise.} Let $K\subset G$ be finite groups,
and $\chi: K\to \Bbb C^*$ be a homomorphism. Let $\Bbb C_\chi$ be the 
corresponding 1-dimensional representation of $K$. Let 
$$
e_\chi=\frac{1}{|K|}\sum_{g\in K}\chi(g)^{-1}g\in \Bbb C[K]
$$
be the idempotent corresponding to $\chi$. 
Show that the $G$-representation 
${\rm Ind}_K^G\Bbb C_\chi$ is naturally isomorphic to 
${\Bbb C}[G]e_\chi$ (with $G$ acting by left multiplication). 

\subsection{The Mackey formula}

Let us now compute the character $\chi$ of $\mathrm{Ind}_H^GV$.
In each right coset $\sigma\in H\backslash G$, choose a representative 
$x_\sigma$. 

\begin{theorem}\label{indr} (The Mackey formula) One has
$$
\chi(g)=\sum_{\sigma \in H\backslash G: x_\sigma gx_\sigma^{-1} \in H}
\chi_V(x_\sigma g x_\sigma^{-1}).
$$
\end{theorem}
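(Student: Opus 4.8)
The idea is to compute the trace of the operator $\rho_{\mathrm{Ind}_H^G V}(g)$ directly, by choosing a convenient basis for the induced representation. As noted in Remark \ref{natiso}, having fixed coset representatives $x_\sigma$ for $\sigma \in H\backslash G$, an element $f \in \mathrm{Ind}_H^G V$ is determined by the values $f(x_\sigma) \in V$, and these values can be prescribed arbitrarily; so $\mathrm{Ind}_H^G V \cong \bigoplus_{\sigma} V_\sigma$ as a vector space, where each $V_\sigma$ is a copy of $V$ consisting of the functions supported (after the $H$-equivariance forces values elsewhere) on the coset $Hx_\sigma$. First I would pin down how $g$ acts on this decomposition: $(gf)(x_\sigma) = f(x_\sigma g)$, and $x_\sigma g$ lies in some coset $Hx_\tau$, say $x_\sigma g = h_{\sigma,g} x_\tau$ with $h_{\sigma,g} \in H$ uniquely determined. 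Hence $(gf)(x_\sigma) = \rho_V(h_{\sigma,g}) f(x_\tau)$, which shows that $g$ maps the block $V_\tau$ into the block $V_\sigma$ via $\rho_V(h_{\sigma,g})$.

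The second step is the trace computation. Writing the matrix of $\rho_{\mathrm{Ind}}(g)$ in block form indexed by cosets, the only diagonal blocks that contribute to the trace are those with $\tau = \sigma$, i.e. those cosets $\sigma$ fixed by right multiplication by $g$: $Hx_\sigma g = Hx_\sigma$, equivalently $x_\sigma g x_\sigma^{-1} \in H$. For such a $\sigma$ the diagonal block is exactly $\rho_V(x_\sigma g x_\sigma^{-1})$ (since then $h_{\sigma,g} = x_\sigma g x_\sigma^{-1}$), whose trace is $\chi_V(x_\sigma g x_\sigma^{-1})$. Summing over all $\sigma$ gives
$$
\chi(g) = \sum_{\sigma \in H\backslash G\,:\, x_\sigma g x_\sigma^{-1} \in H} \chi_V(x_\sigma g x_\sigma^{-1}),
$$
which is the Mackey formula.

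There are two small points to check carefully. One is well-definedness: although the coset representatives $x_\sigma$ are chosen, replacing $x_\sigma$ by $h x_\sigma$ changes $x_\sigma g x_\sigma^{-1}$ to a conjugate by $h \in H$, so $\chi_V$ of it is unchanged (characters are class functions), and likewise the condition $x_\sigma g x_\sigma^{-1} \in H$ is independent of the choice — so the right-hand side genuinely depends only on $g$. The other is just bookkeeping: making precise the identification of $\mathrm{Ind}_H^G V$ with $\bigoplus_\sigma V_\sigma$ and verifying that the off-diagonal blocks really do not contribute (they are honestly off-diagonal, so they contribute $0$ to the trace regardless of what they are).

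**Main obstacle.** Honestly there is no deep obstacle here; the entire content is organizing the coset bookkeeping cleanly. The one place to be a little attentive is getting the equivariance convention right — the functions satisfy $f(hx) = \rho_V(h)f(x)$ and $G$ acts on the right, $g(f)(x) = f(xg)$ — so that the factor that appears in the diagonal block is $\rho_V(x_\sigma g x_\sigma^{-1})$ and not, say, $\rho_V(x_\sigma^{-1} g x_\sigma)$ or an inverse thereof. Tracking this consistently through the identification $f \leftrightarrow (f(x_\sigma))_\sigma$ is the part most prone to a sign- or inverse-type slip, so I would write that step out explicitly.
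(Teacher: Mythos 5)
Your proof is correct and follows essentially the same route as the paper's: decompose $\mathrm{Ind}_H^G V$ into the blocks $V_\sigma$ indexed by right cosets, observe that $g$ permutes these blocks so only cosets with $Hx_\sigma g = Hx_\sigma$ contribute to the trace, and identify the diagonal block on such a coset with $\rho_V(x_\sigma g x_\sigma^{-1})$. Your extra check that the right-hand side is independent of the choice of coset representatives is a harmless addition not spelled out in the paper.
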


{\bf Remark.} If the characteristic of the ground field $k$ is relatively prime to $|H|$, 
then this formula can be written as 
$$
\chi(g)=\frac{1}{|H|}\sum_{x \in G: xgx^{-1} \in H}
\chi_V(xg x^{-1}).
$$

\begin{proof}
For a right $H$-coset $\sigma$
of $G$, let us define $$V_{\sigma}=\{f \in
\mathrm{Ind}_H^GV |f(g)=0\ \forall g \nin \sigma\}.$$
Then one has
$$\mathrm{Ind}_H^GV=\bigoplus_{\sigma} V_\sigma,$$
and so $$\chi(g)=\sum_\sigma \chi_{\sigma}(g),$$ where
$\chi_{\sigma}(g)$ is the trace of the diagonal block of
$\rho(g)$ corresponding to $V_\sigma$.

Since $g(\sigma)=\sigma g$ is a right $H$-coset for any
right $H$-coset $\sigma$, $\chi_{\sigma}(g)=0$ if $\sigma \ne \sigma
g$.

Now assume that $\sigma=\sigma g$. 
Then $x_\sigma g=hx_\sigma$ where $h=x_\sigma gx_\sigma^{-1} \in
H$. Consider the vector space homomorphism $\alpha: V_\sigma \to
V$ with $\alpha(f)=f(x_\sigma)$. Since $f \in V_\sigma$ is
uniquely determined by $f(x_\sigma)$, $\alpha$ is an isomorphism. We have
$$\alpha(gf)=g(f)(x_\sigma)=f(x_\sigma g)=f(h x_\sigma)=
\rho_V(h)f(x_\sigma)=h \alpha(f),$$
and $gf=\alpha^{-1}h\alpha(f)$. This means that
$\chi_{\sigma}(g)=\chi_V(h)$. Therefore
$$\chi(g)=\sum_{\sigma \in H \backslash G, \sigma g=\sigma} \chi_V(x_\sigma g
x_\sigma^{-1}).$$
\end{proof}

\subsection{Frobenius reciprocity}
A very important result about induced representations is the
Frobenius Reciprocity Theorem which connects the operations $\mathrm{Ind}$ and
$\mathrm{Res}$.

\begin{theorem}\label{Frobenius reciprocity}\label{frobrec}\rm (Frobenius Reciprocity)

{\rm Let $H\subset G$ be groups, $V$ be a representation of $G$ and $W$ a representation of $H$.}\ Then
$\mathrm{Hom}_G(V, \mathrm{Ind}_H^GW)$ is naturally isomorphic to $\mathrm{Hom}_H(\mathrm{Res}^G_HV, W)$.
\end{theorem}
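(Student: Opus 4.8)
The plan is to construct explicit maps in both directions between $\Hom_G(V,\Ind_H^G W)$ and $\Hom_H(\Res_H^G V, W)$ and check they are mutually inverse. First I would set up the ``evaluation at $e$'' map. Given a $G$-homomorphism $\varphi: V\to \Ind_H^G W$, for each $v\in V$ the image $\varphi(v)$ is a function $G\to W$; define $\Phi(\varphi): V\to W$ by $\Phi(\varphi)(v) = \varphi(v)(e)$. I would check $\Phi(\varphi)$ is $H$-linear: for $h\in H$, using that $\varphi$ commutes with the $G$-action and the defining property $f(hx)=\rho_W(h)f(x)$ of elements of $\Ind_H^G W$, one computes $\Phi(\varphi)(hv) = \varphi(hv)(e) = (h\varphi(v))(e) = \varphi(v)(eh) = \varphi(v)(he) = \rho_W(h)\varphi(v)(e) = \rho_W(h)\Phi(\varphi)(v)$, where the step $\varphi(v)(eh)=\varphi(v)(he)$ is just $eh=he$. (The only subtlety is getting the left/right conventions straight: the $G$-action on $\Ind$ is $(gf)(x)=f(xg)$, so $(h\varphi(v))(e)=\varphi(v)(eh)=\varphi(v)(h)$, and then the $H$-equivariance property of $f\in\Ind$ gives $\varphi(v)(h)=\varphi(v)(h\cdot e)=\rho_W(h)\varphi(v)(e)$.)

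Next I would build the inverse. Given an $H$-homomorphism $\psi: \Res_H^G V\to W$, define $\Psi(\psi): V\to \Ind_H^G W$ by the formula $\Psi(\psi)(v)(x) = \psi(xv)$ for $x\in G$, $v\in V$. I would verify three things: (a) $\Psi(\psi)(v)$ actually lies in $\Ind_H^G W$, i.e. satisfies $f(hx)=\rho_W(h)f(x)$ — this holds because $\psi((hx)v)=\psi(h(xv))=\rho_W(h)\psi(xv)$ by $H$-linearity of $\psi$; (b) $\Psi(\psi)$ is $G$-linear, i.e. $\Psi(\psi)(gv) = g\cdot\Psi(\psi)(v)$ — both sides evaluated at $x$ give $\psi(x(gv))=\psi((xg)v)$ versus $\Psi(\psi)(v)(xg)=\psi((xg)v)$, so they agree; (c) linearity of $\Psi(\psi)$ in $v$, which is immediate.

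Finally I would check $\Phi$ and $\Psi$ are inverse to each other. For one direction, $\Phi(\Psi(\psi))(v) = \Psi(\psi)(v)(e) = \psi(ev) = \psi(v)$, so $\Phi\circ\Psi = \id$. For the other, $\Psi(\Phi(\varphi))(v)(x) = \Phi(\varphi)(xv) = \varphi(xv)(e) = (x\varphi(v))(e) = \varphi(v)(x)$, so $\Psi(\Phi(\varphi))(v)=\varphi(v)$ and $\Psi\circ\Phi=\id$. Both $\Phi$ and $\Psi$ are visibly $k$-linear, so this establishes the isomorphism. The word ``naturally'' refers to naturality in $V$ and $W$; I would remark that the constructed maps are manifestly functorial (compatible with composition by $G$-maps $V'\to V$ and $H$-maps $W\to W'$), since they are defined by plain composition and evaluation, but I would not belabor the diagram-chase.

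I do not expect a serious obstacle here — the whole proof is a bookkeeping exercise. The one place to be careful is the handedness of the $G$-action on $\Ind_H^G V$ (acting on the right inside the argument, $(gf)(x)=f(xg)$, with the $H$-equivariance condition $f(hx)=\rho_V(h)f(x)$ imposed on the left): getting these conventions consistent throughout is the only thing that could go wrong, and it is exactly what makes the two composites collapse to the identity.
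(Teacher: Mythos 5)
Your proposal is correct and follows exactly the paper's own argument: your maps $\Phi$ and $\Psi$ are the paper's $F(\alpha)v=(\alpha v)(e)$ and $(F'(\beta)v)(x)=\beta(xv)$, and your checks (well-definedness, $H$- and $G$-equivariance, and the two composites being the identity) are the same five verifications carried out there. Nothing is missing.
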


\begin{proof}
Let $E=\mathrm{Hom}_G(V, \mathrm{Ind}_H^GW)$ and $E'=\mathrm{Hom}_H(\mathrm{Res}^G_HV, W)$. Define
$F:E \to E'$ and $F':E' \to E$ as follows:
$F(\alpha)v=(\alpha v)(e)$
for any $\alpha \in E$ and
$(F'(\beta)v)(x)=\beta(xv)$ for any $\beta \in E'$.

In order to check that $F$ and $F'$ are well defined and inverse to each
other, we need to check the following five statements.

Let $\alpha \in E$, $\beta \in E'$, $v \in V$, and $x, g \in G$.

(a) $F(\alpha)$ is an $H$-homomorphism, i.e., $F(\alpha)hv=hF(\alpha)v$.

Indeed, $F(\alpha)hv=(\alpha hv)(e)=(h\alpha v)(e)=(\alpha
v)(he)=(\alpha v)(eh)=h\cdot (\alpha v)(e)=hF(\alpha)v$.

(b) $F'(\beta)v \in \mathrm{Ind}^G_H W$,
i.e., $(F'(\beta)v)(hx)=h(F'(\beta)v)(x)$.

Indeed,
$(F'(\beta)v)(hx)=\beta(hxv)=h\beta(xv)=h(F'(\beta)v)(x)$.

(c) $F'(\beta)$ is a $G$-homomorphism, i.e.
$F'(\beta)gv=g(F'(\beta)v)$.

Indeed,
$(F'(\beta)gv)(x)=\beta(xgv)=(F'(\beta)v)(xg)=(g(F'(\beta)v))(x)$.

(d) $F \circ F'=Id_{E'}$.

This holds since $F(F'(\beta))v=(F'(\beta)v)(e)=\beta(v)$.

(e) $F' \circ F=Id_E$, i.e., $(F'(F(\alpha))v)(x)=(\alpha v)(x)$.

Indeed, $(F'(F(\alpha))v)(x)
=F(\alpha xv)=(\alpha xv)(e)=(x \alpha v)(e)=(\alpha v)(x)$,
and we are done.
\end{proof}

{\bf Exercise.} 
The purpose of this exercise is to understand the notions of
restricted and induced representations as part of a more advanced
framework. This framework is the notion of tensor products over
$k$-algebras (which generalizes the tensor product over $k$ which
we defined in Definition \ref{tenpro}). In particular, this understanding
will lead us to a new proof of the Frobenius reciprocity and to
some analogies between induction and restriction.

Throughout this exercise, we will use the notation and results of 
the Exercise in Section \ref{tenprod}. 

Let $G$ be a finite group and
$H\subset G$ a subgroup.  We consider $k\left[G\right]$ as a
$\left(k\left[H\right],k\left[G\right]\right)$-bimodule (both
module structures are given by multiplication inside
$k\left[G\right]$). We denote this bimodule by
$k\left[G\right]_1$. On the other hand, we can also consider
$k\left[G\right]$ as a
$\left(k\left[G\right],k\left[H\right]\right)$-bimodule (again,
both module structures are given by multiplication). We denote
this bimodule by $k\left[G\right]_2$.

(a) Let $V$ be a representation of $G$. Then, $V$ is a left
$k\left[G\right]$-module, thus a
$\left(k\left[G\right],k\right)$-bimodule. Thus, the tensor
product $k\left[G\right]_1\otimes_{k\left[G\right]} V$ is a
$\left(k\left[H\right],k\right)$-bimodule, i. e., a left
$k\left[H\right]$-module. Prove that this tensor product is
isomorphic to $\mathrm{Res}^G_H V$ as a left
$k\left[H\right]$-module. The isomorphism $\mathrm{Res}^G_H V\to
k\left[G\right]_1\otimes_{k\left[G\right]} V$ is given by
$v\mapsto 1\otimes_{k\left[G\right]}v$ for every $v\in
\mathrm{Res}^G_H V$.

(b) Let $W$ be a representation of $H$. Then, $W$ is a left
$k\left[H\right]$-module, thus a
$\left(k\left[H\right],k\right)$-bimodule. Then,
$\mathrm{Ind}^G_H W\cong
\mathrm{Hom}_H\left(k\left[G\right],W\right)$, according to
Remark \ref{natiso}. In other words, $\mathrm{Ind}^G_H W\cong
\mathrm{Hom}_{k\left[H\right]}\left(k\left[G\right]_1,W\right)$. Now,
use part (b) of the Exercise in Section \ref{tenprod} to conclude Theorem \ref{frobrec}.

(c) Let $V$ be a representation of $G$. Then, $V$ is a left
$k\left[G\right]$-module, thus a
$\left(k\left[G\right],k\right)$-bimodule. Prove that not only
$k\left[G\right]_1\otimes_{k\left[G\right]} V$, but also
$\mathrm{Hom}_{k\left[G\right]}\left(k\left[G\right]_2,V\right)$
is isomorphic to $\mathrm{Res}^G_H V$ as a left
$k\left[H\right]$-module. The isomorphism
$\mathrm{Hom}_{k\left[G\right]}\left(k\left[G\right]_2,V\right)\to
\mathrm{Res}^G_H V$ is given by $f\mapsto f\left(1\right)$ for
every $f\in
\mathrm{Hom}_{k\left[G\right]}\left(k\left[G\right]_2,V\right)$.

(d) Let $W$ be a representation of $H$. Then, $W$ is a left
$k\left[H\right]$-module, thus a
$\left(k\left[H\right],k\right)$-bimodule. Show that
$\mathrm{Ind}^G_H W$ is not only isomorphic to
$\mathrm{Hom}_{k\left[H\right]}\left(k\left[G\right]_1,W\right)$,
but also isomorphic to \linebreak
$k\left[G\right]_2\otimes_{k\left[H\right]}W$. The isomorphism
$\mathrm{Hom}_{k\left[H\right]}\left(k\left[G\right]_1,W\right)\to
k\left[G\right]_2\otimes_{k\left[H\right]}W$ is given by
$f\mapsto \sum_{g\in
P}g^{-1}\otimes_{k\left[H\right]}f\left(g\right)$ for every
$f\in\mathrm{Hom}_{k\left[H\right]}\left(k\left[G\right]_1,W\right)$,
where $P$ is a set of distinct representatives for the right
$H$-cosets in $G$. (This isomorphism is independent of the choice
of representatives.)

(e) Let $V$ be a representation of $G$ and $W$ a representation
of $H$. Use (b) to prove that
$\mathrm{Hom}_G\left(\mathrm{Ind}^G_H W,V\right)$ is naturally
isomorphic to $\mathrm{Hom}_H\left(W,\mathrm{Res}^G_H
V\right)$. 

(f) Let $V$ be a representation of $H$. Prove that
$\mathrm{Ind}^G_H \left(V^{\ast}\right) \cong
\left(\mathrm{Ind}^G_H V\right)^{\ast}$ as representations of
$G$. [Hint: Write $\mathrm{Ind}^G_H V$ as
$k\left[G\right]_2\otimes_{k\left[H\right]}V$ and write
$\mathrm{Ind}^G_H \left(V^{\ast}\right)$ as
$\mathrm{Hom}_{k\left[H\right]}\left(k\left[G\right]_1,V^{\ast}\right)$. Prove
that the map
$\mathrm{Hom}_{k\left[H\right]}\left(k\left[G\right]_1,V^{\ast}\right)\times
\left(\mathrm{Ind}^G_H \left(V^{\ast}\right)\right) \to k$ given
by $\left(f, \left(x\otimes_{k\left[H\right]} v\right)\right)\mapsto
\left(f\left(Sx\right)\right)\left(v\right)$ is a nondegenerate
$G$-invariant bilinear form, where $S:k\left[G\right]\to
k\left[G\right]$ is the linear map defined by
$Sg=g^{-1}$ for every $g\in G$.]

\subsection{Examples}

Here are some examples of induced representations
(we use the notation for representations from the character tables).

\begin{enumerate}
\item Let $G=S_3$, $H=\Bbb Z_2$. Using the Frobenius reciprocity,
we obtain: ${\rm Ind}_H^G\Bbb C_+=\Bbb C^2\oplus \Bbb C_+$,
${\rm Ind}_H^G\Bbb C_-=\Bbb C^2\oplus \Bbb C_-$.

\item Let $G=S_3$, $H=\Bbb Z_3$. Then we obtain
${\rm Ind}_H^G\Bbb C_+=\Bbb C_+\oplus \Bbb C_-$,
${\rm Ind}_H^G\Bbb C_\epsilon={\rm Ind}_H^G\Bbb
C_{\epsilon^2}=\Bbb C^2$.

\item Let $G=S_4$, $H=S_3$. Then ${\rm Ind}_H^G\Bbb C_+=
\Bbb C_+\oplus \Bbb C_-^3$,  ${\rm Ind}_H^G\Bbb C_-=
\Bbb C_-\oplus \Bbb C_+^3$, ${\rm Ind}_H^G\Bbb C^2=
\Bbb C^2\oplus \Bbb C_-^3\oplus \Bbb C^3_+$.

\end{enumerate}

\begin{problem}
Compute the decomposition into irreducibles
of all the representations of $A_5$ induced from
all the irreducible representations of 

(a) $\Bbb Z_2$

(b) $\Bbb Z_3$

(c) $\Bbb Z_5$

(d) $A_4$

(e) $\Bbb Z_2\times \Bbb Z_2$

\end{problem}

\subsection{Representations of $S_n$}

In this subsection we give a description of the representations
of the symmetric group $S_n$ for any $n$.

\begin{definition} A {\bf partition}
$\lambda$ of $n$ is a representation of $n$
in the form $n=\lambda_1+\lambda_2+...+\lambda_p$,
where $\lambda_i$ are positive integers, and $\lambda_i\ge \lambda_{i+1}$.
\end{definition}

To such $\lambda$ we will attach a {\bf Young diagram} $Y_\lambda$,
which is the union of rectangles $-i\le y\le -i+1$,
$0\le x\le \lambda_i$ in the coordinate plane, for $i=1,...,p$.
Clearly, $Y_\lambda$ is a collection of $n$ unit squares.
A {\bf Young tableau} corresponding to $Y_\lambda$ is the result of
filling the numbers $1,...,n$ into the squares of $Y_\lambda$ in
some way (without repetitions). For example, we will consider
the Young tableau $T_\lambda$ obtained by filling in the numbers
in the increasing order, left to right, top to bottom.

We can define two subgroups of $S_n$ corresponding to
$T_\lambda$:

1. The row subgroup $P_\lambda$: the subgroup which maps every
element of $\{1,...,n\}$ into an element standing in the same row
in $T_\lambda$.

2. The column subgroup $Q_\lambda$: the subgroup which maps every
element of $\{1,...,n\}$ into an element standing in the same column
in $T_\lambda$.

Clearly, $P_\lambda\cap Q_\lambda=\{1\}$.

Define the {\it Young projectors:}
$$
a_\lambda:=\frac{1}{|P_\lambda|}\sum_{g\in P_\lambda}g,
$$
$$
b_\lambda:=\frac{1}{|Q_\lambda|}\sum_{g\in Q_\lambda}(-1)^gg,
$$
where $(-1)^g$ denotes the sign of the permutation $g$.
Set $c_\lambda=a_\lambda b_\lambda$. Since $P_\lambda\cap
Q_\lambda=\lbrace{ 1\rbrace}$, this element is nonzero.

The irreducible representations of $S_n$ are described by the
following theorem.

\begin{theorem}\label{symgroup}
The subspace $V_\lambda:=\Bbb C[S_n]c_\lambda$ of $\Bbb
C[S_n]$ is an irreducible representation of $S_n$ under
left multiplication. Every irreducible
representation of $S_n$ is isomorphic to $V_\lambda$
for a unique $\lambda$.
\end{theorem}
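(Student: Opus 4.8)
The plan is to run the classical Young-symmetrizer argument in three stages: first show each $V_\lambda=\CC[S_n]c_\lambda$ is irreducible by computing its endomorphism algebra; then show that distinct partitions give non-isomorphic representations; and finally match the count of the $V_\lambda$ with the number of conjugacy classes of $S_n$ (via the character-count corollary) to conclude that nothing else is missing.

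\textbf{The combinatorial core and the idempotent.} The technical heart is a statement about the interaction of the row group $P_\lambda$ and column group $Q_\lambda$. \textbf{Claim 1:} if $\sigma\in S_n$ is not of the form $pq$ with $p\in P_\lambda$, $q\in Q_\lambda$, then there exist transpositions $t\in P_\lambda$ and $s\in Q_\lambda$ with $t\sigma=\sigma s$. (One shows that if no two entries sharing a row of $T_\lambda$ share a column of $\sigma T_\lambda$, then $\sigma\in P_\lambda Q_\lambda$ by a bookkeeping argument; otherwise one takes $t$ to be the transposition of two such entries and $s=\sigma^{-1}t\sigma$.) Since $pa_\lambda=a_\lambda$ for $p\in P_\lambda$ and $q b_\lambda=(-1)^q b_\lambda$ for $q\in Q_\lambda$, Claim 1 gives $a_\lambda\sigma b_\lambda=a_\lambda(t\sigma)b_\lambda=a_\lambda(\sigma s)b_\lambda=-a_\lambda\sigma b_\lambda$, hence $a_\lambda\sigma b_\lambda=0$ for such $\sigma$, whereas $a_\lambda(pq)b_\lambda=(-1)^q c_\lambda$, the decomposition $\sigma=pq$ being unique because $P_\lambda\cap Q_\lambda=\{1\}$. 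Expanding a general $x=\sum_\sigma x_\sigma\sigma$ yields $a_\lambda x b_\lambda\in\CC c_\lambda$ for all $x\in\CC[S_n]$. In particular $c_\lambda^2=\kappa_\lambda c_\lambda$ for a scalar $\kappa_\lambda$, and $\kappa_\lambda\neq 0$: the trace of right multiplication by $c_\lambda$ on $\CC[S_n]$ equals $n!$ times the coefficient of the identity in $c_\lambda$, which is $n!/(|P_\lambda|\,|Q_\lambda|)\neq 0$, so $c_\lambda$ is not nilpotent and $c_\lambda^2\neq 0$. Thus $e_\lambda:=\kappa_\lambda^{-1}c_\lambda$ is a nonzero idempotent with $V_\lambda=\CC[S_n]c_\lambda=\CC[S_n]e_\lambda$.

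\textbf{Irreducibility.} For any idempotent $e$ in any algebra $A$ there is a vector space isomorphism $\End_A(Ae)\cong eAe$ given by $\varphi\mapsto\varphi(e)$ (note $\varphi(e)=\varphi(e\cdot e)=e\varphi(e)\in eAe$). Here $e_\lambda\CC[S_n]e_\lambda=\CC e_\lambda$ by the previous paragraph, so $\End_{S_n}(V_\lambda)$ is one-dimensional. Since $\CC[S_n]$ is semisimple by Maschke's theorem (Theorem \ref{maschk}), $V_\lambda$ is a direct sum $\bigoplus_i m_i W_i$ of irreducibles, whence $\End_{S_n}(V_\lambda)\cong\bigoplus_i\Mat_{m_i}(\CC)$ has dimension $\sum_i m_i^2$; this equals $1$ only if a single $W_i$ occurs, with multiplicity one, i.e.\ $V_\lambda$ is irreducible.

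\textbf{Distinctness and counting.} Fix the lexicographic order on partitions of $n$ and prove \textbf{Claim 2:} if $\lambda>\mu$ then $a_\lambda\,x\,b_\mu=0$ for all $x\in\CC[S_n]$, again by the transposition trick (for each $\sigma$ one locates, using $\lambda>\mu$, two entries sharing a row of $T_\lambda$ that share a column of $\sigma T_\mu$). Then for $\lambda>\mu$ one gets $c_\lambda\CC[S_n]c_\mu=a_\lambda\big(b_\lambda\CC[S_n]a_\mu\big)b_\mu=0$, hence $\Hom_{S_n}(V_\lambda,V_\mu)\cong e_\lambda\CC[S_n]e_\mu=0$ and so $V_\lambda\ncong V_\mu$; since the order is total, this gives $V_\lambda\ncong V_\mu$ whenever $\lambda\neq\mu$. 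Thus $\lambda\mapsto V_\lambda$ produces exactly $p(n)$ pairwise non-isomorphic irreducibles, where $p(n)$ is the number of partitions of $n$. As conjugacy classes of $S_n$ correspond to cycle types, hence to partitions of $n$, and as over $\CC$ (characteristic $0$) the number of isomorphism classes of irreducible representations equals the number of conjugacy classes, the $V_\lambda$ exhaust all irreducibles, each arising for a unique $\lambda$. I expect Claims 1 and 2 to be the real work — the purely combinatorial input about rows and columns of Young tableaux under permutation — with Claim 2, depending on an ordering of partitions, being the more delicate; the algebra (idempotents, $\End$, semisimplicity, the count) is routine given the earlier results.
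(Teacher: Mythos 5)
Your proposal is correct and follows essentially the same route as the paper: your Claims 1 and 2 are exactly the paper's Lemmas on $a_\lambda x b_\lambda=\ell_\lambda(x)c_\lambda$ and $a_\lambda\CC[S_n]b_\mu=0$ for $\lambda>\mu$, your trace argument gives the paper's idempotency lemma, and the identification $\Hom_{S_n}(\CC[S_n]e_\lambda,\CC[S_n]e_\mu)\cong e_\lambda\CC[S_n]e_\mu$ together with semisimplicity and the count by conjugacy classes is precisely how the paper concludes. The only differences are cosmetic: you spell out the Maschke/$\sum m_i^2=1$ step for irreducibility and track the normalization of $c_\lambda$ (coefficient of the identity $1/(|P_\lambda||Q_\lambda|)$) a bit more carefully than the text does.
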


The modules $V_\lambda$ are called the {\bf Specht modules}.

The proof of this theorem is given in the next subsection.

\begin{example}

\item For the partition $\lambda=(n)$, $P_\lambda=S_n$,
$Q_\lambda=\{1\}$, so $c_\lambda$ is the symmetrizer, and hence
$V_\lambda$ is the trivial representation.

\item For the partition $\lambda=(1,...,1)$, $Q_\lambda=S_n$,
$P_\lambda=\{1\}$, so $c_\lambda$ is the antisymmetrizer, and hence
$V_\lambda$ is the sign representation.

\item $n=3$. For $\lambda=(2,1)$, $V_\lambda=\Bbb C^2$.

\item $n=4$. For $\lambda=(2,2)$, $V_\lambda=\Bbb C^2$;
for $\lambda=(3,1)$, $V_\lambda=\Bbb C_-^3$;
for $\lambda=(2,1,1)$, $V_\lambda=\Bbb C_+^3$.
\end{example}

\begin{corollary} All irreducible representations of $S_n$ can
be given by matrices with rational entries.
\end{corollary}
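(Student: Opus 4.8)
The plan is to observe that the Specht module construction of Theorem \ref{symgroup} already makes sense over $\mathbb{Q}$, and then to check that extending scalars back to $\mathbb{C}$ recovers $V_\lambda$ without changing dimensions.

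First I would note that the Young projector $c_\lambda = a_\lambda b_\lambda$ lies in $\mathbb{Q}[S_n]$: by definition $a_\lambda = \frac{1}{|P_\lambda|}\sum_{g\in P_\lambda} g$ and $b_\lambda = \frac{1}{|Q_\lambda|}\sum_{g\in Q_\lambda}(-1)^g g$ have rational coefficients, hence so does their product. Therefore the left ideal $W_\lambda := \mathbb{Q}[S_n]c_\lambda \subseteq \mathbb{Q}[S_n]$ is a subrepresentation of the regular representation of $S_n$ over $\mathbb{Q}$; that is, $S_n$ acts on the finite-dimensional $\mathbb{Q}$-vector space $W_\lambda$ by left multiplication.

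Next I would show that a $\mathbb{Q}$-basis $v_1,\dots,v_d$ of $W_\lambda$ is simultaneously a $\mathbb{C}$-basis of $V_\lambda = \mathbb{C}[S_n]c_\lambda$. Each $v_i$ lies in $\mathbb{C}[S_n]c_\lambda = V_\lambda$. They span $V_\lambda$ over $\mathbb{C}$, since $V_\lambda$ is spanned over $\mathbb{C}$ by the elements $g c_\lambda$ ($g\in S_n$), each of which lies in $W_\lambda$ and is thus a $\mathbb{Q}$-linear (a fortiori $\mathbb{C}$-linear) combination of the $v_i$. They remain $\mathbb{C}$-linearly independent because a $\mathbb{Q}$-linearly independent subset of the $\mathbb{Q}$-vector space $\mathbb{Q}[S_n]$ (which has basis $S_n$) stays linearly independent after extending scalars to $\mathbb{C}$: complete it to a $\mathbb{Q}$-basis of $\mathbb{Q}[S_n]$, and note that a change-of-basis matrix invertible over $\mathbb{Q}$ is invertible over $\mathbb{C}$. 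Equivalently, the natural map $\mathbb{C}\otimes_{\mathbb{Q}} W_\lambda \to V_\lambda$ is an isomorphism of $S_n$-representations, since right multiplication by $c_\lambda$ has the same rank whether computed over $\mathbb{Q}$ or over $\mathbb{C}$.

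Finally, since left multiplication by each $g\in S_n$ is a $\mathbb{Q}$-linear endomorphism of $W_\lambda$, its matrix in the basis $v_1,\dots,v_d$ has rational entries; and because the same vectors form a $\mathbb{C}$-basis of $V_\lambda$ on which $g$ acts by the same matrix, this matrix also represents $g$ on $V_\lambda$. As $\lambda$ ranges over all partitions of $n$, Theorem \ref{symgroup} tells us that every irreducible representation of $S_n$ arises as some $V_\lambda$, so all of them are realizable by matrices with rational entries. There is no serious obstacle here; the only point requiring a little care is the dimension count, i.e.\ verifying that passing from $\mathbb{Q}[S_n]c_\lambda$ to $\mathbb{C}[S_n]c_\lambda$ does not enlarge the space — which is precisely the statement that the rank of a rational matrix is unaffected by field extension.
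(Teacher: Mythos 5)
Your proof is correct and follows the same route the paper intends: since $c_\lambda\in\mathbb{Q}[S_n]$, the Specht module $V_\lambda=\mathbb{C}[S_n]c_\lambda$ has the rational form $\mathbb{Q}[S_n]c_\lambda$, and extending scalars gives the same representation in a basis where the group elements act by rational matrices. Your careful verification that a $\mathbb{Q}$-basis of $\mathbb{Q}[S_n]c_\lambda$ remains a $\mathbb{C}$-basis of $V_\lambda$ is exactly the point the paper leaves implicit, so nothing is missing.
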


\begin{problem}\label{3:7}
Find the sum of dimensions of all irreducible representations of the
symmetric group $S_n$.

Hint. Show that all irreducible representations of $S_n$ are real, i.e., admit
a nondegenerate invariant symmetric form. Then use the Frobenius-Schur
theorem.
\end{problem}

\subsection{Proof of Theorem \ref{symgroup}}

\begin{lemma}\label{le1}
Let $x\in \Bbb C[S_n]$. Then $a_\lambda
xb_\lambda=\ell_\lambda(x)c_\lambda$,
where $\ell_\lambda$ is a linear function.
\end{lemma}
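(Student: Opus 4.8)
The plan is to use the combinatorial properties of the row and column subgroups $P_\lambda$ and $Q_\lambda$ — more precisely, the ways their elements interact under left and right multiplication. The key structural fact about $a_\lambda$ is that $g a_\lambda = a_\lambda g = a_\lambda$ for every $g \in P_\lambda$ (since $g$ just permutes the summands of $\sum_{h\in P_\lambda} h$). Dually, $g b_\lambda = b_\lambda g = (-1)^g b_\lambda$ for every $g \in Q_\lambda$. First I would record these two observations. They immediately imply that for any $x \in \mathbb{C}[S_n]$ the element $y := a_\lambda x b_\lambda$ satisfies $p \cdot y \cdot q = (-1)^q\, y$ for all $p \in P_\lambda$, $q \in Q_\lambda$.

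Next I would reduce to basis elements: writing $x = \sum_{g\in S_n} x_g\, g$, linearity of $x \mapsto a_\lambda x b_\lambda$ means it suffices to show $a_\lambda g b_\lambda = \mu_\lambda(g)\, c_\lambda$ for a scalar $\mu_\lambda(g)$ depending only on $g$, and then set $\ell_\lambda(x) = \sum_g x_g \mu_\lambda(g)$, which is manifestly linear. So the heart of the matter is the single-group-element statement. Here I would expand $a_\lambda g b_\lambda = \frac{1}{|P_\lambda|\,|Q_\lambda|}\sum_{p\in P_\lambda}\sum_{q\in Q_\lambda} (-1)^q\, p g q$ and examine the coefficient of an arbitrary $w \in S_n$ in this sum. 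The claim splits into two cases according to whether $w$ can be written in the form $p g q$ with $p \in P_\lambda$, $q \in Q_\lambda$, or not.

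The technical core — and the step I expect to be the main obstacle — is the following combinatorial lemma about Young tableaux: if $w \in S_n$ has the property that there is no transposition lying simultaneously in $P_\lambda$ and in $w Q_\lambda w^{-1}$ (equivalently, no two entries in the same row of $T_\lambda$ end up in the same column of $w T_\lambda$), then $w$ admits a factorization $w = pq$ with $p\in P_\lambda$, $q \in Q_\lambda$, and this factorization is unique; and conversely if such a shared transposition $t = p_0 = w q_0 w^{-1}$ exists, then $p_0 w = w q_0$, so grouping the sum $\sum_{p,q}(-1)^q pgq$ into cosets of $\langle t\rangle$ on the left (pairing $p$ with $p p_0$) against cosets on the right shows the contributions cancel in pairs, forcing the coefficient of such $w$ to vanish. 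Using this, for $w = p g q$ in the ``good'' case the coefficient of $w$ in $|P_\lambda||Q_\lambda|\, a_\lambda g b_\lambda$ is $(-1)^q$ where $q$ is the (unique) $Q_\lambda$-part of $g^{-1}w$; comparing with the coefficient of $w$ in $|P_\lambda||Q_\lambda|\, c_\lambda = \sum_{p',q'} (-1)^{q'} p' q'$ one checks these coefficients are a common scalar multiple, and for $w$ in the ``bad'' case both coefficients are zero. This yields $a_\lambda g b_\lambda = \mu_\lambda(g) c_\lambda$, completing the argument. I would cite that the permutation-rearrangement facts ($g a_\lambda = a_\lambda$, etc.) are immediate, and isolate the tableau combinatorics as the one genuinely nontrivial input; everything else is bookkeeping.
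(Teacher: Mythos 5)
Your overall strategy is the same as the paper's: exploit $pa_\lambda=a_\lambda p=a_\lambda$ and $qb_\lambda=b_\lambda q=(-1)^qb_\lambda$, reduce to a single group element by linearity, and split according to whether some transposition lies in $P_\lambda$ and, after conjugation, in $Q_\lambda$. But there is a genuine gap: the combinatorial statement you yourself isolate as ``the one genuinely nontrivial input'' --- if no two entries lie in the same row of $T_\lambda$ and in the same column of $wT_\lambda$, then $w\in P_\lambda Q_\lambda$ --- is asserted without any argument. That claim is exactly where the paper's proof does its work: one straightens row by row, finding $q_1'\in wQ_\lambda w^{-1}$ moving the first-row entries of $T_\lambda$ into the first row of $wT_\lambda$ and $p_1\in P_\lambda$ so that $p_1T_\lambda$ and $q_1'(wT_\lambda)$ have the same first row, then repeating with the second row, and so on, eventually producing $p\in P_\lambda$ and $q'\in wQ_\lambda w^{-1}$ with $pT_\lambda=q'(wT_\lambda)$, hence $w\in P_\lambda Q_\lambda$ (uniqueness then follows from $P_\lambda\cap Q_\lambda=\{1\}$). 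Without some such construction your proof is incomplete, since, as you say, everything else is bookkeeping.

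A second, smaller problem is the cancellation step. Pairing $p$ with $pp_0$ does not preserve the constraint $pgq=w$: if $q=g^{-1}p^{-1}w\in Q_\lambda$, the partner's $Q$-component is $g^{-1}p_0^{-1}p^{-1}w=(g^{-1}p_0g)\,q$, which lies in $Q_\lambda$ only if $g^{-1}p_0g\in Q_\lambda$ --- a condition on $g$, not on $w$. The fix is either to use $p_0w=wq_0$ and the sign-reversing involution $(p,q)\mapsto(p_0p,\ qq_0^{-1})$, or, much more economically (this is the paper's route), to run the case split on $g$ itself: if $t\in P_\lambda$ and $g^{-1}tg\in Q_\lambda$, then $a_\lambda gb_\lambda=a_\lambda\, t\,g\,(g^{-1}tg)\,b_\lambda=-a_\lambda gb_\lambda=0$, while if no such $t$ exists then $g=pq\in P_\lambda Q_\lambda$ and $a_\lambda gb_\lambda=(-1)^qc_\lambda$ immediately, with no coefficient-by-coefficient comparison of $a_\lambda gb_\lambda$ with $c_\lambda$ needed. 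Your per-monomial bookkeeping can be repaired along these lines, but as written the pairing is on the wrong side, and the central tableau lemma still needs a proof.
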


\begin{proof}
If $g\in P_\lambda Q_\lambda$, then $g$ has a unique representation as
$pq$, $p\in P_\lambda, q\in Q_\lambda$, so $a_\lambda
gb_\lambda=(-1)^qc_\lambda$. Thus, to prove the required statement, we need to
show that if $g$ is a permutation which is not in $P_\lambda Q_\lambda$ then $a_\lambda
gb_\lambda=0$.

To show this, it is sufficient to find a transposition $t$ such
that $t\in P_\lambda$ and $g^{-1}tg\in Q_\lambda$;
then
$$
a_\lambda gb_\lambda=a_\lambda tgb_\lambda=a_\lambda g
(g^{-1}tg)b_\lambda=-a_\lambda g b_\lambda,
$$
so $a_\lambda g b_\lambda=0$. In other words, we have to find
two elements $i,j$ standing in the same row in the tableau $T=T_\lambda$,
and in the same column in the tableau $T'=gT$ (where $gT$ is the
tableau of the same shape as $T$ obtained by permuting the
entries of $T$ by the permutation $g$). Thus, it suffices to
show that if such a pair does not exist, then $g\in P_\lambda
Q_\lambda$, i.e., there exists $p\in P_\lambda$, $q'\in Q_\lambda':=gQ_\lambda
g^{-1}$ such that $pT=q'T'$ (so that $g=pq^{-1}, q=g^{-1}q'g\in Q_\lambda$).

Any two elements in the first row of $T$
must be in different columns of $T'$, so there exists
$q_1'\in Q_\lambda'$ which moves all these elements to the first
row. So there is $p_1\in P_\lambda$ such
that $p_1T$ and $q_1'T'$ have the same first row.
Now do the same procedure with the second row, finding
elements $p_2,q_2'$ such that $p_2p_1T$ and $q_2'q_1'T'$ have the
same first two rows. Continuing so, we will construct the desired
elements $p,q'$. The lemma is proved.
\end{proof}

Let us introduce the {\bf lexicographic ordering} on partitions:
$\lambda>\mu$ if the first nonvanishing $\lambda_i-\mu_i$ is
positive.

\begin{lemma}\label{le2}
If $\lambda>\mu$ then $a_\lambda \Bbb C[S_n]b_\mu=0$.
\end{lemma}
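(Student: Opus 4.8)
The statement is that if $\lambda > \mu$ in the lexicographic order, then $a_\lambda \, \Bbb C[S_n] \, b_\mu = 0$. Since $\Bbb C[S_n]$ is spanned by group elements $g$, it suffices to show $a_\lambda g b_\mu = 0$ for every $g \in S_n$. The plan is to reduce this to the previous lemma (Lemma \ref{le1}) by a conjugation trick: write $a_\lambda g b_\mu = a_\lambda g b_\mu g^{-1} g = a_\lambda \widetilde{b}_\mu g$, where $\widetilde{b}_\mu = g b_\mu g^{-1}$ is the antisymmetrizer associated to the column group $Q'_\mu := g Q_\mu g^{-1}$ of the tableau $g T_\mu$. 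So it is enough to show $a_\lambda \widetilde{b}_\mu = 0$, i.e. that the symmetrizer over the rows of $T_\lambda$ kills the antisymmetrizer over the columns of any tableau of shape $\mu$.

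First I would establish the key combinatorial fact: if $\lambda > \mu$, then for \emph{any} filling $T'$ of the Young diagram $Y_\mu$, there exist two integers $i, j$ lying in the same row of $T_\lambda$ and in the same column of $T'$. The argument is a counting/pigeonhole argument on the diagrams: since $\lambda > \mu$, comparing the diagrams row by row, at the first place they differ $\lambda$ has a strictly longer row; one shows that the entries of the first $\lambda_1$ columns... more precisely, one runs the same kind of argument as in the proof of Lemma \ref{le1}: try to move, via column permutations of $T'$, the elements of the first row of $T_\lambda$ into distinct columns of $T'$; if this succeeds for every row, one deduces a containment of shapes forcing $\mu \ge \lambda$, contradicting $\lambda > \mu$ (here one uses that $\lambda \ne \mu$ as well, but lexicographic strict inequality with equal size suffices). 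Hence the move must fail at some row, producing the desired pair $i,j$.

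Given such a pair $i,j$, let $t = (i\,j)$ be the transposition. Then $t \in P_\lambda$ (same row of $T_\lambda$) and $t \in Q'_\mu$ (same column of $T'$, where $T' = g T_\mu$). Therefore $a_\lambda t = a_\lambda$ and $\widetilde{b}_\mu = t \widetilde{b}_\mu \cdot (\text{sign adjustments})$; precisely $\widetilde{b}_\mu = -\, t\, \widetilde{b}_\mu$ wait—one gets $t \widetilde{b}_\mu = -\widetilde{b}_\mu$ since $t \in Q'_\mu$ and the antisymmetrizer picks up the sign of $t$, which is $-1$. Thus
$$
a_\lambda \widetilde{b}_\mu = a_\lambda t \widetilde{b}_\mu = a_\lambda (t \widetilde{b}_\mu) = -\, a_\lambda \widetilde{b}_\mu,
$$
so $a_\lambda \widetilde{b}_\mu = 0$, hence $a_\lambda g b_\mu = a_\lambda \widetilde{b}_\mu g = 0$ for all $g$, and the lemma follows.

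The main obstacle is the combinatorial fact in the second paragraph — making precise why $\lambda > \mu$ forces the existence of a same-row-same-column pair for every filling of $Y_\mu$. This is exactly the lexicographic refinement of the exchange argument used in Lemma \ref{le1}, and I would model the write-up on that proof: attempt the greedy row-by-row column-straightening of $T' = gT_\mu$ against the rows of $T_\lambda$; if it never gets stuck, the successive column permutations exhibit an injection of each row of $Y_\lambda$ into the rows of $Y_\mu$ of index $\le$, which by a standard argument gives $\mu \ge \lambda$ in dominance and hence in lexicographic order, contradicting $\lambda > \mu$. Everything else is the routine sign manipulation above, already rehearsed in Lemma \ref{le1}.
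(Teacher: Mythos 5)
Your proof is correct and follows essentially the same route as the paper's: you reduce to finding a transposition $t\in P_\lambda$ with $g^{-1}tg\in Q_\mu$ (equivalently, $t$ lying in the column group of $gT_\mu$), produce it from two entries sharing a row of $T_\lambda$ and a column of $gT_\mu$ via the same row-by-row straightening used in Lemma \ref{le1}, and finish with the sign computation giving $a_\lambda g b_\mu=-a_\lambda g b_\mu$. The only difference is bookkeeping: you argue by contradiction (if the straightening never gets stuck, one gets $\mu\ge\lambda$), whereas the paper runs the straightening forward and applies the pigeonhole at the first index $i$ with $\lambda_i>\mu_i$.
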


\begin{proof} Similarly to the previous lemma, it suffices to show
that for any $g\in S_n$ there exists a transposition
$t\in P_\lambda$ such that $g^{-1}tg\in Q_\mu$.
Let $T=T_\lambda$ and $T'=gT_\mu$. We claim that there are two
integers which are in the same row of $T$ and the same
column of $T'$. Indeed, if $\lambda_1>\mu_1$, this is clear by
the pigeonhole principle (already for the first row). Otherwise,
if $\lambda_1=\mu_1$, like in the proof of the previous lemma,
we can find elements $p_1\in P_\lambda, q_1'\in gQ_\mu g^{-1}$
such that $p_1T$ and $q_1'T'$ have the same first row,
and repeat the argument for the second row, and so on.
Eventually, having done $i-1$ such steps,
we'll have $\lambda_i>\mu_i$, which means that some two elements
of the $i$-th row of the first tableau are in the same column of
the second tableau, completing the proof.
\end{proof}

\begin{lemma}\label{le3} $c_\lambda$ is proportional to an
idempotent. Namely, $c_\lambda^2=\frac{n!}{\dim
V_\lambda}c_\lambda$.
\end{lemma}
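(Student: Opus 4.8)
The plan is to use Lemma \ref{le1} to reduce $c_\lambda^2$ to a scalar multiple of $c_\lambda$, and then to identify that scalar by computing a trace on the regular representation $\Bbb C[S_n]$. First I would write
$$
c_\lambda^2 = a_\lambda b_\lambda a_\lambda b_\lambda = a_\lambda(b_\lambda a_\lambda)b_\lambda
$$
and apply Lemma \ref{le1} with $x = b_\lambda a_\lambda$ to conclude $c_\lambda^2 = \eta c_\lambda$, where $\eta := \ell_\lambda(b_\lambda a_\lambda) \in \Bbb C$. So the entire content of the lemma is the identity $\eta = n!/\dim V_\lambda$; in particular this will force $\eta \neq 0$, so that $\frac{\dim V_\lambda}{n!}c_\lambda$ is a genuine idempotent. (Here $\dim V_\lambda \neq 0$ because the nonzero element $c_\lambda$ lies in $V_\lambda$, as already noted.)

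To pin down $\eta$, I would look at the operator $R \colon \Bbb C[S_n] \to \Bbb C[S_n]$ of right multiplication by $c_\lambda$ and compute $\text{tr}(R)$ in two ways. On one hand, the image of $R$ is exactly $V_\lambda = \Bbb C[S_n]c_\lambda$, and for $v = yc_\lambda \in V_\lambda$ one has $Rv = yc_\lambda^2 = \eta v$; hence, in block form with respect to a vector-space decomposition $\Bbb C[S_n] = V_\lambda \oplus U$, the matrix of $R$ is block upper triangular with diagonal blocks $\eta\cdot\text{Id}_{V_\lambda}$ and $0$ (the second block vanishes since $R(U) \subseteq V_\lambda$), so $\text{tr}(R) = \eta\dim V_\lambda$. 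On the other hand, writing $c_\lambda = \sum_{g \in S_n}\gamma_g\, g$ in the group basis, right multiplication by each $g \neq 1$ permutes the basis $S_n$ without fixed points while right multiplication by $1$ is the identity, so $\text{tr}(R) = n!\,\gamma_1$. Finally, expanding $c_\lambda = a_\lambda b_\lambda$ as a combination of terms $(-1)^q pq$ with $p \in P_\lambda$, $q \in Q_\lambda$, such a term equals $1$ precisely when $q = p^{-1} \in P_\lambda \cap Q_\lambda = \{1\}$, i.e. only for $p = q = 1$, so $\gamma_1 = 1$ and $\text{tr}(R) = n!$. Comparing the two expressions gives $\eta\dim V_\lambda = n!$, which is the assertion $c_\lambda^2 = \frac{n!}{\dim V_\lambda}c_\lambda$.

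The computation is short, so the only place that really needs care is the bookkeeping at the very end — checking that the coefficient of the identity in $c_\lambda$ is exactly $1$, which is the one point where $P_\lambda \cap Q_\lambda = \{1\}$ is used — together with the observation that $R$ acts on its image $V_\lambda$ by the scalar $\eta$, which is what reduces $\text{tr}(R)$ to $\eta\dim V_\lambda$. No deeper input (such as irreducibility of $V_\lambda$) is required.
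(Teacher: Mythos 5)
Your proof is correct and is essentially the paper's own argument: the paper likewise obtains $c_\lambda^2=\eta c_\lambda$ from Lemma \ref{le1} and pins down $\eta$ by observing that the trace of multiplication by $c_\lambda$ in the regular representation is $n!$ (the coefficient of the identity in $c_\lambda$ being $1$), which compared with $\eta\dim V_\lambda$ gives the scalar — you have merely spelled out the block-triangular trace computation that the paper compresses into ``this implies the statement.'' (One small point, shared with the paper's own proof: the count $\gamma_1=1$ tacitly uses the unnormalized symmetrizers $a_\lambda=\sum_{g\in P_\lambda}g$, $b_\lambda=\sum_{g\in Q_\lambda}(-1)^g g$; with the prefactors $\frac{1}{|P_\lambda|},\frac{1}{|Q_\lambda|}$ appearing in the displayed definitions one would instead get $\gamma_1=1/(|P_\lambda||Q_\lambda|)$, so the stated constant $n!/\dim V_\lambda$ corresponds to the unnormalized convention.)
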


\begin{proof} Lemma \ref{le1} implies that $c_\lambda^2$ is
proportional to $c_\lambda$. Also,
it is easy to see that the trace of $c_\lambda$ in the regular
representation is $n!$ (as the coefficient of the identity element
in $c_\lambda$ is $1$). This implies the statement.
\end{proof}

\begin{lemma}\label{le4}
Let $A$ be an algebra and $e$ be an idempotent in $A$.
Then for any left $A$-module $M$, one has
$\Hom_A(Ae,M)\cong eM$ (namely, $x\in eM$ corresponds to $f_x: Ae\to M$ 
given by $f_x(a)=ax$, $a\in Ae$).
\end{lemma}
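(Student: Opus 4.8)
The plan is to exhibit mutually inverse $k$-linear maps between $\Hom_A(Ae,M)$ and $eM$, namely evaluation at $e$ in one direction and $x\mapsto f_x$ in the other. Before doing anything else I would record the two elementary facts that make the whole argument go through: every element of the left ideal $Ae$ is fixed by right multiplication by $e$ (since $(be)e=be^2=be$, so $ae=a$ for all $a\in Ae$), and every element of $eM$ is fixed by left multiplication by $e$ (since $e(ey)=e^2y=ey$, so $ex=x$ for all $x\in eM$). Everything afterwards is a one-line application of $A$-linearity.

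First I would define $\Phi\colon\Hom_A(Ae,M)\to eM$ by $\Phi(f)=f(e)$; this indeed lands in $eM$ because $f(e)=f(e\cdot e)=e\,f(e)$ by $A$-linearity of $f$. In the other direction I would define $\Psi\colon eM\to\Hom_A(Ae,M)$ by $\Psi(x)=f_x$, where $f_x(a)=ax$ for $a\in Ae$. The map $f_x$ is $A$-linear, since $f_x(ba)=(ba)x=b(ax)=b\,f_x(a)$ for all $b\in A$ and $a\in Ae$, so $f_x$ really is an element of $\Hom_A(Ae,M)$. Both $\Phi$ and $\Psi$ are visibly $k$-linear.

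Then I would check that the two composites are the identity. For $x\in eM$ we have $\Phi(\Psi(x))=f_x(e)=ex=x$, using $ex=x$ for $x\in eM$. For $f\in\Hom_A(Ae,M)$ and $a\in Ae$ we have $\big(\Psi(\Phi(f))\big)(a)=f_{f(e)}(a)=a\,f(e)=f(ae)=f(a)$, using $A$-linearity of $f$ together with $ae=a$ for $a\in Ae$; hence $\Psi(\Phi(f))=f$. Thus $\Phi$ and $\Psi$ are mutually inverse, establishing $\Hom_A(Ae,M)\cong eM$ and exhibiting precisely the stated correspondence $x\leftrightarrow f_x$.

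There is no genuine obstacle here: the only thing one must keep straight is which side $e$ acts on — "$a\in Ae$" supplies $ae=a$ and "$x\in eM$" supplies $ex=x$ — after which each verification collapses to a single use of $A$-linearity. As a remark I would note that the special case $M=Ae$ recovers $\End_A(Ae)\cong(eAe)^{\op}$, which generalizes Problem \ref{1:3} (the case $e=1$) and is the form in which the lemma will be applied, e.g. to the idempotent governing the Specht module $V_\lambda$.
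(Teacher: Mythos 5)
Your proof is correct, but it follows a different route from the one in the text. You construct the isomorphism by hand: evaluation at $e$ in one direction, $x\mapsto f_x$ in the other, with the two key identities $ae=a$ for $a\in Ae$ and $ex=x$ for $x\in eM$ making the composites collapse to the identity. The paper instead observes that $1-e$ is also an idempotent, so $A=Ae\oplus A(1-e)$, and deduces the lemma from the standard fact $\Hom_A(A,M)\cong M$ (via $f\mapsto f(1)$): applying $\Hom_A(-,M)$ to the splitting gives $\Hom_A(Ae,M)\oplus\Hom_A(A(1-e),M)\cong M=eM\oplus(1-e)M$, and matching summands yields the claim. The paper's argument is shorter and frames the lemma as a consequence of $Ae$ being a direct summand of the free module $A$ (i.e., projective), but it leaves the explicit form of the isomorphism somewhat implicit; your verification is entirely self-contained and makes the stated correspondence $x\leftrightarrow f_x$ completely transparent, which is exactly what gets used later in identifying $\Hom_{S_n}(\CC[S_n]c_\lambda,\CC[S_n]c_\mu)$ with $c_\lambda\CC[S_n]c_\mu$. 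Your closing remark that the case $M=Ae$ recovers $\End_A(Ae)\cong (eAe)^{\op}$, generalizing the $e=1$ case of Problem \ref{1:3}, is also correct.
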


\begin{proof} Note that $1-e$ is also an idempotent in $A$.
Thus the statement immediately follows from the fact that
$\Hom_A(A,M)\cong M$ and the decomposition
$A=Ae\oplus A(1-e)$.
\end{proof}

Now we are ready to prove Theorem \ref{symgroup}.
Let $\lambda\ge \mu$. Then by Lemmas \ref{le3}, \ref{le4}
$$
\Hom_{S_n}(V_\lambda,V_\mu)=\Hom_{S_n}(\Bbb C[S_n]c_\lambda,\Bbb
C[S_n]c_\mu)=c_\lambda\Bbb C[S_n]c_\mu.
$$
The latter space is zero for $\lambda>\mu$ by Lemma \ref{le2},
and 1-dimensional if $\lambda=\mu$ by Lemmas \ref{le1} and
\ref{le3}. Therefore, $V_\lambda$ are irreducible, and
$V_\lambda$ is not isomorphic to $V_\mu$ if $\lambda\ne \mu$.
Since the number of partitions equals the number of conjugacy
classes in $S_n$, the representations $V_\lambda$ exhaust all the
irreducible representations of $S_n$. The theorem is proved.

\subsection{Induced representations for $S_n$}

Denote by $U_\lambda$ the representation
${\rm Ind}_{P_\lambda}^{S_n}\Bbb C$. It is easy to see that
$U_\lambda$ can be alternatively defined as $U_\lambda=\Bbb
C[S_n]a_\lambda$.

\begin{proposition}\label{multip}
$\Hom(U_\lambda,V_\mu)=0$ for $\mu<\lambda$, and
$\dim \Hom(U_\lambda,V_\lambda)=1$. Thus,
$U_\lambda=\oplus_{\mu\ge \lambda}K_{\mu\lambda}V_\mu$,
where $K_{\mu\lambda}$ are nonnegative integers
and $K_{\lambda \lambda}=1$.
\end{proposition}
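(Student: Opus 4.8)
The plan is to reduce $\Hom_{S_n}(U_\lambda,V_\mu)$ to a space of the form $a_\lambda\,\mathbb{C}[S_n]\,c_\mu$ and then read off the answer from Lemmas \ref{le1} and \ref{le2}. First I would record the two facts that make this possible: that $a_\lambda$ is an idempotent of $\mathbb{C}[S_n]$ (since $h a_\lambda=a_\lambda$ for all $h\in P_\lambda$, one gets $a_\lambda^2=a_\lambda$), and that $U_\lambda=\mathbb{C}[S_n]a_\lambda$ by the remark preceding the proposition. Applying Lemma \ref{le4} with $A=\mathbb{C}[S_n]$, $e=a_\lambda$, and $M=V_\mu=\mathbb{C}[S_n]c_\mu$ then gives a natural isomorphism
$$
\Hom_{S_n}(U_\lambda,V_\mu)\;\cong\;a_\lambda V_\mu\;=\;a_\lambda\,\mathbb{C}[S_n]\,c_\mu\;=\;a_\lambda\,\mathbb{C}[S_n]\,a_\mu b_\mu .
$$

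For the vanishing statement, suppose $\mu<\lambda$, i.e. $\lambda>\mu$ in the lexicographic order. Since $\mathbb{C}[S_n]a_\mu\subseteq\mathbb{C}[S_n]$, we have $a_\lambda\,\mathbb{C}[S_n]\,a_\mu b_\mu\subseteq a_\lambda\,\mathbb{C}[S_n]\,b_\mu$, and the latter space is $0$ by Lemma \ref{le2}. Hence $\Hom_{S_n}(U_\lambda,V_\mu)=0$. For $\mu=\lambda$, Lemma \ref{le1} says $a_\lambda x b_\lambda\in\mathbb{C}c_\lambda$ for every $x\in\mathbb{C}[S_n]$, so $a_\lambda\,\mathbb{C}[S_n]\,a_\lambda b_\lambda\subseteq a_\lambda\,\mathbb{C}[S_n]\,b_\lambda=\mathbb{C}c_\lambda$, which is one-dimensional because $c_\lambda\neq 0$; thus $\dim\Hom_{S_n}(U_\lambda,V_\lambda)\le 1$. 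To see it is exactly $1$, take $x=1$ and use idempotency: $a_\lambda\cdot a_\lambda b_\lambda=a_\lambda b_\lambda=c_\lambda\neq 0$, so the space is nonzero. Therefore $\dim\Hom_{S_n}(U_\lambda,V_\lambda)=1$.

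Finally I would assemble the decomposition. The representation $U_\lambda$ is finite-dimensional, hence completely reducible by Maschke's theorem (Theorem \ref{maschk}), so $U_\lambda\cong\bigoplus_\mu K_{\mu\lambda}V_\mu$ for nonnegative integers $K_{\mu\lambda}$; by Schur's lemma $\End_{S_n}(V_\mu)=\mathbb{C}$, so $K_{\mu\lambda}=\dim\Hom_{S_n}(U_\lambda,V_\mu)$. The two computations above give $K_{\mu\lambda}=0$ for $\mu<\lambda$ and $K_{\lambda\lambda}=1$, which is precisely the claimed statement $U_\lambda=\bigoplus_{\mu\ge\lambda}K_{\mu\lambda}V_\mu$ with $K_{\lambda\lambda}=1$. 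There is no serious obstacle here: the real work was done in proving Lemmas \ref{le1} and \ref{le2} (the lexicographic vanishing being the crux), and the only points needing care are getting the direction of $\Hom$ right and remembering that the relevant idempotent is $a_\lambda$ rather than $c_\lambda$.
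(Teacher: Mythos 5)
Your proof is correct and follows essentially the same route as the paper: reduce $\Hom_{S_n}(U_\lambda,V_\mu)$ to $a_\lambda\,\mathbb{C}[S_n]\,a_\mu b_\mu$ via Lemma \ref{le4} (using that $a_\lambda$ is an idempotent and $U_\lambda=\mathbb{C}[S_n]a_\lambda$), and then conclude with Lemmas \ref{le1} and \ref{le2} together with semisimplicity. You have merely spelled out the details the paper leaves implicit, including the nonvanishing at $\mu=\lambda$ via $a_\lambda\cdot a_\lambda b_\lambda=c_\lambda\ne 0$.
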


\begin{definition} The integers $K_{\mu\lambda}$ are called the
{\bf Kostka numbers}.
\end{definition}

\begin{proof} By Lemmas \ref{le3} and \ref{le4},
$$
\Hom(U_\lambda,V_\mu)=\Hom(\CC[S_n]a_\lambda,\CC[S_n]a_\mu b_\mu)=
a_\lambda \CC[S_n]a_\mu b_\mu,
$$
and the result follows from Lemmas \ref{le1} and \ref{le2}.
\end{proof}

Now let us compute the character of $U_\lambda$.
Let $C_{\bold i}$ be the conjugacy class in $S_n$
having $i_l$ cycles of length $l$ for all $l\ge 1$
(here $\bold i$ is a shorthand notation for $(i_1,...,i_l,...)$).
Also let $x_1,...,x_N$ be variables, and let
$$
H_m(x)=\sum_i x_i^m
$$
be the power sum polynomials.

\begin{theorem}\label{charU} Let $N\ge p$ (where $p$ is the number of
parts of $\lambda$). Then $\chi_{U_\lambda}(C_{\bold i})$
is the coefficient\footnote{If $j>p$, we define $\lambda_j$ to be zero.}
of $x^\lambda:=\prod x_j^{\lambda_j}$
in the polynomial
$$
\prod_{m\ge 1}H_m(x)^{i_m}.
$$
\end{theorem}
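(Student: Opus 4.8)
The plan is to compute the character of the induced representation $U_\lambda = \mathrm{Ind}_{P_\lambda}^{S_n}\mathbb{C}$ directly via the Mackey formula (Theorem \ref{indr}), and then recognize the resulting combinatorial sum as the stated coefficient extraction. First I would recall that $U_\lambda$ is the permutation representation of $S_n$ on the coset space $S_n/P_\lambda$, or equivalently, since $P_\lambda \cong S_{\lambda_1}\times\cdots\times S_{\lambda_p}$ is a Young subgroup, on the set $\Omega_\lambda$ of ordered set-partitions $(B_1,\dots,B_p)$ of $\{1,\dots,n\}$ with $|B_j|=\lambda_j$. For a permutation representation on a set $\Omega$, the character value $\chi(g)$ is simply the number of fixed points of $g$ acting on $\Omega$. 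So the task reduces to counting the ordered set-partitions of the prescribed shape $\lambda$ that are preserved by a fixed permutation $g \in C_{\mathbf{i}}$.

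Next I would translate this fixed-point count into cycle-type combinatorics. A permutation $g$ with $i_m$ cycles of length $m$ fixes an ordered set-partition $(B_1,\dots,B_p)$ if and only if each block $B_j$ is a union of cycles of $g$. Thus fixing such a partition amounts to distributing the $i_m$ many $m$-cycles (for each $m\ge 1$) among the $p$ blocks so that the total length assigned to block $j$ is exactly $\lambda_j$. If block $j$ receives $a_{mj}$ of the $m$-cycles, the constraints are $\sum_j a_{mj}=i_m$ for each $m$ and $\sum_m m\, a_{mj}=\lambda_j$ for each $j$, and the number of ways to choose which cycles go where is $\prod_m \binom{i_m}{a_{1m},a_{2m},\dots,a_{pm}}$ — a product of multinomial coefficients, one for each cycle length $m$ (with the understanding that $a_{mj}$ may be zero). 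Therefore
$$
\chi_{U_\lambda}(C_{\mathbf i}) = \sum \prod_{m\ge 1}\binom{i_m}{a_{m1},\dots,a_{mp}},
$$
where the sum ranges over all nonnegative integer arrays $(a_{mj})$ satisfying the two families of constraints above.

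Finally I would match this against the generating function. Expanding $H_m(x)^{i_m}=\left(\sum_i x_i^m\right)^{i_m}$ by the multinomial theorem gives $H_m(x)^{i_m}=\sum \binom{i_m}{a_{m1},\dots,a_{mN}} \prod_j x_j^{m a_{mj}}$, summed over nonnegative arrays $(a_{m1},\dots,a_{mN})$ with $\sum_j a_{mj}=i_m$. Multiplying over all $m$ and collecting the coefficient of $x^\lambda=\prod_j x_j^{\lambda_j}$ forces exactly the constraint $\sum_m m\,a_{mj}=\lambda_j$ for each $j$ (and for $j>p$, forces $a_{mj}=0$ since $\lambda_j=0$; here the hypothesis $N\ge p$ guarantees there are enough variables $x_1,\dots,x_N$ to realize all of $\lambda_1,\dots,\lambda_p$). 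So the coefficient is precisely the sum $\sum\prod_m\binom{i_m}{a_{m1},\dots,a_{mp}}$ computed above, which finishes the proof.

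I do not expect a serious obstacle; the only point requiring care is the bookkeeping in the second paragraph — correctly arguing that $g$-invariant ordered set-partitions of shape $\lambda$ correspond bijectively to distributions of $g$'s cycles into blocks, and that the number of such distributions compatible with a fixed array $(a_{mj})$ is the product of multinomials $\prod_m\binom{i_m}{a_{m1},\dots,a_{mp}}$ (cycles of different lengths are distributed independently, cycles of the same length are interchangeable so only the counts matter). One could alternatively avoid Mackey entirely and observe that $\chi_{U_\lambda}=\langle \chi_{\mathbb C}, \mathrm{Res}\rangle$-style reasoning gives the same fixed-point formula, but the permutation-representation viewpoint is the cleanest route.
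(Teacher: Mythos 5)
Your proposal is correct, and it rests on the same underlying computation as the paper: both start from the Mackey formula for the induced character of the trivial representation (for the trivial character this is exactly the count of cosets $\sigma\in P_\lambda\backslash S_n$ with $\sigma g=\sigma$, i.e.\ your fixed-point count on ordered set partitions of shape $\lambda$), and both end by matching the resulting sum $\sum\prod_m \frac{i_m!}{\prod_j a_{mj}!}$ against the multinomial expansion of $\prod_m H_m(x)^{i_m}$. Where you differ is the bookkeeping in the middle: the paper counts elements $x\in S_n$ with $xgx^{-1}\in P_\lambda$, which forces it to compute two auxiliary quantities, the centralizer order $|Z_g|=\prod_m m^{i_m}i_m!$ and the intersection count $|C_{\mathbf i}\cap P_\lambda|=\sum_r\prod_j \frac{\lambda_j!}{\prod_m m^{r_{jm}}r_{jm}!}$, and then observes the cancellation; you instead count the $g$-stable ordered set partitions directly by distributing the $i_m$ cycles of each length $m$ among the blocks, so the multinomial coefficients $\binom{i_m}{a_{m1},\dots,a_{mp}}$ appear immediately and no cancellation is needed. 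Your route is thus a bit shorter and more transparent, at the cost of having to justify (as you do) that a $g$-fixed partition is precisely a union-of-cycles partition and that same-length cycles are distributed independently of different-length ones; the paper's route is more mechanical but requires no bijective argument beyond counting cycle types inside $\prod_j S_{\lambda_j}$. (The only blemish is the transposed indices $a_{1m},\dots,a_{pm}$ in your second paragraph, clearly a typo for $a_{m1},\dots,a_{mp}$.)
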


\begin{proof}
The proof is obtained easily from the Mackey formula.
Namely, $\chi_{U_\lambda}(C_{\bold i})$ is the number
of elements $x\in S_n$ such that $xgx^{-1}\in P_\lambda$
(for a representative $g\in C_{\bold i}$), divided by $|P_\lambda|$. 
The order of $P_\lambda$ is $\prod_i \lambda_i!$, and 
the number of elements $x$ such that $xgx^{-1}\in P_\lambda$ 
is the number of elements in $P_\lambda$ conjugate to $g$ (i.e. $|C_{\bold i}\cap P_\lambda|$)
times the order of the centralizer $Z_g$ of $g$ (which is $n!/|C_{\bold i}|$). 
Thus,
$$
\chi_{U_\lambda}(C_{\bold i})=\frac{|Z_g|}{\prod_j \lambda_j!}
|C_{\bold i}\cap P_\lambda |.
$$
Now, it is easy to see that the centralizer $Z_g$ of $g$ is isomorphic to 
$\prod_m S_{i_m}\ltimes (\Bbb Z/m\Bbb Z)^{i_m}$, so 
$$
|Z_g|=\prod_m m^{i_m} i_m!,
$$
and we get
$$
\chi_{U_\lambda}(C_{\bold i})=\frac{\prod_m m^{i_m} i_m!}
{\prod_j \lambda_j!}
|C_{\bold i}\cap P_\lambda |.
$$
Now, since $P_\lambda=\prod_j S_{\lambda_j}$,
we have
$$
|C_{\bold i}\cap P_\lambda |=
\sum_r \prod_{j\ge 1} \frac{\lambda_j!}{\prod_{m\ge 1} m^{r_{jm}}
r_{jm}!},
$$
where $r=(r_{jm})$ runs over all collections of nonnegative
integers such that
$$
\sum_m mr_{jm}=\lambda_j,
\sum_j r_{jm}=i_m.
$$
Indeed, an element of $C_{\bold i}$ that is in 
$P_\lambda$ would define an ordered 
partition of each $\lambda_j$ into parts (namely, cycle lengths), 
with $m$ occuring $r_{jm}$ times, such that the total (over all $j$)
number of times each part $m$ occurs is $i_m$. 
Thus we get
$$
\chi_{U_\lambda}(C_{\bold i})
=\sum_r \prod_m \frac{i_m!}{\prod_j r_{jm}!}
$$
But this is exactly the coefficient of
$x^{\lambda}$ in
$$
\prod_{m\ge 1}(x_1^m+...+x_N^m)^{i_m}
$$
($r_{jm}$ is the number of times we take $x_j^m$).
\end{proof}

\subsection{The Frobenius character formula}

Let $\Delta(x)=\prod_{1\le i<j\le N}(x_i-x_j)$.
Let $\rho=(N-1,N-2,...,0)\in \Bbb C^N$.
The following theorem, due to Frobenius, gives a character
formula for the Specht modules $V_\lambda$.

\begin{theorem}\label{charV} Let $N\ge p$. Then
$\chi_{V_\lambda}(C_{\bold i})$
is the coefficient of $x^{\lambda+\rho}:=\prod x_j^{\lambda_j+N-j}$
in the polynomial
$$
\Delta(x)\prod_{m\ge 1}H_m(x)^{i_m}.
$$
\end{theorem}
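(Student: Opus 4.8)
The plan is to identify the $\ZZ$-valued class function $\psi_\lambda$ on $S_n$ given by $\psi_\lambda(C_{\bold i}):=\big[x^{\lambda+\rho}\big]\,\Delta(x)\prod_{m\ge 1}H_m(x)^{i_m}$ (coefficient of $x^{\lambda+\rho}$) with the irreducible character $\chi_{V_\lambda}$, by playing $\psi_\lambda$ off against the permutation modules $U_\mu=\mathrm{Ind}_{P_\mu}^{S_n}\CC$. The two facts about these that we already have are Theorem \ref{charU}, $\chi_{U_\mu}(C_{\bold i})=[x^\mu]\prod_m H_m(x)^{i_m}$, and Proposition \ref{multip}, $U_\lambda=\bigoplus_{\mu\ge\lambda}K_{\mu\lambda}V_\mu$ with $K_{\lambda\lambda}=1$ (lexicographic order). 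Besides these the only ingredients are two classical polynomial identities, which I would prove in a line each: the power-sum Cauchy identity $\sum_{\bold i}z_{\bold i}^{-1}\big(\prod_m H_m(x)^{i_m}\big)\big(\prod_m H_m(y)^{i_m}\big)=\prod_{i,j=1}^{N}(1-x_iy_j)^{-1}$ (take $\exp$ of $\sum_m\frac1m H_m(x)H_m(y)=-\sum_{i,j}\log(1-x_iy_j)$; here $z_{\bold i}=\prod_m m^{i_m}i_m!$, so $|C_{\bold i}|=n!/z_{\bold i}$), and the Cauchy determinant $\det\big(\tfrac1{1-x_iy_j}\big)_{1\le i,j\le N}=\Delta(x)\Delta(y)\big/\prod_{i,j}(1-x_iy_j)$. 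First I reduce to $N\ge n$: setting $x_{N+1}=0$ turns $\Delta(x_1,\dots,x_{N+1})$ into $(x_1\cdots x_N)\Delta(x_1,\dots,x_N)$ and restricts each $H_m$ to the $N$-variable $H_m$, while the exponent of $x_{N+1}$ in $x^{\lambda+\rho^{(N+1)}}$ is $0$; comparing coefficients gives that $\psi_\lambda$ in $N+1$ variables equals $\psi_\lambda$ in $N$ variables, for every $N\ge p$. So it suffices to treat $N=n$, where every partition of $n$ has at most $N$ parts and Theorem \ref{charU} applies to all $U_\mu$, $\mu\vdash n$.

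Next I compute two inner products with respect to $(f_1,f_2)=\frac1{n!}\sum_g f_1(g)\overline{f_2(g)}=\sum_{\bold i}z_{\bold i}^{-1}f_1(C_{\bold i})f_2(C_{\bold i})$ (all values are real integers). Writing $\psi_\lambda(C_{\bold i})$ as a coefficient in a first set of variables $y$ and $\chi_{U_\mu}(C_{\bold i})$ as a coefficient in a second set $x$, pulling the finite coefficient extraction outside the sum over $\bold i$ (legitimate: only degree-$n$ terms contribute, so the sum may be replaced by the full Cauchy sum), and applying the two identities, I obtain $(\psi_\lambda,\chi_{U_\mu})=\big[x^\mu y^{\lambda+\rho}\big]\,\Delta(y)\prod_{i,j}(1-x_iy_j)^{-1}=[x^\mu]\,\dfrac{a_{\lambda+\rho}(x)}{\Delta(x)}$, where $a_\nu(x):=\det(x_i^{\nu_j})$; note $a_\rho=\Delta$, and $\Delta\mid a_{\lambda+\rho}$ because $a_{\lambda+\rho}$ is alternating, so the quotient is a genuine symmetric polynomial of degree $n$. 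The same computation with $\psi_{\lambda'}$ in place of $\chi_{U_\mu}$ gives $(\psi_\lambda,\psi_{\lambda'})=\big[y^{\lambda+\rho}z^{\lambda'+\rho}\big]\,\Delta(y)\Delta(z)\prod_{i,j}(1-y_iz_j)^{-1}=\big[y^{\lambda+\rho}z^{\lambda'+\rho}\big]\det\big(\tfrac1{1-y_iz_j}\big)=\delta_{\lambda\lambda'}$: expanding the determinant as $\sum_w\operatorname{sgn}(w)\prod_i(1-y_iz_{w(i)})^{-1}$, the coefficient extraction forces $w$ to carry the strictly decreasing sequence $\lambda'+\rho$ onto the strictly decreasing sequence $\lambda+\rho$, hence $w=\id$ and $\lambda=\lambda'$. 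Finally, taking $\mu=\lambda$ in the first identity and using that the leading monomial (in any fixed term order) of $a_{\lambda+\rho}/\Delta$ is $x^{(\lambda+\rho)-\rho}=x^\lambda$ with coefficient $1$ (since $a_{\lambda+\rho}$ and $\Delta=a_\rho$ have leading monomials $x^{\lambda+\rho}$ and $x^\rho$, each with coefficient $1$), I get $(\psi_\lambda,\chi_{U_\lambda})=1$.

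Now assemble. Each $\psi_\lambda$ is an integer-valued class function with $(\psi_\lambda,\psi_\lambda)=1$, so by orthonormality of irreducible characters (as in the proof of Lemma \ref{virtu}) $\psi_\lambda=\pm\chi_{W_\lambda}$ for a single irreducible $W_\lambda$; then $(\psi_\lambda,\chi_{U_\lambda})=1>0$ forces the sign $+$ and shows $W_\lambda$ occurs in $U_\lambda$ with multiplicity $1$. By Proposition \ref{multip} the constituents of $U_\lambda$ are the $V_\mu$ with $\mu\ge\lambda$, so $W_\lambda=V_{\tau(\lambda)}$ with $\tau(\lambda)\ge\lambda$; and $(\psi_\lambda,\psi_{\lambda'})=\delta_{\lambda\lambda'}$ makes $\lambda\mapsto W_\lambda$ injective, hence — there being exactly as many partitions of $n$ as irreducible $S_n$-modules — a bijection. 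An order-non-decreasing bijection of a finite totally ordered set is the identity, so $\tau=\id$, i.e. $\psi_\lambda=\chi_{V_\lambda}$, which is the assertion of the theorem.

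The genuine mathematical content here is light: everything is driven by the two Cauchy identities together with Proposition \ref{multip}. There is no serious obstacle; the one place that needs care is the bookkeeping in the inner-product computations — keeping the two variable sets straight and justifying the interchange of the finite coefficient extraction with the formal sum over all partitions — and the one place a reader might want more is a short proof of each Cauchy identity, which I would include.
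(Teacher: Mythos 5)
Your overall architecture is sound and genuinely different in one respect from the paper's: where the paper first expands $\Delta(x)$ as an alternating sum to write $\theta_\lambda=\sum_{\sigma\in S_N}(-1)^\sigma\chi_{U_{\langle\lambda+\rho-\sigma(\rho)\rangle}}$, obtaining unitriangularity $\theta_\lambda=\sum_{\mu\ge\lambda}L_{\mu\lambda}\chi_\mu$ with $L_{\lambda\lambda}=1$, and then only needs the single norm computation $(\theta_\lambda,\theta_\lambda)=1$, you replace that expansion by the pairings $(\psi_\lambda,\chi_{U_\mu})$ and a counting/bijection argument in the lexicographic order. Your two generating-function computations (power-sum Cauchy plus the Cauchy determinant) are exactly the analytic core of the paper's proof and are carried out correctly, as is the stabilization in $N$. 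However, there is one genuine gap: the step ``$\psi_\lambda$ is an integer-valued class function with $(\psi_\lambda,\psi_\lambda)=1$, so $\psi_\lambda=\pm\chi_{W_\lambda}$'' is not a valid inference. Lemma \ref{virtu} applies to \emph{virtual characters}, i.e.\ integer linear combinations of irreducible characters, and ``integer-valued'' does not imply ``virtual.'' For instance, on $S_3$ the class function $f$ with $f(e)=1$, $f((12))=1$, $f((123))=-1$ is integer-valued and satisfies $(f,f)=\tfrac16(1+3+2)=1$, yet $(f,\chi_{\CC_+})=\tfrac13$, so $f$ is not $\pm$ an irreducible character (not even a virtual one). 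At the point where you invoke this, you have not yet shown that $\psi_\lambda$ lies in the $\ZZ$-span of the irreducible characters, so the argument as written fails.

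The gap is repairable from material already in your proposal. The cleanest patch: note that $(\psi_\lambda,\chi_{U_\mu})=[x^\mu y^{\lambda+\rho}]\,\Delta(y)\prod_{i,j}(1-x_iy_j)^{-1}$ is manifestly an integer for every $\mu\vdash n$ (the series has integer coefficients), and that by Proposition \ref{multip} the matrix expressing the $\chi_{U_\mu}$ in terms of the $\chi_{V_\nu}$ is unitriangular over $\ZZ$, hence invertible over $\ZZ$; therefore $(\psi_\lambda,\chi_{V_\nu})\in\ZZ$ for all $\nu$, i.e.\ $\psi_\lambda$ is a virtual character, and only then does the norm-one argument give $\psi_\lambda=\pm\chi_{W_\lambda}$, with the sign fixed by $(\psi_\lambda,\chi_{U_\lambda})=1>0$ as you say. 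Alternatively, you could simply borrow the paper's first step and expand $\Delta(x)$ over $S_N$, which exhibits $\psi_\lambda$ directly as an integer combination of the $\chi_{U_\nu}$ and, combined with Proposition \ref{multip}, even gives the unitriangularity that makes your final counting/bijection argument unnecessary. Two small cosmetic points: ``any fixed term order'' should be a monomial order for which the decreasing arrangement of exponents is leading (e.g.\ lexicographic with $x_1>\dots>x_N$), and the claim $(\psi_\lambda,\chi_{U_\lambda})=1$ can also be read off directly from the determinant expansion rather than from leading terms.
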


{\bf Remark.} Here is an equivalent formulation of Theorem \ref{charV}:
$\chi_{V_\lambda}(C_{\bold i})$
is the coefficient of $x^\lambda$
in the (Laurent) polynomial
$$
\prod_{i<j}\biggl(1-\frac{x_j}{x_i}\biggr)\prod_{m\ge 1}H_m(x)^{i_m}.
$$

\begin{proof} Denote $\chi_{V_\lambda}$ shortly by
$\chi_\lambda$.
Let us denote the class function defined in the theorem by
$\theta_\lambda$. We claim that this function has the property $\theta_\lambda=\sum_{\mu\ge
\lambda}L_{\mu\lambda}\chi_\mu$, where $L_{\mu\lambda}$ are
integers and $L_{\lambda\lambda}=1$. Indeed, 
from Theorem \ref{charU} we have 
$$
\theta_\lambda=\sum_{\sigma\in S_N}(-1)^\sigma \chi_{U_{\lambda+\rho-\sigma(\rho)}},
$$
where if the vector $\lambda+\rho-\sigma(\rho)$ has a negative entry, 
the corresponding term is dropped, and if it has nonnegative entries which fail to be nonincreasing, 
then the entries should be reordered in the nonincreasing order, making a partition that we'll denote 
$\langle \lambda+\rho-\sigma(\rho)\rangle$ (i.e., we agree that $U_{\lambda+\rho-\sigma(\rho)}:=
U_{\langle \lambda+\rho-\sigma(\rho)\rangle}$). Now note that $\mu=\langle \lambda+\rho-\sigma(\rho)\rangle$ is obtained 
from $\lambda$ by adding vectors of the form $e_i-e_j$, $i<j$, which implies that $\mu>\lambda$ or $\mu=\lambda$, and the case 
$\mu=\lambda$ arises only if $\sigma=1$, as desired.  

Therefore, to show that
$\theta_\lambda=\chi_\lambda$, by Lemma \ref{virtu}, it suffices to show that
$(\theta_\lambda,\theta_\lambda)=1$.

We have
$$
(\theta_\lambda,\theta_\lambda)=\frac{1}{n!}\sum_{\bold i}|C_{\bold
i}|\theta_\lambda(C_{\bold i})^2.
$$
Using that 
$$
|C_{\bold i}|=\frac{n!}{\prod_m m^{i_m}i_m!},
$$
we conclude that $(\theta_\lambda,\theta_\lambda)$ 
is the coefficient of $x^{\lambda+\rho}y^{\lambda+\rho}$
in the series $R(x,y)=\Delta(x)\Delta(y)S(x,y)$, where
$$
S(x,y)=
\sum_{\bold i}\prod_m \frac{(\sum_{j}
x_j^m)^{i_m}(\sum_k y_k^m)^{i_m}}{m^{i_m}i_m!}=
\sum_{\bold i}\prod_m \frac{(\sum_{j,k}
x_j^my_k^m/m)^{i_m}}{i_m!}.
$$
Summing over $\bold i$ and $m$, we get
$$
S(x,y)=\prod_m \exp(\sum_{j,k}
x_j^my_k^m/m)=\exp(-\sum_{j,k}\log(1-x_jy_k))=\prod_{j,k}(1-x_jy_k)^{-1}
$$
Thus,
$$
R(x,y)=\frac{\prod_{i<j}(x_i-x_j)(y_i-y_j)}{\prod_{i,j}(1-x_iy_j)}.
$$
Now we need the following lemma.

\begin{lemma}\label{dete}
$$
\frac{\prod_{i<j}(z_j-z_i)(y_i-y_j)}{\prod_{i,j}(z_i-y_j)}=
\det(\frac{1}{z_i-y_j}).
$$
\end{lemma}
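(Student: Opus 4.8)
The statement is the Cauchy determinant identity, and the plan is to prove it by induction on $N$. The base case $N=1$ is immediate: both sides equal $\frac{1}{z_1-y_1}$, the products over pairs $i<j$ being empty. For the inductive step I would perform elementary row and column operations on $M=\left(\frac{1}{z_i-y_j}\right)_{1\le i,j\le N}$ that preserve the determinant, peeling off scalar factors as I go, until what remains is the $(N-1)\times(N-1)$ Cauchy matrix, to which the induction hypothesis applies.

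Concretely: first subtract the last row from each of rows $1,\dots,N-1$. Using $\frac{1}{z_i-y_j}-\frac{1}{z_N-y_j}=\frac{z_N-z_i}{(z_i-y_j)(z_N-y_j)}$, one can factor $z_N-z_i$ out of row $i$ (for $i<N$) and $\frac{1}{z_N-y_j}$ out of column $j$ (for $j=1,\dots,N$); the new matrix has last row $(1,1,\dots,1)$ and $(i,j)$ entry $\frac{1}{z_i-y_j}$ for $i<N$. Next subtract the last column from columns $1,\dots,N-1$; now the last row has a single nonzero entry, in position $(N,N)$, so expanding along it leaves the determinant of the $(N-1)\times(N-1)$ matrix with $(i,j)$ entry $\frac{y_j-y_N}{(z_i-y_j)(z_i-y_N)}$. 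Factoring $y_j-y_N$ out of column $j$ and $\frac{1}{z_i-y_N}$ out of row $i$ produces exactly $\det\left(\frac{1}{z_i-y_j}\right)_{1\le i,j\le N-1}$, which by the induction hypothesis equals $\frac{\prod_{1\le i<j\le N-1}(z_j-z_i)(y_i-y_j)}{\prod_{1\le i,j\le N-1}(z_i-y_j)}$.

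It then remains to collect the accumulated prefactors $\prod_{i<N}(z_N-z_i)$, $\prod_{j\le N}(z_N-y_j)^{-1}$, $\prod_{j<N}(y_j-y_N)$, $\prod_{i<N}(z_i-y_N)^{-1}$ and check they combine with the inductive expression to the claimed formula: the factors $\prod_{i<N}(z_N-z_i)$ and $\prod_{i<N}(y_i-y_N)$ promote the two Vandermonde products in the numerator from $N-1$ to $N$ indices, while $\prod_{j\le N}(z_N-y_j)^{-1}$ and $\prod_{i<N}(z_i-y_N)^{-1}$ together with the inductive denominator reassemble $\prod_{1\le i,j\le N}(z_i-y_j)^{-1}$. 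Since the identity is an equality of rational functions, it suffices to verify it with the $z_i$ and $y_j$ treated as independent indeterminates (equivalently, all differences nonzero), so no genericity or convergence issue arises. I expect the only real care to be in this last bookkeeping step—tracking signs and index ranges in the factored-out products—where a sign slip would be easy to commit; the rest is routine.
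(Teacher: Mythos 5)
Your proof is correct, and the bookkeeping you were worried about does close up: the factored-out $\prod_{i<N}(z_N-z_i)$ and $\prod_{j<N}(y_j-y_N)$ are exactly the missing Vandermonde factors (with the right signs, since the statement has $z_j-z_i$ but $y_i-y_j$), and $\prod_{j\le N}(z_N-y_j)^{-1}\prod_{i<N}(z_i-y_N)^{-1}$ together with the inductive denominator give $\prod_{i,j\le N}(z_i-y_j)^{-1}$. However, your route is genuinely different from the one in the text. The paper clears denominators and argues abstractly: the resulting polynomial $\det\bigl(\tfrac{1}{z_i-y_j}\bigr)\prod_{i,j}(z_i-y_j)$ must vanish on the hyperplanes $z_i=z_j$ and $y_i=y_j$, hence is divisible by $\Delta(z)\Delta(y)$, and a degree count ($N(N-1)$ on both sides) forces the two sides to be proportional; the constant $1$ is then pinned down by multiplying by $z_N-y_N$ and setting $z_N=y_N$, which reduces to the $(N-1)$-variable case. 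That argument is shorter and avoids all matrix manipulation, at the price of the divisibility-plus-degree reasoning and a limiting specialization; your explicit row-and-column reduction is more elementary and mechanical, makes the induction completely concrete, and requires only the careful tracking of prefactors that you in fact carried out. Either proof is acceptable here.
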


\begin{proof} Multiply both sides by
$\prod_{i,j}(z_i-y_j)$. Then the right hand side must vanish
on the hyperplanes $z_i=z_j$ and $y_i=y_j$ (i.e., be divisible by
$\Delta(z)\Delta(y)$), and is a homogeneous polynomial
of degree $N(N-1)$. This implies that the right hand side and the left
hand side are proportional. The proportionality coefficient
(which is equal to $1$) is found by induction by multiplying both
sides by $z_N-y_N$ and then setting $z_N=y_N$.
\end{proof}

Now setting in the lemma $z_i=1/x_i$, we get

\begin{corollary}\label{formR} (Cauchy identity)
$$
R(x,y)=\det(\frac{1}{1-x_iy_j})=\sum_{\sigma\in S_N}\frac{1}{\prod_{j=1}^N(1-x_jy_{\sigma(j)})}.
$$
\end{corollary}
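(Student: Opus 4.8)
The plan is to obtain Corollary \ref{formR} from Lemma \ref{dete} by the substitution $z_i = 1/x_i$ indicated in the hint; the whole argument is a short computation that amounts to keeping track of monomial prefactors, so no genuinely new idea is required.

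First I would compute the effect of $z_i = 1/x_i$ on each factor appearing in Lemma \ref{dete}. We get $z_j - z_i = (x_i - x_j)/(x_i x_j)$, so $\prod_{1\le i<j\le N}(z_j - z_i) = \Delta(x)/(x_1\cdots x_N)^{N-1}$, because each variable $x_k$ occurs exactly $N-1$ times in the product $\prod_{i<j} x_i x_j$. Likewise $z_i - y_j = (1 - x_i y_j)/x_i$, so $\prod_{i,j}(z_i - y_j) = \prod_{i,j}(1 - x_i y_j)/(x_1\cdots x_N)^N$. Plugging these into the left-hand side of Lemma \ref{dete} and collecting the powers of $x_1\cdots x_N$ — one gets a factor $(x_1\cdots x_N)^{-(N-1)}$ from the numerator and $(x_1\cdots x_N)^{N}$ from inverting the denominator, for a net factor $x_1\cdots x_N$ — the left-hand side becomes $(x_1\cdots x_N)\, R(x,y)$, using $R(x,y) = \Delta(x)\Delta(y)/\prod_{i,j}(1 - x_i y_j)$.

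Next I would handle the right-hand side: $\det\!\big(1/(z_i - y_j)\big) = \det\!\big(x_i/(1 - x_i y_j)\big)$, and pulling the scalar $x_i$ out of the $i$-th row (multilinearity of the determinant in its rows) gives $(x_1\cdots x_N)\det\!\big(1/(1 - x_i y_j)\big)$. Equating the two sides of Lemma \ref{dete} after this substitution and cancelling the common factor $x_1\cdots x_N$ — legitimate since, once denominators are cleared, Lemma \ref{dete} is a polynomial identity, so both the substitution and the cancellation are valid as identities of rational functions in the $x_i, y_j$ — yields $R(x,y) = \det\!\big(1/(1 - x_i y_j)\big)$. The final equality in the corollary is then nothing but the permutation (Leibniz) expansion $\det(M) = \sum_{\sigma\in S_N} (-1)^{\sigma}\prod_{j=1}^N M_{j,\sigma(j)}$ applied to $M_{ij} = 1/(1 - x_i y_j)$.

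There is no real obstacle here; the only point demanding care is getting the exponents of $x_1\cdots x_N$ right so that the prefactors on the two sides agree and cancel — in particular, noticing that the asymmetry between $\prod_{i<j}(z_j - z_i)$, which contributes $(x_1\cdots x_N)^{N-1}$ in its denominator, and $\prod_{i,j}(z_i - y_j)$, which contributes $(x_1\cdots x_N)^{N}$, is exactly compensated by the single factor $x_1\cdots x_N$ produced by the row-scaling on the determinant side.
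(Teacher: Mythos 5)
Your proposal is correct and is essentially identical to the paper's own proof, which consists of the single instruction ``setting $z_i=1/x_i$ in Lemma \ref{dete}''; you have merely supplied the bookkeeping of the powers of $x_1\cdots x_N$ (a net factor of $x_1\cdots x_N$ on the left matching the row-scaling factor on the determinant side), which is exactly what that substitution requires. One small remark: your Leibniz expansion correctly carries the sign $(-1)^\sigma$, which the printed statement of the corollary omits; since only the $\sigma=1$ term is used afterwards, this discrepancy is harmless, but your version is the accurate one.
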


Corollary \ref{formR} easily implies that
the coefficient of $x^{\lambda+\rho}y^{\lambda+\rho}$ is $1$.
Indeed, if $\sigma\ne 1$ is a permutation in $S_N$, the coefficient of this
monomial in $\frac{1}{\prod (1-x_jy_{\sigma(j)})}$ is obviously
zero.
\end{proof}

{\bf Remark.} For partitions $\lambda$ and $\mu$ of $n$, 
let us say that $\lambda\preceq \mu$ or $\mu\succeq \lambda$ if $\mu-\lambda$ 
is a sum of vectors of the form $e_i-e_j$, $i<j$ (called positive roots).
This is a partial order, and $\mu\succeq \lambda$ 
implies $\mu\ge \lambda$. It follows from Theorem \ref{charV} 
and its proof that 
$$
\chi_\lambda=\oplus_{\mu\succeq \lambda}\widetilde{K}_{\mu\lambda}\chi_{U_\mu}.
$$
This implies that the Kostka numbers $K_{\mu\lambda}$ vanish unless 
$\mu\succeq \lambda$. 

\subsection{Problems}

In the following problems, we do not make a distinction between
Young diagrams and partitions.

\begin{problem}
For a Young diagram $\mu$, let $A(\mu)$ be the set of Young
diagrams obtained by adding a square to $\mu$, and $R(\mu)$
be the set of Young diagrams obtained by removing a square from
$\mu$.

(a) Show that ${\rm Res}_{S_{n-1}}^{S_n} V_\mu=\oplus_{\lambda\in
R(\mu)}V_\lambda$.
\end{problem}

(b) Show that $\Ind_{S_{n-1}}^{S_n} V_\mu=\oplus_{\lambda\in
A(\mu)}V_\lambda$.

\begin{problem}\label{cont}
The content $c(\lambda)$ of a Young diagram $\lambda$ is the sum
$\sum_j\sum_{i=1}^{\lambda_j} (i-j)$.
Let $C=\sum_{i<j}(ij)\in \CC[S_n]$ be the sum of all transpositions.
Show that $C$ acts on the Specht module $V_\lambda$ by
multiplication by $c(\lambda)$.
\end{problem}

\begin{problem}
(a) Let $V$ be any finite dimensional representation of $S_n$. 
Show that the element $E:=(12)+...+(1n)$ is diagonalizable and 
has integer eigenvalues on $V$, which are between $1-n$ and $n-1$. 

Hint. Represent $E$ as $C_n-C_{n-1}$, where $C_n=C$ is the element from
Problem \ref{cont}. 

(b) Show that the element $(12)+...+(1n)$ acts on $V_\lambda$ by a
scalar if and only if $\lambda$ is a rectangular Young diagram,
and compute this scalar.
\end{problem}

\subsection{The hook length formula}

Let us use the Frobenius character formula to compute the
dimension of $V_\lambda$. According to the character formula,
$\dim V_\lambda$ is the coefficient of $x^{\lambda+\rho}$ in
$\Delta(x)(x_1+...+x_N)^n$. Let $l_j=\lambda_j+N-j$. Then, 
using the determinant formula for $\Delta(x)$ and expanding the determinant 
as a sum over permutations, we get
$$
\dim V_\lambda=\sum_{s\in S_N: l_j\ge N-s(j)}(-1)^s \frac{n!}
{\prod_j (l_j-N+s(j))!}=\frac{n!}{l_1!...l_N!}
\sum_{s\in S_N} (-1)^s \prod_j l_j(l_j-1)...(l_j-N+s(j)+1)=
$$
$$
\frac{n!}{\prod_j l_j!}
\det (l_j(l_j-1)...(l_j-N+i+1)).
$$
Using column reduction and the Vandermonde determinant formula,
we see from this expression that
\begin{equation}
\label{prodfo}
\dim V_\lambda=\frac{n!}{\prod_j l_j!}\det(l_j^{N-i})=
\frac{n!}{\prod_j l_j!}\prod_{1\le i<j\le N}(l_i-l_j)
\end{equation}
(where $N\ge p$).

In this formula, there are many cancelations. After making some
of these cancelations, we obtain the hook length formula.
Namely, for a square $(i,j)$ in a Young diagram $\lambda$ ($i,j\ge
1,i\le \lambda_j$), define the hook of $(i,j)$ to be the set
of all squares $(i',j')$ in $\lambda$ with $i'\ge i$, $j'=j$ or
$i'=i$, $j'\ge j$. Let $h(i,j)$ be the length of the hook of
$i,j$, i.e., the number of squares in it.

\begin{theorem} (The hook length formula)
One has
$$
\dim V_\lambda=\frac{n!}{\prod_{i\le \lambda_j}h(i,j)}.
$$
\end{theorem}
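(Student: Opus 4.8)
The plan is to derive the hook length formula directly from the product formula (\ref{prodfo}) by means of a purely combinatorial identity relating the integers $l_j=\lambda_j+N-j$ to the hook lengths of $\lambda$. Since (\ref{prodfo}) is valid for every $N\ge p$, where $p$ is the number of parts of $\lambda$, I would first specialize to $N=p$; then $l_1>l_2>\dots>l_p\ge 1$, and (\ref{prodfo}) reads
$$
\dim V_\lambda=\frac{n!}{\prod_{j=1}^p l_j!}\prod_{1\le i<j\le p}(l_i-l_j).
$$
Hence the theorem is equivalent to the identity
$$
\prod_{(i,j)\in\lambda}h(i,j)=\frac{\prod_{j=1}^p l_j!}{\prod_{1\le i<j\le p}(l_i-l_j)},
$$
where $(i,j)$ ranges over boxes of $\lambda$, with $j$ the row index ($1\le j\le p$) and $i$ the column index ($1\le i\le\lambda_j$).

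The key step is a statement about the hook lengths occurring in one fixed row $j$: I claim that
$$
\{\,h(i,j):1\le i\le\lambda_j\,\}\ \sqcup\ \{\,l_j-l_k:j<k\le p\,\}\ =\ \{1,2,\dots,l_j\}
$$
as a disjoint union. Granting this, $\prod_{i=1}^{\lambda_j}h(i,j)=l_j!/\prod_{k>j}(l_j-l_k)$, and multiplying over $j=1,\dots,p$ collapses the denominators into $\prod_{i<j}(l_i-l_j)$, which is exactly the identity needed; together with the displayed form of (\ref{prodfo}) this proves the theorem.

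To establish the claim I would note first that both sets on the left lie in $\{1,\dots,l_j\}$ (using $h(1,j)=\lambda_j+(p-j)=l_j$ and $l_j-l_{j+1}=\lambda_j-\lambda_{j+1}+1\ge 1$), that the map $i\mapsto h(i,j)$ is strictly decreasing so the first set has exactly $\lambda_j$ elements, and that the second has $p-j$ elements; since $\lambda_j+(p-j)=l_j$, it suffices to prove the two sets are disjoint. Writing $h(i,j)=(\lambda_j-i)+a_i+1$ with $a_i=\#\{k:j<k\le p,\ \lambda_k\ge i\}$ the leg length, and $l_j-l_k=\lambda_j-\lambda_k+k-j$, an equality $h(i,j)=l_j-l_k$ reduces to $a_i=i-\lambda_k+k-j-1$; splitting $a_i$ according to whether $\lambda_k\ge i$ or $\lambda_k<i$ and using that $\lambda$ is non-increasing gives $a_i\ge k-j$ in the first case and $a_i\le k-j-1$ in the second, and in both cases the resulting inequality on $i$ contradicts the case hypothesis. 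This case analysis is the only place that requires care; everything else is bookkeeping. (Equivalently, one can phrase the single-row claim through the abacus encoding: record $\lambda$ by the bead positions $l_1,\dots,l_p$, observe that the boxes of row $j$ correspond to the non-bead positions below $l_j$, and that the hook length of such a box equals $l_j$ minus that non-bead position — but verifying this reformulation comes down to the same computation.)
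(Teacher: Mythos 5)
Your proof is correct and takes essentially the same route as the paper: both deduce the theorem from the product formula (\ref{prodfo}) via the observation that the hook lengths of a given row, together with the differences $l_j-l_k$, $k>j$, fill out $\{1,\dots,l_j\}$ without repetition, so that $\prod_i h(i,j)=l_j!/\prod_{k>j}(l_j-l_k)$. The only difference is presentational: the paper states this for the first row (leaving the verification as ``easy to see'') and then inducts by deleting that row, whereas you verify the row identity uniformly for every $j$ (with the counting-plus-disjointness argument written out) and multiply over the rows, which amounts to the same computation.
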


\begin{proof}
The formula follows from formula (\ref{prodfo}).
Namely, note that
$$
\frac{l_1!}{\prod_{1<j\le N}(l_1-l_j)}=\prod_{1\le k\le l_1, k\ne
l_1-l_j}k.
$$
It is easy to see that the factors in this product are exactly the hooklengths
$h(i,1)$. Now delete the first row of the diagram and proceed by induction.
\end{proof}

\subsection{Schur-Weyl duality for ${\mathfrak{gl}}(V)$}

We start with a simple result which is called the Double
Centralizer Theorem.

\begin{theorem}\label{dcl}
Let $A$, $B$ be two subalgebras of the algebra $\End E$ of
endomorphisms of a finite dimensional vector space $E$, such that
$A$ is semisimple, and $B=\End_A E$. Then:

(i) $A=\End_B E$ (i.e., the centralizer of the centralizer of $A$
is $A$);

(ii) $B$ is semisimple;

(iii) as a representation of $A\otimes B$,
$E$ decomposes as $E=\oplus_{i\in I}V_i\otimes W_i$, where $V_i$
are all the irreducible representations of $A$, and $W_i$
are all the irreducible representations of $B$.
In particular, we have a natural bijection between irreducible
representations of $A$ and $B$.
\end{theorem}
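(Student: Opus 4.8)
The plan is to reduce everything to the structure theory of semisimple algebras (Proposition \ref{semisi}) together with Schur's lemma (Corollary \ref{slacf}), by writing out the isotypic decomposition of $E$ as an $A$-module and then computing $B=\End_A E$ and the double centralizer $\End_B E$ directly from it. Throughout I assume $k$ is algebraically closed, as elsewhere in this chapter, so that Corollary \ref{slacf} applies.

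First I would record the isotypic decomposition. Since $A$ is semisimple, the finite dimensional module $E$ is completely reducible (Proposition \ref{semisi}), so $E\cong\bigoplus_{i\in I}V_i\otimes W_i$ as an $A$-module, where the $V_i$ are the pairwise non-isomorphic irreducible $A$-modules occurring in $E$, $W_i=k^{n_i}$ is the multiplicity space of $V_i$ (with $n_i\ge 1$), and $A$ acts through $\rho_{V_i}\otimes\id_{W_i}$ on the $i$-th summand. Because $A$ is a \emph{subalgebra} of $\End E$, this action is faithful; and writing $A\cong\bigoplus_i\Mat_{\dim V_i}(k)$ (Proposition \ref{semisi}), a faithful module over a sum of matrix algebras must involve every block, so $I$ in fact indexes \emph{all} irreducible representations of $A$. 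This last point is what makes the bijection in (iii) a bijection onto all irreducibles of $A$.

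Next I would identify $B$ and deduce (ii) and (iii). Applying $\End_A(-)$ to the decomposition and using Schur's lemma in the form $\End_A V_i=k$, $\Hom_A(V_i,V_j)=0$ for $i\ne j$, together with the standard identity $\Hom_A(V_i\otimes W_i,V_j\otimes W_j)=\Hom_A(V_i,V_j)\otimes\Hom_k(W_i,W_j)$, one gets $B=\End_A E\cong\bigoplus_i\End_k(W_i)=\bigoplus_i\Mat_{n_i}(k)$, acting on $E$ through $\id_{V_i}\otimes(\text{standard action on }W_i)$ on the $i$-th summand. Hence $B$ is semisimple (Proposition \ref{semisi}), proving (ii); by Theorem \ref{RepMatrix} its irreducible modules are precisely the $W_i=k^{n_i}$, one per block, so $E=\bigoplus_{i\in I}V_i\otimes W_i$ is exactly the $A\otimes B$-decomposition of (iii) and $i\mapsto(V_i,W_i)$ is the asserted bijection. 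For (i): $B$ centralizes $A$ by definition, so $A\subseteq\End_B E$; computing $\End_B(-)$ on $E=\bigoplus_i V_i\otimes W_i$, the $W_i$ are pairwise non-isomorphic irreducible $B$-modules, so Schur again kills the cross terms and $\End_B E\cong\bigoplus_i\End_k(V_i)$. Since $A\cong\bigoplus_i\Mat_{\dim V_i}(k)$ has the same dimension $\sum_i(\dim V_i)^2$ as this algebra, the inclusion $A\subseteq\End_B E$ is an equality. (Alternatively, surjectivity of $A\to\bigoplus_i\End_k(V_i)$ is the density theorem, Theorem \ref{DensityThm}.)

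I do not expect a genuine obstacle here: the entire argument is bookkeeping around the identity $\End_A(V\otimes W)=\End_A(V)\otimes\End_k(W)$ for an $A$-module $V$ and a multiplicity space $W$, and keeping straight which tensor factor $A$ versus $B$ acts on. The only points that need care are the faithfulness argument forcing $I$ to list all irreducibles of $A$, and remembering that Corollary \ref{slacf} requires $k$ algebraically closed; everything else is a direct unwinding of the semisimple structure theory already established.
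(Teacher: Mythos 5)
Your proof is correct and follows essentially the same route as the paper: decompose $E=\bigoplus_i V_i\otimes W_i$ into isotypic components (the paper takes $W_i=\Hom_A(V_i,E)$, your multiplicity spaces), identify $A=\bigoplus_i\End(V_i)$ and, via Schur's lemma, $B=\End_A E=\bigoplus_i\End(W_i)$, from which (i)--(iii) follow. You simply spell out the details the paper compresses into ``this implies all the statements'' (faithfulness forcing all irreducibles of $A$ to occur, and the symmetric Schur/dimension argument for $\End_B E=A$), which is a faithful elaboration rather than a different approach.
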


\begin{proof}
Since $A$ is semisimple, we have a natural decomposition
$E=\oplus_{i\in I}V_i\otimes W_i$, where $W_i:=\Hom_A(V_i,E)$,
and $A=\oplus_i \End V_i$. Therefore, by Schur's lemma,
$B=\End_A(E)$ is naturally
identified with $\oplus_i \End(W_i)$. This implies all the
statements of the theorem.
\end{proof}

We will now apply Theorem \ref{dcl} to the following situation:
$E=V^{\otimes n}$, where $V$ is a finite dimensional vector
space over a field of characteristic zero, and $A$ is the image of
$\Bbb C[S_n]$ in $\End E$. Let us now characterize the algebra
$B$. Let ${\mathfrak{gl}}(V)$ be $\End V$ regarded as a Lie algebra with
operation $ab-ba$.

\begin{theorem}
The algebra $B=\End_A E$ is the image of the universal enveloping
algebra ${\mathfrak{U}}({\mathfrak{gl}}(V))$ under its natural action on $E$. In other
words, $B$ is generated by elements of the form
$$
\Delta_n(b):=b\otimes 1\otimes...\otimes 1+1\otimes b\otimes...\otimes 1+...+
1\otimes 1\otimes...\otimes b,
$$
$b\in {\mathfrak{gl}}(V)$.
\end{theorem}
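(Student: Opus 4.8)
The plan is to introduce the subalgebra $B'\subseteq\End E$ equal to the image of the universal enveloping algebra $\mathfrak{U}(\mathfrak{gl}(V))$ acting on $E=V^{\otimes n}$. Since $\mathfrak{U}(\mathfrak{gl}(V))$ is generated by $\mathfrak{gl}(V)$, and since $b\in\mathfrak{gl}(V)$ acts on the tensor power $V^{\otimes n}$ by $\Delta_n(b)$ (the tensor product rule for Lie algebra representations, iterated $n-1$ times, applied to the tautological representation of $\mathfrak{gl}(V)=\End V$ on $V$), the algebra $B'$ is precisely the subalgebra of $\End E$ generated by the operators $\Delta_n(b)$, $b\in\mathfrak{gl}(V)$. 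So the theorem is exactly the statement $B'=B$. One inclusion is immediate: $A$ is the image of the $S_n$-action on $E$ permuting tensor factors, so $B=\End_A E$ is the commutant of this action; and for $\sigma\in S_n$ one has $\sigma\,\Delta_n(b)\,\sigma^{-1}=\Delta_n(b)$, because conjugation by $\sigma$ merely permutes the $n$ summands $1\otimes\cdots\otimes b\otimes\cdots\otimes 1$. Hence $\Delta_n(b)\in B$ for every $b$, and therefore $B'\subseteq B$.

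For the reverse inclusion I would first identify $\End E=\End(V)^{\otimes n}$ (using $\dim V<\infty$); under this identification conjugation by $\sigma\in S_n$ becomes the permutation action on the factors of $\End(V)^{\otimes n}$, so $B=\End_A E=\bigl(\End(V)^{\otimes n}\bigr)^{S_n}$. Because the ground field has characteristic zero, this invariant subspace is the image of the symmetrizer $\tfrac1{n!}\sum_{\sigma\in S_n}\sigma$, hence is spanned by the symmetrizations of pure tensors $a_1\otimes\cdots\otimes a_n$ with $a_i\in\End(V)$; and by the polarization identity
$$
\sum_{\sigma\in S_n}\sigma(a_1\otimes\cdots\otimes a_n)=\sum_{\emptyset\ne S\subseteq\{1,\dots,n\}}(-1)^{n-|S|}\Bigl(\textstyle\sum_{i\in S}a_i\Bigr)^{\otimes n}
$$
each such symmetrization is a $\mathbb{Q}$-linear combination of "diagonal" tensors $X^{\otimes n}$, $X\in\End(V)$. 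Thus it suffices to prove $X^{\otimes n}\in B'$ for every $X\in\End(V)$.

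This is the one step with genuine content. Put $X_{(i)}=1^{\otimes(i-1)}\otimes X\otimes 1^{\otimes(n-i)}\in\End E$. These $n$ operators commute pairwise (they act on distinct tensor factors), their product is $X_{(1)}X_{(2)}\cdots X_{(n)}=X^{\otimes n}$, and for each $k\ge1$ their $k$-th power sum is $\sum_{i=1}^nX_{(i)}^k=\Delta_n(X^k)$, which lies in $B'$ because $X^k\in\mathfrak{gl}(V)$. Newton's identities express the elementary symmetric polynomial $e_n$ of $n$ commuting variables as a polynomial with rational coefficients in the power sums $p_1,\dots,p_n$ — the only denominators that occur are integers at most $n$, so this is valid in characteristic zero. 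Applying it to $X_{(1)},\dots,X_{(n)}$ writes $X^{\otimes n}=e_n(X_{(1)},\dots,X_{(n)})$ as a $\mathbb{Q}$-polynomial in $\Delta_n(X),\Delta_n(X^2),\dots,\Delta_n(X^n)$; since $B'$ is a subalgebra, $X^{\otimes n}\in B'$. Combining with the first paragraph, $B=\bigl(\End(V)^{\otimes n}\bigr)^{S_n}\subseteq B'\subseteq B$, so $B=B'$, which is the image of $\mathfrak{U}(\mathfrak{gl}(V))$, as claimed.

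The main obstacle is exactly this recovery of $X^{\otimes n}$ from the available operators $\Delta_n(X^k)=\sum_iX_{(i)}^k$: although $B'$ does not contain the individual single-factor operators $X_{(i)}$, it contains all of their symmetric functions, and Newton's identities convert the accessible power sums into the elementary symmetric function $e_n$, which is the diagonal tensor we need. The remaining ingredients — identifying the commutant of the $S_n$-action with the invariant tensors, and spanning those invariants by diagonal tensors via polarization — both rely on characteristic zero but are routine.
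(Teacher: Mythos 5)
Your proof is correct and follows essentially the same route as the paper: both reduce the theorem to showing that the $S_n$-invariants $\End_A E=S^n\End V$ are generated by the elements $\Delta_n(b)$, and both recover the diagonal tensors $X^{\otimes n}$ from the accessible power sums $\Delta_n(X^k)=\sum_i X_{(i)}^k$ via the fundamental theorem on symmetric functions (Newton's identities), exactly as in Lemma \ref{symfu}(ii). The only divergence is the spanning step: the paper deduces that $S^nU$ is spanned by the tensors $u\otimes\cdots\otimes u$ from the irreducibility of $S^nU$ as a $GL(U)$-representation (Problem \ref{3:1}), whereas you prove it directly by the polarization identity; your version is more elementary and self-contained, trading the representation-theoretic input for an explicit inclusion-exclusion computation, and both arguments use characteristic zero in the same essential way.
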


\begin{proof}
Clearly, the image of ${\mathfrak{U}}({\mathfrak{gl}}(V))$ is contained in $B$, so we just
need to show that any element of $B$ is contained in the image of
${\mathfrak{U}}({\mathfrak{gl}}(V))$. By definition, $B=S^n\End V$, so the result follows
from part (ii) of the following lemma.

\begin{lemma}\label{symfu} Let $k$ be a field of characteristic
zero.

(i) For any finite dimensional vector space $U$ over $k$, the
space $S^nU$ is spanned by elements of the form
$u\otimes...\otimes u$, $u\in U$.

(ii) For any algebra $A$ over $k$,
the algebra $S^nA$ is generated by elements $\Delta_n(a)$,
$a\in A$.
\end{lemma}

\begin{proof}
(i) The space $S^nU$ is an irreducible representation of $GL(U)$
(Problem \ref{3:1}). The subspace spanned by $u\otimes...\otimes
u$ is a nonzero subrepresentation, so it must be everything.

(ii) By the fundamental theorem on symmetric functions,
there exists a polynomial $P$ with rational coefficients such
that $P(H_1(x),...,H_n(x))=x_1...x_n$ (where $x=(x_1,...,x_n)$). Then
$$
P(\Delta_n(a),\Delta_n(a^2),...,\Delta_n(a^n))=a\otimes...\otimes
a.
$$
The rest follows from (i).
\end{proof}

\end{proof}

Now, the algebra $A$ is semisimple
by Maschke's theorem, so the double centralizer theorem
applies, and we get the following result, which goes under the
name ``Schur-Weyl duality''.

\begin{theorem}
(i) The image $A$ of $\Bbb C[S_n]$ and the image $B$ of ${\mathfrak{U}}({\mathfrak{gl}}(V))$
in $\End(V^{\otimes n})$ are centralizers of each other.

(ii) Both $A$ and $B$ are semisimple. In particular,
$V^{\otimes n}$ is a semisimple ${\mathfrak{gl}}(V)$-module.

(iii) We have a decomposition of $A\otimes B$-modules
$V^{\otimes n}=\oplus_{\lambda}V_\lambda\otimes L_\lambda$, where
the summation is taken over partitions of $n$,
$V_\lambda$ are Specht modules for $S_n$, and $L_\lambda$ are
some distinct irreducible representations of ${\mathfrak{gl}}(V)$ or zero.
\end{theorem}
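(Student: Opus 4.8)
The plan is to apply the Double Centralizer Theorem (Theorem \ref{dcl}) directly to the setup established immediately before the statement: take $E = V^{\otimes n}$, let $A$ be the image of $\CC[S_n]$ in $\End(E)$, and let $B = \End_A(E)$. The preceding theorem identifies $B$ with the image of $\mathfrak{U}(\mathfrak{gl}(V))$, so almost everything we need is already in place; the job is to extract parts (i)–(iii) of Schur-Weyl duality as consequences.

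First I would note that $A$ is semisimple: it is the image of the group algebra $\CC[S_n]$, which is semisimple by Maschke's theorem (Theorem \ref{maschk}), and a quotient of a semisimple algebra is semisimple (e.g.\ by Proposition \ref{semisi}, since $\CC[S_n]$ is a direct sum of matrix algebras and the image is a direct summand, or simply because the kernel, being a two-sided ideal in a semisimple algebra, is complemented). Hence the hypotheses of Theorem \ref{dcl} are satisfied with this $A$ and $B = \End_A E$. Theorem \ref{dcl}(i) gives $A = \End_B E$; combined with the identification $B = \text{image of }\mathfrak{U}(\mathfrak{gl}(V))$ from the previous theorem, this is precisely part (i): the two images are mutual centralizers. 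Part (ii) is Theorem \ref{dcl}(ii) (that $B$ is semisimple) together with the already-noted semisimplicity of $A$; and since $V^{\otimes n}$ is a module over $B$, which is semisimple, $V^{\otimes n}$ is a semisimple $B$-module, hence a semisimple $\mathfrak{gl}(V)$-module (a direct sum of irreducible $B$-modules pulls back to a direct sum of irreducible $\mathfrak{gl}(V)$-modules, since $B$ is the image of $\mathfrak{U}(\mathfrak{gl}(V))$).

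For part (iii), Theorem \ref{dcl}(iii) gives a decomposition $E = \bigoplus_{i} M_i \otimes W_i$ as an $A \otimes B$-module, where the $M_i$ are the irreducible $A$-modules and the $W_i$ the irreducible $B$-modules. Now the irreducible representations of $A$, a quotient of $\CC[S_n]$, are among the irreducible representations of $S_n$, i.e.\ among the Specht modules $V_\lambda$ (Theorem \ref{symgroup}); so I would index the sum by partitions $\lambda$ of $n$, writing $E = \bigoplus_\lambda V_\lambda \otimes L_\lambda$, with the convention that $L_\lambda = 0$ when $V_\lambda$ does not occur (i.e.\ is killed in the quotient $A$). Viewing each $L_\lambda$ as a $\mathfrak{gl}(V)$-module via the surjection $\mathfrak{U}(\mathfrak{gl}(V)) \twoheadrightarrow B$, the nonzero $L_\lambda$ are exactly the irreducible $B$-modules, hence distinct irreducible $\mathfrak{gl}(V)$-modules; this is the claim. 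The one point requiring a word of care — and the closest thing to an obstacle — is the bookkeeping that the $A$-irreducibles really are a subset of $\{V_\lambda\}$ and that passing from $B$-modules to $\mathfrak{gl}(V)$-modules preserves irreducibility and distinctness; both follow formally from the surjectivity of the relevant algebra maps, so there is no genuine difficulty, only the need to state the identifications cleanly. I would close by remarking that this also shows each $L_\lambda$ (when nonzero) is $\Hom_{S_n}(V_\lambda, V^{\otimes n})$ as a $\mathfrak{gl}(V)$-module.
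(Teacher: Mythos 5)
Your proposal is correct and follows essentially the same route as the paper: the paper's (very brief) proof likewise notes that $A$ is semisimple by Maschke's theorem and then invokes the Double Centralizer Theorem together with the preceding identification of $B=\End_A(V^{\otimes n})$ with the image of ${\mathfrak{U}}({\mathfrak{gl}}(V))$. Your additional bookkeeping (indexing by Specht modules with $L_\lambda=0$ allowed, and transporting irreducibility along the surjections) just makes explicit what the paper leaves implicit.
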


\subsection{Schur-Weyl duality for $GL(V)$}

The Schur-Weyl duality for the Lie algebra ${\mathfrak{gl}}(V)$ implies a
similar statement for the group $GL(V)$.

\begin{proposition}
The image of $GL(V)$ in $\End(V^{\otimes n})$ spans $B$.
\end{proposition}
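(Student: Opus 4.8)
The plan is to show that the image of $GL(V)$ in $\End(V^{\otimes n})$ spans the same subspace $B = S^n\End V$ that is spanned by the image of $\mathfrak{U}(\mathfrak{gl}(V))$, exploiting that both images live inside the finite-dimensional space $S^n\End V \subseteq \End(V^{\otimes n})$ and that $B$ has already been identified with $S^n\End V$ in the preceding theorem. First I would observe that for $g \in GL(V)$ the operator $g^{\otimes n}$ on $V^{\otimes n}$ is exactly the image of $g \otimes \cdots \otimes g \in S^n\End V$ under the identification $B = S^n\End V$; hence the span $B'$ of $\{g^{\otimes n} : g \in GL(V)\}$ is a subspace of $B$, and it suffices to prove $B' = B$, i.e. that the elements $g \otimes \cdots \otimes g$ with $g$ \emph{invertible} already span $S^n(\End V)$.

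The key step is a Zariski-density / polynomial-identity argument. By Lemma \ref{symfu}(i) applied to $U = \End V$, the space $S^n(\End V)$ is spanned by the elements $a\otimes\cdots\otimes a$ for \emph{arbitrary} $a \in \End V$, not just invertible ones. So I need to upgrade from "all $a$" to "all invertible $a$". For this, fix a linear functional $\varphi$ on $S^n(\End V)$ vanishing on every $g^{\otimes n}$, $g \in GL(V)$; I want to conclude $\varphi$ vanishes on every $a^{\otimes n}$, $a \in \End V$. Consider the function $a \mapsto \varphi(a \otimes \cdots \otimes a)$ on $\End V$: it is a polynomial function in the matrix entries of $a$ (each entry of $a^{\otimes n}$ is a monomial of degree $n$ in the entries of $a$, and $\varphi$ is linear). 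This polynomial vanishes on the set $GL(V)$, which is the complement of the zero set of $\det$, hence a nonempty Zariski-open subset of the affine space $\End V$; since $k$ has characteristic zero (in particular is infinite), a polynomial vanishing on a nonempty Zariski-open set vanishes identically. Therefore $\varphi$ annihilates all $a^{\otimes n}$, hence annihilates $S^n(\End V)$ by Lemma \ref{symfu}(i), so $\varphi = 0$. Thus $B' = B$.

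The main obstacle, and the only place any real care is needed, is making the "vanishes on a dense open set implies vanishes everywhere" step airtight over an arbitrary field of characteristic zero rather than over $\Bbb C$; the clean way is to note that $\det$ is a single nonzero polynomial on $\End V \cong k^{N^2}$, so for any fixed $a$ the univariate polynomial $t \mapsto P(a + t\cdot \mathrm{Id})$ (with $P(b) := \varphi(b^{\otimes n})$) vanishes for all but finitely many $t$ — since $\det(a + t\cdot\mathrm{Id})$ is a nonzero polynomial in $t$ — hence is identically zero, giving $P(a) = 0$. Everything else is bookkeeping: checking $g^{\otimes n} \in S^n\End V$ corresponds to $g\otimes\cdots\otimes g$, and invoking Lemma \ref{symfu}(i) and the identification $B = S^n\End V$ from the previous theorem, both of which are already available in the text.
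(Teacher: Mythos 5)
Your proposal is correct and is essentially the paper's own argument: the paper also shows $b^{\otimes n}$ lies in the span of the $g^{\otimes n}$ by noting that $(t\cdot\Id+b)^{\otimes n}$ is polynomial in $t$ and $t\cdot\Id+b$ is invertible for all but finitely many $t$, and then invokes Lemma \ref{symfu}(i). Your reformulation via a linear functional $\varphi$ vanishing on the image of $GL(V)$ is just the standard dual way of saying that a polynomial curve lying in a subspace for infinitely many parameter values lies in it identically, so the two proofs coincide in substance.
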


\begin{proof}
Denote the span of $g^{\otimes n}$, $g\in
GL(V)$, by $B'$. Let $b\in \End V$ be any element.

We claim that $B'$ contains $b^{\otimes n}$.
Indeed, for all values of $t$ but finitely many,
$t\cdot \Id+b$ is invertible, so $(t\cdot \Id+b)^{\otimes n}$
belongs to $B'$. This implies that this is true for all $t$,
in particular for $t=0$,
since $(t\cdot \Id+b)^{\otimes n}$ is a polynomial in $t$.

The rest follows from Lemma \ref{symfu}.
\end{proof}

\begin{corollary} As a representation of $S_n\times GL(V)$,
$V^{\otimes n}$ decomposes as $\oplus_\lambda V_\lambda\otimes
L_\lambda$, where $L_\lambda=\Hom_{S_n}(V_\lambda,V^{\otimes n})$
are distinct irreducible representations of
$GL(V)$ or zero.
\end{corollary}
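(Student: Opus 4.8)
The plan is to deduce this corollary directly from the Schur--Weyl duality for ${\mathfrak{gl}}(V)$ together with the preceding Proposition, which asserts that the image of $GL(V)$ in $\End(V^{\otimes n})$ \emph{spans} $B$. The key observation is that the $GL(V)$-action on $V^{\otimes n}$ is $g\mapsto g^{\otimes n}$, so it factors through $B$; conversely, since $B$ is the linear span of the operators $g^{\otimes n}$, a subspace of $V^{\otimes n}$ is $GL(V)$-invariant if and only if it is $B$-invariant, and a linear endomorphism of $V^{\otimes n}$ commutes with all $g^{\otimes n}$ if and only if it commutes with all of $B$. Thus the lattice of $GL(V)$-subrepresentations of $V^{\otimes n}$ and the lattice of $B$-submodules coincide, and $GL(V)$-equivariance of a map is the same as $B$-linearity.

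First I would invoke the $A\otimes B$-module decomposition $V^{\otimes n}=\oplus_\lambda V_\lambda\otimes L_\lambda$ from Schur--Weyl duality for ${\mathfrak{gl}}(V)$, where $A$ is the image of $\Bbb C[S_n]$, $\lambda$ runs over partitions of $n$, the $V_\lambda$ are the Specht modules, and the $L_\lambda$ are pairwise non-isomorphic irreducible $B$-modules or zero. By the Double Centralizer Theorem (applied as in that proof, with $W_i=\Hom_A(V_i,E)$ for $E=V^{\otimes n}$), one identifies $L_\lambda=\Hom_A(V_\lambda,V^{\otimes n})=\Hom_{S_n}(V_\lambda,V^{\otimes n})$, which is exactly the description in the statement.

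Next, using the observation above, each $L_\lambda$ (regarded inside $V^{\otimes n}$ via the decomposition) is a $GL(V)$-subrepresentation, and it is irreducible as a $GL(V)$-representation precisely because it is irreducible as a $B$-module: any nonzero $GL(V)$-invariant subspace of $L_\lambda$ is automatically $B$-invariant, hence all of $L_\lambda$. Likewise, if $L_\lambda\cong L_\mu$ as $GL(V)$-representations, the intertwiner commutes with every $g^{\otimes n}$, hence with all of $B$, so it is an isomorphism of $B$-modules and $\lambda=\mu$. Therefore the $L_\lambda$ are distinct irreducible representations of $GL(V)$ (or zero), and $V^{\otimes n}=\oplus_\lambda V_\lambda\otimes L_\lambda$ holds as $S_n\times GL(V)$-modules, since the $S_n$-action is unchanged and the $GL(V)$-action is just the restriction of the $B$-action.

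The only point requiring care — and the step I would treat most carefully — is making the correspondence ``$B$-module structure $\leftrightarrow$ $GL(V)$-representation'' fully rigorous: it rests entirely on the Proposition that the $g^{\otimes n}$, $g\in GL(V)$, \emph{span} $B$ (not merely generate it as an algebra), so that both invariance and equivariance can be tested on this spanning set. Everything else is a formal consequence of the already-established ${\mathfrak{gl}}(V)$ version of Schur--Weyl duality and of the Double Centralizer Theorem.
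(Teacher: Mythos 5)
Your proof is correct and follows exactly the route the paper intends: the corollary is stated right after the Proposition that the operators $g^{\otimes n}$, $g\in GL(V)$, span $B$, and the intended (unwritten) argument is precisely your transfer of the $A\otimes B$-decomposition from the ${\mathfrak{gl}}(V)$ Schur--Weyl theorem to $S_n\times GL(V)$, using the spanning property to identify $B$-submodules with $GL(V)$-subrepresentations and $B$-module maps with $GL(V)$-intertwiners. Your explicit identification $L_\lambda=\Hom_{S_n}(V_\lambda,V^{\otimes n})$ via the Double Centralizer Theorem is likewise the one the paper relies on.
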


\begin{example}
If $\lambda=(n)$ then $L_\lambda=S^nV$, and if $\lambda=(1^n)$
($n$ copies of $1$) then $L_\lambda=\wedge^n V$.
It was shown in Problem \ref{3:1} that these representations
are indeed irreducible (except that $\wedge^n V$ is zero if
$n>\dim V$).
\end{example}

\subsection{Schur polynomials}

Let $\lambda=(\lambda_1,...,\lambda_p)$ be a partition of $n$,
and $N\ge p$. Let
$$
D_\lambda(x)=\sum_{s\in S_N}(-1)^s\prod_{j=1}^N
x_{s(j)}^{\lambda_j+N-j}=\det(x_i^{\lambda_j+N-j}).
$$
Define the polynomials
$$
S_\lambda(x):=\frac{D_\lambda(x)}{D_0(x)}
$$
(clearly $D_0(x)$ is just $\Delta(x)$). It is easy to see that
these are indeed polynomials, as $D_\lambda$ is antisymmetric
and therefore must be divisible by $\Delta$. The polynomials
$S_\lambda$ are called the {\bf Schur polynomials}.

\begin{proposition}\label{Schur}
$$
\prod_m (x_1^m+...+x_N^m)^{i_m}=\sum_{\lambda: p\le N}
\chi_\lambda(C_{\bold i})S_\lambda(x).
$$
\end{proposition}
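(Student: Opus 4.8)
The plan is to derive the identity directly from the Frobenius character formula (Theorem~\ref{charV}) and the standard fact that antisymmetric polynomials in $x_1,\dots,x_N$ have the $D_\lambda$ as a basis. First I would note that, writing $H_m(x)=x_1^m+\dots+x_N^m$, the polynomial $\prod_m H_m(x)^{i_m}$ is symmetric and homogeneous of degree $\sum_m m i_m=n$ (since $C_{\mathbf i}$ is a conjugacy class in $S_n$). Hence $P(x):=\Delta(x)\prod_m H_m(x)^{i_m}$ is antisymmetric and homogeneous of degree $n+\binom{N}{2}$, because $\Delta(x)=\prod_{i<j}(x_i-x_j)$ is antisymmetric of degree $\binom{N}{2}$.

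Next I would invoke the structure of antisymmetric polynomials: every antisymmetric polynomial in $x_1,\dots,x_N$ is uniquely a linear combination $\sum_\lambda c_\lambda D_\lambda(x)$ with $\lambda$ ranging over partitions with at most $N$ parts and $D_\lambda(x)=\det(x_i^{\lambda_j+N-j})$. (This is obtained by grouping the monomials of an antisymmetric polynomial into $S_N$-orbits, observing that monomials with a repeated exponent have coefficient $0$, and that each remaining orbit has a unique representative of the form $x^{\lambda+\rho}=\prod_j x_j^{\lambda_j+N-j}$ with $\lambda$ a partition and $\rho=(N-1,\dots,0)$.) Applying this to $P(x)$, and comparing degrees — $D_\lambda$ is homogeneous of degree $|\lambda|+\binom{N}{2}$ — forces $|\lambda|=n$, so only partitions $\lambda$ of $n$ with $p\le N$ occur, exactly the indexing set in the statement.

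Then I would identify the coefficients. The key point is that the coefficient of $x^{\lambda+\rho}$ in $D_\mu(x)=\sum_{s\in S_N}(-1)^s\prod_j x_{s(j)}^{\mu_j+N-j}$ equals $\delta_{\lambda\mu}$: both $(\lambda_j+N-j)_j$ and $(\mu_j+N-j)_j$ are strictly decreasing sequences of nonnegative integers, so the only way the exponent tuple $\prod_j x_j^{\lambda_j+N-j}$ arises from a term of $D_\mu$ is $\mu=\lambda$ together with $s=\mathrm{id}$, contributing coefficient $+1$. Consequently $c_\lambda$ is precisely the coefficient of $x^{\lambda+\rho}$ in $P(x)=\Delta(x)\prod_m H_m(x)^{i_m}$, which by Theorem~\ref{charV} is $\chi_{V_\lambda}(C_{\mathbf i})=\chi_\lambda(C_{\mathbf i})$ (valid for all $\lambda$ occurring, since they satisfy $p\le N$, the hypothesis of that theorem). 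Thus $P(x)=\sum_{\lambda:\,p\le N}\chi_\lambda(C_{\mathbf i})D_\lambda(x)$, and dividing by $D_0(x)=\Delta(x)$ — legitimate, as $D_\lambda/\Delta=S_\lambda$ is a polynomial — gives $\prod_m(x_1^m+\dots+x_N^m)^{i_m}=\sum_{\lambda:\,p\le N}\chi_\lambda(C_{\mathbf i})S_\lambda(x)$.

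There is no serious obstacle here; the whole argument is a bookkeeping translation between Theorem~\ref{charV} and the definition of $S_\lambda$. The only points demanding care are checking that the sign/normalization conventions make the coefficient of $x^{\lambda+\rho}$ in $D_\lambda$ equal to $+1$ (which holds with the convention $D_\lambda(x)=\det(x_i^{\lambda_j+N-j})$ used in the text) and confirming that the expansion of an antisymmetric polynomial in the $D_\lambda$ ranges exactly over partitions of $n$ with at most $N$ parts, so that the summation index in the statement is correct.
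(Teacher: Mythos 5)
Your argument is correct and is exactly the paper's proof, merely spelled out in full: the paper's own proof consists of the one-line observation that the identity follows from the Frobenius character formula and the antisymmetry of $\Delta(x)\prod_m H_m(x)^{i_m}$, which is precisely the bookkeeping you carry out (expansion of an antisymmetric polynomial in the $D_\lambda$, extraction of the coefficient of $x^{\lambda+\rho}$ via Theorem \ref{charV}, and division by $\Delta$). No gaps.
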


\begin{proof}
The identity follows from the Frobenius character formula
and the antisymmetry of
$$
\Delta(x)\prod_m
(x_1^m+...+x_N^m)^{i_m}.
$$
\end{proof}

Certain special values of Schur polynomials are of importance.
Namely, we have

\begin{proposition}\label{eval}
$$
S_\lambda(1,z,z^2,...,z^{N-1})=\prod_{1\le i<j\le N}
\frac{z^{\lambda_i-i}-z^{\lambda_j-j}}{z^{-i}-z^{-j}}
$$
Therefore,
$$
S_\lambda(1,...,1)=\prod_{1\le i<j\le N}
\frac{\lambda_i-\lambda_j+j-i}{j-i}
$$
\end{proposition}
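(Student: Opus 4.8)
The plan is to evaluate the determinant $D_\lambda$ at the geometric progression $x_i = z^{i-1}$ directly, then divide by $D_0$. First I would substitute $x_i = z^{i-1}$ into $D_\lambda(x) = \det(x_i^{\lambda_j + N - j})$. The $(i,j)$ entry becomes $(z^{i-1})^{\lambda_j + N - j} = z^{(i-1)(\lambda_j+N-j)}$, so if we set $\mu_j := \lambda_j + N - j$ (a strictly decreasing sequence of nonnegative integers), the matrix is $\bigl(z^{(i-1)\mu_j}\bigr)_{i,j}$, i.e. a generalized Vandermonde matrix in the quantities $z^{\mu_1}, \dots, z^{\mu_N}$. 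By the standard Vandermonde determinant identity, $\det\bigl((w_j^{i-1})_{i,j}\bigr) = \prod_{i<j}(w_j - w_i)$; applying this with $w_j = z^{\mu_j}$ gives
$$
D_\lambda(1, z, \dots, z^{N-1}) = \prod_{1\le i<j\le N}\bigl(z^{\mu_j} - z^{\mu_i}\bigr) = \prod_{1\le i<j\le N}\bigl(z^{\lambda_j + N - j} - z^{\lambda_i + N - i}\bigr).
$$
The same computation with $\lambda = 0$ (so $\mu_j = N-j$) yields $D_0(1,z,\dots,z^{N-1}) = \prod_{i<j}(z^{N-j} - z^{N-i})$.

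Next I would take the quotient $S_\lambda = D_\lambda / D_0$ factor by factor over the pairs $i<j$:
$$
S_\lambda(1,z,\dots,z^{N-1}) = \prod_{1\le i<j\le N} \frac{z^{\lambda_j + N - j} - z^{\lambda_i + N - i}}{z^{N-j} - z^{N-i}}.
$$
To match the form claimed in the Proposition, I would pull a common power of $z$ out of numerator and denominator of each factor: factor $z^{N}$ is not quite uniform, but factoring $z^{-i}$ from the exponent bookkeeping (equivalently, multiplying numerator and denominator of the $(i,j)$ factor by $z^{-N}$ and rearranging) rewrites each factor as $\dfrac{z^{\lambda_i - i} - z^{\lambda_j - j}}{z^{-i} - z^{-j}}$ after a sign check — swapping the order of the two terms in both numerator and denominator introduces matching signs that cancel. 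This gives exactly the first displayed formula of Proposition \ref{eval}.

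Finally, for the second formula I would take the limit $z \to 1$. Each factor $\dfrac{z^{\lambda_i - i} - z^{\lambda_j - j}}{z^{-i} - z^{-j}}$ is $0/0$ at $z=1$, so I would apply L'Hôpital (or write $z = 1+\varepsilon$ and expand to first order): the numerator has derivative $(\lambda_i - i)z^{\lambda_i - i - 1} - (\lambda_j - j)z^{\lambda_j - j - 1}$, which at $z=1$ is $(\lambda_i - i) - (\lambda_j - j) = \lambda_i - \lambda_j + j - i$, and the denominator has derivative $-i z^{-i-1} + j z^{-j-1}$, which at $z=1$ is $j - i$. Hence each factor tends to $\dfrac{\lambda_i - \lambda_j + j - i}{j - i}$, and taking the product over all $i<j$ gives $S_\lambda(1,\dots,1) = \prod_{i<j} \dfrac{\lambda_i - \lambda_j + j - i}{j - i}$, as desired.

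The only real care-point — the "hard part," though it is not hard — is the sign/normalization bookkeeping in passing from $\prod \dfrac{z^{\lambda_j+N-j} - z^{\lambda_i+N-i}}{z^{N-j}-z^{N-i}}$ to the symmetric-looking $\prod \dfrac{z^{\lambda_i-i}-z^{\lambda_j-j}}{z^{-i}-z^{-j}}$: one must check that the factor $z^{-N}$ pulled from each of the $\binom{N}{2}$ factors in numerator and denominator cancels, and that reversing the subtraction order in numerator and denominator simultaneously leaves each factor unchanged. After that, the Vandermonde evaluation and the L'Hôpital limit are entirely routine.
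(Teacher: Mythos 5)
Your proof is correct and takes essentially the same route as the paper's (very terse) proof: evaluate $D_\lambda$ at the geometric progression $x_i=z^{i-1}$ via the Vandermonde determinant, divide by $D_0=\Delta$, and pass to $z=1$ in the resulting $0/0$ factors. You merely spell out the normalization and sign bookkeeping and use L'H\^{o}pital for the limit, which the paper leaves implicit in ``setting $z=1$.''
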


\begin{proof}
The first identity is obtained from the definition
using the Vandermonde determinant. The second identity
follows from the first one by setting $z=1$.
\end{proof}

\subsection{The characters of $L_\lambda$}

Proposition \ref{Schur} allows us to calculate the characters
of the representations $L_\lambda$.

Namely, let $\dim V=N$, $g\in GL(V)$, and
$x_1,...,x_N$ be the eigenvalues of $g$ on $V$.
To compute the character $\chi_{L_\lambda}(g)$, let us calculate
${\rm Tr}_{V^{\otimes n}}(g^{\otimes n}s)$, where $s\in
S_n$. If $s\in C_{\bold i}$, we easily get that this trace equals
$$
\prod_m {\rm Tr}(g^m)^{i_m}=\prod_m H_m(x)^{i_m}.
$$
On the other hand, by the Schur-Weyl duality
$$
{\rm Tr}_{V^{\otimes n}}(g^{\otimes n}s)=\sum_\lambda \chi_\lambda(C_{\bold i}){\rm Tr}_{L_\lambda}(g).
$$
Comparing this to Proposition \ref{Schur} and using linear
independence of columns of the character table of $S_n$, we
obtain

\begin{theorem} (Weyl character formula)
The representation $L_\lambda$ is zero if and only if $N<p$,
where $p$ is the number of parts of $\lambda$. If $N\ge p$,
the character of $L_\lambda$ is the Schur polynomial
$S_\lambda(x)$. Therefore, the dimension of $L_\lambda$ is given
by the formula
$$
\dim L_\lambda=\prod_{1\le i<j\le N}\frac{\lambda_i-\lambda_j+j-i}{j-i}
$$
\end{theorem}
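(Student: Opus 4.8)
The plan is to compute the trace of $g^{\otimes n}s$ on $V^{\otimes n}$ in two ways, for $g\in GL(V)$ and $s\in S_n$, and then read off $\chi_{L_\lambda}(g)$ by the linear independence of the irreducible characters of $S_n$. Since the trace of a polynomial expression in a matrix is a polynomial in its entries, and diagonalizable elements are Zariski dense in $GL(V)$, it suffices to work with a diagonalizable $g$; let $x_1,\dots,x_N$ be its eigenvalues. First I would compute $\Tr_{V^{\otimes n}}(g^{\otimes n}s)$ directly: writing $s\in C_{\mathbf i}$ as a product of cycles and tracking how the permutation of tensor factors composes copies of $g$, each $m$-cycle contributes a factor $\Tr(g^m)=H_m(x)$, so $\Tr_{V^{\otimes n}}(g^{\otimes n}s)=\prod_{m\ge 1}H_m(x)^{i_m}$.

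Next I would use the Schur--Weyl decomposition $V^{\otimes n}=\bigoplus_{\lambda}V_\lambda\otimes L_\lambda$ as $S_n\times GL(V)$-modules, in which $s$ acts only on the $V_\lambda$ factors and $g^{\otimes n}$ only on the $L_\lambda$ factors. This gives $\Tr_{V^{\otimes n}}(g^{\otimes n}s)=\sum_{\lambda}\chi_{V_\lambda}(C_{\mathbf i})\,\chi_{L_\lambda}(g)=\sum_{\lambda}\chi_\lambda(C_{\mathbf i})\,\chi_{L_\lambda}(g)$, the sum over all partitions $\lambda$ of $n$. Comparing with Proposition \ref{Schur}, which reads $\prod_{m}H_m(x)^{i_m}=\sum_{\lambda:\,p\le N}\chi_\lambda(C_{\mathbf i})S_\lambda(x)$, I obtain, for every conjugacy class $C_{\mathbf i}$, the equality $\sum_{\lambda}\chi_\lambda(C_{\mathbf i})\chi_{L_\lambda}(g)=\sum_{\lambda:\,p\le N}\chi_\lambda(C_{\mathbf i})S_\lambda(x)$. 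Since the irreducible characters $\chi_\lambda$ are linearly independent as functions on the conjugacy classes of $S_n$ (equivalently, the columns of the character table are linearly independent), I may equate coefficients of $\chi_\lambda$ for each fixed $g$: $\chi_{L_\lambda}(g)=S_\lambda(x)$ when $\lambda$ has $p\le N$ parts, and $\chi_{L_\lambda}(g)=0$ when $p>N$. In the latter case $\chi_{L_\lambda}$ vanishes on a Zariski-dense set, hence identically, so $L_\lambda=0$ because in characteristic zero a representation is determined by its character. When $p\le N$ the identity $\chi_{L_\lambda}=S_\lambda$ holds on diagonalizable $g$, hence on all of $GL(V)$.

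Finally, to get the dimension I would specialize to $g=\Id$, i.e. $x_1=\cdots=x_N=1$, in $\chi_{L_\lambda}=S_\lambda$, and apply Proposition \ref{eval}: $\dim L_\lambda=S_\lambda(1,\dots,1)=\prod_{1\le i<j\le N}\frac{\lambda_i-\lambda_j+j-i}{j-i}$. Note this product is positive (since $\lambda_i\ge\lambda_j$ and $j>i$ give $\lambda_i-\lambda_j+j-i>0$), which also shows $L_\lambda\neq 0$ whenever $N\ge p$, completing the ``if and only if'' in the first assertion.

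\textbf{Main obstacle.} The delicate point is the coefficient-extraction step: the two expressions for the trace agree for each individual $g$, and the sum on the left runs over all partitions of $n$ while the sum on the right only over those with at most $N$ parts, so one must be careful to conclude both that $\chi_{L_\lambda}=S_\lambda$ for $p\le N$ and that $\chi_{L_\lambda}=0$ (hence $L_\lambda=0$) for $p>N$. This rests on the linear independence of the $\chi_\lambda$ over the space of class functions on $S_n$ (Theorem \ref{orthogonality-of-characters}), together with the density argument reducing from diagonalizable $g$ to all $g$; the rest is bookkeeping with the Frobenius character formula and Propositions \ref{Schur} and \ref{eval}, all of which are already available.
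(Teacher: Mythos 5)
Your proposal is correct and follows essentially the same route as the paper: compute $\Tr_{V^{\otimes n}}(g^{\otimes n}s)$ both directly (giving $\prod_m H_m(x)^{i_m}$) and via the Schur--Weyl decomposition, compare with Proposition \ref{Schur}, and extract coefficients using linear independence of the irreducible characters of $S_n$, then evaluate at $x_1=\dots=x_N=1$ via Proposition \ref{eval}. The extra care you take (Zariski density for non-diagonalizable $g$, and the positivity of the product showing $L_\lambda\neq 0$ for $N\ge p$) only makes explicit points the paper leaves implicit.
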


This shows that irreducible representations of $GL(V)$ which occur in $V^{\otimes
n}$ for some $n$ are labeled by Young diagrams with any number of
squares but at most $N=\dim V$ rows.

\begin{proposition}\label{adddet}
The representation $L_{\lambda+1^N}$ (where $1^N=(1,1,...,1)\in
\Bbb Z^N$) is isomorphic
to $L_\lambda\otimes \wedge^NV$.
\end{proposition}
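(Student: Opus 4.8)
The plan is to prove the isomorphism by comparing characters via the Weyl character formula and the explicit determinant formula for Schur polynomials, and then to upgrade the equality of characters to an isomorphism using Schur--Weyl duality.

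First I would record two facts. The representation $\wedge^N V$ is one-dimensional, and $g\in GL(V)$ acts on it by the scalar $\det(g)=x_1x_2\cdots x_N$, where $x_1,\dots,x_N$ are the eigenvalues of $g$ on $V$. Consequently $L_\lambda\otimes\wedge^N V$ is again irreducible (tensoring an irreducible representation with a one-dimensional one preserves irreducibility), and by the Weyl character formula its character is
$$\chi_{L_\lambda\otimes\wedge^N V}(g)=\chi_{L_\lambda}(g)\,\det(g)=S_\lambda(x)\,x_1\cdots x_N .$$
Note also that since $L_\lambda\neq 0$, the partition $\lambda$ has at most $N$ parts, so $\lambda+1^N$ is a genuine partition of $n+N$ with at most $N$ parts and $L_{\lambda+1^N}\neq 0$.

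Next I would carry out a short determinant computation. Writing $\mu=\lambda+1^N$, we have $\mu_j+N-j=(\lambda_j+N-j)+1$ for every $j$, hence
$$D_\mu(x)=\det\!\left(x_i^{\lambda_j+N-j+1}\right)=\det\!\left(x_i\cdot x_i^{\lambda_j+N-j}\right)=(x_1\cdots x_N)\det\!\left(x_i^{\lambda_j+N-j}\right)=(x_1\cdots x_N)\,D_\lambda(x),$$
where the third equality pulls the factor $x_i$ out of the $i$-th row. Dividing by $D_0(x)=\Delta(x)$ gives $S_{\lambda+1^N}(x)=(x_1\cdots x_N)\,S_\lambda(x)$. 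Combining this with the previous paragraph and the Weyl character formula applied to $L_{\lambda+1^N}$, we obtain
$$\chi_{L_\lambda\otimes\wedge^N V}=S_{\lambda+1^N}=\chi_{L_{\lambda+1^N}}$$
as functions on $GL(V)$.

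Finally I would deduce the isomorphism. Since $L_\lambda$ is a summand of $V^{\otimes n}$ and $\wedge^N V=L_{1^N}$ is a summand of $V^{\otimes N}$ (as $GL(V)$-modules), the representation $L_\lambda\otimes\wedge^N V$ is a summand of $V^{\otimes(n+N)}$; by Schur--Weyl duality every irreducible summand of $V^{\otimes(n+N)}$ is some $L_\nu$ with $\nu$ a partition of $n+N$ having at most $N$ parts. Thus $L_\lambda\otimes\wedge^N V\cong L_\nu$ for such a $\nu$, and comparing characters yields $S_\nu=S_{\lambda+1^N}$; since the Schur polynomials $S_\nu$ for distinct partitions with at most $N$ parts are pairwise distinct (equivalently, the characters of the distinct irreducibles $L_\nu$ are linearly independent), this forces $\nu=\lambda+1^N$, which is the assertion. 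The only genuinely delicate point is this last passage from equality of characters to an actual isomorphism, and it is handled entirely by the semisimplicity of $V^{\otimes(n+N)}$ as a $GL(V)$-module established in Schur--Weyl duality; everything else is routine bookkeeping with determinants.
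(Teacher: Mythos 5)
Your proposal is correct and follows essentially the same route as the paper: embed $L_\lambda\otimes\wedge^N V$ into $V^{\otimes(n+N)}$, note that by Schur--Weyl duality its irreducible $GL(V)$-constituents are of the form $L_\nu$, and identify the right $\nu$ by comparing characters. The paper states the character comparison tersely, while you make it explicit via the row-factorization identity $S_{\lambda+1^N}(x)=(x_1\cdots x_N)S_\lambda(x)$; this is just a spelled-out version of the same argument.
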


\begin{proof} Indeed,
$L_\lambda\otimes \wedge^N V\subset V^{\otimes n}\otimes \wedge^N
V\subset V^{\otimes n+N}$, and the only component of $V^{\otimes n+N}$
that has the same character as $L_\lambda\otimes \wedge^NV$ is
$L_{\lambda+1^N}$. This implies the statement.
\end{proof}

\subsection{Polynomial representations of $GL(V)$}

\begin{definition}
We say that a finite dimensional representation $Y$ of $GL(V)$ is
{\bf polynomial} (or {\bf algebraic}, or {\bf rational})
if its matrix elements are polynomial functions
of the entries of $g,g^{-1}$, $g\in GL(V)$ (i.e., belong to
$k[g_{ij}][1/\det (g)]$).
\end{definition}

For example, $V^{\otimes n}$ and hence
all $L_\lambda$ are polynomial. Also define $L_{\lambda-r\cdot
1^N}:=L_\lambda\otimes (\wedge^NV^*)^{\otimes r}$
(this definition makes sense by Proposition \ref{adddet}).
This is also a polynomial representation.
Thus we have attached a unique irreducible polynomial
representation $L_\lambda$ of $GL(V)=GL_N$ to any sequence
$(\lambda_1,...,\lambda_N)$ of integers (not necessarily
positive) such that $\lambda_1\ge...\ge \lambda_N$.
This sequence is called the {\bf highest weight} of $L_\lambda$.

\begin{theorem}
(i) Every finite dimensional polynomial representation of $GL(V)$ is
completely reducible, and decomposes into summands of the form
$L_\lambda$ (which are pairwise non-isomorphic).

(ii) (the Peter-Weyl theorem for $GL(V)$). Let $R$ be the algebra of polynomial functions on $GL(V)$. 
Then as a representation of $GL(V)\times GL(V)$ (with action $(\rho(g,h)\phi)(x)=\phi(g^{-1}xh)$, 
$g,h,x\in GL(V)$, $\phi\in R$), $R$ decomposes as 
$$
R=\oplus_\lambda L_\lambda^*\otimes L_\lambda,
$$
where the summation runs over all $\lambda$. 
\end{theorem}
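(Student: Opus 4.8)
The plan is to prove part (ii) first and deduce part (i) from it. Write $N=\dim V$ and identify $\End V$ with $V\otimes V^*$, so that the action $(g,h)\cdot x=gxh^{-1}$ of $GL(V)\times GL(V)$ induces precisely the action $(\rho(g,h)\phi)(x)=\phi(g^{-1}xh)$ on the coordinate ring $k[\End V]=\bigoplus_{n\ge 0}S^n(V^*\otimes V)$, with $g$ acting on the $V^*$-factor and $h$ on the $V$-factor. The first step is the Cauchy decomposition of each graded piece: in characteristic zero $S^n(V^*\otimes V)$ is the space of invariants of $(V^*\otimes V)^{\otimes n}=(V^*)^{\otimes n}\otimes V^{\otimes n}$ under the diagonal $S_n$-action permuting both families of tensor factors simultaneously, and this identification is $GL(V)\times GL(V)$-equivariant. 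Applying Schur--Weyl duality to $V$ and to $V^*$ (and noting that the irreducible $GL(V)$-module built from $V^*$ with shape $\lambda$ is $L_\lambda^*$, since its character is $S_\lambda$ evaluated at the eigenvalues $x_i^{-1}$ of $g$ on $V^*$, which is the character of $L_\lambda^*$) one gets $(V^*)^{\otimes n}\otimes V^{\otimes n}\cong\bigoplus_{\mu,\lambda}(V_\mu\otimes V_\lambda)\otimes(L_\mu^*\otimes L_\lambda)$ over $S_n\times S_n\times GL(V)\times GL(V)$, summed over partitions of $n$ with at most $N$ rows. Taking invariants for the diagonal $S_n$ and using that the Specht modules $V_\mu$ are self-dual (their characters are rational) with $\dim\Hom_{S_n}(V_\mu,V_\lambda)=\delta_{\mu\lambda}$ by Schur's lemma, we obtain $S^n(V^*\otimes V)\cong\bigoplus_{|\lambda|=n}L_\lambda^*\otimes L_\lambda$.

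Next I would invert the determinant. The ring $R$ of polynomial functions on $GL(V)$ is the localization $k[\End V][1/\det]$, and $k[\End V]$ is an integral domain, so multiplication by $\det$ is injective; moreover $\det$ spans the one-dimensional summand $L_{1^N}^*\otimes L_{1^N}$, and since $L_\lambda\otimes\wedge^NV\cong L_{\lambda+1^N}$ by Proposition \ref{adddet} (hence $L_\lambda^*\otimes\wedge^NV^*\cong L_{\lambda+1^N}^*$), multiplication by $\det$ maps the summand $L_\lambda^*\otimes L_\lambda$ isomorphically onto a copy of $L_{\lambda+1^N}^*\otimes L_{\lambda+1^N}$. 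Thus $\det^{-r}k[\End V]\cong\bigoplus_\lambda L_{\lambda-r1^N}^*\otimes L_{\lambda-r1^N}$, these subspaces grow with $r$ and their union is all of $R$, and every dominant integer weight $\mu$ (with $\mu_1\ge\cdots\ge\mu_N$, $\mu_i\in\ZZ$) arises as $\lambda-r1^N$ for exactly one partition $\lambda$ and minimal $r$. Passing to the limit gives $R\cong\bigoplus_\mu L_\mu^*\otimes L_\mu$ over all dominant integer weights, each appearing once, which is (ii); the two tensor factors carry the left- and right-translation actions respectively.

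For part (i), let $Y$ be a finite-dimensional polynomial representation with basis $v_1,\dots,v_m$, and let $C(Y)\subset R$ be the span (of dimension at most $m^2$) of the matrix coefficients $c_{ij}\colon g\mapsto\langle v_i^*,\rho_Y(g)v_j\rangle$. From $c_{v^*,v}(gh)=c_{v^*,\rho_Y(h)v}(g)$ the space $C(Y)$ is stable under right translation, and the map $Y\to C(Y)\otimes Y$, $v\mapsto\sum_i c_{v_i^*,v}\otimes v_i$, is injective (evaluate at $g=1$) and intertwines $\rho_Y$ on $Y$ with right translation on the first factor, since $\rho_Y(g)v=\sum_i c_{v_i^*,v}(g)v_i$; hence $Y$ embeds, as a representation of $GL(V)$, into $C(Y)^{\oplus m}$ with the right-translation action. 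By (ii), as a right-translation module $R\cong\bigoplus_\mu(\dim L_\mu)L_\mu$, so the finite-dimensional subspace $C(Y)$ is contained in a finite sum of these isotypic components, which is a finite-dimensional semisimple representation; by Proposition \ref{submo} $C(Y)$, and therefore $C(Y)^{\oplus m}$, is a direct sum of $L_\mu$'s, and so — again by Proposition \ref{submo} — is its subrepresentation $Y$. Since the $L_\mu$ are pairwise non-isomorphic, this is exactly (i).

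The main obstacle is the first step: setting up the $GL(V)\times GL(V)$-equivariant identification of $S^n(V^*\otimes V)$ with the diagonal $S_n$-invariants in $(V^*)^{\otimes n}\otimes V^{\otimes n}$ and pushing both Schur--Weyl decompositions through it — the Cauchy-identity phenomenon — together with the bookkeeping identifying the factor built from $V^*$ with $L_\lambda^*$ and tracking which $GL(V)$-factor acts where. Once (ii) is known in the case of partitions, the localization in the second step and the coefficient-space argument in the third are essentially formal.
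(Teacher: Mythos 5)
Your proof is correct, but it takes a genuinely different route from the paper and reverses the order of the two parts. You prove (ii) first, by establishing the $GL(V)\times GL(V)$-equivariant Cauchy decomposition $S^n(V^*\otimes V)\cong\bigoplus_{|\lambda|=n}L_\lambda^*\otimes L_\lambda$ (diagonal $S_n$-invariants in $(V^*)^{\otimes n}\otimes V^{\otimes n}$, Schur--Weyl duality applied twice, self-duality of Specht modules), and then pass to $R=k[\End V][1/\det]$ via the injective equivariant multiplication by $\det$, using $L_{\lambda+1^N}\cong L_\lambda\otimes\wedge^NV$ (Proposition \ref{adddet}) to track how the isotypic pieces shift; (i) is then deduced from (ii) through the matrix-coefficient embedding $Y\hookrightarrow C(Y)^{\oplus m}\subset R^{\oplus m}$ and Proposition \ref{submo}. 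The paper goes the other way: it proves (i) first using essentially the same matrix-coefficient embedding $Y\to Y\otimes R$, but instead of decomposing $R$ it merely observes that $R$ is a quotient of direct sums of modules $S^r(V\otimes V^*)\otimes(\wedge^NV^*)^{\otimes s}$, hence (via $V^*\cong\wedge^{N-1}V\otimes\wedge^NV^*$) that $Y$ sits inside a direct sum of modules $V^{\otimes n}\otimes(\wedge^NV^*)^{\otimes s}$, which are completely reducible with summands of the form $L_\lambda$ by Schur--Weyl duality; (ii) then follows formally from (i) by identifying the multiplicity space $\Hom_{GL(V)}(Y,R)$, for the right-translation action, with $Y^*$. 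Your route costs extra bookkeeping (the double Schur--Weyl step, the identification of the $V^*$-side constituents with $L_\lambda^*$, and the determinant-twisted limit), but it buys the refined multiplicity-one statement for each graded piece $S^n(V^*\otimes V)$, an equivariant strengthening of the character-level Cauchy identity of Corollary \ref{formR}; the paper's route is shorter and obtains multiplicity one in (ii) for free from the $\Hom$-computation. Both arguments ultimately rest on the same inputs (Schur--Weyl duality and the matrix-coefficient embedding), and I see no gap in yours at the level of rigor of these notes.
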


\begin{proof}
(i) Let $Y$ be a polynomial representation of $GL(V)$.
We have an
embedding $\xi: Y\to Y\otimes R$ given by $(u,\xi(v))(g):=u(gv)$, $u\in V^*$.
It is easy to see that $\xi$ is a homomorphism of representations
(where the action of $GL(V)$ on the first component of
$Y\otimes R$ is trivial). Thus, it suffices to prove the theorem
for a subrepresentation $Y\subset R^m$. Now, every element of $R$
is a polynomial of $g_{ij}$ times a nonpositive power of
$\det(g)$. Thus, $R$ is a quotient of a direct sum of representations of the form
$S^r(V\otimes V^*)\otimes (\wedge^NV^*)^{\otimes s}$.
So we may assume that
$Y$ is contained in a quotient of a (finite) direct sum of such
representations. As $V^*=\wedge^{N-1}V\otimes
\wedge^NV^*$, $Y$ is contained in a direct sum of representations
of the form $V^{\otimes n}\otimes (\wedge^NV^*)^{\otimes s}$,
and we are done.

(ii) Let $Y$ be a polynomial representation of $GL(V)$, and 
let us regard $R$ as a representation of $GL(V)$ via $(\rho(h)\phi)(x)=\phi(xh)$. 
Then ${\rm Hom}_{GL(V)}(Y,R)$ is the space of polynomial functions on $GL(V)$ 
with values in $Y^*$, which are $GL(V)$-equivariant. 
This space is naturally identified with $Y^*$. Taking into account the proof of (i), we deduce that 
$R$ has the required decomposition, which is compatible with the second action of $GL(V)$
(by left multiplications). This implies the statement. 
\end{proof}

Note that the Peter-Weyl theorem generalizes Maschke's theorem for finite group, one 
of whose forms states that the space of complex functions ${\rm Fun}(G,\Bbb C)$ 
on a finite group $G$ as a representation of $G\times G$ decomposes as $\oplus_{V\in {\rm Irrep(G)}}V^*\otimes V$. 

\begin{remark}
Since the Lie algebra ${\mathfrak{sl}}(V)$ of traceless
operators on $V$ is a quotient of ${\mathfrak{gl}}(V)$ by scalars, 
the above results extend in a straightforward manner
to representations of the Lie algebra ${\mathfrak{sl}}(V)$. Similarly, 
the results for $GL(V)$ extend to the case of the group
$SL(V)$ of operators with determinant $1$. The only difference is that in this case
the representations $L_\lambda$ and $L_{\lambda+1^m}$ are
isomorphic, so the irreducible representations are parametrized
by integer sequences $\lambda_1\ge...\ge \lambda_N$ up to a simultaneous
shift by a constant.

In particular, one can show that any finite dimensional representation of ${\mathfrak{sl}}(V)$
is completely reducible, and any irreducible one is of the form
$L_\lambda$ (we will not do this here). For $\dim V=2$ one then recovers the
representation theory of ${\mathfrak{sl}}(2)$ studied in Problem \ref{4:1}.
\end{remark}

\subsection{Problems}

\begin{problem}
(a) Show that the $S_n$-representation $V_\lambda':=\Bbb C[S_n]b_\lambda a_\lambda$
is isomorphic to $V_\lambda$.

Hint. Define $S_n$-homomorphisms 
$f: V_\lambda\to V_\lambda'$ and $g: V_\lambda'\to V_\lambda$ 
by the formulas $f(x)=xa_\lambda$ and $g(y)=yb_\lambda$, and show
that they are inverse to each other up to a nonzero scalar. 

(b) Let $\phi: \CC[S_n]\to \CC[S_n]$ be the automorphism sending
$s$ to $(-1)^ss$ for any permutation $s$. Show that $\phi$ maps
any representation $V$ of $S_n$ to $V\otimes \CC_-$.
Show also that $\phi(\CC[S_n]a)=\CC[S_n]\phi(a)$, for $a\in
\CC[S_n]$. Use (a) to deduce that $V_\lambda\otimes
\CC_-=V_{\lambda^*}$, where $\lambda^*$ is the conjugate
partition to $\lambda$, obtained by reflecting the Young diagram
of $\lambda$.
\end{problem}

\begin{problem}
Let $R_{k,N}$ be the algebra of polynomials
on the space of $k$-tuples of complex $N$ by $N$ matrices $X_1,...,X_k$,
invariant under simultaneous conjugation. An example of an
element of $R_{k,N}$ is the function $T_w:={\rm Tr} (w(X_1,...,X_k))$,
where $w$ is any finite word on a $k$-letter alphabet. Show that
$R_{k,N}$ is generated by the elements $T_w$.

Hint. Consider invariant functions that are of degree $d_i$ in each $X_i$, 
and realize this space as a tensor product $\otimes_i S^{d_i}(V\otimes V^*)$. 
Then embed this tensor product into $(V\otimes V^*)^{\otimes N}=\End(V)^{\otimes n}$, and
use the Schur-Weyl duality to get the result. 
\end{problem}

\subsection {Representations of $GL_2(\mathbb F_q)$}

\subsubsection{Conjugacy classes in $GL_2(\mathbb F_q)$}
Let $\mathbb F_q$ be a finite field of size $q$ of characteristic
other than $2$, and 
$G=GL_2(\mathbb F_q)$.
 Then $$\abs{G}=(q^2-1)(q^2-q),$$
since the first column of an invertible 2 by 2 matrix
 must be non-zero and the second column may
not be a multiple of the first one. Factoring, $$\abs{GL_2(\mathbb
F_q)}=q(q+1)(q-1)^2.$$ The goal of this section is to describe the
irreducible representations of $G$.\\
To begin, let us find the conjugacy classes in $GL_2(\mathbb
F_q)$.
$$
\begin{array} {l|l|l}
\parbox{5cm} {Representatives} & \parbox{6cm} {Number of elements in a conjugacy
class} & \parbox{4cm} {Number of classes} \\ &&\\ \hline
\parbox{5cm} {Scalar $\bigl( \begin{smallmatrix} x&0\\ 0&x
\end{smallmatrix} \bigr)$} &
\parbox{6cm} {$1$ (this is a central element)} &
\parbox{4cm} {$q-1$ (one for every non-zero $x$)} \\
&&\\
\parbox{5cm} {Parabolic $\bigl(\begin{smallmatrix} x&1\\ 0&x
\end{smallmatrix} \bigr)$} &
\parbox{6cm} {$q^2-1$ (elements that commute with this one are of the
form $\bigl( \begin{smallmatrix} t&u\\ 0&t \end{smallmatrix}
\bigr), \; t \ne 0$)} &
\parbox{4cm} {$q-1$ (one for every non-zero $x$)} \\
&&\\
\parbox{5cm} {Hyperbolic $\bigl( \begin{smallmatrix} x&0\\ 0&y
\end{smallmatrix} \bigr), \; y \ne x$} &
\parbox{6cm} {$q^2+q$ (elements that commute with this one
are of the form $\bigl( \begin{smallmatrix} t&0\\ 0&u
\end{smallmatrix} \bigr), \; t,u \ne 0$)} &
\parbox{4cm} {$\frac{1}{2} (q-1)
(q-2)$ ($x,y \ne 0$ and $x \ne y$)} \\
&&\\
\parbox{5cm} {Elliptic $\bigl( \begin{smallmatrix} x&\eps y\\ y&x
\end{smallmatrix} \bigr), x \in \mathbb F_q, \; y \in \mathbb
F_q^\times, \; \eps \in \mathbb F_q \setminus \mathbb
F_q^2$ (characteristic polynomial over $\mathbb F_q$ is
irreducible)} &
\parbox{6cm} {$q^2-q$ (the reason will be described below)} &
\parbox{4cm} {$\frac{1}{2} q (q-1)$ (matrices with $y$ and $-y$ are
conjugate)}\\
\end{array}
$$

More on the conjugacy class of elliptic matrices: these are the
matrices whose characteristic polynomial is irreducible over
$\mathbb{F}_q$ and which therefore don't have eigenvalues in
$\Bbb F_q$. Let $A$
be such a matrix, and consider a quadratic extension of
$\mathbb{F}_q$, $$\mathbb{F}_q(\sqrt{\eps}), \eps \in
\mathbb F_q \setminus \mathbb F_q^2.$$ Over this field, $A$ will
have eigenvalues $$\alpha=\alpha_1 + \sqrt{\eps}\alpha_2$$ and
$$\ov{\alpha}=\alpha_1 - \sqrt{\eps}\alpha_2,$$ with
corresponding eigenvectors $$v, \; \ov{v} \quad (Av = \alpha v, \;
A\ov{v}=\ov{\alpha}\ov{v}).$$\\
Choose a basis $$\lbrace e_1=v+\ov{v}, \;
e_2=\sqrt{\eps}(v-\ov{v}) \rbrace.$$ In this basis, the matrix
A will have the form $$\begin{pmatrix} \alpha_1 &
\eps\alpha_2\\ \alpha_2&\alpha_1 \end{pmatrix},$$ justifying
the description of representative elements of
this conjugacy class.\\
In the basis $\lbrace v, \ov{v} \rbrace$, matrices that commute
with $A$ will have the form $$\begin{pmatrix} \lambda&0\\
0&\ov{\lambda} \end{pmatrix},$$ for all $$\lambda \in \mathbb
F_{q^2}^{\times},$$
so the number of such matrices is $q^2-1$.

\subsubsection{1-dimensional representations}
First, we describe the 1-dimensional representations of $G$.
\begin{proposition}
$[G,G]=SL_2(\mathbb F_q)$.
\end{proposition}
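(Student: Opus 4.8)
The plan is to prove the two inclusions separately. The inclusion $[G,G]\subseteq SL_2(\mathbb F_q)$ is the easy half: the determinant $\det\colon G\to \mathbb F_q^\times$ is a group homomorphism to an abelian group, so it annihilates every commutator; hence every commutator, and therefore the whole subgroup $[G,G]$ they generate, lies in $\ker\det = SL_2(\mathbb F_q)$.

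For the reverse inclusion $SL_2(\mathbb F_q)\subseteq [G,G]$, I would first recall the standard fact that $SL_2$ of any field is generated by the elementary transvections $u_+(t)=\left(\begin{smallmatrix}1&t\\0&1\end{smallmatrix}\right)$ and $u_-(t)=\left(\begin{smallmatrix}1&0\\t&1\end{smallmatrix}\right)$, $t\in\mathbb F_q$; this is Gaussian elimination, since multiplying by these matrices on the left or right performs elementary row or column operations, and any element of $SL_2$ can be reduced to the identity by such operations (first making the lower-left entry nonzero, then clearing the remaining entries). It then suffices to show that each transvection is a commutator in $G$. Choose $a\in\mathbb F_q^\times$ with $a\ne 1$ --- such an $a$ exists precisely because $q\ne 2$, which holds since $\chara\mathbb F_q\ne 2$ --- and put $d=\left(\begin{smallmatrix}a&0\\0&1\end{smallmatrix}\right)\in G$. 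A direct computation gives
$$
d\,u_+(s)\,d^{-1}\,u_+(s)^{-1}=u_+\bigl((a-1)s\bigr),\qquad
d\,u_-(s)\,d^{-1}\,u_-(s)^{-1}=u_-\bigl((a^{-1}-1)s\bigr).
$$
Since $a-1\ne 0$ and $a^{-1}-1\ne 0$, the right-hand sides range over all transvections $u_\pm(t)$ as $s$ ranges over $\mathbb F_q$. Hence $[G,G]$ contains every transvection, and therefore all of $SL_2(\mathbb F_q)$.

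Combining the two inclusions yields $[G,G]=SL_2(\mathbb F_q)$. The only place the hypothesis on $q$ enters is the existence of a scalar $a\ne 1$ in $\mathbb F_q^\times$ in the last step; the rest of the argument is characteristic-free. I do not anticipate a genuine obstacle here: the single ingredient that is not a one-line verification is the generation of $SL_2$ by transvections, and that is elementary linear algebra.
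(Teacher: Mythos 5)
Your proof is correct. The overall skeleton matches the paper's: the inclusion $[G,G]\subseteq SL_2(\mathbb F_q)$ via the determinant homomorphism, and the reverse inclusion by exhibiting a generating set of $SL_2(\mathbb F_q)$ consisting of commutators of elements of $G$. The details of the hard inclusion differ, though. The paper takes as generators the three matrices $\left(\begin{smallmatrix}1&1\\0&1\end{smallmatrix}\right)$, $\left(\begin{smallmatrix}a&0\\0&a^{-1}\end{smallmatrix}\right)$, $\left(\begin{smallmatrix}1&0\\1&1\end{smallmatrix}\right)$ (stating without proof that they generate), reduces to the first two by transposition, and writes each as an explicit commutator --- the unipotent one using the matrix $\left(\begin{smallmatrix}1&1/2\\0&1\end{smallmatrix}\right)$, which genuinely requires odd characteristic, and the diagonal one as a commutator with the swap matrix. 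You instead use the standard fact that $SL_2$ over any field is generated by all transvections $u_\pm(t)$ (Gaussian elimination, which you sketch correctly), and then the single uniform identity $d\,u_\pm(s)\,d^{-1}u_\pm(s)^{-1}=u_\pm(cs)$ with $d=\operatorname{diag}(a,1)$ and $c=a-1$ or $a^{-1}-1$ nonzero; both computations check out. What your route buys is uniformity and slightly greater generality: it needs only $q>2$, so it also covers characteristic $2$, whereas the paper's explicit commutators are tied to $\operatorname{char}\ne 2$ (harmless here, since that is the standing hypothesis of the section, but worth noting). What the paper's route buys is a very small, completely explicit list of commutator identities with no appeal to the transvection-generation lemma, at the cost of leaving its own generation claim unproved.
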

\begin{proof}
Clearly, $$\det(xyx^{-1} y^{-1})=1,$$ so $$[G,G] \subseteq SL_2(\mathbb
F_q).$$ To show the converse, it suffices to show that
the matrices
$$\begin{pmatrix}
1&1\\0&1
\end{pmatrix},\begin{pmatrix}
a&0\\0&a^{-1}
\end{pmatrix},\begin{pmatrix}
1&0\\1&1
\end{pmatrix}$$
are commutators (as such matrices generate
$SL_2(\mathbb F_q).$)
Clearly, by using transposition, it suffices to show that only the first
two matrices are commutators.
But it is easy to see that the matrix $$\begin{pmatrix} 1&1\\0&1
\end{pmatrix}$$ is the commutator of
the matrices
$$
A=
\begin{pmatrix}
1 & 1/2\\
0 & 1
\end{pmatrix}
, \;  B=
\begin{pmatrix}
1 & 0\\
0 & -1
\end{pmatrix},
$$
while the matrix $$\begin{pmatrix} a&0\\0&a^{-1}
\end{pmatrix}$$ is the commutator of
the matrices
$$
A=
\begin{pmatrix}
a & 0\\
0 & 1
\end{pmatrix}
, \; B=
\begin{pmatrix}
0 & 1\\
1 & 0
\end{pmatrix},
$$
This completes the proof.
\end{proof}

Therefore,
$$G/[G,G] \cong \mathbb F_q^{\times} \text{ via } g \rightarrow
\det(g).$$ The one-dimensional representations of $G$ thus have
the form $$\rho(g) = \xi \bigl(\det(g) \bigr),$$ where $\xi$ is a
homomorphism $$\xi: \mathbb F_q^{\times} \rightarrow
\CC^{\times};$$ so there are
$q-1$ such representations, denoted $\CC_{\xi}.$

\subsubsection{Principal series representations}
Let $$B \subset G, \; B =
\{\begin{pmatrix}*&*\\0&*\end{pmatrix}\}$$ (the set of upper
triangular matrices); then $$\abs{B}=(q-1)^2 q,$$
$$[B,B]=U=\{\begin{pmatrix}1&*\\0&1\end{pmatrix}\},$$ and $$B/[B,B]
\cong \mathbb F_q^{\times} \times \mathbb F_q^{\times}$$ (the
isomorphism maps an element of $B$ to its two
diagonal entries).\\
Let $$\lambda: B \rightarrow \CC^\times$$ be a homomorphism defined by
$$\lambda
\begin{pmatrix}a&b\\0&c\end{pmatrix}=\lambda_1(a)\lambda_2(c)\text{,for
some pair of homomorphisms }\lambda_1, \lambda_2: \mathbb F_q^\times
\rightarrow \CC^\times.$$ Define $$V_{\lambda_1, \lambda_2} =
\Ind_B^G
\CC_\lambda,$$ where $\CC_\lambda$ is the 1-dimensional
representation of $B$ in which $B$ acts by $\lambda.$ We have
$$\dim(V_{\lambda_1, \lambda_2}) = \frac{\abs{G}}{\abs{B}} = q+1.$$

\begin{theorem}
\begin{enumerate}
\item $\lambda_1 \ne \lambda_2 \Rightarrow V_{\lambda_1,
\lambda_2}$ is irreducible. \item $\lambda_1 = \lambda_2 = \mu
\Rightarrow V_{\lambda_1, \lambda_2} = \CC_\mu \oplus W_\mu,$
where $W_\mu$ is a $q$-dimensional irreducible representation of
$G$.
\item $W_\mu \cong W_\nu$ if and only if $\mu = \nu;$ $V_{ \lambda_1,
\lambda_2 } \cong V_{\lambda_1', \lambda_2'}$ if and only if $\{ \lambda_1,
\lambda_2 \} = \{ \lambda_1^{'}, \lambda_2^{'} \}$ (in the second
case, $\lambda_1 \ne \lambda_2, \lambda_1^{'} \ne \lambda_2^{'}$).
\end{enumerate}
\end{theorem}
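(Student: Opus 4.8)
\textbf{The plan} is to derive all three assertions from a single formula for the dimension of the intertwiner space $\mathrm{Hom}_G\!\left(V_{\lambda_1,\lambda_2},V_{\lambda_1',\lambda_2'}\right)$ between two principal series representations. By the orthogonality of characters (Theorem~\ref{orthogonality-of-characters}) this dimension equals $(\chi_{V_{\lambda_1,\lambda_2}},\chi_{V_{\lambda_1',\lambda_2'}})$, and by Frobenius reciprocity (Theorem~\ref{frobrec}) together with the Mackey machinery it can be computed from the double cosets $B\backslash G/B$. Once this formula is in hand, parts~1--3 follow by inspection (using Maschke's theorem, Theorem~\ref{maschk}, and Schur's lemma where needed).

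First I would establish the Bruhat decomposition $G=B\sqcup BwB$, where $w=\begin{pmatrix}0&1\\1&0\end{pmatrix}$; equivalently, $B\backslash G/B$ has exactly two elements. This is elementary: for $g\in G\setminus B$ the lower-left entry is nonzero, and the remaining entries can be cleared by left and right multiplication by suitable elements of $B$. Along the way I would record the two relevant intersection subgroups, $B\cap eBe^{-1}=B$ and $B\cap wBw^{-1}=T$ (the diagonal torus), and note that conjugation by $w$ swaps the two components, turning the character $\mathrm{diag}(a,c)\mapsto\lambda_1(a)\lambda_2(c)$ of $T$ into $\mathrm{diag}(a,c)\mapsto\lambda_2(a)\lambda_1(c)$.

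Next, combining Frobenius reciprocity with Mackey's theorem --- either invoking the subgroup form of Mackey, whose character version follows from Theorem~\ref{indr} and Theorem~\ref{orthogonality-of-characters}, or else expanding $(\chi_{V_{\lambda_1,\lambda_2}},\chi_{V_{\lambda_1',\lambda_2'}})$ directly via the Mackey character formula and the conjugacy-class data tabulated above --- I obtain
$$
\dim\mathrm{Hom}_G\!\left(V_{\lambda_1,\lambda_2},\,V_{\lambda_1',\lambda_2'}\right)
=\delta_{\lambda_1,\lambda_1'}\delta_{\lambda_2,\lambda_2'}+\delta_{\lambda_1,\lambda_2'}\delta_{\lambda_2,\lambda_1'},
$$
where the first summand is the contribution of the double coset $B$ (there the intersection subgroup is all of $B$, and both $\lambda$ and $\lambda'$ factor through $B/U$) and the second is the contribution of $BwB$ (there the intersection subgroup is $T$ and $\lambda'$ is twisted by $w$).

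Finally I would read off the theorem. Taking $\lambda_1'=\lambda_1$, $\lambda_2'=\lambda_2$: if $\lambda_1\ne\lambda_2$ the right side is $1$, so $V_{\lambda_1,\lambda_2}$ is irreducible; if $\lambda_1=\lambda_2=\mu$ it is $2$, so by semisimplicity $V_{\mu,\mu}$ is a direct sum of exactly two non-isomorphic irreducibles, each of multiplicity one. To identify one of them with $\mathbb{C}_\mu=(g\mapsto\mu(\det g))$, apply Frobenius reciprocity once more: $\mathrm{Hom}_G(\mathbb{C}_\mu,V_{\mu,\mu})\cong\mathrm{Hom}_B(\mathbb{C}_\mu|_B,\mathbb{C}_{(\mu,\mu)})$, and $\mathbb{C}_\mu|_B$ is precisely the character $\begin{pmatrix}a&b\\0&c\end{pmatrix}\mapsto\mu(ac)=\mu(a)\mu(c)$, so this space is one-dimensional; hence $V_{\mu,\mu}=\mathbb{C}_\mu\oplus W_\mu$ with $W_\mu$ irreducible of dimension $(q+1)-1=q$. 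For part~3, the displayed formula gives $\dim\mathrm{Hom}_G(V_{\mu,\mu},V_{\nu,\nu})=2\delta_{\mu,\nu}$, so for $\mu\ne\nu$ even $\mathrm{Hom}_G(W_\mu,W_\nu)=0$ and thus $W_\mu\ncong W_\nu$; and for $\lambda_1\ne\lambda_2$, $\lambda_1'\ne\lambda_2'$ the formula shows $\mathrm{Hom}_G(V_{\lambda_1,\lambda_2},V_{\lambda_1',\lambda_2'})\ne0$ exactly when $\{\lambda_1,\lambda_2\}=\{\lambda_1',\lambda_2'\}$, which --- both representations being irreducible --- is the same as their being isomorphic. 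The one genuinely delicate point is making the Mackey step rigorous from what the text provides: since only the Mackey \emph{character} formula and Frobenius reciprocity are stated, I would either prove the double-coset restriction formula for $\mathrm{Res}_B\mathrm{Ind}_B^G(-)$ by hand or carry out the inner-product computation explicitly; the Bruhat decomposition and the bookkeeping of the $w$-twist then require care but are routine.
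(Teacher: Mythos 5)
Your proposal is correct, but it follows a genuinely different route from the text. The paper works entirely at the level of characters: it uses the Mackey character formula to write down $\chi_{V_{\lambda_1,\lambda_2}}$ on each of the four conjugacy class types, then computes $\langle\chi,\chi\rangle$ by an explicit sum over classes (with a roots-of-unity cancellation in the case $\lambda_1\ne\lambda_2$), obtaining the values $1$ and $2$ in the two cases; it splits off $\CC_\mu$ exactly as you do, via Frobenius reciprocity, and it settles the classification statement in part 3 by observing that the character value $\lambda_1(x)\lambda_2(y)+\lambda_1(y)\lambda_2(x)$ on hyperbolic elements determines $\{\lambda_1,\lambda_2\}$, and that the characters of the $W_\mu$ are visibly distinct. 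You instead prove the Bruhat decomposition $G=B\sqcup BwB$ and use the double-coset (subgroup-level) Mackey theorem together with Frobenius reciprocity to get the uniform intertwiner formula $\dim\Hom_G(V_{\lambda_1,\lambda_2},V_{\lambda_1',\lambda_2'})=\delta_{\lambda_1,\lambda_1'}\delta_{\lambda_2,\lambda_2'}+\delta_{\lambda_1,\lambda_2'}\delta_{\lambda_2,\lambda_1'}$, from which all three parts follow at once; in particular part 3 becomes immediate and needs no separate inspection of hyperbolic characters. What your approach buys is conceptual clarity and a single formula covering all pairs simultaneously; what it costs is exactly the point you flag yourself: the text only proves the character-level Mackey formula (Theorem \ref{indr}), not the module-level restriction-to-double-cosets theorem, so to be complete you must either prove that decomposition for $\mathrm{Res}_B\,\mathrm{Ind}_B^G$ by hand (easy here, since there are only two double cosets and $B\cap wBw^{-1}=T$) or redo the computation as an inner product of characters summed over conjugacy classes --- which is essentially the paper's more pedestrian but self-contained route. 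One cosmetic point: the paper's Theorem \ref{orthogonality-of-characters} states $(\chi_V,\chi_W)=\dim\Hom_G(W,V)$, with the arguments in the opposite order from how you use it; in the semisimple complex setting the two dimensions agree, but you should say so (or swap the arguments) when invoking it.
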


\begin{proof}
 From the Mackey formula, we have
$$tr_{V_{\lambda_1, \lambda_2}}(g) = \frac{1}{\abs{B}} \sum_{a \in G, \, aga^{-1} \in B}
\lambda(aga^{-1}).$$ If $$g=\begin{pmatrix} x&0\\0&x
\end{pmatrix},$$ the expression on the right evaluates to
$$\lambda(g) \frac{\abs{G}}{\abs{B}} = \lambda_1(x) \lambda_2(x) \bigl(q+1\bigr).$$
If $$g=\begin{pmatrix} x&1\\0&x
\end{pmatrix},$$ the expression evaluates to
$$\lambda(g) \cdot 1,$$ since here $$aga^{-1} \in B \Rightarrow a \in
B.$$ If $$g=\begin{pmatrix} x&0\\0&y \end{pmatrix},$$ the
expression evaluates to $$\bigl(
\lambda_1(x)\lambda_2(y)+\lambda_1(y)\lambda_2(x) \bigr) \cdot
1,$$ since here $$aga^{-1} \in B \Rightarrow a \in B \text{ or $a$
is an element of $B$ multiplied by the transposition matrix.}$$
If $$g=\begin{pmatrix}
x&\eps y\\y&x \end{pmatrix},\ x\ne y$$ the expression on the right
evaluates to 0 because matrices of this type don't have
eigenvalues over $\mathbb F_q$
(and thus cannot be conjugated into $B$).
 From the definition, $\lambda_i(x) (i=1,2)$ is a root of unity, so

\begin{multline*}
\abs{G} \langle \chi_{V_{\lambda_1, \lambda_2}},
\chi_{V_{\lambda_1,
\lambda_2}} \rangle = (q+1)^2(q-1)+(q^2-1)(q-1)\\
+2(q^2+q)\frac{(q-1)(q-2)}{2}+(q^2+q)\sum_{x
\ne y}\lambda_1(x)\lambda_2(y) \ov{\lambda_1(y)\lambda_2(x)}.
\end{multline*}

The last two summands come from the expansion $$\abs{a+b}^2 =
\abs{a}^2 + \abs{b}^2 + a \ov{b} + \ov{a} b.$$ If $$\lambda_1 =
\lambda_2 = \mu,$$ the last term is equal to
$$(q^2+q)(q-2)(q-1),$$ and the total in this case is
$$
(q+1)(q-1)[(q+1)+(q-1)+2q(q-2)] = (q+1)(q-1)2q(q-1) = 2\abs{G},
$$
so
$$\langle \chi_{V_{\lambda_1, \lambda_2}}, \chi_{V_{\lambda_1, \lambda_2}} \rangle = 2.$$
Clearly, $$\CC_\mu \subseteq \Ind_B^G \CC_{\mu,\mu},$$ since
$$
\Hom_G(\CC_\mu, \Ind_B^G \CC_{\mu,\mu}) = \Hom_B(\CC_\mu, \CC_\mu)
= \CC \; \text{(Theorem \ref{Frobenius reciprocity}).}
$$
Therefore, $\Ind_B^G \CC_{\mu,\mu} = \CC_{\mu} \oplus W_\mu;$
$W_\mu$ is irreducible; and the character of $W_\mu$ is different
for distinct values of $\mu,$ proving that $W_\mu$ are distinct.

If $\lambda_1 \ne \lambda_2,$ let $z=xy^{-1},$ then the last term
of the summation is
$$
(q^2+q) \sum_{x \ne y} \lambda_1(z) \ov{\lambda_2(z)} = (q^2+q)
\sum_{x; z\ne1} \frac{\lambda_1}{\lambda_2}(z) = (q^2+q)(q-1)
\sum_{z \ne 1} \frac{\lambda_1}{\lambda_2}(z).
$$
Since
$$
\sum_{z \in \mathbb F_q^\times} \frac{\lambda_1}{\lambda_2}(z) = 0,
$$
because the sum of all roots of unity of a given order $m>1$ is
zero, the last term becomes
$$
-(q^2+q)(q-1) \sum_{z \ne 1} \frac{\lambda_1}{\lambda_2}(1) =
-(q^2+q)(q-1).
$$
The difference between this case and the case of $\lambda_1 =
\lambda_2$ is equal to
$$
-(q^2+q)[(q-2)(q-1)+(q-1)] = \abs{G},
$$
so this is an irreducible representation by Lemma \ref{virtu}.

To prove the third assertion of the theorem, we look at the
characters on hyperbolic elements and note that the function
$$
\lambda_1(x)\lambda_2(y) + \lambda_1(y)\lambda_2(x)
$$
determines $\lambda_1, \lambda_2$ up to permutation.

\end{proof}

\subsubsection{Complementary series representations}
Let $\mathbb F_{q^2} \supset \mathbb F_q$ be a quadratic extension
$\mathbb F_q(\sqrt{\eps}), \eps \in \mathbb F_q \setminus \mathbb
F_q^2.$ We regard this as a 2-dimensional vector space over
$\mathbb F_q;$ then $G$ is the group of linear
transformations of $\mathbb F_{q^2}$ over $\mathbb F_q.$ Let $K
\subset G$ be the cyclic group of multiplications
by elements of $\mathbb F_{q^2}^\times,$
$$
K=\{ \begin{pmatrix} x&\eps y\\y&x \end{pmatrix} \}, \;
\abs{K}=q^2-1.
$$
For $\nu: K \rightarrow \CC^{\times}$ a homomorphism, let
$$
Y_\nu = \Ind_K^G {\Bbb C}_\nu.
$$
This representation, of course, is very reducible. Let us compute
its character, using the Mackey formula. We get
$$
\chi \begin{pmatrix} x&0\\0&x \end{pmatrix} = q(q-1) \nu(x);
$$
$$
\chi(A)=0 \text{ for $A$ parabolic or hyperbolic;}
$$
$$
\chi \begin{pmatrix} x&\eps y\\y&x \end{pmatrix} =
\nu\begin{pmatrix} x&\eps y\\y&x \end{pmatrix} +
\nu\begin{pmatrix} x&\eps y\\y&x \end{pmatrix}^q.
$$
The last assertion holds because if we regard the matrix as an element
of $\mathbb F_{q^2}$, conjugation is an automorphism of $\mathbb
F_{q^2}$
over $\mathbb F_{q}$,
but the only nontrivial automorphism of $\mathbb F_{q^2}$ over $\mathbb F_q$ is
the $q^\text{th}$ power map.

We thus have
$$
\Ind_K^G {\Bbb C}_{\nu^q} \cong \Ind_K^G {\Bbb C}_\nu
$$
because they have the same character. Therefore, for $\nu^q \ne \nu$ we get
$\frac{1}{2}q(q-1)$ representations.

Next, we look at the following tensor product:
$$
W_1 \otimes V_{\alpha, 1},
$$
where $1$ is the trivial character and $W_1$ is defined as in the previous section.
The character of this representation is
$$
\chi \begin{pmatrix} x&0\\0&x \end{pmatrix} = q(q+1) \alpha(x);
$$
$$
\chi(A)=0 \text{ for $A$ parabolic or elliptic;}
$$
$$
\chi \begin{pmatrix} x&0\\0&y \end{pmatrix} =
\alpha(x) + \alpha(y).
$$

Thus the "virtual representation"
$$
W_{1} \otimes V_{\alpha, 1} - V_{\alpha, 1} - \Ind_K^G {\Bbb C}_\nu,
$$
where $\alpha$ is the restriction of $\nu$ to scalars, has character
$$
\chi \begin{pmatrix} x&0\\0&x \end{pmatrix} = (q-1) \alpha(x);
$$
$$
\chi \begin{pmatrix} x&1\\0&x \end{pmatrix} = -\alpha(x);
$$
$$
\chi \begin{pmatrix} x&0\\0&y \end{pmatrix} = 0;
$$
$$
\chi \begin{pmatrix} x&\eps y\\y&x \end{pmatrix} =
-\nu \begin{pmatrix} x&\eps y\\y&x \end{pmatrix}
-\nu^q \begin{pmatrix} x&\eps y\\y&x \end{pmatrix}.
$$

In all that follows, we will have $\nu^q \ne \nu.$

The following two lemmas will establish that the inner product of this character with
itself is equal to 1, that its value at 1 is positive.
As we know from Lemma \ref{virtu}, these two properties
imply that it is the character of an irreducible representation of $G$.

\begin{lemma}
Let $\chi$ be the character of the "virtual representation" defined above. Then
$$\langle \chi, \chi \rangle = 1$$ and $$\chi(1)>0.$$
\end{lemma}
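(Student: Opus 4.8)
The plan is to read both assertions off the explicit values of $\chi$ on the four conjugacy types together with the class sizes recorded in the conjugacy-class table above. The positivity $\chi(1)>0$ is immediate: $1$ is a scalar matrix with $x=1$, so the first displayed formula gives $\chi(1)=(q-1)\alpha(1)=q-1>0$. (Equivalently, subtracting dimensions, $\chi(1)=\dim(W_1\otimes V_{\alpha,1})-\dim V_{\alpha,1}-\dim\mathrm{Ind}_K^G\mathbb C_\nu=q(q+1)-(q+1)-q(q-1)=q-1$.)

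For the norm I would write $\langle\chi,\chi\rangle=\frac{1}{|G|}\sum_{g\in G}|\chi(g)|^2$ and split the sum according to the four families of conjugacy classes. Since $\alpha$ and all the values $\nu(z)$ are roots of unity, $|\alpha(x)|=1$ throughout, so the scalar classes contribute $(q-1)\cdot(q-1)^2=(q-1)^3$ (there are $q-1$ of them, each of size $1$), the parabolic classes contribute $(q-1)\cdot(q^2-1)\cdot 1$ (there are $q-1$ of them, each of size $q^2-1$, and $|{-\alpha(x)}|^2=1$), and the hyperbolic classes contribute nothing because $\chi$ vanishes on them.

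The substantive step is the elliptic contribution. Here I would use that the elliptic elements are exactly the non-central elements of the cyclic torus $K\cong\mathbb F_{q^2}^\times$: there are $q^2-q$ of them, they form $\tfrac12 q(q-1)$ conjugacy classes each of size $q^2-q$, and each class meets $K$ in a Frobenius orbit $\{z,z^q\}$ with $z\in\mathbb F_{q^2}^\times\setminus\mathbb F_q^\times$, on which $\chi$ takes the common value $\nu(z)+\nu(z)^q$. Hence the elliptic contribution is $(q^2-q)\cdot\tfrac12\sum_{z\in\mathbb F_{q^2}^\times\setminus\mathbb F_q^\times}|\nu(z)+\nu(z)^q|^2$. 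Expanding $|\nu(z)+\nu(z)^q|^2=2+\nu(z)^{q-1}+\nu(z)^{-(q-1)}$, the constant term contributes $2(q^2-q)$; for the remaining two terms I would invoke orthogonality of characters of $\mathbb F_{q^2}^\times$: the character $z\mapsto\nu(z)^{q-1}$ is \emph{nontrivial} precisely because we are in the case $\nu^q\ne\nu$, so it sums to $0$ over all of $\mathbb F_{q^2}^\times$, and therefore to $-\sum_{z\in\mathbb F_q^\times}\nu(z)^{q-1}=-(q-1)$ over the elliptic part (since $z^{q-1}=1$ for $z\in\mathbb F_q^\times$), and the conjugate character behaves the same way. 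This gives $\tfrac12\bigl(2(q^2-q)-2(q-1)\bigr)=(q-1)^2$ for the inner sum, hence an elliptic contribution of $(q^2-q)(q-1)^2$.

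Adding up, $|G|\,\langle\chi,\chi\rangle=(q-1)^3+(q-1)(q^2-1)+(q^2-q)(q-1)^2=(q-1)^2\bigl((q-1)+(q+1)+q(q-1)\bigr)=(q-1)^2\,q(q+1)=|G|$, so $\langle\chi,\chi\rangle=1$. The only place where anything non-mechanical happens is the elliptic term: one must correctly match elliptic conjugacy classes of $G$ with Frobenius orbits $\{z,z^q\}$ in $\mathbb F_{q^2}^\times$ and know their common size $q^2-q$, and one must notice that the hypothesis $\nu^q\ne\nu$ is exactly what makes the relevant character of $\mathbb F_{q^2}^\times$ nontrivial so that character orthogonality applies. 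Everything else is arithmetic with roots of unity and the class-size table.
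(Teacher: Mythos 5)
Your proof is correct and follows essentially the same route as the paper's: evaluate $\chi(1)=q(q+1)-(q+1)-q(q-1)=q-1$ directly, then compute $|G|\langle\chi,\chi\rangle$ class by class, expanding the elliptic term as $2+\nu^{q-1}(\zeta)+\nu^{1-q}(\zeta)$ and using that $\nu^{q-1}$ is a nontrivial character of $\mathbb F_{q^2}^{\times}$ (since $\nu^q\ne\nu$) which is trivial on $\mathbb F_q^{\times}$, giving the same totals $(q-1)^3+(q-1)(q^2-1)+q(q-1)^3=|G|$. The only cosmetic difference is your sign on the elliptic value ($\chi$ there is $-\nu(\zeta)-\nu^q(\zeta)$), which is immaterial since only $|\chi|^2$ enters.
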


\begin{proof}
$$\chi(1)=q(q+1)-(q+1)-q(q-1)=q-1>0.$$
We now compute the inner product $\langle \chi, \chi \rangle.$
Since $\alpha$ is a root of unity, this will be equal to
$$
\frac{1}{(q-1)^2q(q+1)} \bigl[(q-1)\cdot(q-1)^2\cdot 1 + (q-1)\cdot1\cdot(q^2-1) +
\frac{q(q-1)}{2}\cdot\sum_{\zeta\text{ elliptic}} (\nu(\zeta)+\nu^q(\zeta))
\ov{(\nu(\zeta)+\nu^q(\zeta))} \bigr]
$$
Because $\nu$ is also a root of unity, the last term of the expression evaluates to
$$
\sum_{\zeta\text{ elliptic}}(2+ \nu^{q-1}(\zeta) + \nu^{1-q}(\zeta)).$$
Let's evaluate the last summand.

Since $\mathbb F_{q^2}^{\times}$ is cyclic and $\nu^q \ne \nu$,
$$
\sum_{\zeta \in \mathbb F_{q^2}^{\times}} \nu^{q-1}(\zeta) =
\sum_{\zeta \in \mathbb F_{q^2}^{\times}} \nu^{1-q}(\zeta) = 0.
$$
Therefore,
$$
\sum_{\zeta\text{ elliptic}} (\nu^{q-1}(\zeta) + \nu^{1-q}(\zeta)) =
 - \sum_{\zeta \in \mathbb F_q^{\times}} (\nu^{q-1}(\zeta) + \nu^{1-q}(\zeta)) =
 - 2(q-1) =
$$
since $\mathbb F_q^{\times}$ is cyclic of order $q-1.$
Therefore,
$$
\langle \chi, \chi \rangle = \frac{1}{(q-1)^2q(q+1)} \bigl((q-1)\cdot(q-1)^2\cdot 1
+ (q-1)\cdot1\cdot(q^2-1) + \frac{q(q-1)}{2}\cdot(2(q^2-q)-2(q-1)) \bigr) = 1.
$$
\end{proof}

We have now shown that for any $\nu$ with $\nu^q \ne \nu$ the representation
$Y_\nu$ with the same character as
$$
W_{1} \otimes V_{\alpha, 1} - V_{\alpha, 1} - \Ind_K^G {\Bbb C}_\nu
$$
exists and is irreducible. These characters are distinct for distinct
pairs $(\alpha,\;\nu)$
(up to switch $\nu\to \nu^q$), so there are
$\frac{q(q-1)}{2}$ such representations, each of dimension $q-1$.

We have thus found $q-1$ 1-dimensional representations of $G,$ $\frac{q(q-1)}{2}$
principal series representations, and $\frac{q(q-1)}{2}$ complementary series
representations, for a total of $q^2-1$ representations, i.e., the number of conjugacy
classes in $G.$
This implies that we have in fact found all irreducible representations
of $GL_2(\Bbb F_q)$.

\subsection{Artin's theorem}

\begin{theorem}\label{art}
Let $X$ be a conjugation-invariant system of subgroups of a finite group $G$.
Then two conditions are equivalent:

(i) Any element of $G$ belongs to a subgroup $H\in X$.

(ii) The character of any irreducible representation of $G$
belongs to the $\Bbb Q$-span of characters of induced
representations $\Ind_H^GV$, where $H\in X$ and $V$ is an
irreducible representation of $H$.
\end{theorem}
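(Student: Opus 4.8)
The plan is to establish the two implications separately. The implication $(ii)\Rightarrow(i)$ is the easy direction and uses only the Mackey formula. The substantive direction is $(i)\Rightarrow(ii)$, which I would prove by identifying the span of induced characters inside the space of class functions as an orthogonal complement governed by restriction, and then descending the resulting $\CC$-statement to $\QQ$ via the representation ring. Throughout I work over $\CC$, so all the orthogonality results of the preceding sections are available.

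For $(ii)\Rightarrow(i)$: suppose $g\in G$ lies in no member of $X$. If $xgx^{-1}\in H$ for some $x\in G$ and $H\in X$, then $g\in x^{-1}Hx$, and $x^{-1}Hx\in X$ by conjugation-invariance, a contradiction; hence no conjugate of $g$ lies in any $H\in X$. The Mackey formula (Theorem \ref{indr}) then gives $\chi_{\Ind_H^GV}(g)=0$ for every $H\in X$ and every representation $V$ of $H$, so every element of the $\QQ$-span of these characters vanishes at $g$. By hypothesis (ii) this forces $\chi_W(g)=0$ for every irreducible representation $W$ of $G$, which contradicts column orthogonality (Theorem \ref{column-orthogonality}): $\sum_{W}\chi_W(g)\overline{\chi_W(g)}=|Z_g|>0$. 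Hence every $g\in G$ lies in some $H\in X$.

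For $(i)\Rightarrow(ii)$: work in ${\rm F}_c(G,\CC)$ with its Hermitian inner product, and let $J\subset{\rm F}_c(G,\CC)$ be the $\CC$-span of all $\chi_{\Ind_H^GV}$ with $H\in X$ and $V$ irreducible. Since $\Ind_H^G$ is additive and the irreducible characters of $H$ span ${\rm F}_c(H,\CC)$, the subspace $J$ is exactly the image of the total induction map $\bigoplus_{H\in X}{\rm F}_c(H,\CC)\to{\rm F}_c(G,\CC)$. Combining Frobenius reciprocity (Theorem \ref{frobrec}) with $(\chi_V,\chi_W)=\dim\Hom_G(W,V)$ (Theorem \ref{orthogonality-of-characters}), and extending sesquilinearly from characters to all class functions, one obtains that induction and restriction are adjoint: $(\Ind_H^G\psi,\phi)_G=(\psi,\mathrm{Res}_H^G\phi)_H$. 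Therefore $J^{\perp}=\{\phi\in{\rm F}_c(G,\CC):\mathrm{Res}_H^G\phi=0\text{ for all }H\in X\}$. But (i) says every $g\in G$ lies in some $H\in X$, and then $\mathrm{Res}_H^G\phi=0$ for that $H$ gives $\phi(g)=0$; hence $J^{\perp}=0$ and $J={\rm F}_c(G,\CC)$.

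It remains to descend to $\QQ$. Let $R(G)$ be the representation ring (free abelian on the irreducibles), $R(G)_{\QQ}=R(G)\otimes_{\ZZ}\QQ$, viewed via characters as the $\QQ$-span of irreducible characters inside ${\rm F}_c(G,\CC)$; since irreducible characters form a $\CC$-basis of ${\rm F}_c(G,\CC)$, one has $R(G)_{\QQ}\otimes_{\QQ}\CC={\rm F}_c(G,\CC)$, and similarly for each $H$. The $\QQ$-linear total induction map $\theta\colon\bigoplus_{H\in X}R(H)_{\QQ}\to R(G)_{\QQ}$ has image the $\QQ$-span of the induced characters, and $\theta\otimes_{\QQ}\CC$ is the surjection found above; since $\mathrm{coker}(\theta)\otimes_{\QQ}\CC\cong\mathrm{coker}(\theta\otimes\CC)=0$ and a $\QQ$-vector space vanishing after $\otimes_{\QQ}\CC$ is zero, $\theta$ is surjective, which is precisely (ii). I expect the step needing the most care to be exactly this descent: the induced characters need not be $\QQ$-valued, so (ii) is genuinely a statement about the $\QQ$-span of irreducible characters (not about rational class functions), and one must route through the representation ring and an extension-of-scalars argument rather than working inside a single fixed space of rational functions. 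The adjunction $J^{\perp}=\{\phi:\mathrm{Res}_H^G\phi=0\}$, which turns (ii) into the exact dual of (i), is the conceptual heart and is routine once Frobenius reciprocity is available.
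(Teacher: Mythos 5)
Your proof is correct and takes essentially the same route as the text: (ii)$\Rightarrow$(i) is the Mackey formula plus the fact that the irreducible characters cannot all vanish at a given element, and (i)$\Rightarrow$(ii) is the same Frobenius-reciprocity duality, showing that any class function orthogonal to all induced characters restricts to zero on every $H\in X$ and hence vanishes by (i). Your descent from the $\CC$-span to the $\QQ$-span (surjectivity of $\bigoplus_{H\in X} R(H)_\QQ\to R(G)_\QQ$ detected after $\otimes_\QQ\CC$) is just a repackaging of the Remark in the text, which notes that the $\QQ$- and $\CC$-span statements are equivalent because the decomposition matrix of the induced representations is rational.
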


{\bf Remark.} Statement (ii) of Theorem \ref{art} is equivalent
to the same statement with $\Bbb Q$-span replaced by $\Bbb
C$-span. Indeed, consider the matrix whose columns consist of  
the coefficients of the decomposition of $\Ind_H^GV$ 
(for various $H,V$) with respect to the irreducible
representations of $G$. Then both statements are equivalent to
the condition that the rows of this matrix are linearly
independent. 

\begin{proof} Proof that (ii) implies (i). Assume that $g\in G$
does not belong to any of the subgroups $H\in X$. Then, since $X$ is
conjugation invariant, it cannot be conjugated into such a
subgroup. Hence by the Mackey formula, $\chi_{\Ind_H^G(V)}(g)=0$
for all $H\in X$ and $V$. So by (ii), for any irreducible
representation $W$ of $G$, $\chi_W(g)=0$. But irreducible
characters span the space of class functions, so any class
function vanishes on $g$, which is a contradiction.

Proof that (i) implies (ii).
Let $U$ be a virtual representation of $G$ over $\Bbb C$ (i.e., a linear
combination of irreducible representations with nonzero integer
coefficients) such that $(\chi_U,\chi_{\Ind_H^GV})=0$ for all $H,V$.
So by Frobenius reciprocity, $(\chi_{U|_H},\chi_V)=0$.
This means that $\chi_U$ vanishes on $H$ for any $H\in X$. Hence
by (i), $\chi_U$ is identically zero. This implies (ii) (because
of the above remark).
\end{proof}

\begin{corollary}
Any irreducible character of a finite group is a rational linear
combination of induced characters from its cyclic subgroups.
\end{corollary}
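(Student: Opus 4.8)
The plan is to apply Artin's theorem (Theorem \ref{art}) directly, taking $X$ to be the collection of \emph{all} cyclic subgroups of $G$. Thus essentially no new work is needed beyond checking that this $X$ satisfies the hypotheses of the theorem.

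First I would verify that $X$ is a conjugation-invariant system of subgroups. This is immediate: if $H=\langle g\rangle$ is cyclic and $a\in G$, then $aHa^{-1}=\langle aga^{-1}\rangle$ is again cyclic, so $aHa^{-1}\in X$. Next I would check condition (i) of Theorem \ref{art}: every element $g\in G$ lies in some $H\in X$, namely the cyclic subgroup $\langle g\rangle$ generated by $g$ itself. Hence condition (i) holds trivially.

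By Theorem \ref{art}, condition (i) implies condition (ii): the character of every irreducible representation of $G$ lies in the $\mathbb{Q}$-span of the characters $\chi_{\mathrm{Ind}_H^G V}$, where $H$ ranges over cyclic subgroups of $G$ and $V$ over irreducible representations of $H$. Since $\chi_{\mathrm{Ind}_H^G V}$ is precisely what is meant by an ``induced character from a cyclic subgroup,'' this is exactly the assertion of the corollary, and the proof is complete.

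There is no real obstacle here; the only thing worth emphasizing is that the corollary is literally the specialization of Artin's theorem to the (obviously conjugation-invariant, obviously covering) family of cyclic subgroups, so the entire content is already contained in Theorem \ref{art} and its proof via the Mackey formula and Frobenius reciprocity.
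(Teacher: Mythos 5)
Your proof is correct and is exactly the intended argument: the corollary is Theorem \ref{art} applied to the conjugation-invariant family of all cyclic subgroups, where condition (i) holds because every $g\in G$ lies in $\langle g\rangle$. The paper leaves this specialization implicit, and your write-up supplies precisely the routine verifications it omits.
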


\subsection{Representations of semidirect products}

Let $G,A$ be groups and $\phi: G\to {\rm Aut}(A)$
be a homomorphism. For $a\in A$, denote $\phi(g)a$ by $g(a)$.
The semidirect product $G\ltimes A$ is defined to be the product
$A\times G$ with multiplication law
$$
(a_1,g_1)(a_2,g_2)=(a_1g_1(a_2),g_1g_2).
$$
Clearly, $G$ and $A$ are subgroups of $G\ltimes A$ in a natural
way.

We would like to study irreducible complex representations of
$G\ltimes A$. For simplicity, let us do it when $A$ is abelian.

In this case, irreducible representations of $A$ are
1-dimensional and form the character group $A^\vee$, which
carries an action of $G$. Let $O$ be an orbit of this action,
$x\in O$ a chosen element, and $G_x$ the stabilizer of $x$ in
$G$. Let $U$ be an irreducible representation of $G_x$.
Then we define a representation $V_{(O,U)}$ of $G\ltimes A$ as
follows.

As a representation of $G$, we set
$$
V_{(O,x,U)}=\Ind_{G_x}^GU=
\lbrace{f: G\to U|f(hg)=hf(g), h\in G_x\rbrace}.
$$
Next, we introduce an additional action of $A$ on this space by
$(af)(g)=x(g(a))f(g)$. Then it's easy to check that these
two actions combine into an action of $G\ltimes A$.
Also, it is clear that this representation does not really depend
on the choice of $x$, in the following sense. Let $x,y\in O$, and
$g\in G$ be such that $gx=y$, and let $g(U)$ be the
representation of $G_y$ obtained from the representation $U$ of
$G_x$ by the action of $g$. Then $V_{(O,x,U)}$ is (naturally)
isomorphic to $V_{(O,y,g(U))}$. Thus we will denote $V_{(O,x,U)}$
by $V_{(O,U)}$ (remembering, however, that $x$ has been fixed). 

\begin{theorem}\label{semidi}
(i) The representations $V_{(O,U)}$ are irreducible.

(ii) They are pairwise nonisomorphic.

(iii) They form a complete set of irreducible representations of
$G\ltimes A$.

(iv) The character of $V=V_{(O,U)}$ is given by the Mackey-type formula
$$
\chi_V(a,g)=\frac{1}{|G_x|}\sum_{h\in G: hgh^{-1}\in G_x}x(h(a))
\chi_U(hgh^{-1}).
$$
\end{theorem}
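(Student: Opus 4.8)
The plan is to realize each $V_{(O,U)}$ as an honest induced representation of $G\ltimes A$ from the subgroup $G_x\ltimes A$, and then deduce everything from results already established: the Mackey formula (Theorem \ref{indr}) and Maschke's theorem (Theorem \ref{maschk}). Since $G_x$ fixes $x\in A^\vee$, let $\widetilde U$ denote the representation of $G_x\ltimes A$ on the space $U$ in which $G_x$ acts by the given action and $A$ acts by the scalar $x$; one checks from the semidirect product multiplication law that this is well defined. First I would verify that $V_{(O,x,U)}\cong \mathrm{Ind}_{G_x\ltimes A}^{G\ltimes A}\widetilde U$: restricting a function $F$ on $G\ltimes A$ to the subset $G\subset G\ltimes A$ and using the equivariance under elements $(a,e)\in A$ to recover $F$ from $F|_G$ identifies the induced representation, as a vector space with $G$-action, with $\mathrm{Ind}_{G_x}^G U$, and a short computation shows the extra $A$-action is exactly the twist $(af)(g)=x(g(a))f(g)$ appearing in the definition of $V_{(O,x,U)}$.

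Granting this identification, part (iv) is immediate: apply the Mackey formula to $\mathrm{Ind}_{G_x\ltimes A}^{G\ltimes A}\widetilde U$, noting that conjugating $(a,g)$ by $h\in G$ gives $(h(a),hgh^{-1})$, which lies in $G_x\ltimes A$ precisely when $hgh^{-1}\in G_x$, and that $\chi_{\widetilde U}(h(a),hgh^{-1})=x(h(a))\chi_U(hgh^{-1})$. Since this summand depends only on the coset $G_xh$ (because $\chi_U$ is a class function on $G_x$ and $G_x$ fixes $x$), the sum over cosets equals $\frac1{|G_x|}$ times the sum over all $h\in G$, which is the stated formula.

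For (i) and (ii) I would restrict $V:=V_{(O,U)}$ to $A$. Choosing coset representatives $g_1,\dots,g_m$ of $G_x\backslash G$, the function realization gives $V|_A=\bigoplus_i U_{g_i}$, where $A$ acts on the $i$-th summand by the character $a\mapsto x(g_i(a))$; these characters run exactly once through the orbit $O$, so $V|_A\cong (\dim U)\bigoplus_{\chi\in O}\chi$, and the $x$-isotypic component $V_x$ is identified with $U$ as a $G_x$-module. Now if $W\subseteq V$ is a nonzero $G\ltimes A$-subrepresentation, then $W=\bigoplus_{\chi\in O}W_\chi$ with $W_\chi\subseteq V_\chi$, the $G$-action sends $W_\chi$ into $W_{g\chi}$, and $W_x$ is a $G_x$-subrepresentation of $V_x\cong U$. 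Irreducibility of $U$ forces $W_x=0$ or $W_x=V_x$; if $W\ne 0$ then $W_\chi\ne 0$ for some $\chi$, hence (transitivity of $G$ on $O$) $W_x\ne 0$, hence $W_x=V_x$, hence $W=V$. This proves (i). For (ii), an isomorphism $V_{(O,U)}\cong V_{(O',U')}$ restricts to an $A$-isomorphism, forcing $O=O'$, and then matching $x$-isotypic components as $G_x$-modules forces $U\cong U'$.

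Finally, (iii) is a dimension count. Using $\dim V_{(O,U)}=[G:G_x]\dim U$ and $\sum_{U\in\mathrm{Irr}(G_x)}(\dim U)^2=|G_x|$ (Theorem \ref{maschk}), we get
$$
\sum_{(O,U)}(\dim V_{(O,U)})^2=\sum_O [G:G_x]^2|G_x|=\sum_O [G:G_x]\,|G|=|G|\sum_O|O|=|G|\,|A^\vee|=|G\ltimes A|.
$$
Since by (i) and (ii) the $V_{(O,U)}$ are pairwise non-isomorphic irreducibles, and the sum of the squares of the dimensions of all irreducibles of $G\ltimes A$ equals $|G\ltimes A|$, they must exhaust the irreducibles. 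The step requiring the most care is the isotypic analysis in (i): fixing once and for all the conventions for the $G$-action on $A^\vee$ and the twist, and checking that the nonzero isotypic components of a subrepresentation really do form a single $G$-orbit equal to $O$; everything else is a direct invocation of the Mackey formula or Maschke's theorem.
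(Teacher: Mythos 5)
Your proof is correct, and for parts (i)--(iii) it follows essentially the same route as the paper: decompose $V$ over the abelian normal subgroup $A$ into isotypic pieces indexed by the orbit $O$, observe that a subrepresentation is a sum of sub-pieces permuted transitively by $G$ and that the $x$-piece is $U$ as a $G_x$-module (giving (i) and (ii)), and then exhaust the irreducibles by the sum-of-squares count (iii). Where you genuinely diverge is in the treatment of (iv) and in the structural framing: you first identify $V_{(O,x,U)}$ with the honest induced representation $\Ind_{G_x\ltimes A}^{G\ltimes A}\widetilde U$, where $\widetilde U$ is $U$ with $A$ acting through the character $x$ (your verification that restriction of functions to $G\subset G\ltimes A$ recovers $\Ind_{G_x}^G U$ with the twisted $A$-action, using $x\circ h=x$ for $h\in G_x$, is the right check). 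This lets you obtain (iv) by directly invoking the already-proved Mackey formula (Theorem \ref{indr}), noting $(e,h)(a,g)(e,h)^{-1}=(h(a),hgh^{-1})$ and that the summand is constant on $G_x$-cosets, whereas the paper simply asserts that the proof of (iv) is ``essentially the same as that of the Mackey formula,'' i.e., it redoes the computation rather than quoting it. Your identification buys a cleaner, citation-level proof of (iv) (and would also make the dimension formula $\dim V_{(O,U)}=[G:G_x]\dim U$ automatic from Remark \ref{natiso}); the paper's phrasing avoids setting up the auxiliary representation $\widetilde U$ but at the cost of repeating an argument. Both are sound.
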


\begin{proof}
(i) Let us decompose $V=V_{(O,U)}$ as an $A$-module. Then we get
$$
V=\oplus_{y\in O}V_y,
$$
where $V_y=\lbrace{v\in V_{(O,U)}|av=(y,a)v, a\in A\rbrace}$.
(Equivalently, $V_y=\lbrace v\in V_{(O,U)}|v(g)=0\text{ unless
}gy=x\rbrace $). So if $W\subset V$ is a subrepresentation, 
then $W=\oplus_{y\in O} W_y$,
where $W_y\subset V_y$. Now, $V_y$ is a representation of $G_y$,
which goes to $U$ under any isomorphism $G_y\to G_x$ determined
by $g\in G$ mapping $x$ to $y$. Hence, $V_y$ is irreducible
over $G_y$, so $W_y=0$ or $W_y=V_y$ for each $y$. Also, if
$hy=z$ then $hW_y=W_z$, so either $W_y=0$ for all $y$ or
$W_y=V_y$ for all $y$, as desired.

(ii) The orbit $O$ is determined by the $A$-module structure of
$V$, and the representation $U$ by the structure of $V_x$ as a
$G_x$-module.

(iii) We have
$$
\sum_{U,O}\dim V_{(U,O)}^2=\sum_{U,O}|O|^2 (\dim U)^2=
$$
$$
\sum_O |O|^2 |G_x|=\sum_O |O| |G/G_x| |G_x|=|G|\sum_O
|O|=|G||A^\vee|=|G\ltimes A|.
$$

(iv) The proof is essentially the same as that of the Mackey
formula.
\end{proof}

{\bf Exercise.} Redo Problems \ref{2:5}(a), \ref{2:6}, \ref{3:4} 
using Theorem \ref{semidi}.

{\bf Exercise.} Deduce parts (i)-(iii) of Theorem \ref{semidi}
from part (iv).  

\newpage \section{Quiver Representations}

\subsection{Problems}
\begin{problem}\label{5:1}
{\bf Field embeddings.}
Recall that $k(y_1,...,y_m)$ denotes the
field of rational functions of $y_1,...,y_m$ over a field $k$.
Let $f: k[x_1,...,x_n]\to k(y_1,...,y_m)$ be an injective
$k$-algebra homomorphism. Show that $m\ge n$. (Look at the growth of
dimensions of the spaces $W_N$ of polynomials of degree $N$ in
$x_i$ and their images under $f$ as $N\to \infty$).
Deduce that if  $f: k(x_1,...,x_n)\to k(y_1,...,y_m)$ is a field
embedding, then $m\ge n$.
\end{problem}

\begin{problem}\label{5:2}
{\bf Some algebraic geometry.}

Let $k$ be an algebraically closed field, and $G=GL_n(k)$.
Let $V$ be a polynomial representation of $G$.
Show that if $G$ has finitely many orbits on $V$ then ${\rm
dim}(V)\le n^2$. Namely:

(a) Let $x_1,...,x_N$ be linear coordinates on $V$.
Let us say that a subset $X$ of $V$ is Zariski dense if
any polynomial $f(x_1,...,x_N)$ which vanishes on $X$
is zero (coefficientwise). Show that if $G$ has finitely many
orbits on $V$ then $G$ has at least one Zariski dense orbit on $V$.

(b) Use (a) to construct a field embedding $k(x_1,...,x_N)\to
k(g_{pq})$, then use Problem \ref{5:1}.

(c) generalize the result of this problem to the case when
$G=GL_{n_1}(k)\times...\times GL_{n_m}(k)$.
\end{problem}

\begin{problem}\label{5:3} {\bf Dynkin diagrams.}

Let $\Gamma$ be a graph, i.e., a finite set of points (vertices)
connected with a certain number of edges (we allow multiple edges).
We assume that $\Gamma$
is connected (any vertex can be connected to any other by a path of edges)
and has no self-loops (edges from a vertex to itself).
Suppose the vertices of $\Gamma$ are labeled by integers
$1,...,N$. Then one can assign
to $\Gamma$ an $N\times N$ matrix $R_\Gamma=(r_{ij})$, where $r_{ij}$
is the number of edges connecting vertices $i$ and $j$.
This matrix is obviously symmetric, and is called the adjacency
matrix. Define the matrix $A_\Gamma=2I-R_\Gamma$, where $I$ is the identity
matrix.

{\bf Main definition:} $\Gamma$ is said to be a Dynkin diagram if
the quadratic from on ${\Bbb R}^N$
with matrix $A_\Gamma$ is positive definite.

Dynkin diagrams appear in many areas of mathematics
(singularity theory, Lie algebras, representation theory,
algebraic geometry, mathematical physics, etc.)
In this problem you will get a complete classification of Dynkin diagrams.
Namely, you will prove

{\bf Theorem.} $\Gamma$ is a Dynkin diagram
if and only if it is one on the following graphs:

\begin{itemize}

\item $A_n$ : {\large \vspace{-.55cm} $$\stackrel{}{\circ}\hspace{-.2cm}\sn\hspace{-.2cm}\stackrel{}{\circ} \dots \stackrel{\text{}}{\circ}\hspace{-.29cm}\sn\hspace{-.18cm}\stackrel{\text{}}{\circ}$$}

\item $D_n$: {\large \vspace{-.95cm} $$\stackrel{}{\circ}\hspace{-.2cm}\sn\hspace{-.2cm}\stackrel{}{\circ} \dots \stackrel{\text{}}{\circ}\hspace{-.29cm}\sn\hspace{-.23cm}\stackrel{\text{}}{\circ}$$
\vspace{-0.85cm}$$\hspace{1.03cm}|$$
\vspace{-0.9cm}$$\hspace{1.05cm}\circ\text{}$$}
\\
\item $E_6$  : {\large \vspace{-.55cm} $$\stackrel{}{\circ}\hspace{-.2cm}\sn\hspace{-.2cm}\stackrel{}{\circ} \hspace{-.2cm}\sn\hspace{-.2cm} \stackrel{\text{}}{\circ}\hspace{-.2cm}\sn\hspace{-.2cm}\stackrel{}{\circ}\hspace{-.2cm}\sn \hspace{-.2cm}\stackrel{}{\circ}$$
\vspace{-.87cm}$$\hspace{.04cm}|$$
\vspace{-.97cm}$$\hspace{-.02cm}\circ\text{}$$}

\item $E_7$  : {\large \vspace{-.55cm} $$\hspace{.5cm}\stackrel{}{\circ}\hspace{-.2cm}\sn\hspace{-.2cm}\stackrel{}{\circ} \hspace{-.2cm}\sn\hspace{-.2cm} \stackrel{\text{}}{\circ}\hspace{-.2cm}\sn\hspace{-.2cm}\stackrel{}{\circ}\hspace{-.2cm}\sn \hspace{-.2cm}\stackrel{}{\circ}\hspace{-.2cm}\sn\hspace{-.2cm}\stackrel{}{\circ}$$
\vspace{-0.9cm}$$\hspace{1.02cm}|$$
\vspace{-0.9cm}$$\hspace{1.02cm}\circ\text{}$$}

\item $E_8$  : {\large \vspace{-1.cm} $$\hspace{1cm}\stackrel{}{\circ}\hspace{-.2cm}\sn\hspace{-.2cm}\stackrel{}{\circ} \hspace{-.2cm}\sn\hspace{-.2cm} \stackrel{\text{}}{\circ}\hspace{-.2cm}\sn\hspace{-.2cm}\stackrel{}{\circ}\hspace{-.2cm}\sn \hspace{-.2cm}\stackrel{}{\circ}\hspace{-.2cm}\sn\hspace{-.2cm}\stackrel{}{\circ} \hspace{-.2cm}\sn\hspace{-.2cm}\stackrel{}{\circ}$$
\vspace{-1.00cm}$$\hspace{2.05cm}|$$
\vspace{-0.9cm}$$\hspace{2.05cm}\circ\text{}$$}

\end{itemize}

(a) Compute the determinant of $A_\Gamma$ where $\Gamma=A_N,D_N$.
(Use the row decomposition rule, and write down a recursive equation
for it). Deduce by Sylvester criterion\footnote{Recall the Sylvester criterion: a symmetric real matrix is positive definite
if and only if all its upper left corner principal minors are positive.} that
$A_N,D_N$ are Dynkin diagrams.\footnote{The Sylvester criterion
says that a symmetric bilinear form $(,)$ on
$\Bbb R^N$ is positive definite if and only if
for any $k\le N$, $\det_{1\le i,j\le k}(e_i,e_j)>0$.}

(b) Compute the determinants of $A_\Gamma$
for $E_6,E_7,E_8$ (use row decomposition
and reduce to (a)). Show they are Dynkin diagrams.

(c) Show that if $\Gamma$ is a Dynkin diagram, it cannot have cycles.
For this, show that $det(A_\Gamma)=0$ for a graph $\Gamma$ below
\footnote{Please ignore the numerical labels; they will be
relevant for Problem \ref{6:3} below.}

\includegraphics{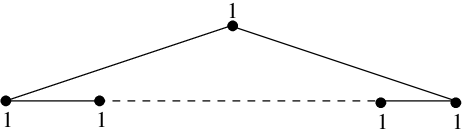}

(show that the sum of rows is 0). Thus $\Gamma$ has to be a tree.

(d) Show that if $\Gamma$ is a Dynkin diagram, it cannot have
vertices with 4 or more incoming edges, and
that $\Gamma$ can have no more than one vertex
with 3 incoming edges.
For this, show that $det(A_\Gamma)=0$ for a graph $\Gamma$ below:

\includegraphics{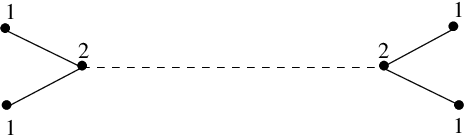}

(e) Show that $det(A_\Gamma)=0$ for all graphs $\Gamma$ below:

\includegraphics{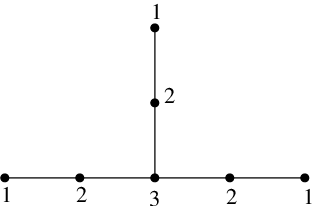}

\includegraphics{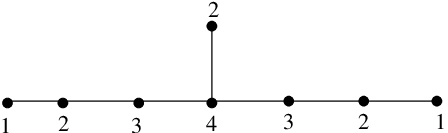}

\includegraphics{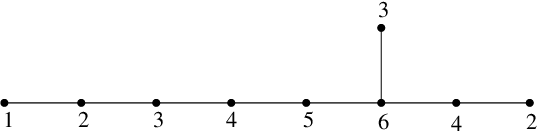}

(f) Deduce from (a)-(e) the classification theorem
for Dynkin diagrams.

(g) A (simply laced)
affine Dynkin diagram is a connected graph without self-loops such
that the quadratic form defined by $A_\Gamma$ is positive
semidefinite. Classify affine Dynkin diagrams.
(Show that they are exactly the forbidden diagrams
from (c)-(e)).
\end{problem}

\begin{problem}\label{5:4} Let $Q$ be a quiver with set of vertices $D$.
We say that $Q$ is of finite type
if it has finitely many indecomposable representations.
Let $b_{ij}$ be the number of edges from $i$ to $j$ in $Q$
($i,j\in D$).

There is the following remarkable theorem, proved by P.
Gabriel in early seventies.

{\bf Theorem.} A connected quiver $Q$ is of finite type if and only if
the corresponding unoriented graph (i.e., with directions of arrows forgotten)
is a Dynkin diagram.

In this problem you will prove the ``only if'' direction of this theorem
(i.e., why other quivers are NOT of finite type).

(a) Show that if $Q$ is of finite type then
for any rational numbers $x_i\ge 0$ which are not simultaneously zero,
one has $q(x_1,...,x_N)>0$, where
$$
q(x_1,...,x_N):=\sum_{i\in D} x_i^2-\frac{1}{2}\sum_{i,j\in D}b_{ij}x_ix_j.
$$

Hint. It suffices to check the result for integers: $x_i=n_i$.
First assume that $n_i\ge 0$,
and consider the space $W$ of representations $V$ of $Q$
such that ${\rm dim}V_i=n_i$. Show that the group
$\prod_i GL_{n_i}(k)$ acts with finitely many orbits
on $W\oplus k$, and use Problem \ref{5:2} to derive the
inequality. Then deduce the result in the case when $n_i$ are
arbitrary integers.

(b) Deduce that $q$ is a positive definite quadratic form.

Hint. Use the fact that $\Bbb Q$ is dense in $\Bbb R$.

(c) Show that a quiver of finite type can have no self-loops. Then,
using Problem \ref{5:3}, deduce the theorem.
\end{problem}

\begin{problem}\label{6:3}
Let $G\ne 1$ be a finite subgroup of $SU(2)$, and
$V$ be the 2-dimensional representation of $G$ coming
from its embedding into $SU(2)$. Let $V_i$, $i\in I$, be all the irreducible
representations of $G$. Let $r_{ij}$ be the multiplicity of $V_i$
in $V\otimes V_j$.
\end{problem}

(a) Show that $r_{ij}=r_{ji}$.

(b) The McKay graph of $G$,
$M(G)$, is the graph whose vertices are labeled by $i\in I$,
and $i$ is connected to $j$ by $r_{ij}$ edges.
Show that $M(G)$ is connected. (Use Problem \ref{6:1})

(c) Show that $M(G)$ is an affine Dynkin graph (one of the
``forbidden'' graphs in Problem \ref{5:3}).
For this, show that the matrix $a_{ij}=2\delta_{ij}-r_{ij}$
is positive semidefinite but not definite, and use Problem \ref{5:3}.

Hint. Let $f=\sum x_i\chi_{V_i}$, where $\chi_{V_i}$
be the characters of $V_i$. Show directly that
$((2-\chi_V)f,f)\ge 0$. When is it equal to $0$?
Next, show that $M(G)$ has no self-loops, by using that if $G$
is not cyclic then $G$ contains the central element $-Id\in SU(2)$.

(d) Which groups from Problem \ref{6:2} correspond to which diagrams?

(e) Using the McKay graph, find the dimensions of irreducible
representations of all finite $G\subset SU(2)$
(namely, show that they are the numbers labeling the vertices of
the affine Dynkin diagrams on our pictures).
Compare with the results on subgroups of $SO(3)$ we
obtained in Problem \ref{6:2}.

\subsection{Indecomposable representations of the quivers $A_1, A_2, A_3$}

\CompileMatrices

We have seen that a central question about representations of quivers
is whether a certain connected quiver has only finitely many indecomposable
representations. In the previous subsection it is shown
that only those quivers whose
underlying undirected graph is a Dynkin diagram may have this
property. To see if they actually do have this property, we first explicitly decompose representations of certain easy quivers.

\begin{remark}
  By an object of the type $\xymatrix@1{1 \ar[r] & 0}$ we mean a
map from a one-dimensional vector space to the zero
space. Similarly, an object of the type $\xymatrix@1{0 \ar[r] &
1}$ is a map from the zero space into a
one-dimensional space.
The object $\xymatrix@1{1 \ar[r] & 1}$ means an isomorphism from
a one-dimensional to another one-dimensional space. The numbers in such diagrams always mean the dimension of the attached spaces and the maps are the canonical maps (unless specified otherwise)
\end{remark}

\begin{example}[$A_1$]
The quiver $A_1$ consists of a single vertex and has no
edges. Since a representation of this quiver is just a single
vector space, the only indecomposable representation is the
ground field (=a one-dimensional space).
\end{example}

\begin{example}[$A_2$]
 \label{example2}
  The quiver $A_2$ consists of two vertices connected by a single edge.
  $$\xymatrix{\bullet \ar[r] & \bullet}$$
A representation of this quiver consists of two vector spaces $V, W$ and an operator $A:V \to W$.
  $$\xymatrix{\bullet \save[]+<0cm,-0.25cm>*{V}\restore \ar[r]^A & \bullet  \save[]+<0cm,-0.25cm>*{W}\restore}$$
To decompose this representation, we first let $V'$ be a complement to the kernel of $A$ in $V$ and let $W'$ be a complement to the image of $A$ in $W$. Then we can decompose the representation as follows
$$\xymatrix{\bullet \save[]+<0cm,-0.25cm>*{V}\restore \ar[r]^A & \bullet  \save[]+<0cm,-0.25cm>*{W}\restore} \;=\;
\xymatrix{\bullet \save[]+<0cm,-0.25cm>*{\ker A}\restore \ar[r]^0 & \bullet  \save[]+<0cm,-0.25cm>*{0}\restore} \;\oplus\;
\xymatrix{\bullet \save[]+<0cm,-0.25cm>*{V'}\restore \ar[r] \ar@{}@<-0.75ex>[r]^{\scriptstyle A \atop \textstyle \thicksim} & \bullet  \save[]+<0cm,-0.25cm>*{\Image A}\restore} \;\oplus\;
\xymatrix{\bullet \save[]+<0cm,-0.25cm>*{0}\restore \ar[r]^0 & \bullet  \save[]+<0cm,-0.25cm>*{W'}\restore}$$
The first summand is a multiple of the object $\xymatrix@1{1 \ar[r] & 0}$, the second a multiple of  $\xymatrix@1{1 \ar[r] & 1}$, the third of $\xymatrix@1{0 \ar[r] & 1}$.
We see that the quiver $A_2$ has three indecomposable representations, namely
$$\xymatrix@1{1 \ar[r] & 0}, \quad  \xymatrix@1{1 \ar[r] & 1}\quad \mbox{and} \quad \xymatrix@1{0 \ar[r] & 1}.$$
\end{example}

\begin{example}[$A_3$]
  The quiver $A_3$ consists of three vertices and two connections between them. So we have to choose between two possible orientations.
 $$\xymatrix@1{\bullet \ar[r] & \bullet \ar[r] & \bullet} \quad \mbox{or} \quad \xymatrix@1{\bullet \ar[r] & \bullet & \ar[l] \bullet}$$
  \begin{enumerate}
  \item We first look at the orientation
    $$\xymatrix@1{\bullet \ar[r] & \bullet \ar[r] & \bullet}.$$
    Then a representation of this quiver looks like
    $$\xymatrix@1@+=1.5 cm{\bullet \save[]+<0cm,-0.25cm>*{V}\restore \ar[r]^A & \bullet \save[]+<0cm,-0.25cm>*{W}\restore \ar[r]^B & \bullet \save[]+<0cm,-0.25cm>*{Y}\restore}.$$
    Like in Example \ref{example2} we first split away
    $$\xymatrix@1@+=1.5 cm{\bullet \save[]+<0cm,-0.25cm>*{\ker A}\restore \ar[r]^0 & \bullet \save[]+<0cm,-0.25cm>*{0}\restore \ar[r]^0 & \bullet \save[]+<0cm,-0.25cm>*{0}\restore}.$$
    This object is a multiple of $\xymatrix@1{1 \ar[r] & 0 \ar[r]
& 0}$. Next, let $Y'$ be a complement of $\Image B$ in $Y$. Then we can also split away
    $$\xymatrix@1@+=1.5 cm{\bullet \save[]+<0cm,-0.25cm>*{0}\restore \ar[r]^0 & \bullet \save[]+<0cm,-0.25cm>*{0}\restore \ar[r]^0 & \bullet \save[]+<0cm,-0.25cm>*{Y'}\restore}$$
    which is a multiple of the object $\xymatrix@1{0 \ar[r] & 0 \ar[r] & 1}$. This results in a situation where the map $A$ is injective and the map $B$ is surjective (we rename the spaces to simplify notation):
    $$\xymatrix@1@+=1.5 cm{*+<0.25 cm>{\bullet} \save[]+<0cm,-0.25cm>*{V}\restore \ar@{^{(}->}[r]^A & \bullet \save[]+<0cm,-0.25cm>*{W}\restore \ar@{->>}[r]^B & \bullet \save[]+<0cm,-0.25cm>*{Y}\restore}.$$
    Next, let $X = \ker (B \circ A)$ and let $X'$ be a complement of $X$ in $V$. Let $W'$ be a complement of $A(X)$ in $W$ such that $A(X') \subset W'$. Then we get
    $$\xymatrix@1@+=1.5 cm{*+<0.25 cm>{\bullet} \save[]+<0cm,-0.25cm>*{V}\restore \ar@{^{(}->}[r]^A & \bullet \save[]+<0cm,-0.25cm>*{W}\restore \ar@{->>}[r]^B & \bullet \save[]+<0cm,-0.25cm>*{Y}\restore} \;=\;
    \xymatrix@1@+=1.5 cm{*+<0.25 cm>{\bullet} \save[]+<0cm,-0.25cm>*{X}\restore \ar[r]^A & \bullet \save[]+<0cm,-0.28cm>*{A(X)}\restore \ar[r]^B & \bullet \save[]+<0cm,-0.25cm>*{0}\restore} \;\oplus\;
    \xymatrix@1@+=1.5 cm{*+<0.25 cm>{\bullet} \save[]+<0cm,-0.25cm>*{X'}\restore \ar@{^{(}->}[r]^A & \bullet \save[]+<0cm,-0.25cm>*{W'}\restore \ar@{->>}[r]^B & \bullet \save[]+<0cm,-0.25cm>*{Y}\restore}$$
    The first of these summands is a multiple of $\xymatrix@1{1 \ar[r] \ar@{}@<-0.1cm>[r]^{\textstyle\thicksim} & 1 \ar[r] & 0}$. Looking at the second summand, we now have a situation where $A$ is injective, $B$ is surjective and furthermore $\ker (B\circ A) = 0$. To simplify notation, we redefine
$$V = X', \; W = W'.$$
    Next we let $X = \Image(B\circ A)$ and let $X'$ be a complement of $X$ in $Y$. Furthermore, let $W' = B^{-1}(X')$. Then $W'$ is a complement of $A(V)$ in $W$. This yields the decomposition
    $$\xymatrix@1@+=1.5 cm{*+<0.25 cm>{\bullet} \save[]+<0cm,-0.25cm>*{V}\restore \ar@{^{(}->}[r]^A & \bullet \save[]+<0cm,-0.25cm>*{W}\restore \ar@{->>}[r]^B & \bullet \save[]+<0cm,-0.25cm>*{Y}\restore} \;=\;
    \xymatrix@1@+=1.5 cm{*+<0.25 cm>{\bullet} \save[]+<0cm,-0.25cm>*{V}\restore \ar@<-0.8ex>@{}[r]^{\textstyle\thicksim} \ar@{}@<0.3ex>[r]^A \ar[r] & \bullet \save[]+<0cm,-0.28cm>*{A(V)}\restore  \ar@<-0.8ex>@{}[r]^{\textstyle\thicksim} \ar@{}@<0.3ex>[r]^B \ar[r] & \bullet \save[]+<0cm,-0.25cm>*{X}\restore} \;\oplus\;
    \xymatrix@1@+=1.5 cm{*+<0.25 cm>{\bullet} \save[]+<0cm,-0.25cm>*{0}\restore \ar[r] & \bullet \save[]+<0cm,-0.25cm>*{W'}\restore \ar@{->>}[r]^B & \bullet \save[]+<0cm,-0.25cm>*{X'}\restore}$$
    Here, the first summand is a multiple of $\xymatrix@1{1 \ar[r] \ar@{}@<-0.1cm>[r]^{\textstyle\thicksim} & 1 \ar[r]\ar@{}@<-0.1cm>[r]^{\textstyle\thicksim} & 1}$. By splitting away the kernel of $B$, the second summand can be decomposed into multiples of $\xymatrix@1{0 \ar[r] & 1 \ar[r]\ar@{}@<-0.1cm>[r]^{\textstyle\thicksim} & 1}$ and $\xymatrix@1{0 \ar[r] & 1 \ar[r] & 0}.$
    So, on the whole, this quiver has six indecomposable representations:
$$\xymatrix@1{1 \ar[r] & 0 \ar[r] & 0}, \quad \xymatrix@1{0 \ar[r] & 0 \ar[r] & 1}, \quad \xymatrix@1{1 \ar[r] \ar@{}@<-0.1cm>[r]^{\textstyle\thicksim} & 1 \ar[r] & 0},$$
$$\xymatrix@1{1 \ar[r] \ar@{}@<-0.1cm>[r]^{\textstyle\thicksim} & 1 \ar[r]\ar@{}@<-0.1cm>[r]^{\textstyle\thicksim} & 1}, \quad \xymatrix@1{0 \ar[r] & 1 \ar[r]\ar@{}@<-0.1cm>[r]^{\textstyle\thicksim} & 1}, \quad \xymatrix@1{0 \ar[r] & 1 \ar[r] & 0}$$
\item Now we look at the orientation
\label{example32}
$$\quad \xymatrix@1{\bullet \ar[r] & \bullet & \ar[l] \bullet}.$$
Very similarly to the other orientation, we can split away objects of the type
$$\xymatrix@1{1 \ar[r] & 0 & 0 \ar[l]}, \quad \xymatrix@1{0 \ar[r] & 0 & 1 \ar[l]}$$
which results in a situation where both $A$ and $B$ are injective:
    $$\xymatrix@1@+=1.5 cm{*+<0.25cm>{\bullet} \save[]+<0cm,-0.25cm>*{V}\restore \ar@{^{(}->}[r]^A & \bullet \save[]+<0cm,-0.25cm>*{W}\restore \ar@{<-^{)}}[r]^B & *+<0.25cm>{\bullet} \save[]+<0cm,-0.25cm>*{Y}\restore}.$$
    By identifying $V$ and $Y$ as subspaces of $W$, this leads to
the problem of classifying pairs of subspaces of a given space
$W$ up to isomorphism (the \textbf{pair of subspaces
problem}). To do so, we first choose a complement $W'$ of $V\cap
Y$ in $W$, and set $V'=W'\cap V$, $Y'=W'\cap Y$.
Then we can decompose the representation as follows:
    $$\xymatrix@1@+=1.5 cm{*+<0.25cm>{\bullet} \save[]+<0cm,-0.25cm>*{V}\restore \ar@{^{(}->}[r] & \bullet \save[]+<0cm,-0.25cm>*{W}\restore \ar@{<-^{)}}[r] & *+<0.25cm>{\bullet} \save[]+<0cm,-0.25cm>*{Y}\restore} \;=\;
    \xymatrix@1@+=1.5 cm{*+<0.25cm>{\bullet} \save[]+<0cm,-0.25cm>*{V'}\restore \ar@{^{(}->}[r] & \bullet \save[]+<0cm,-0.25cm>*{W'}\restore \ar@{<-^{)}}[r] & *+<0.25cm>{\bullet} \save[]+<0cm,-0.25cm>*{Y'}\restore} \;\oplus\;
    \xymatrix@1@+=1.5 cm{*+<0.25cm>{\bullet} \save[]+<0cm,-0.25cm>*{V\cap Y}\restore \ar[r] \ar@<0.4ex>@{}[r] \ar@{}@<-0.1cm>[r]^{\textstyle\thicksim} & \bullet \save[]+<0cm,-0.25cm>*{V\cap Y}\restore &  *+<0.25cm>{\bullet} \ar[l] \ar@<-2.75ex>@{}[l] \ar@{}@<-0.25cm>[l]^{\textstyle\thicksim} \save[]+<0cm,-0.25cm>*{V\cap Y}\restore}.$$
    The second summand is a multiple of the object $\xymatrix@1{1 \ar[r] \ar@<-0.75ex>@{}[r]^{\textstyle \thicksim} & 1 & 1 \ar[l] \ar@<+0.75ex>@{}[l]_{\textstyle\thicksim}}$. We go on decomposing the first summand. Again, to simplify notation, we let
    $$V = V', \;W = W', \;Y = Y'.$$
    We can now assume that $V \cap Y = 0$. Next, let $W'$ be a complement of $V \oplus Y$ in $W$. Then we get
$$\xymatrix@1@+=1.5 cm{*+<0.25cm>{\bullet} \save[]+<0cm,-0.25cm>*{V}\restore \ar@{^{(}->}[r] & \bullet \save[]+<0cm,-0.25cm>*{W}\restore \ar@{<-^{)}}[r] & *+<0.25cm>{\bullet} \save[]+<0cm,-0.25cm>*{Y}\restore} \;=\;
\xymatrix@1@+=1.5 cm{*+<0.25cm>{\bullet} \save[]+<0cm,-0.25cm>*{V}\restore \ar@{^{(}->}[r] & \bullet \save[]+<0cm,-0.25cm>*{V \oplus Y}\restore \ar@{<-^{)}}[r] & *+<0.25cm>{\bullet} \save[]+<0cm,-0.25cm>*{Y}\restore} \;\oplus\;
\xymatrix@1@+=1.5 cm{*+<0.25cm>{\bullet} \save[]+<0cm,-0.25cm>*{0}\restore \ar[r] & \bullet \save[]+<0cm,-0.25cm>*{W'}\restore  & *+<0.25cm>{\bullet} \save[]+<0cm,-0.25cm>*{0}\restore  \ar[l]}$$
The second of these summands is a multiple of the indecomposable object $\xymatrix@1{0 \ar[r] & 1 & 0 \ar[l]}$. The first summand can be further decomposed as follows:
$$\xymatrix@1@+=1.5 cm{*+<0.25cm>{\bullet} \save[]+<0cm,-0.25cm>*{V}\restore \ar@{^{(}->}[r] & \bullet \save[]+<0cm,-0.25cm>*{V \oplus Y}\restore \ar@{<-^{)}}[r] & *+<0.25cm>{\bullet} \save[]+<0cm,-0.25cm>*{Y}\restore} \;=\;
\xymatrix@1@+=1.5 cm{*+<0.25cm>{\bullet} \save[]+<0cm,-0.25cm>*{V}\restore \ar[r] \ar@{}@<0.2ex>[r] \ar@{}@<-0.75ex>[r]^{\textstyle \thicksim}& \bullet \save[]+<0cm,-0.25cm>*{V}\restore  & *+<0.25cm>{\bullet} \save[]+<0cm,-0.25cm>*{0}\restore \ar[l]} \;\oplus\;
\xymatrix@1@+=1.5 cm{*+<0.25cm>{\bullet} \save[]+<0cm,-0.25cm>*{0}\restore \ar[r] & \bullet \save[]+<0cm,-0.25cm>*{Y}\restore  & *+<0.25cm>{\bullet} \save[]+<0cm,-0.25cm>*{Y}\restore \ar[l] \ar@{}@<-2.65ex>[l] \ar@{}@<-1.35ex>[l]^{\textstyle \thicksim}}$$
These summands are multiples of
$$\xymatrix@1{1 \ar[r] & 1 & 0 \ar[l]}, \quad \xymatrix@1{0 \ar[r] & 1 & 1 \ar[l]}$$
So - like in the other orientation - we get 6 indecomposable representations of $A_3$:
$$\xymatrix@1{1 \ar[r] & 0 & 0 \ar[l]}, \quad \xymatrix@1{0 \ar[r] & 0 & 1 \ar[l]}, \quad \xymatrix@1{1 \ar[r]^{\textstyle \thicksim} & 1 & 1 \ar[l]_{\textstyle\thicksim}},$$
$$\xymatrix@1{0 \ar[r] & 1 & 0 \ar[l]}, \quad \xymatrix@1{1 \ar[r] & 1 & 0 \ar[l]}, \quad \xymatrix@1{0 \ar[r] & 1 & 1 \ar[l]}$$
  \end{enumerate}
\end{example}

\subsection{Indecomposable representations of the quiver $D_4$}
As a last - slightly more complicated - example we consider the quiver $D_4$.
\begin{example}[$D_4$]
  We restrict ourselves to the orientation
  $$\xymatrix{\bullet \ar[r] & \bullet & \bullet \ar[l] \\ & \bullet \ar[u]}.$$
  So a representation of this quiver looks like
  $$\xymatrix@+=1cm{\bullet  \save[]+<0cm,-0.3cm>*{V_1}\restore \ar[r]^{A_1} & \bullet \save[]+<0cm,0.3cm>*{V}\restore & \bullet \save[]+<0cm,-0.3cm>*{V_3}\restore \ar[l]_{A_3} \\ & \bullet \save[]+<0cm,-0.3cm>*{V_2}\restore \ar[u]_{A_2}}$$
  The first thing we can do is - as usual - split away the kernels of the maps $A_1, A_2, A_3$. More precisely, we split away the representations
  $$\xymatrix@+=1cm{\bullet  \save[]+<0cm,-0.3cm>*{\ker A_1}\restore \ar[r]^{0} & \bullet \save[]+<0cm,0.3cm>*{0}\restore & \bullet \save[]+<0cm,-0.3cm>*{0}\restore \ar[l] \\ & \bullet \save[]+<0cm,-0.3cm>*{0}\restore \ar[u]} \quad\quad
\xymatrix@+=1cm{\bullet  \save[]+<0cm,-0.3cm>*{0}\restore \ar[r] & \bullet \save[]+<0cm,0.3cm>*{0}\restore & \bullet \save[]+<0cm,-0.3cm>*{0}\restore \ar[l] \\ & \bullet \save[]+<0cm,-0.3cm>*{\ker A_2}\restore \ar[u]_{0}} \quad\quad
\xymatrix@+=1cm{\bullet  \save[]+<0cm,-0.3cm>*{0}\restore \ar[r] & \bullet \save[]+<0cm,0.3cm>*{0}\restore & \bullet \save[]+<0cm,-0.3cm>*{\ker A_3}\restore \ar[l]_{0} \\ & \bullet \save[]+<0cm,-0.3cm>*{0}\restore \ar[u]}$$
These representations are multiples of the indecomposable objects
  $$\xymatrix@+=1cm{\bullet  \save[]+<0cm,-0.3cm>*{1}\restore \ar[r]^{0} & \bullet \save[]+<0cm,0.3cm>*{0}\restore & \bullet \save[]+<0cm,-0.3cm>*{0}\restore \ar[l] \\ & \bullet \save[]+<0cm,-0.3cm>*{0}\restore \ar[u]} \quad\quad
\xymatrix@+=1cm{\bullet  \save[]+<0cm,-0.3cm>*{0}\restore \ar[r] & \bullet \save[]+<0cm,0.3cm>*{0}\restore & \bullet \save[]+<0cm,-0.3cm>*{0}\restore \ar[l] \\ & \bullet \save[]+<0cm,-0.3cm>*{1}\restore \ar[u]_{0}} \quad\quad
\xymatrix@+=1cm{\bullet  \save[]+<0cm,-0.3cm>*{0}\restore \ar[r] & \bullet \save[]+<0cm,0.3cm>*{0}\restore & \bullet \save[]+<0cm,-0.3cm>*{1}\restore \ar[l]_{0} \\ & \bullet \save[]+<0cm,-0.3cm>*{0}\restore \ar[u]}$$
So we get to a situation where all of the maps $A_1, A_2, A_3$ are injective.
  $$\xymatrix@+=1cm{\bullet  \save[]+<0cm,-0.3cm>*{V_1}\restore \ar@{^{(}->}[r]^{A_1} & \bullet \save[]+<0cm,0.3cm>*{V}\restore & \bullet \save[]+<0cm,-0.3cm>*{V_3}\restore \ar@{_{(}->}[l]_{A_3} \\ & \bullet \save[]+<0cm,-0.3cm>*{V_2}\restore \ar@{_{(}->}[u]_{A_2}}$$
As in \ref{example32}, we can then identify the spaces $V_1, V_2, V_3$ with subspaces of $V$. So we get to the \textbf{triple of subspaces problem} of classifying a triple of subspaces of a given space $V$.

The next step is to split away a multiple of
$$\xymatrix@+=1cm{\bullet  \save[]+<0cm,-0.3cm>*{0}\restore \ar[r] & \bullet \save[]+<0cm,0.3cm>*{1}\restore & \bullet \save[]+<0cm,-0.3cm>*{0}\restore \ar[l] \\ & \bullet \save[]+<0cm,-0.3cm>*{0}\restore \ar[u]}$$
to reach a situation where $$V_1 + V_2 + V_3 = V.$$
By letting $Y = V_1 \,\cap\, V_2 \,\cap\, V_3$, choosing a
complement $V'$ of $Y$ in $V$, and setting $V_i'=V'\cap V_i$,
$i=1,2,3$, we can decompose this representation into
  $$\vcenter{\xymatrix@+=1cm{\bullet  \save[]+<0cm,-0.3cm>*{V'_1}\restore \ar@{^{(}->}[r] & \bullet \save[]+<0cm,0.3cm>*{V'}\restore & \bullet \save[]+<0cm,-0.3cm>*{V'_3}\restore \ar@{_{(}->}[l] \\ & \bullet \save[]+<0cm,-0.3cm>*{V'_2}\restore \ar@{_{(}->}[u]}} \quad \oplus \quad
\vcenter{\xymatrix@+=1cm{\bullet \save[]+<0cm,-0.3cm>*{Y}\restore \ar[r]^{\textstyle \thicksim} & \bullet \save[]+<0cm,0.3cm>*{Y}\restore \ar@<+1ex>@{}[d]^<<<{}="b" & \bullet \save[]+<0cm,-0.3cm>*{Y}\restore \ar[l]_{\textstyle \thicksim} \\ & \bullet \save[]+<0cm,-0.3cm>*{Y}\restore \ar[u] \ar@<-1ex>@{}[u]_<<<{}="a" \ar@{~} "a";"b" } }$$
The last summand is a multiple of the indecomposable representation
$$\vcenter{\xymatrix@+=1cm{\bullet  \save[]+<0cm,-0.3cm>*{1}\restore \ar[r]^{\textstyle \thicksim} & \bullet \save[]+<0cm,0.3cm>*{1}\restore \ar@<+1ex>@{}[d]^<<<{}="b" & \bullet \save[]+<0cm,-0.3cm>*{1}\restore \ar[l]_{\textstyle \thicksim} \\ & \bullet \save[]+<0cm,-0.3cm>*{1}\restore \ar[u] \ar@<-1ex>@{}[u]_<<<{}="a" \ar@<-1ex>@{}[u]_<<<{}="a" \ar@{~} "a";"b" } }$$
So - considering the first summand and renaming the spaces to simplify notation - we are in a situation where
$$V = V_1 + V_2 + V_3, \quad V_1 \cap V_2 \cap V_3 = 0.$$
As a next step, we let $Y= V_1 \cap V_2$ and we choose a
complement $V'$ of $Y$ in $V$ such that $V_3 \subset V'$,
and set $V'_1=V'\cap V_1,
V'_2=V'\cap V_2$. This yields the decomposition
$$\vcenter{\xymatrix@+=1cm{\bullet  \save[]+<0cm,-0.3cm>*{V_1}\restore \ar@{^{(}->}[r] & \bullet \save[]+<0cm,0.3cm>*{V}\restore & \bullet \save[]+<0cm,-0.3cm>*{V_3}\restore \ar@{_{(}->}[l] \\ & \bullet \save[]+<0cm,-0.3cm>*{V_2}\restore \ar@{_{(}->}[u]}} \quad=\quad
\vcenter{\xymatrix@+=1cm{\bullet  \save[]+<0cm,-0.3cm>*{V'_1}\restore \ar@{^{(}->}[r] & \bullet \save[]+<0cm,0.3cm>*{V'}\restore & \bullet \save[]+<0cm,-0.3cm>*{V_3}\restore \ar@{_{(}->}[l] \\ & \bullet \save[]+<0cm,-0.3cm>*{V'_2}\restore \ar@{_{(}->}[u]}} \quad\oplus\quad
\vcenter{\xymatrix@+=1cm{\bullet  \save[]+<0cm,-0.3cm>*{Y}\restore \ar[r]^{\textstyle \thicksim} & \bullet \save[]+<0cm,0.3cm>*{Y}\restore \ar@<+1ex>@{}[d]^<<<{}="b" & \bullet \save[]+<0cm,-0.3cm>*{0}\restore \ar[l] \\ & \bullet \save[]+<0cm,-0.3cm>*{Y}\restore \ar[u] \ar@<-1ex>@{}[u]_<<<{}="a" \ar@<-1ex>@{}[u]_<<<{}="a" \ar@{~} "a";"b" } }
$$
The second summand is a multiple of the indecomposable object
$$\xymatrix@+=1cm{\bullet  \save[]+<0cm,-0.3cm>*{1}\restore \ar[r]^{\textstyle \thicksim} & \bullet \save[]+<0cm,0.3cm>*{1}\restore \ar@<+1ex>@{}[d]^<<<{}="b" & \bullet \save[]+<0cm,-0.3cm>*{0}\restore \ar[l] \\ & \bullet \save[]+<0cm,-0.3cm>*{1}\restore \ar[u] \ar@<-1ex>@{}[u]_<<<{}="a" \ar@<-1ex>@{}[u]_<<<{}="a" \ar@{~} "a";"b" }.$$
In the resulting situation we have $V_1 \cap V_2 = 0$. Similarly we can split away multiples of
$$\vcenter{\xymatrix@+=1cm{\bullet  \save[]+<0cm,-0.3cm>*{1}\restore \ar[r]^{\textstyle \thicksim} & \bullet \save[]+<0cm,0.3cm>*{1}\restore & \bullet \save[]+<0cm,-0.3cm>*{1}\restore \ar[l]_{\textstyle \thicksim} \\ & \bullet \save[]+<0cm,-0.3cm>*{0}\restore \ar[u]} } \quad\mbox{and}\quad
\vcenter{\xymatrix@+=1cm{\bullet  \save[]+<0cm,-0.3cm>*{0}\restore \ar[r] & \bullet \save[]+<0cm,0.3cm>*{1}\restore \ar@<+1ex>@{}[d]^<<<{}="b" & \bullet \save[]+<0cm,-0.3cm>*{1}\restore \ar[l]_{\textstyle \thicksim} \\ & \bullet \save[]+<0cm,-0.3cm>*{1}\restore \ar[u] \ar@<-1ex>@{}[u]_<<<{}="a" \ar@<-1ex>@{}[u]_<<<{}="a" \ar@{~} "a";"b" } }$$
to reach a situation where the spaces $V_1, V_2, V_3$ do not intersect pairwise
$$V_1 \cap V_2 \;=\; V_1 \cap V_3 \;=\; V_2 \cap V_3 \;=\; 0.$$
If $V_1 \nsubseteq V_2 \oplus V_3$ we let $Y = V_1 \cap \left( V_2 \oplus V_3 \right)$. We let $V'_1$ be a complement of $Y$ in $V_1$. Since then $V'_1 \cap (V_2 \oplus V_3) = 0$, we can select a complement $V'$ of $V'_1$ in $V$ which contains $V_2 \oplus V_3$. This gives us the decomposition
$$\vcenter{\xymatrix@+=1cm{\bullet  \save[]+<0cm,-0.3cm>*{V_1}\restore \ar@{^{(}->}[r] & \bullet \save[]+<0cm,0.3cm>*{V}\restore & \bullet \save[]+<0cm,-0.3cm>*{V_3}\restore \ar@{_{(}->}[l] \\ & \bullet \save[]+<0cm,-0.3cm>*{V_2}\restore \ar@{_{(}->}[u]}} \quad=\quad
\vcenter{\xymatrix@+=1cm{\bullet  \save[]+<0cm,-0.3cm>*{V'_1}\restore \ar[r]^{\textstyle \thicksim} & \bullet \save[]+<0cm,0.3cm>*{V'_1}\restore & \bullet \save[]+<0cm,-0.3cm>*{0}\restore \ar[l] \\ & \bullet \save[]+<0cm,-0.3cm>*{0}\restore \ar[u]} } \quad\oplus\quad
\vcenter{\xymatrix@+=1cm{\bullet  \save[]+<0cm,-0.3cm>*{Y}\restore \ar@{^{(}->}[r] & \bullet \save[]+<0cm,0.3cm>*{V'}\restore & \bullet \save[]+<0cm,-0.3cm>*{V_3}\restore \ar@{_{(}->}[l] \\ & \bullet \save[]+<0cm,-0.3cm>*{V_2}\restore \ar@{_{(}->}[u]}}$$
The first of these summands is a multiple of
$$\vcenter{\xymatrix@+=1cm{\bullet  \save[]+<0cm,-0.3cm>*{1}\restore \ar[r]^{\textstyle \thicksim} & \bullet \save[]+<0cm,0.3cm>*{1}\restore \ar@<+1ex>@{}[d]^<<<{}="b" & \bullet \save[]+<0cm,-0.3cm>*{0}\restore \ar[l] \\ & \bullet \save[]+<0cm,-0.3cm>*{0}\restore \ar[u] } }$$
By splitting these away we get to a situation where $V_1 \subseteq V_2 \oplus V_3$. Similarly, we can split away objects of the type
$$\vcenter{\xymatrix@+=1cm{\bullet \save[]+<0cm,-0.3cm>*{0}\restore \ar[r] & \bullet \save[]+<0cm,0.3cm>*{1}\restore \ar@<+1ex>@{}[d]^<<<{}="b" & \bullet \save[]+<0cm,-0.3cm>*{0}\restore \ar[l] \\ & \bullet \save[]+<0cm,-0.3cm>*{1}\restore \ar[u] \ar@<-1ex>@{}[u]_<<<{}="a" \ar@<-1ex>@{}[u]_<<<{}="a" \ar@{~} "a";"b" }} \quad\mbox{and}\quad
\vcenter{\xymatrix@+=1cm{\bullet  \save[]+<0cm,-0.3cm>*{0}\restore \ar[r] & \bullet \save[]+<0cm,0.3cm>*{1}\restore \ar@<+1ex>@{}[d]^<<<{}="b" & \bullet \save[]+<0cm,-0.3cm>*{1}\restore \ar[l]_{\textstyle \thicksim} \\ & \bullet \save[]+<0cm,-0.3cm>*{0}\restore \ar[u] } }$$
to reach a situation in which the following conditions hold
\begin{enumerate}
\item $V_1 + V_2 + V_3 = V.$
\item $V_1 \cap V_2 = 0, \quad V_1 \cap V_3 = 0, \quad V_2 \cap V_3 = 0.$
\item $V_1 \subseteq V_2 \oplus V_3, \quad V_2 \subseteq V_1 \oplus V_3, \quad V_3 \subseteq V_1 \oplus V_2.$
\end{enumerate}
But this implies that
$$V_1 \oplus V_2 = V_1 \oplus V_3 = V_2 \oplus V_3 = V.$$
So we get
$$\dim V_1 = \dim V_2 = \dim V_3 = n$$
and
$$\dim V = 2n.$$
Since $V_3 \subseteq V_1 \oplus V_2$ we can write every element of $V_3$ in the form
$$x \in V_3, \quad x = (x_1, x_2), \; x_1 \in V_1, \, x_2 \in V_2.$$
We then can define the projections
$$B_1: V_3 \to V_1, \quad (x_1,x_2) \mapsto x_1,$$
$$B_2: V_3 \to V_2, \quad (x_1,x_2) \mapsto x_2.$$
Since $V_3\cap V_1=0, V_3 \cap V_2=0$, these maps have to be injective and therefore are isomorphisms. We then define the isomorphism
$$A = B_2 \circ B_1^{-1}: V_1\to V_2.$$
Let $e_1, \ldots, e_n$ be a basis for $V_1$. Then we get
$$V_1 = \CC\,e_1 \oplus \CC\,e_2 \oplus \dots \oplus \CC\,e_n$$
$$V_2 = \CC\,Ae_1 \oplus \CC\,Ae_2 \oplus \dots \oplus \CC\,Ae_n$$
$$V_3 = \CC\,(e_1+Ae_1) \oplus \CC\,(e_2+Ae_2) \oplus \dots
\oplus \CC\,(e_n+Ae_n).$$
So we can think of $V_3$ as the graph of an isomorphism $A:V_1 \to V_2$.  From this we obtain the decomposition
$$\vcenter{\xymatrix@+=1cm{\bullet  \save[]+<0cm,-0.3cm>*{V_1}\restore \ar@{^{(}->}[r] & \bullet \save[]+<0cm,0.3cm>*{V}\restore & \bullet \save[]+<0cm,-0.3cm>*{V_3}\restore \ar@{_{(}->}[l] \\ & \bullet \save[]+<0cm,-0.3cm>*{V_2}\restore \ar@{_{(}->}[u]}} \quad=\quad \bigoplus_{j=1}^n \quad
\vcenter{ \xymatrix@+=1cm{ \bullet \save[]+<0cm,-0.3cm>*{\CC(1,0)}\restore \ar@{^{(}->}[r] & \bullet \save[]+<0cm,0.3cm>*{\CC^2}\restore & \bullet \save[]+<0cm,-0.3cm>*{\CC(1,1)}\restore \ar@{_{(}->}[l] \\ & \bullet \save[]+<0cm,-0.3cm>*{\CC(0,1)}\restore \ar@{_{(}->}[u] }}$$
These correspond to the indecomposable object
  $$\xymatrix@+=1cm{\bullet \save[]+<0cm,-0.3cm>*{1}\restore \ar[r] & \bullet \save[]+<0cm,0.3cm>*{2}\restore & \bullet \save[]+<0cm,-0.3cm>*{1}\restore \ar[l] \\ & \bullet \save[]+<0cm,-0.3cm>*{1}\restore \ar[u]}$$
Thus the quiver $D_4$ with the selected orientation has 12 indecomposable objects.
If one were to explicitly decompose representations for the other possible orientations, one would also find 12 indecomposable objects.
\end{example}
It appears as if the number of indecomposable representations does not depend on the orientation of the edges, and indeed - Gabriel's theorem will generalize this observation.

\subsection{Roots}
 From now on, let $\Gamma$ be a fixed graph of type $A_n, D_n, E_6, E_7, E_8$. We denote the adjacency matrix of $\Gamma$ by $R_\Gamma$.
\begin{definition}[Cartan Matrix]
 We define the Cartan matrix as
$$A_\Gamma = 2 \Id - R_\Gamma.$$
\end{definition}
On the lattice $\ZZ^n$ (or the space $\RR^n$) we then define an inner product
$$B(x,y) = x^{T} A_\Gamma y$$
corresponding to the graph $\Gamma$.
\begin{lemma}
  \begin{enumerate}
  \item $B$ is positive definite.
  \item $B(x,x)$ takes only even values for $x \in \ZZ^n$.
  \end{enumerate}
\end{lemma}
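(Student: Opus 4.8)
The plan is to prove the two claims separately, using the fact that $\Gamma$ is a Dynkin diagram (type $A_n, D_n, E_6, E_7, E_8$), so that by Problem \ref{5:3} the quadratic form with matrix $A_\Gamma$ is positive definite.

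For part (1): positive definiteness of $B$ is immediate. By definition $B(x,y) = x^T A_\Gamma y$, so $B(x,x) = x^T A_\Gamma x$ is exactly the quadratic form associated with the Cartan matrix $A_\Gamma = 2\Id - R_\Gamma$ of a Dynkin diagram. This is positive definite precisely by the Main Definition and the classification theorem recalled in Problem \ref{5:3} (indeed, that is the defining property of a Dynkin diagram). So there is essentially nothing to do here beyond citing Problem \ref{5:3}.

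For part (2): I would compute $B(x,x)$ explicitly in terms of the entries of $x = (x_1,\dots,x_n)$. Writing $R_\Gamma = (r_{ij})$ for the adjacency matrix, we have $(A_\Gamma)_{ii} = 2$ (since $\Gamma$ has no self-loops, so $r_{ii}=0$) and $(A_\Gamma)_{ij} = -r_{ij}$ for $i \neq j$, where $r_{ij} = r_{ji} \in \{0,1\}$ (a Dynkin diagram is a simple graph, in particular has no multiple edges — though even allowing multiplicities the parity argument goes through). Then
$$
B(x,x) = \sum_i 2x_i^2 - \sum_{i \neq j} r_{ij} x_i x_j = 2\sum_i x_i^2 - 2\sum_{i<j} r_{ij} x_i x_j = 2\Bigl(\sum_i x_i^2 - \sum_{i<j} r_{ij}x_ix_j\Bigr).
$$
For $x \in \ZZ^n$ the expression in parentheses is an integer, hence $B(x,x) \in 2\ZZ$. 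This is the whole argument; the only thing to be slightly careful about is the symmetry of $R_\Gamma$ (used to fold the double sum into a sum over $i<j$) and the vanishing of the diagonal of $R_\Gamma$ (no self-loops), both of which are part of the standing hypotheses on $\Gamma$.

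I do not expect any serious obstacle: part (1) is a citation and part (2) is a one-line parity computation. The only conceptual point worth stating clearly is \emph{why} $A_\Gamma$ has $2$'s on the diagonal and $0$ or $-1$ off the diagonal, which comes down to $\Gamma$ being a finite simple graph without loops — exactly the running assumptions. If one wanted to be maximally self-contained one could also remark that positive definiteness already implies $\det A_\Gamma > 0$, but that is not needed for the statement of this lemma.
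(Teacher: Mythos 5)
Your proposal is correct and matches the paper's own proof: part (1) is exactly the observation that positive definiteness is the defining property of a Dynkin diagram, and part (2) is the same parity computation, folding the off-diagonal terms via symmetry into $2\sum_{i<j}$ and noting the diagonal contributes $2\sum_i x_i^2$. Nothing is missing.
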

\begin{proof}
  \begin{enumerate}
  \item This follows by definition, since $\Gamma$ is a Dynkin diagram.
  \item By the definition of the Cartan matrix we get
    $$B(x, x) = x^{T} A_\Gamma x = \sum_{i,j} x_i\, a_{ij}\, x_j = 2 \sum_i x_i^2 + \sum_{i,j,\;i \neq j} x_i\, a_{ij}\, x_j = 2 \sum_i x_i^2 + 2 \cdot \sum_{i<j} a_{ij}\, x_ix_j$$
    which is even.
  \end{enumerate}
\end{proof}
\begin{definition}
  A root with respect to a certain positive
inner product is a shortest (with respect to this inner product),
nonzero vector in $\ZZ^n$.
\end{definition}
So for the inner product $B$, a root is a nonzero vector $x \in \ZZ^n$ such that
$$B(x,x) = 2.$$
\begin{remark}
  There can be only finitely many roots, since all of them have to lie in some ball.
\end{remark}
\begin{definition}
  We call vectors of the form
  $$\alpha_i = (0, \ldots, \overbrace{1}^{i-\mathrm{th}}, \ldots, 0)$$
  \textbf{simple roots}.
\end{definition}
The $\alpha_i$ naturally form a basis of the lattice $\ZZ^n$.
\begin{lemma}\label{posneg}
  Let $\alpha$ be a root, $\alpha = \sum_{i=1}^n k_i \alpha_i$. Then either $k_i \geq 0$ for all $i$ or $k_i \leq 0$ for all $i$.
\end{lemma}
\begin{proof}
  Assume the contrary, i.e., $k_i>0$, $k_j<0$. Without loss of generality, we can also assume that $k_s=0$ for all $s$ between $i$ and $j$. We can identify the indices $i,j$ with vertices of the graph $\Gamma$.
$$\xymatrix{\bullet \ar@{-}[r] & \bullet \save[]+<0cm,-0.3cm>*{i}\restore  \ar@{-}[r]^{\epsilon} & \bullet \save[]+<0cm,+0.3cm>*{i'}\restore \ar@{-}[r] & \bullet \ar@{-}[r] & \bullet  \save[]+<0cm,-0.3cm>*{j}\restore \ar@{-}[r] & \bullet \ar@{-}[r] & \bullet \\ & & \bullet \ar@{-}[u] }$$
Next, let $\epsilon$ be the edge connecting $i$ with the next vertex towards $j$ and $i'$ be the vertex on the other end of $\epsilon$. We then let $\Gamma_1, \Gamma_2$ be the graphs obtained from $\Gamma$ by removing $\epsilon$. Since $\Gamma$ is supposed to be a Dynkin diagram - and therefore has no cycles or loops - both $\Gamma_1$ and $\Gamma_2$ will be connected graphs, which are not connected to each other.
$$\framebox{\xymatrix{\bullet \ar@{-}[r] & \bullet \save[]+<0cm,-0.3cm>*{i}\restore}}_{\;\Gamma_1} \quad\quad
\framebox{\xymatrix{\bullet \ar@{-}[r] & \bullet \ar@{-}[r] & \bullet  \save[]+<0cm,-0.3cm>*{j}\restore \ar@{-}[r] & \bullet \ar@{-}[r] & \bullet \\ \bullet \ar@{-}[u]}}_{\;\Gamma_2}$$
Then we have $i \in \Gamma_1, \, j \in \Gamma_2$. We define
$$\beta = \sum_{m \in \Gamma_1} k_m \alpha_m, \quad \gamma = \sum_{m \in \Gamma_2} k_m \alpha_m.$$
With this choice we get
$$\alpha = \beta + \gamma.$$
Since $k_i >0, k_j < 0$ we know that $\beta \neq 0, \gamma \neq 0$ and therefore
$$B(\beta, \beta) \geq 2, \quad B(\gamma, \gamma) \geq 2.$$
Furthermore,
$$B(\beta, \gamma) = - k_i k_{i'},$$
since $\Gamma_1, \Gamma_2$ are only connected at $\epsilon$. But this has to be a nonnegative number, since $k_i > 0$ and $k_{i'} \leq 0$. This yields
$$B(\alpha, \alpha) = B(\beta + \gamma, \beta + \gamma) = \underbrace{B(\beta, \beta)}_{\geq 2} + 2 \underbrace{B(\beta, \gamma)}_{\geq 0} + \underbrace{B(\gamma, \gamma)}_{\geq 2} \geq 4.$$
But this is a contradiction, since $\alpha$ was assumed to be a root.
\end{proof}
\begin{definition}
  We call a root $\alpha  = \sum_i k_i \alpha_i$ a positive root
if all $k_i \geq 0$. A root for which $k_i \leq 0$ for all $i$ is called a negative root.
\end{definition}
\begin{remark}
Lemma \ref{posneg} states that every root is either positive or negative.
\end{remark}

\begin{example}
  \begin{enumerate}
  \item Let $\Gamma$ be of the type $A_{N-1}$. Then the lattice $L= \ZZ^{N-1}$ can be realized as a subgroup of the lattice $\ZZ^N$ by letting $L \subseteq \ZZ^N$ be the subgroup of all vectors $(x_1, \ldots, x_N)$ such that
$$\sum_i x_i = 0.$$
The vectors
\begin{eqnarray*}
\alpha_1 &=& (1, -1, 0, \ldots, 0)\\
\alpha_2 &=& (0, 1, -1, 0, \ldots, 0)\\
&\vdots&\\
\alpha_{N-1} &=& (0, \ldots, 0, 1, -1)
\end{eqnarray*}
naturally form a basis of $L$.
Furthermore, the standard inner product $$(x,y) = \sum x_iy_i$$ on $\ZZ^N$ restricts to the inner product $B$ given by $\Gamma$ on $L$, since it takes the same values on the basis vectors:
$$(\alpha_i, \alpha_i) = 2$$
$$(\alpha_i, \alpha_j) = \left\{
  \begin{array}[h]{ll}
   -1 & i,j \;\mbox{adjacent} \\
   0 & \mbox{otherwise}
  \end{array}
\right.$$
  This means that vectors of the form
  $$(0, \ldots, 0, 1, 0, \ldots, 0, -1, 0, \ldots, 0) = \alpha_i + \alpha_{i+1} + \dots + \alpha_{j-1}$$
  and
  $$(0, \ldots, 0, -1, 0, \ldots, 0, 1, 0, \ldots, 0) = -(\alpha_i + \alpha_{i+1} + \dots + \alpha_{j-1})$$
  are the roots of $L$. Therefore the number of positive roots in $L$ equals
  $$\frac{N(N-1)}{2}.$$
\item As a fact we also state the number of positive roots in the other Dynkin diagrams:
\center{
  \begin{tabular}[h]{ll}
    $D_N$ & $N(N-1)$ \\
    $E_6$ & 36 roots \\
    $E_7$ & 63 roots \\
    $E_8$ & 120 roots
  \end{tabular}}
  \end{enumerate}
\end{example}
\begin{definition}
  Let $\alpha \in \ZZ^n$ be a positive root. The reflection $s_\alpha$ is defined by the formula
  $$s_\alpha(v) = v - B(v,\alpha) \alpha.$$
\end{definition}
We denote $s_{\alpha_i}$ by $s_i$ and call these \textbf{simple reflections}.
\begin{remark}
  As a linear operator of $\RR^n$, $s_\alpha$ fixes any vector orthogonal to $\alpha$ and
  $$s_\alpha(\alpha) = -\alpha$$
  Therefore $s_\alpha$ is the reflection at the
hyperplane orthogonal to $\alpha$, and in particular fixes $B$.
  The $s_i$ generate a subgroup $W \subseteq O(\RR^n)$, which is
called {\it the Weyl group} of $\Gamma$.
Since for every $w \in W$, $w(\alpha_i)$ is a root, and since there are only finitely many roots, $W$ has to be finite.
\end{remark}

\subsection{Gabriel's theorem}
\begin{definition}
  Let $Q$ be a quiver with any labeling $1, \ldots, n$ of the vertices. Let $V = (V_1, \ldots, V_n)$ be a representation of $Q$. We then call
$$d(V) = (\dim V_1, \ldots, \dim V_n)$$
the dimension vector of this representation.
\end{definition}
We are now able to formulate Gabriel's theorem using roots.
\begin{theorem}[Gabriel's theorem]
  Let $Q$ be a quiver of type $A_n, D_n, E_6, E_7, E_8$. Then $Q$ has finitely many indecomposable representations. Namely, the dimension vector of any indecomposable representation is a positive root (with respect to $B_\Gamma$) and for any positive root $\alpha$ there is exactly one indecomposable representation with dimension vector $\alpha$.
\end{theorem}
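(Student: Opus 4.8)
**The plan is to prove Gabriel's theorem via the Coxeter functor / reflection functor method of Bernstein–Gel'fand–Ponomarev.** The strategy splits into two independent halves: (1) show the dimension vector of any indecomposable representation is a positive root, and (2) show that for each positive root $\alpha$ there is exactly one indecomposable with that dimension vector. The key technical device will be the \emph{reflection functors} $C_i^+$ and $C_i^-$ associated to a sink, resp. source, vertex $i$ of $Q$. For a sink $i$ (all arrows at $i$ point into $i$), $C_i^+$ sends a representation $V$ to the representation $V'$ on the quiver $s_i Q$ (with all arrows at $i$ reversed) obtained by replacing $V_i$ with the kernel of the map $\bigoplus_{h:\,h''=i} V_{h'} \to V_i$; dually for a source. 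The fundamental facts I would establish first are: $C_i^+$ and $C_i^-$ are mutually inverse on the subcategories of representations having no summand isomorphic to the simple $S_i$ at vertex $i$; they preserve indecomposability within these subcategories; and on dimension vectors they act by the simple reflection $s_i$, provided the relevant dimension vector is not $\alpha_i$ itself (in which case $C_i^+$ kills it). Each of these is a short linear-algebra verification along the lines of the explicit decompositions already carried out for $A_2, A_3, D_4$ in the preceding subsections.

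Second, I would introduce a \textbf{Coxeter element}: order the vertices $1, \dots, n$ so that $i$ is a sink in the quiver obtained from $Q$ by reversing all arrows at $1, \dots, i-1$ (possible since the underlying graph is a tree, hence acyclic-orientable with an admissible ordering), and set $\Phi^+ = C_n^+ \cdots C_1^+$, a functor from $\mathrm{Rep}(Q)$ to itself, with Coxeter transformation $c = s_n \cdots s_1 \in W$ on dimension vectors. The crucial observation, using that $W$ is \emph{finite} (established in the excerpt, since $\Gamma$ is Dynkin so $B_\Gamma$ is positive definite and there are finitely many roots), is that no $W$-orbit of a positive vector can consist entirely of positive vectors under iterated application of the $s_i$ in this order — so for any indecomposable $V$, applying $\Phi^+$ repeatedly must eventually hit a vertex where the representation becomes the simple $S_i$ (for the then-current sink), because otherwise we would get infinitely many distinct positive dimension vectors. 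Tracing this back: every indecomposable $V$ is obtained from some simple $S_i$ by applying a sequence of inverse reflection functors $C_{j_1}^- C_{j_2}^- \cdots$, hence $d(V) = s_{j_1} s_{j_2} \cdots (\alpha_i)$ is a positive root. This gives half (1) and also shows indecomposables with a given dimension vector are unique (since the reflection functors are equivalences on the relevant subcategories, uniqueness is pushed down to the trivial uniqueness of $S_i$). Conversely, for a given positive root $\alpha$, write $\alpha = w(\alpha_i)$ for suitable $w$ a product of simple reflections and suitable $i$ (again possible because $W$ acts transitively enough on roots — or more carefully, because applying the $s_i$ in Coxeter order to $\alpha$ eventually produces a negative vector, and the last place it is still a nonnegative simple root $\alpha_i$ gives the needed expression), then apply the corresponding $C^-$ functors to $S_i$ to produce an indecomposable with dimension vector $\alpha$; one checks along the way that $S_i$ is never a summand at the obstructing vertex, so the functors genuinely apply.

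\textbf{The main obstacle} I anticipate is the combinatorial heart of the argument: proving that the reflection-functor procedure \emph{terminates}, i.e., that for any positive root (or positive dimension vector of an indecomposable) the sequence $\alpha, s_1\alpha, s_2 s_1 \alpha, \dots$ obtained by applying simple reflections in admissible order cannot stay positive forever, and conversely locating the unique step at which one passes from a positive root to $\alpha_i$. This rests on the fact that a Coxeter element $c$ of a finite Weyl group has no nonzero fixed vector (equivalently $1 - c$ is invertible), so $\{c^k \alpha\}$ cannot all be positive — which I would deduce from positive-definiteness of $B_\Gamma$ together with the standard fact that $c$ acts on $\RR^n$ with all eigenvalues of modulus $1$ but $\ne 1$ (the eigenvalues being $e^{2\pi i m_j/h}$ for the exponents $m_j$). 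Making this self-contained given only what the excerpt provides will require a small argument: if $c\alpha = \alpha$ with $\alpha \ne 0$ then $s_1 \alpha = s_2^{-1}\cdots s_n^{-1}\alpha$ lies in $\mathrm{span}(\alpha_2,\dots,\alpha_n)^{\perp}$ considerations force $\alpha$ orthogonal to all $\alpha_i$, contradicting nondegeneracy. A secondary technical point needing care is the bookkeeping when $Q$ is not itself orientable with a single sink reachable by the full sequence — but since the underlying graph is a tree this is automatic. The remaining steps (the three lemmas about reflection functors, the translation between functor-language and dimension-vector-language) are routine linear algebra of exactly the flavor already displayed for $A_3$ and $D_4$ in the text, so I would state them cleanly and verify the kernel/cokernel computations without belaboring them.
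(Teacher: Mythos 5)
Your proposal is correct and follows essentially the same route as the paper: the Bernstein--Gel'fand--Ponomarev reflection-functor argument, with the same key facts about $F_i^{\pm}$ (indecomposability preserved, mutual inverses away from the simple at $i$, action by $s_i$ on dimension vectors), the same Coxeter-element termination lemma proved by showing $c$ has no nonzero fixed vector via nondegeneracy of $B_\Gamma$, and the same construction of the indecomposable for a given positive root by running the inverse functors from a simple representation. The only differences are cosmetic (notation $C_i^{\pm}$ vs.\ $F_i^{\pm}$ and the bookkeeping of the admissible ordering), so no gap to report.
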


\subsection{Reflection Functors}
\begin{definition}
  Let $Q$ be any quiver. We call a vertex $i \in Q$ a sink if all edges connected to $i$ point towards $i$.
  $$\xymatrix{\ar[r] & \bullet \save[]+<0cm,+0.3cm>*{i}\restore & \ar[l] \\ & \ar[u]}$$
  We call a vertex $i \in Q$ a source if all edges connected to $i$ point away from $i$.
  $$\xymatrix{ & \bullet \save[]+<0cm,+0.3cm>*{i}\restore \ar[l]\ar[r]\ar[d]&  \\ &}$$
\end{definition}
\begin{definition}
  Let $Q$ be any quiver and $i \in Q$ be a sink (a source). Then we let $\overline{Q_i}$ be the quiver obtained from $Q$ by reversing all arrows pointing into (pointing out of) $i$.
\end{definition}
We are now able to define the reflection functors (also called \textit{Coxeter functors}).
\begin{definition}
  Let $Q$ be a quiver, $i \in Q$ be a sink. Let $V$ be a representation of $Q$. Then we define the reflection functor
  $$F_i^+ : \Rep Q \to \Rep \overline{Q_i}$$
  by the rule
  $$F^+_i (V)_k = V_k \quad \mbox{if}\; k \neq i$$
  $$F^+_i (V)_i = \ker\left(\varphi: \bigoplus_{j \to i} V_j \to V_i\right).$$
  Also, all maps stay the same but those now pointing out of $i$;
these are replaced by compositions of the inclusion of
$\ker\varphi$ into $\oplus V_j$ with the projections $\oplus
V_j\to V_k$.
\end{definition}
\begin{definition}
  Let $Q$ be a quiver, $i \in Q$ be a source. Let $V$ be a representation of $Q$. Let $\psi$ be the canonical map
  $$\psi : V_i \to \bigoplus_{i \to j} V_j.$$
  Then we define the reflection functor
  $$F^-_i : \Rep Q \to \Rep \overline{Q_i}$$
  by the rule
  $$F^-_i (V)_k = V_k \quad \mbox{if}\; k \neq i$$
  $$F^-_i (V)_i = \mathrm{Coker}\left( \psi \right) = \left(\bigoplus_{i \to j} V_j\right)/\Image \psi.$$
  Again, all maps stay the same but those now pointing into $i$;
these are replaced by the compositions of the inclusions $V_k\to
\oplus_{i\to j}V_j$ with the natural map
$\oplus V_j\to \oplus V_j/{\rm Im}\psi$.
\end{definition}
\begin{proposition}
  Let $Q$ be a quiver, $V$ an indecomposable representation of $Q$.  \label{prop2}
  \begin{enumerate}
  \item Let $i \in Q$ be a sink. Then either $\dim V_i = 1, \, \dim V_j = 0$ for $j \neq i$ \textbf{or}
    $$\varphi: \bigoplus_{j \to i} V_j \to V_i$$
    is surjective.
  \item Let $i \in Q$ be a source. Then either $\dim V_i = 1, \, \dim V_j = 0$ for $j \neq i$ \textbf{or}
    $$\psi: V_i \to \bigoplus_{i \to j} V_j$$
    is injective.
  \end{enumerate}
\end{proposition}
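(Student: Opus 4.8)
The plan is to prove statement (1); statement (2) will then follow either by applying (1) to the dual representation $V^{*}$ of the opposite quiver $Q^{\mathrm{op}}$ (in which the source $i$ becomes a sink and $\psi$ dualizes to the map ``$\varphi$'' for $V^{*}$, with injectivity becoming surjectivity), or simply by repeating the argument below with $\Image\varphi$ replaced by $\Ker\psi$. So suppose $i$ is a sink, and suppose the map $\varphi:\bigoplus_{j\to i}V_{j}\to V_{i}$ is \emph{not} surjective; the goal is to deduce that $\dim V_{i}=1$ and $\dim V_{j}=0$ for all $j\neq i$.

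First I would set $W=\Image\varphi$, a proper subspace of $V_{i}$, and choose a vector space complement $C$ with $V_{i}=W\oplus C$ and $C\neq 0$. Out of this I build two subrepresentations of $V$: let $V'$ be given by $V'_{i}=C$ and $V'_{j}=0$ for $j\neq i$, and let $V''$ be given by $V''_{i}=W$ and $V''_{j}=V_{j}$ for $j\neq i$, with all structure maps in both cases the restrictions of those of $V$. The crucial point is that $V'$ and $V''$ really are subrepresentations, and this is exactly where the sink hypothesis enters: since every edge incident to $i$ has target $i$ (in particular there is no loop at $i$), $V'$ has no outgoing maps to control, and the only invariance conditions left to check are $x_{h}(V'_{h'})\subseteq V'_{h''}$ for arrows $h$ with $h''=i$, which read $x_{h}(0)\subseteq C$ and hold trivially; for $V''$ the only nontrivial condition is on arrows $h:j\to i$, where one needs $x_{h}(V_{j})\subseteq W$, and this holds because $W=\Image\varphi=\sum_{h:\,h''=i}x_{h}(V_{h'})$. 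Moreover $V=V'\oplus V''$ as representations: at the vertex $i$ this is $V_{i}=C\oplus W$, at every other vertex it is $V_{j}=0\oplus V_{j}$, and every structure map is block-diagonal for this decomposition, again because each arrow at $i$ points into $i$ and lands inside $W=V''_{i}$.

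Finally I would invoke indecomposability of $V$. As $C\neq 0$ we have $V'\neq 0$, hence $V''=0$; thus $V_{j}=0$ for every $j\neq i$ and $W=\Image\varphi=0$, so $V$ is concentrated at the single vertex $i$ with $V_{i}=C$. A representation concentrated at one vertex carrying no loop is a direct sum of $\dim V_{i}$ copies of the one-dimensional representation at $i$, so indecomposability forces $\dim V_{i}=1$, which is precisely the exceptional case in the statement. I expect the only genuine care needed is in verifying the subrepresentation axioms and the block-diagonality of the structure maps (and in recording that a sink, resp.\ source, carries no loop, which is what makes the one-vertex case rigid); the rest is bookkeeping. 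For (2) the mirror construction works directly: if $\psi:V_{i}\to\bigoplus_{i\to j}V_{j}$ fails to be injective, split off the subrepresentation concentrated at $i$ with space $\Ker\psi=\bigcap_{h:\,h'=i}\Ker x_{h}\neq 0$ from the complementary subrepresentation having a chosen complement of $\Ker\psi$ at $i$ and $V_{j}$ at the other vertices, and conclude as before.
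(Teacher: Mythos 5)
Your argument is correct and is essentially the paper's own proof: both split $V_i$ as $\Image\varphi$ plus a complement, observe (using the sink condition) that this gives a direct sum decomposition of $V$ into a representation concentrated at $i$ and one containing $\Image\varphi$ and all the $V_j$, and then invoke indecomposability to force either surjectivity of $\varphi$ or the one-vertex, one-dimensional case, with part (2) handled symmetrically by splitting off $\Ker\psi$. The only difference is that you spell out the subrepresentation and block-diagonality checks that the paper leaves implicit.
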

\begin{proof}
  \begin{enumerate}
  \item Choose a complement $W$ of $\Image \varphi$. Then we get
    $$V \quad=\quad
\vcenter{\xymatrix@-=0.5cm{\bullet  \save[]+<0cm,-0.3cm>*{0}\restore \ar[r] & \bullet \save[]+<0cm,0.3cm>*{W}\restore & \bullet \save[]+<0cm,-0.3cm>*{0}\restore \ar[l] \\ & \bullet \save[]+<0cm,-0.3cm>*{0}\restore \ar[u]}} \quad\oplus\quad
V'$$
Since $V$ is indecomposable, one of these summands has to be
zero. If the first summand is zero, then $\varphi$ has to be
surjective. If the second summand is zero, then the first one has to be of the desired form, because else we could write it as a direct sum of several objects of the type
$$\vcenter{\xymatrix@-=0.5cm{\bullet  \save[]+<0cm,-0.3cm>*{0}\restore \ar[r] & \bullet \save[]+<0cm,0.3cm>*{1}\restore & \bullet \save[]+<0cm,-0.3cm>*{0}\restore \ar[l] \\ & \bullet \save[]+<0cm,-0.3cm>*{0}\restore \ar[u]}}$$
which is impossible, since $V$ was supposed to be indecomposable.
\item Follows similarly by splitting away the kernel of $\psi$.
  \end{enumerate}
\end{proof}
\begin{proposition}
  Let $Q$ be a quiver, $V$ be a representation of $Q$. \label{prop3}
  \begin{enumerate}
  \item If
    $$\varphi: \bigoplus_{j \to i} V_j \to V_i$$
    is surjective, then
    $$F_i^-F_i^+ V = V.$$
  \item If
    $$\psi: V_i \to \bigoplus_{i \to j} V_j$$
    is injective, then
    $$F_i^+F_i^- V = V.$$
  \end{enumerate}
\end{proposition}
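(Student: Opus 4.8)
The plan is to prove (1) by directly unwinding the definitions of $F_i^+$ and $F_i^-$, keeping careful track of which arrows get reversed and which structure maps each functor replaces; part (2) then follows either by the symmetric computation or by applying (1) to the quiver with all arrows reversed, under which $F_i^+$ and $F_i^-$ are interchanged and sinks become sources. Here ``$F_i^-F_i^+V=V$'' is understood as the assertion that the evident maps constitute a (natural) isomorphism onto $V$.

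First I would fix a sink $i$ of $Q$ and set $W=F_i^+V$, a representation of $\overline{Q_i}$ in which $i$ is now a source. By definition $W_k=V_k$ for $k\neq i$ and $W_i=\ker\!\left(\varphi:\bigoplus_{j\to i}V_j\to V_i\right)$, while the arrows out of $i$ in $\overline{Q_i}$ carry the composites $W_i\hookrightarrow\bigoplus_{j\to i}V_j\twoheadrightarrow V_k$. Assembling these, the canonical map $\psi_W:W_i\to\bigoplus_{i\to j\text{ in }\overline{Q_i}}W_j$ is, under the identification $\bigoplus_{i\to j\text{ in }\overline{Q_i}}W_j=\bigoplus_{j\to i\text{ in }Q}V_j$, exactly the inclusion of $\ker\varphi$. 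Since $\overline{(\overline{Q_i})_i}=Q$, applying $F_i^-$ returns a representation of $Q$ whose space at $i$ is $\mathrm{Coker}(\psi_W)=\left(\bigoplus_{j\to i}V_j\right)/\ker\varphi$ and which agrees with $V$ at all other vertices. Because $\varphi$ is surjective, $\varphi$ induces an isomorphism $\bar\varphi:\left(\bigoplus_{j\to i}V_j\right)/\ker\varphi\xrightarrow{\sim}V_i$ (first isomorphism theorem).

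It remains to check that $\bar\varphi$ together with the identity maps at the vertices $k\neq i$ is an isomorphism of representations $F_i^-F_i^+V\to V$. The only nontrivial compatibility is at arrows incident to $i$: for an arrow $h:j\to i$ of $Q$, the map in $F_i^-F_i^+V$ is by definition the composite $V_j\hookrightarrow\bigoplus_{j'\to i}V_{j'}\twoheadrightarrow\mathrm{Coker}(\psi_W)$, and post-composing with $\bar\varphi$ gives $v\mapsto\varphi_h(v)=x_h(v)$, the original structure map of $V$; arrows not incident to $i$ keep their maps through both functors. For (2), with $i$ a source and $\psi$ injective, the analogous bookkeeping shows $F_i^+F_i^-V$ has space at $i$ equal to the kernel of the natural projection $\bigoplus_{i\to j}V_j\to\mathrm{Coker}(\psi)$, which is $\Image(\psi)\cong V_i$ via $\psi$, and the induced maps again coincide with those of $V$. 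I do not expect a genuine obstacle: the only real care needed is the indexing of the direct sums over reversed arrows and the identification of which maps each reflection functor modifies; once these are pinned down the statement is just the first isomorphism theorem in the two dual forms ``cokernel of the inclusion of a kernel is the image'' and ``kernel of a cokernel projection is the image.''
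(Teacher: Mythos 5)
Your proof is correct and follows essentially the same route as the paper: apply $F_i^+$ to replace $V_i$ by $K=\ker\varphi$, then apply $F_i^-$ and use the first isomorphism theorem to identify $\bigl(\bigoplus_{j\to i}V_j\bigr)/K$ with $\Image\varphi=V_i$. In fact you go somewhat further than the text, which explicitly restricts itself to checking that the space at the vertex $i$ is recovered, whereas you also verify that the structure maps at arrows incident to $i$ match and carry out the dual bookkeeping for part (2).
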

\begin{proof}
 In the following proof, we will always mean by $i \to j$ that $i$ points into $j$ in the original quiver $Q$.
 We only establish the first statement and we also restrict ourselves to showing that the spaces of $V$ and $F_i^-F_i^+V$ are the same. It is enough to do so for the $i$-th space.
 Let
 $$\varphi : \bigoplus_{j \to i} V_j \to V_i$$
 be surjective and let
 $$K = \ker \varphi.$$
 When applying $F_i^+$, the space $V_i$ gets replaced by $K$.
 Furthermore, let
 $$\psi: K \to \bigoplus_{j \to i} V_j.$$
 After applying $F_i^-$, $K$ gets replaced by
 $$K' = \left( \bigoplus_{j \to i} V_j \right)/(\Image \psi).$$
 But
 $$\Image \psi = K$$
 and therefore
$$
K' = \left( \bigoplus_{j \to i} V_j \right) / \left( \ker(\varphi:
\bigoplus_{j \to i} V_j \to V_i) \right) = \Image (\varphi: \bigoplus_{j \to
i} V_j \to V_i)
$$
 by the homomorphism theorem.
Since $\varphi$ was assumed to be surjective, we get
 $$K' = V_i.$$
\end{proof}
\begin{proposition}
  Let $Q$ be a quiver, and $V$ be an indecomposable representation of
$Q$. Then $F_i^+V$ and $F_i^-V$ (whenever defined) are either indecomposable or 0. \label{prop4}
\end{proposition}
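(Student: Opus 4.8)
The plan is to deduce the indecomposability of $F_i^{\pm}V$ from that of $V$ by exploiting the fact, recorded in Proposition~\ref{prop3}, that $F_i^+$ and $F_i^-$ undo each other on the relevant class of representations. The statements for $F_i^+$ at a sink and for $F_i^-$ at a source are mirror images of one another --- one passes from one to the other by replacing $\ker$ with $\operatorname{Coker}$, ``surjective'' with ``injective'', and invoking part~(2) of Propositions~\ref{prop2} and~\ref{prop3} in place of part~(1) (equivalently, by the duality $V\mapsto V^{\ast}$, which turns $Q$ into $Q^{\operatorname{op}}$ and interchanges sinks with sources) --- so I would only treat $W:=F_i^+V$ with $i$ a sink of $Q$.

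First I would dispose of the degenerate case. By Proposition~\ref{prop2}(1), either $\dim V_i=1$ and $\dim V_j=0$ for all $j\neq i$, or the canonical map $\varphi\colon\bigoplus_{j\to i}V_j\to V_i$ is surjective. In the first case $\bigoplus_{j\to i}V_j=0$, so $W_i=\ker\varphi=0$ and all other components of $W$ vanish; thus $W=0$ and there is nothing to prove. So I would assume $\varphi$ surjective, in which case Proposition~\ref{prop3}(1) gives $F_i^-F_i^+V=V$. I would also note, directly from the definition, that $F_i^-$ is additive: it leaves the spaces at vertices $\neq i$ unchanged and replaces the space at $i$ by a cokernel, and both operations commute with finite direct sums.

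The one point that needs care is the following observation about $W=F_i^+V$: in $\overline{Q_i}$ the vertex $i$ is a source, and the assembled map $\psi_W\colon W_i\to\bigoplus_{i\to j}W_j$ of all the new arrows out of $i$ is \emph{injective}. Indeed, by the construction of $F_i^+$ each of those arrows is the inclusion $\ker\varphi\hookrightarrow\bigoplus_{j\to i}V_j$ followed by a coordinate projection, so putting them together recovers exactly that inclusion. Consequently, if $W\cong X\oplus Y$ as representations of $\overline{Q_i}$, then $\psi_W$ decomposes as $\psi_X\oplus\psi_Y$, so $\psi_Y\colon Y_i\to\bigoplus_{i\to j}Y_j$ is injective as well.

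The main step would then be: suppose $W=F_i^+V$ is decomposable, $W=X\oplus Y$ with $X,Y$ both nonzero. Applying the additive functor $F_i^-$ and using $F_i^-F_i^+V=V$ yields $V\cong F_i^-X\oplus F_i^-Y$; since $V$ is indecomposable, one summand is zero, say $F_i^-Y=0$. As $(F_i^-Y)_k=Y_k$ for every $k\neq i$, this forces $Y_k=0$ for all $k\neq i$, so the target $\bigoplus_{i\to j}Y_j$ of the injective map $\psi_Y$ is $0$, whence $Y_i=0$ and $Y=0$ --- contradicting the choice of $Y$. Hence $F_i^+V$ is indecomposable or zero. I do not anticipate any genuine obstacle: the argument is formal once Propositions~\ref{prop2} and~\ref{prop3} are available, the only mildly delicate points being the injectivity-on-summands observation above and the bookkeeping that ``$F_i^-Y=0$'' forces $Y$ to be supported at the single vertex $i$.
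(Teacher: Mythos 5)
Your proof is correct and follows essentially the same route as the paper's: dispose of the degenerate case via Proposition \ref{prop2}, use $F_i^-F_i^+V=V$ from Proposition \ref{prop3}, observe that $F_i^+V$ is injective at $i$ so any summand is too, and derive a contradiction with the indecomposability of $V$. The only (harmless) difference is at the very end, where the paper invokes Proposition \ref{prop3}(2) to get $F_i^-X,F_i^-Y\neq 0$, while you argue directly that $F_i^-Y=0$ together with injectivity of $\psi_Y$ forces $Y=0$; both variants rest on the same injectivity-at-$i$ observation.
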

\begin{proof}
  We prove the proposition for $F_i^+V$ - the case $F_i^-V$
follows similarly. By Proposition \ref{prop2} it follows that either
  $$\varphi: \bigoplus_{j \to i} V_j \to V_i$$
  is surjective or $\dim V_i = 1, \dim V_j = 0, \quad j \neq i$. In the last case
  $$F^+_iV = 0.$$
  So we can assume that $\varphi$ is surjective. In this case,
assume that $F_i^+V$ is decomposable as
  $$F_i^+ V = X \oplus Y$$
  with $X, Y \neq 0$.  But $F_i^+V$ is injective at $i$, since
the maps are canonical projections, whose direct sum is the
tautological embedding. Therefore $X$ and $Y$ also have to be injective at $i$ and hence (by \ref{prop3})
  $$F_i^+F_i^- X = X, \quad F_i^+F_i^- Y = Y$$
  In particular
  $$F_i^- X \neq 0, \quad F_i^- Y \neq 0.$$
  Therefore
  $$V = F_i^-F_i^+V = F_i^-X \oplus F_i^-Y$$
  which is a contradiction, since $V$ was assumed to be indecomposable. So we can infer that
  $$F_i^+ V$$
  is indecomposable.
\end{proof}
\begin{proposition}
  Let $Q$ be a quiver and $V$ a representation of $Q$.
  \begin{enumerate}
  \item Let $i \in Q$ be a sink and let $V$ be surjective at $i$. Then
    $$d(F_i^+ V) = s_i(d(V)).$$
  \item Let $i \in Q$ be a source and let $V$ be injective at $i$. Then
    $$d(F_i^- V) = s_i(d(V)).$$
  \end{enumerate}
\end{proposition}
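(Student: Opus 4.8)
The plan is to reduce both statements to a one-line dimension count, treating the sink case in detail and then observing the source case is obtained by dualizing. Fix a sink $i$ and a representation $V$ that is surjective at $i$, i.e. the map $\varphi:\bigoplus_{j\to i}V_j\to V_i$ is onto. Since $F_i^+$ leaves all spaces $V_k$ with $k\neq i$ unchanged, while $s_i(v)=v-B(v,\alpha_i)\alpha_i$ only alters the $\alpha_i$-coordinate, the $k$-th components of $d(F_i^+V)$ and of $s_i(d(V))$ agree automatically for every $k\neq i$. So the whole claim comes down to matching the $i$-th components.

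For the left-hand side: by definition $F_i^+(V)_i=\ker\varphi$, so rank–nullity together with surjectivity of $\varphi$ gives
\[
\dim F_i^+(V)_i=\dim\Bigl(\bigoplus_{j\to i}V_j\Bigr)-\dim V_i .
\]
Because $i$ is a sink, every edge incident to $i$ is oriented into $i$ (and, as $\Gamma$ is a Dynkin graph, there are no loops at $i$), so writing $r_{ik}$ for the number of edges joining $i$ and $k$ — i.e. the off-diagonal entry of $R_\Gamma$ — we get $\dim\bigl(\bigoplus_{j\to i}V_j\bigr)=\sum_k r_{ik}\dim V_k$. Hence $\dim F_i^+(V)_i=\sum_k r_{ik}\dim V_k-\dim V_i$.

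For the right-hand side: $d(V)=\sum_k(\dim V_k)\alpha_k$, and using $A_\Gamma=2\Id-R_\Gamma$ and symmetry of $A_\Gamma$,
\[
B(d(V),\alpha_i)=\sum_k(\dim V_k)(A_\Gamma)_{ki}=2\dim V_i-\sum_k r_{ik}\dim V_k .
\]
Therefore the $i$-th coordinate of $s_i(d(V))=d(V)-B(d(V),\alpha_i)\alpha_i$ equals $\dim V_i-\bigl(2\dim V_i-\sum_k r_{ik}\dim V_k\bigr)=\sum_k r_{ik}\dim V_k-\dim V_i$, which is exactly $\dim F_i^+(V)_i$. This proves $d(F_i^+V)=s_i(d(V))$. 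The source case is handled identically: for a source $i$ with $\psi:V_i\to\bigoplus_{i\to j}V_j$ injective, $F_i^-(V)_i=\mathrm{Coker}\,\psi$ has dimension $\sum_{i\to j}\dim V_j-\dim V_i=\sum_k r_{ik}\dim V_k-\dim V_i$ by injectivity of $\psi$ and the same incidence count, and the computation of the $i$-th coordinate of $s_i(d(V))$ is unchanged.

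There is essentially no obstacle here — it is a bookkeeping argument. The only two points that deserve a moment's care are (a) recording why "sum over arrows $j\to i$" coincides with the adjacency-weighted sum $\sum_k r_{ik}\dim V_k$ that appears in $A_\Gamma$, which is precisely where the sink (resp. source) hypothesis and the absence of loops in a Dynkin graph are used, and (b) keeping the sign in $A_\Gamma=2\Id-R_\Gamma$ straight so that the $-\dim V_i$ comes out correctly. Everything else is rank–nullity applied to $\varphi$ (resp. $\psi$).
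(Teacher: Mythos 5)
Your proof is correct and follows essentially the same route as the paper's: rank--nullity applied to $\varphi$ (resp.\ the cokernel of $\psi$), combined with the identification of $\sum_{j\to i}\dim V_j-2\dim V_i$ with $-B(d(V),\alpha_i)$, which is exactly the paper's computation, with your version merely making the adjacency-matrix bookkeeping and the source case explicit.
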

\begin{proof}
  We only prove the first statement, the second one follows similarly. Let $i \in Q$ be a sink and let
  $$\varphi: \bigoplus_{j \to i} V_j \to V_i$$
  be surjective. Let $K = \ker \varphi$. Then
  $$\dim K = \sum_{j \to i} \dim V_j - \dim V_i.$$
  Therefore we get
  $$\left(d(F_i^+V) - d(V)\right)_i = \sum_{j \to i} \dim V_j - 2 \dim V_i = - B\left(d(V), \alpha_i\right)$$
  and
  $$\left(d(F_i^+V) - d(V)\right)_j = 0, \quad j \neq i.$$
  This implies
  $$d(F_i^+V) - d(V) = -B\left(d(V),\alpha_i\right)\alpha_i$$
  $$\Leftrightarrow \quad d(F_i^+V) \;=\; d(V) - B\left(d(V),\alpha_i\right)\alpha_i \;=\; s_i\left(d(V)\right).$$
\end{proof}

\subsection{Coxeter elements}
\begin{definition}
  Let $Q$ be a quiver and let $\Gamma$ be the underlying
graph. Fix any labeling $1, \ldots, n$ 
of the vertices of $\Gamma$. Then the Coxeter element $c$ of $Q$ corresponding to this labeling is defined as
  $$c = s_1s_2\dots s_n.$$
\end{definition}

\begin{lemma}
  Let \label{lemma7}
  $$\beta = \sum_i k_i \alpha_i$$
  with $k_i \geq 0$ for all $i$ but not all $k_i=0$. Then there is $N \in \NN$, such that
  $$c^N\beta$$
  has at least one strictly negative coefficient.
\end{lemma}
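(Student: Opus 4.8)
The plan is to argue by contradiction: suppose that $c^N\beta$ has all coefficients $\geq 0$ for every $N\in\NN$. Since $\beta$ has nonnegative coefficients, not all zero, it lies in the cone $K=\{\sum k_i\alpha_i : k_i\geq 0\}$, and our assumption says the entire forward orbit $\{c^N\beta : N\geq 0\}$ stays in $K$. The key point is that $c$ is an element of the finite Weyl group $W$, hence has finite order, say $c^m=\mathrm{Id}$. Then the orbit $\{\beta, c\beta, c^2\beta,\dots\}$ is a finite set of vectors, all lying in $K$. Consider their sum $\gamma=\sum_{N=0}^{m-1}c^N\beta$. By construction $c\gamma=\gamma$, so $\gamma$ is a $c$-fixed vector in $K$; moreover $\gamma\neq 0$ because $\beta$ is a nonzero vector in $K$ and all the $c^N\beta$ are in $K$ (so there is no cancellation — the $\alpha_i$-coefficients only add up).

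Next I would show that $c$ has no nonzero fixed vector in $\RR^n$, which contradicts $\gamma\neq 0$. The cleanest way: since $B$ is positive definite (by the Lemma preceding Definition of roots, as $\Gamma$ is a Dynkin diagram) and each $s_i$ is orthogonal with respect to $B$, the fixed space of $s_i$ is exactly the hyperplane $\alpha_i^\perp$. If $c\gamma=\gamma$ with $\gamma\neq 0$, then applying the $s_i$ one at a time I want to conclude $\gamma$ is orthogonal to all $\alpha_i$. Concretely: write $c=s_1\cdots s_n$ and suppose $s_1\cdots s_n\gamma=\gamma$. One computes $B(s_1\cdots s_n\gamma-\gamma,\gamma)$ and, using that each reflection only lowers or preserves things in a controlled way, shows each intermediate step must fix $\gamma$, forcing $s_i\gamma=\gamma$ for all $i$, i.e. $B(\gamma,\alpha_i)=0$ for all $i$. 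Since the $\alpha_i$ span and $B$ is nondegenerate, $\gamma=0$, a contradiction.

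The main obstacle is making the last step — "$c\gamma=\gamma \Rightarrow \gamma=0$" — rigorous without hand-waving. The slick argument is: let $\gamma$ be $c$-invariant, and consider the chain
$$\gamma,\quad s_n\gamma,\quad s_{n-1}s_n\gamma,\quad\dots,\quad s_1\cdots s_n\gamma=\gamma.$$
All these vectors have the same $B$-norm since the $s_i$ are isometries. Pairing with a suitable vector (for instance, tracking the coefficient along which $s_i$ reflects) shows $s_i$ must fix the relevant partial product, hence $B(\gamma,\alpha_i)=0$ for each $i$; here one uses that $s_i v = v - B(v,\alpha_i)\alpha_i$ so $s_iv=v$ iff $B(v,\alpha_i)=0$. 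Alternatively, and perhaps more safely, I would invoke the standard fact (provable from positive-definiteness of $B$ and finiteness of $W$) that a Coxeter element of a finite Coxeter group has no nonzero fixed vector — its eigenvalues are primitive roots of unity $e^{2\pi i m_j/h}$ with $1\le m_j\le h-1$. Either way, once $\gamma=0$ is forced we contradict $\gamma\neq0$, so some $c^N\beta$ must have a strictly negative coefficient, and by Lemma \ref{posneg} (applied to the fact that $c^N\beta$ is still in the root lattice, indeed $\pm$ a root if $\beta$ is a root) this is exactly the assertion. This completes the proof.
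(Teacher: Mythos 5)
Your argument is correct and follows essentially the same route as the paper: use that $c$ lies in the finite group $W$ and hence has finite order, sum the orbit to get a $c$-fixed vector (the paper phrases this as showing $1+c+\dots+c^{M-1}=0$ as an operator), and rule out nonzero fixed vectors via $s_iv=v\Leftrightarrow B(v,\alpha_i)=0$ together with nondegeneracy of $B$. The one step you leave sketchy --- deducing $s_i\gamma=\gamma$ for all $i$ from $c\gamma=\gamma$ --- is handled in the paper by the simple observation that $s_i$ changes only the $i$-th coordinate, so rewriting $s_1\cdots s_n\gamma=\gamma$ as $s_2\cdots s_n\gamma=s_1\gamma$ and comparing the first coordinate forces $s_1\gamma=\gamma$, after which one repeats with $s_2\cdots s_n\gamma=\gamma$; this is more elementary than the norm-pairing or Coxeter-eigenvalue facts you propose to invoke.
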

\begin{proof}
  $c$ belongs to a finite group $W$. So there is $M \in \NN$, such that
  $$c^M = 1.$$
  We claim that
  $$1 + c + c^2 + \dots + c^{M-1} = 0$$
  as operators on $\RR^n$. This implies what we need, since $\beta$ has at least one strictly positive coefficient, so one of the elements
  $$c\beta, c^2\beta, \ldots, c^{M-1}\beta$$
  must have at least one strictly negative one. Furthermore, it is enough to show that 1 is not an eigenvalue for $c$, since
  $$(1 + c + c^2 + \dots + c^{M-1})v = w \neq 0$$
  $$\Rightarrow \quad c w = c \left(1 + c + c^2 + \dots + c^{M-1}\right) v = (c + c^2 + c^3 + \dots + c^{M-1} + 1) v = w.$$
  Assume the contrary, i.e., 1 is a eigenvalue
of $c$ and let $v$ be a corresponding eigenvector.
  $$cv = v \quad \Rightarrow \quad s_1\dots s_n v = v$$
  $$\Leftrightarrow \quad s_2 \dots s_n v = s_1 v.$$
  But since $s_i$ only changes the $i$-th coordinate of $v$, we get
  $$s_1 v = v \quad \mbox{and} \quad s_2 \dots s_n v = v.$$
  Repeating the same procedure, we get
  $$s_i v = v$$
  for all $i$. But this means
  $$B(v, \alpha_i) = 0.$$
  for all $i$, and since $B$ is nondegenerate, we get $v = 0$. But this is a contradiction, since $v$ is an eigenvector.
\end{proof}

\subsection{Proof of Gabriel's theorem}
Let $V$ be an indecomposable representation of $Q$. We introduce
a fixed labeling $1, \ldots n$ on $Q$, such that $i<j$ if one can
reach $j$ from $i$. This is possible, since we can assign the
highest label to any sink, remove this sink from the quiver,
assign the next highest label to a sink of the remaining quiver
and so on. This way we create a labeling of the desired kind.

We now consider the sequence
$$V^{(0)} = V, \; V^{(1)} = F^+_n V, \; V^{(2)} = F^+_{n-1} F^+_n V, \ldots$$
This sequence is well defined because of the selected labeling:
$n$ has to be a sink of $Q$, $n-1$ has to be a sink of
$\overline{Q_n}$ (where $\overline{Q}_n$ is obtained from $Q$ by
reversing all the arrows at the vertex $r$) and so on. Furthermore, we note that $V^{(n)}$ is a representation of $Q$ again, since every arrow has been reversed twice (since we applied a reflection functor to every vertex). This implies that we can define
$$V^{(n+1)} = F^+_n V^{(n)}, \ldots$$
and continue the sequence to infinity.
\begin{theorem}
  There is $m \in \NN$, such that \label{theorem7}
  $$d\left(V^{(m)}\right) = \alpha_p$$
  for some $p$.
\end{theorem}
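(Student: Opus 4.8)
The plan is to trace the dimension vectors $d(V^{(k)})$ through the sequence of reflection functors: I will show that each step either already produces the desired representation (one whose dimension vector is a simple root) or else transforms $d(V^{(k)})$ by a simple reflection, so that after whole Coxeter cycles the dimension vector is acted on by powers of $c=s_1s_2\cdots s_n$; the positivity of dimension vectors then clashes with Lemma \ref{lemma7} unless a simple root is reached. First I would recall the setup: by the chosen labeling, $n$ is a sink of $Q$, $n-1$ is a sink of $\overline{Q_n}$, and so on, so each $V^{(k)}$ is obtained from $V^{(k-1)}$ by applying $F^+_{i_k}$ at a sink $i_k$, and after every block of $n$ functors every arrow has been reversed twice, so $V^{(k)}$ is again a representation of $Q$ and the sequence is well defined for all $k\in\NN$.

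Next comes the core induction. Assume, for contradiction, that $d(V^{(k)})$ is never a simple root. I claim that then every $V^{(k)}$ is a nonzero indecomposable representation and $d(V^{(k+1)})=s_{i_k}\!\left(d(V^{(k)})\right)$ for all $k$. Assuming $V^{(k)}$ is nonzero indecomposable, Proposition \ref{prop2} applied at the sink $i_k$ gives a dichotomy: either $\dim (V^{(k)})_{i_k}=1$ and $\dim (V^{(k)})_j=0$ for $j\neq i_k$ — i.e.\ $d(V^{(k)})=\alpha_{i_k}$, which is excluded — or the structure map $\varphi\colon\bigoplus_{j\to i_k}(V^{(k)})_j\to (V^{(k)})_{i_k}$ is surjective. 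In the surjective case, the proposition computing $d(F^+_iV)$ for $V$ surjective at a sink $i$ gives $d(V^{(k+1)})=s_{i_k}\!\left(d(V^{(k)})\right)$; Proposition \ref{prop4} shows $V^{(k+1)}$ is indecomposable or $0$; and it cannot be $0$, since $s_{i_k}$ is invertible and $F^+_{i_k}V^{(k)}=0$ would force $d(V^{(k)})=0$, hence $V^{(k)}=0$. This proves the claim by induction, starting from the indecomposable $V^{(0)}=V$.

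Consequently $d(V^{(mn)})=c^m\!\left(d(V)\right)$ for all $m\in\NN$, where $c=s_1s_2\cdots s_n$ is the Coxeter element of $Q$ for the chosen labeling (the order matches: we apply $F^+_n$ first, then $F^+_{n-1}$, \dots). Since $V\neq 0$, the vector $d(V)=\sum_i k_i\alpha_i$ has all $k_i\geq 0$ and not all $k_i=0$, so by Lemma \ref{lemma7} there is an $N$ with $c^N\!\left(d(V)\right)$ having at least one strictly negative coefficient. But $c^N\!\left(d(V)\right)=d(V^{(Nn)})$ is the dimension vector of an honest representation, hence has nonnegative entries — a contradiction. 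Therefore the excluded case must occur: some $V^{(m)}$ satisfies $d(V^{(m)})=\alpha_p$ for a vertex $p$, which is the assertion.

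The main obstacle is not conceptual — the sink/source dichotomy (Proposition \ref{prop2}), stability of indecomposability (Proposition \ref{prop4}), the transformation law for dimension vectors, and the eventual negativity of $c^N\beta$ (Lemma \ref{lemma7}) are all already available. The only points needing care are the bookkeeping in the induction: checking that $V^{(k)}$ cannot accidentally become $0$ before a simple root appears (handled by invertibility of $s_{i_k}$ above), and correctly matching the order of the simple reflections with the definition of $c$. Once these are in place the proof is essentially a short assembly of the stated propositions.
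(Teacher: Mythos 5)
Your proof is correct and follows essentially the same route as the paper's: it assembles Proposition \ref{prop2}, Proposition \ref{prop4}, the formula $d(F_i^+V)=s_i(d(V))$ at a surjective sink, and Lemma \ref{lemma7}, only organized as a proof by contradiction (assuming no $d(V^{(k)})$ is ever a simple root) rather than the paper's direct choice of the first index at which surjectivity fails. Your explicit check that $V^{(k+1)}\neq 0$ is a small extra care that the paper leaves implicit, but otherwise the argument coincides.
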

\begin{proof}
  If $V^{(i)}$ is surjective at the appropriate vertex $k$, then
  $$d\left(V^{(i+1)}\right) = d\left(F_k^+ V^{(i)}\right) = s_k d\left(V^{(i)}\right).$$
  This implies, that if $V^{(0)}, \ldots, V^{(i-1)}$ are surjective at the appropriate vertices, then
  $$d\left(V^{(i)}\right) =
\dots s_{n-1}s_n d(V).$$
  By Lemma \ref{lemma7} this cannot continue indefinitely - since $d\left(V^{(i)}\right)$ may not have any negative entries.
  Let $i$ be smallest number such that $V^{(i)}$ is not
surjective at the appropriate vertex. By Proposition
\ref{prop4} it is indecomposable. So, by Proposition \ref{prop2}, we get
  $$d(V^{(i)}) = \alpha_p$$
  for some $p$.
\end{proof}
We are now able to prove Gabriel's theorem.
Namely, we get the following corollaries.
\begin{corollary}
  Let $Q$ be a quiver, $V$ be any indecomposable representation. Then $d(V)$ is a positive root.
\end{corollary}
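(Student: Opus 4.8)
The plan is to track the dimension vector $d(V)$ along the sequence $V^{(0)}=V,\ V^{(1)}=F_n^+V,\ V^{(2)}=F_{n-1}^+F_n^+V,\dots$ that was used to prove Theorem \ref{theorem7}. First I would recall from that proof that there is a smallest index $m$ at which $V^{(m)}$ fails to be surjective at the relevant sink, and that for this $m$ one has $d(V^{(m)})=\alpha_p$ for some vertex $p$; in particular $V^{(m)}\neq 0$.

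Since $m$ is the smallest index with that property, each of $V^{(0)},V^{(1)},\dots,V^{(m-1)}$ is surjective at the appropriate vertex, and each is indecomposable by Proposition \ref{prop4}. Hence the dimension-vector formula for reflection functors applies at every step and gives $d(V^{(i+1)})=s_{k_i}\bigl(d(V^{(i)})\bigr)$, where $k_i\in\{n,n-1,\dots\}$ is the vertex used at step $i$. Composing these identities yields $\alpha_p=d(V^{(m)})=s_{k_{m-1}}\cdots s_{k_1}s_{k_0}\bigl(d(V)\bigr)$. Since each $s_{k_i}$ is an involution, applying them in reverse order gives $d(V)=s_{k_0}s_{k_1}\cdots s_{k_{m-1}}(\alpha_p)=w(\alpha_p)$ for some $w\in W$.

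Now I would use that $W\subseteq O(\RR^n)$ preserves the form $B$. The simple root $\alpha_p$ satisfies $B(\alpha_p,\alpha_p)=2$ (this is the $p$-th diagonal entry of $A_\Gamma=2\Id-R_\Gamma$, and $\Gamma$ has no self-loops), and since $B$ is positive definite and even-valued on $\ZZ^n$, the value $2$ is the minimal nonzero value of $B(x,x)$; thus $\alpha_p$ is a root. Therefore $B(d(V),d(V))=B(w(\alpha_p),w(\alpha_p))=B(\alpha_p,\alpha_p)=2$. Since $V$ is indecomposable it is nonzero, so $d(V)\neq 0$; hence $d(V)$ is a nonzero vector of $\ZZ^n$ of minimal $B$-norm, i.e. a root. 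Finally, $d(V)$ has nonnegative coordinates (they are dimensions of vector spaces), so by Lemma \ref{posneg} it is a positive root, which is the claim.

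I do not expect a genuine obstacle here. The only point requiring a little care is that the reflection-functor sequence is well defined and that each $V^{(i)}$ with $i<m$ really is surjective at the relevant vertex — but this is precisely what the proof of Theorem \ref{theorem7} establishes, via the chosen labeling together with Propositions \ref{prop2} and \ref{prop4}, so it can simply be quoted. Everything else (that $\alpha_p$ is a root, that $W$ preserves $B$, that simple reflections are involutions) is immediate from the definitions.
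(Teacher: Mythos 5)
Your proposal is correct and follows essentially the same route as the paper: quote Theorem \ref{theorem7} and the dimension-vector formula for reflection functors to get $s_{i_1}\cdots s_{i_m}(d(V))=\alpha_p$, then use that the Weyl group preserves $B$ to conclude $B(d(V),d(V))=B(\alpha_p,\alpha_p)=2$, so $d(V)$ is a root, positive by Lemma \ref{posneg} since its entries are nonnegative. The extra details you supply (indecomposability of the intermediate $V^{(i)}$ via Proposition \ref{prop4}, evenness and positive definiteness of $B$ to identify vectors of norm $2$ with roots) are exactly what the paper's terse proof implicitly relies on.
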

\begin{proof}
  By Theorem \ref{theorem7}
  $$s_{i_1} \dots s_{i_m}\left(d(V)\right) = \alpha_p.$$
  Since the $s_i$ preserve $B$, we get
  $$B(d(V),d(V)) = B(\alpha_p, \alpha_p) = 2.$$
\end{proof}
\begin{corollary}
  Let $V, V'$ be indecomposable representations of $Q$ such that $d(V) = d(V')$. Then $V$ and $V'$ are isomorphic.
\end{corollary}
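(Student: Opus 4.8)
The plan is to run the reflection-functor descent from the proof of the previous corollary on $V$ and $V'$ \emph{in parallel}, observe that the two descents are literally the same sequence of functors because they are governed only by dimension vectors, identify the common bottom of the descent as a one-dimensional (hence unique up to isomorphism) representation, and then climb back up using the inverse functors.

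First I would fix a labeling $1,\dots,n$ of $Q$ as in the proof of Theorem \ref{theorem7}, so that the sequences $V^{(0)}=V$, $V^{(i)}=F^+_{k_i}V^{(i-1)}$ and $V'^{(0)}=V'$, $V'^{(i)}=F^+_{k_i}V'^{(i-1)}$ are well defined, where $k_1=n,k_2=n-1,\dots$ runs cyclically through the vertices. The key observation is that whether the current (indecomposable) representation $V^{(i-1)}$ is surjective at the current sink $k_i$ is detected by its dimension vector: by Proposition \ref{prop2}, if it is \emph{not} surjective then $d(V^{(i-1)})=\alpha_{k_i}$, and conversely if $d(V^{(i-1)})\ne\alpha_{k_i}$ then it is surjective and $d(V^{(i)})=s_{k_i}d(V^{(i-1)})$; moreover by Proposition \ref{prop4} each $V^{(i)}$ is indecomposable (it is nonzero as long as its dimension vector is a nonzero positive root, which persists throughout the descent by the argument in the proof of Theorem \ref{theorem7}). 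Since $d(V)=d(V')$, an immediate induction gives $d(V^{(i)})=d(V'^{(i)})$ for all $i$; hence the smallest index $m$ at which the representation fails to be surjective at the relevant vertex is the same for $V$ and $V'$, such $m$ exists by Theorem \ref{theorem7}, and $d(V^{(m)})=d(V'^{(m)})=\alpha_p$ for some vertex $p$.

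Next I would pin down the bottom. An indecomposable representation with dimension vector $\alpha_p$ has a one-dimensional space at the vertex $p$ and zero spaces elsewhere, so every structure map is necessarily zero; thus it is isomorphic to the simple representation supported at $p$, and any two such are isomorphic. Therefore $V^{(m)}\cong V'^{(m)}$. Finally I would climb back: for each $i$ with $1\le i\le m$ the representation $V^{(i-1)}$ is surjective at $k_i$ (otherwise $d(V^{(i-1)})=\alpha_{k_i}$ and the descent would have stopped before step $i$), so Proposition \ref{prop3} yields $F^-_{k_i}F^+_{k_i}V^{(i-1)}=V^{(i-1)}$, i.e. $F^-_{k_i}V^{(i)}=V^{(i-1)}$, and the same for $V'$. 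Applying $F^-_{k_m},F^-_{k_{m-1}},\dots,F^-_{k_1}$ in turn to the isomorphism $V^{(m)}\cong V'^{(m)}$, and using that a functor carries isomorphic objects to isomorphic objects, gives $V=V^{(0)}\cong V'^{(0)}=V'$.

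The step I expect to be the main obstacle is the synchronization claim in the second paragraph: one must argue carefully that the dichotomy of Proposition \ref{prop2} at each stage depends only on the dimension vector at that stage, so that the two descent sequences use \emph{exactly} the same reflection functors and terminate at the same index $m$. Once that is in place, the identification of the indecomposable with dimension vector $\alpha_p$ and the invertibility $F^-_kF^+_k=\operatorname{id}$ on surjective-at-$k$ representations are handed to us directly by the results already proved, and the argument closes.
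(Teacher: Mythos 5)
Your proposal is correct and follows essentially the same route as the paper's proof: descend both $V$ and $V'$ by the same sequence of reflection functors $F^+_{k_i}$ until the dimension vector becomes a simple root $\alpha_p$, identify both bottoms with the simple representation at $p$, and recover $V\cong V'$ by applying the inverse functors $F^-_{k_i}$, using Propositions \ref{prop2}, \ref{prop3}, \ref{prop4} and Theorem \ref{theorem7}. The only difference is that you spell out the synchronization of the two descents (that surjectivity at each stage is detected by the dimension vector alone), a point the paper leaves implicit.
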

\begin{proof}
  Let $i$ be such that
  $$d\left(V^{(i)}\right) = \alpha_p.$$
  Then we also get $d\left(V'^{(i)}\right) = \alpha_p$. So
  $$V'^{(i)} = V^{(i)} =: V^i.$$
  Furthermore we have
  $$V^{(i)} = F^+_k \dots F_{n-1}^+ F_n^+ V^{(0)}$$
  $$V'^{(i)} = F^+_k \dots F_{n-1}^+ F_n^+ V'^{(0)}.$$
  But both $V^{(i-1)}, \ldots, V^{(0)}$ and $V'^{(i-1)}, \ldots, V'^{(0)}$ have to be surjective at the appropriate vertices. This implies
  $$F^-_{n} F^-_{n-1} \dots F^-_k  V^i =
  \left\{\begin{array}[h]{lll}
      F^-_{n} F^-_{n-1} \dots F^-_k F^+_k \dots F_{n-1}^+ F_n^+ V^{(0)} &= V^{(0)} &= V \\
      F^-_{n} F^-_{n-1} \dots F^-_k F^+_k \dots F_{n-1}^+ F_n^+ V'^{(0)} &= V'^{(0)} &= V'
  \end{array}\right.$$
\end{proof}
These two corollaries show that there are only finitely many indecomposable representations (since there are only finitely many roots) and that the dimension vector of each of them is a positive root. The last statement of Gabriel's theorem follows from
\begin{corollary}
  For every positive root $\alpha$, there is an indecomposable
representation $V$ with
  $$d(V) = \alpha.$$
\end{corollary}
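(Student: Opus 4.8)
The plan is to run reflection functors in reverse: first contract $\alpha$ to a simple root by a controlled chain of reflections, then start from the obvious indecomposable supported at that simple root and push it forward through the corresponding inverse reflection functors.

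First I would fix the labeling $1,\dots,n$ of the vertices of $Q$ used in the proof of Gabriel's theorem, so that $n$ is a sink of $Q=:Q_0$, and, setting $Q_s:=\overline{(Q_{s-1})_{i_s}}$ for $s\ge 1$, the pattern $(i_1,i_2,\dots)=(n,n-1,\dots,1,n,n-1,\dots)$ is a list of sinks (one full cycle reverses every edge twice and returns to $Q$, so the pattern repeats indefinitely). Given a positive root $\alpha$, set $\beta_0=\alpha$ and $\beta_s=s_{i_s}(\beta_{s-1})$. Each $s_i$ lies in the finite group $W$, hence sends roots to roots, and alters only the $i$-th coordinate; so as long as $\beta_0,\dots,\beta_s$ are all positive roots we have $\beta_{mn}=c^m(\alpha)$ after $m$ complete cycles. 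By Lemma \ref{lemma7}, $c^m(\alpha)$ cannot have nonnegative coordinates for all $m$, so there is a least $t\ge 1$ with $s_{i_t}(\beta_{t-1})$ not a positive root. By Lemma \ref{posneg} it is then a negative root, and since it differs from the positive root $\beta_{t-1}$ only in coordinate $i_t$, this forces $\beta_{t-1}=\alpha_{i_t}$; put $p=i_t$. Moreover $\beta_r\ne\alpha_{i_r}$ for $1\le r\le t-1$: otherwise $\beta_{r-1}=s_{i_r}(\beta_r)=s_{i_r}(\alpha_{i_r})=-\alpha_{i_r}$ would be a negative root, contradicting that $\beta_{r-1}$ (with $0\le r-1\le t-2$) is a positive root.

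Next I would let $S_p$ be the one-dimensional representation of $Q_{t-1}$ concentrated at the vertex $p=i_t$ (with all maps zero), and set
$$
W_{t-1}=S_p,\qquad W_s=F_{i_{s+1}}^-\,W_{s+1}\quad(s=t-2,t-3,\dots,0),\qquad V:=W_0 ,
$$
noting that $i_{s+1}$ is a source of $Q_{s+1}$, so $F_{i_{s+1}}^-$ indeed maps $\Rep Q_{s+1}\to\Rep Q_s$. I claim by downward induction on $s$ that each $W_s$ is indecomposable with $d(W_s)=\beta_s$. For $s=t-1$ this holds because $S_p$, being one-dimensional, is indecomposable and $d(S_p)=\alpha_p=\beta_{t-1}$. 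For the inductive step, $W_{s+1}$ is indecomposable with $d(W_{s+1})=\beta_{s+1}\ne\alpha_{i_{s+1}}$, so it is not the simple representation supported at $i_{s+1}$; by Proposition \ref{prop2} the canonical map $\psi$ at the source $i_{s+1}$ is therefore injective, the dimension formula for $F^-$ applies and gives $d(W_s)=s_{i_{s+1}}(d(W_{s+1}))=s_{i_{s+1}}(\beta_{s+1})=\beta_s$, which is a positive root and in particular nonzero, so $W_s\ne 0$; then Proposition \ref{prop4} gives that $W_s$ is indecomposable. Taking $s=0$ produces an indecomposable representation $V$ of $Q$ with $d(V)=\beta_0=\alpha$. (If $t=1$ the chain is empty and one simply takes $V=S_p$ with $\alpha=\alpha_p$.)

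The routine points — that the designated vertex really is a sink, respectively source, at each stage, and that the reflection functors land in the quivers indicated — are exactly as in the proof of Gabriel's theorem and I would not belabor them. The genuinely delicate point, and the reason minimality of $t$ is needed, is the inductive step: a priori some $F_{i_{s+1}}^-$ could annihilate $W_{s+1}$, which is exactly what happens when $W_{s+1}$ is the simple representation sitting at $i_{s+1}$, and the observation $\beta_{s+1}\ne\alpha_{i_{s+1}}$ is precisely what excludes this and keeps the dimension bookkeeping $d(W_s)=\beta_s$ valid all the way down to $s=0$.
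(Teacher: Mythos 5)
Your proof is correct and follows essentially the same route as the paper: apply $s_n, s_{n-1},\dots$ to $\alpha$ until the first negative root appears, observe the previous term must be a simple root $\alpha_p$, and then transport the simple representation at $p$ back to $Q$ by the functors $F^-$ in reverse order. The only difference is that you spell out the inductive verification (via Propositions \ref{prop2}, \ref{prop4} and the dimension formula, using $\beta_{s+1}\ne\alpha_{i_{s+1}}$ to rule out annihilation) that the paper leaves as a bare assertion, which is a legitimate and welcome filling-in of detail rather than a different argument.
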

\begin{proof}
  Consider the sequence
  $$s_n\alpha, s_{n-1}s_n\alpha, \ldots$$
  Consider the first element of this sequence which is a negative
root (this has to happen by Lemma \ref{lemma7}) and look at one
step before that, calling
this element $\beta$. So $\beta$ is a positive root and $s_i\beta$ is a negative root for some $i$.
  But since the $s_i$ only change one coordinate, we get
  $$\beta = \alpha_i$$
  and
  $$(s_q \dots s_{n-1}s_n) \alpha = \alpha_i.$$
  We let $\CC_{(i)}$ be the representation having dimension vector $\alpha_i$. Then we define
  $$V = F^-_nF^-_{n-1} \dots F^-_q \CC_{(i)}.$$
  This is an indecomposable representation and
  $$d(V) = \alpha.$$
\end{proof}

\begin{example}
Let us demonstrate by example how reflection functors work.
Consider the quiver $D_4$ with the orientation of all arrows
towards the node (which is labeled by $4$). Start with the
1-dimensional representation $V_{\alpha_4}$ sitting at the 4-th vertex.
Apply to $V_{\alpha_4}$ the functor $F_3^-F_2^-F_1^-$. This yields
$$
F_1^-F_2^-F_3^-V_{\alpha_4}=V_{\alpha_1+\alpha_2+\alpha_3+\alpha_4}.
$$
Now applying $F_4^-$ we get
$$
F_4^-F_1^-F_2^-F_3^-V_{\alpha_4}=V_{\alpha_1+\alpha_2+\alpha_3+2\alpha_4}.
$$
Note that this is exactly the inclusion of 3 lines into the
plane, which is the most complicated indecomposable
representation of the $D_4$ quiver.
\end{example}

\subsection{Problems}

\begin{problem} Let $Q_n$ be the cyclic quiver of length $n$,
i.e., $n$ vertices connected by $n$ oriented edges forming a
cycle. Obviously, the classification of
indecomposable representations of $Q_1$ is given by the Jordan
normal form theorem. Obtain a similar classification
of indecomposable representations of $Q_2$. In other words,
classify pairs of linear operators $A: V\to W$
and $B:W\to V$ up to isomorphism. Namely:

(a) Consider the following pairs (for $n\ge 1$):

1) $E_{n,\lambda}$: $V=W=\Bbb C^n$, $A$ is the Jordan block of
size $n$ with eigenvalue $\lambda$, $B=1$ ($\lambda\in \Bbb C$).

2) $E_{n,\infty}$: is obtained from $E_{n,0}$ by exchanging $V$
with $W$ and $A$ with $B$.

3) $H_n$: $V=\Bbb C^n$ with basis $v_i$, $W=\Bbb C^{n-1}$ with
basis $w_i$, $Av_i=w_i$, $Bw_i=v_{i+1}$ for $i<n$, and $Av_n=0$.

4) $K_n$ is obtained from $H_n$ by exchanging $V$
with $W$ and $A$ with $B$.

Show that these are indecomposable and pairwise nonisomorphic.

(b) Show that if $E$ is a representation of $Q_2$ such that
$AB$ is not nilpotent, then $E=E'\oplus E''$, where
$E''=E_{n,\lambda}$ for some $\lambda\ne 0$.

(c) Consider the case when $AB$ is nilpotent, and consider
the operator $X$ on $V\oplus W$ given by $X(v,w)=(Bw,Av)$.
Show that $X$ is nilpotent, and admits a basis consisting
of chains (i.e., sequences $u,Xu,X^2u,...X^{l-1}u$ where $X^lu=0$)
which are compatible with the direct sum decomposition
(i.e., for every chain $u\in V$ or $u\in W$). Deduce that
(1)-(4) are the only indecomposable representations of $Q_2$.

(d)(harder!) generalize this classification to the Kronecker quiver, which
has two vertices $1$ and $2$ and two edges both going from $1$ to
$2$.

(e)(still harder!) can you generalize this classification to $Q_n$,
$n>2$, with any orientation?
\end{problem}

\begin{problem}
Let $L\subset \frac{1}{2}\Bbb Z^8$ be the lattice
of vectors where the coordinates are either all integers or all
half-integers (but not integers), and the sum of all coordinates
is an even integer.

(a) Let $\alpha_i=e_i-e_{i+1}$, $i=1,...,6$, $\alpha_7=e_6+e_7$,
$\alpha_8=-1/2\sum_{i=1}^8 e_i$. Show that $\alpha_i$ are a basis
of $L$ (over $\Bbb Z$).

(b) Show that roots in $L$ (under the usual inner
product) form a root system of type $E_8$
(compute the inner products of $\alpha_i$).

(c) Show that the $E_7$ and $E_6$ lattices can be obtained as the
sets of vectors in the $E_8$ lattice $L$ where the
first two, respectively three, coordinates (in the basis $e_i$)
are equal.

(d) Show that $E_6,E_7,E_8$ have 72,126,240 roots, respectively
(enumerate types of roots in terms of the presentations in the basis
$e_i$, and count the roots of each type).
\end{problem}

\begin{problem}
Let $V_\alpha$ be the indecomposable representation of a Dynkin
quiver $Q$ which corresponds to a positive root $\alpha$.
For instance, if $\alpha_i$ is a simple root, then
$V_{\alpha_i}$ has a 1-dimensional space at $i$ and 0 everywhere
else.

(a) Show that if $i$ is a source then $\Ext(V,V_{\alpha_i})=0$
for any representation $V$ of $Q$, and if $i$ is a sink, then
$\Ext(V_{\alpha_i},V)=0$.

(b) Given an orientation of the quiver, find a Jordan-H\"older
series of $V_\alpha$ for that orientation.
\end{problem}

\newpage \section{Introduction to categories}

\subsection{The definition of a category}

We have now seen many examples of representation theories
and of operations with representations (direct
sum, tensor product, induction, restriction, reflection functors,
etc.) A context in which one can systematically talk about this
is provided by {\bf Category Theory}.

Category theory was founded by Saunders MacLane and Samuel Eilenberg
around 1940. It is a fairly abstract theory which seemingly has no
content, for which reason it was christened ``abstract
nonsense''. Nevertheless, it is a very flexible and powerful language,
which has become totally indispensable in many areas of mathematics,
such as algebraic geometry, topology, representation theory, and
many others.

We will now give a very short introduction to Category theory,
highlighting its relevance to the topics in representation theory
we have discussed. For a serious acquaintance with category
theory, the reader should use the classical book \cite{McL}.

\begin{definition}
A category ${\mathcal C}$ is the following data:

(i) a class of objects $Ob({\mathcal C})$;

(ii) for every objects $X,Y\in Ob({\mathcal C})$, the class $\Hom_{\mathcal C}(X,Y)=\Hom(X,Y)$ of
morphisms (or arrows) from $X,Y$ (for $f\in \Hom(X,Y)$, one
may write $f: X\to Y$);

(iii) For any objects $X,Y,Z\in Ob({\mathcal C})$, a composition map
$\Hom(Y,Z)\times \Hom(X,Y)\to \Hom(X,Z)$, $(f,g)\mapsto f\circ g$,

which satisfy the following axioms:

1. The composition is associative, i.e., $(f\circ g)\circ h=f\circ
(g\circ h)$;

2. For each $X\in Ob({\mathcal C})$, there is a morphism $1_X\in \Hom(X,X)$,
called the unit morphism, such that $1_X\circ f=f$ and $g\circ
1_X=g$ for any $f,g$ for which compositions make sense.
\end{definition}

{\bf Remark.} We will write $X\in {\mathcal C}$ instead of $X\in
Ob({\mathcal C})$.

\begin{example}
1. The category ${\bf Sets}$ of sets (morphisms are arbitrary
maps).

2. The categories ${\bf Groups}$, ${\bf Rings}$
(morphisms are homomorphisms).

3. The category ${\bf Vect}_k$ of vector spaces over a field $k$
(morphisms are linear maps).

4. The category ${\rm Rep}(A)$ of representations of an algebra $A$
(morphisms are homomorphisms of representations).

5. The category of topological spaces (morphisms are
continuous maps).

6. The homotopy category of topological spaces (morphisms are homotopy
classes of continuous maps).
\end{example}

{\bf Important remark.} Unfortunately, one cannot simplify this
definition by replacing the word ``class'' by the much more
familiar word ``set''. Indeed, this would rule out the important Example 1, as
it is well known that there is no set of all sets, and working
with such a set leads to contradictions. The precise definition of
a class and the precise distinction between a class and a set is
the subject of set theory, and cannot be discussed here. Luckily,
for most practical purposes (in particular, in these notes),
this distinction is not essential.

We also mention that in many examples, including examples 1-6,
the word ``class'' in (ii) can be replaced by ``set''.
Categories with this property (that $\Hom(X,Y)$ is a set for any
$X,Y$) are called locally small; many categories that we
encounter are of this kind.

Sometimes the collection $\Hom(X,Y)$ of morphisms from $X$ to $Y$ 
in a given locally small category ${\mathcal C}$ is not just a set but has some additional structure (say, the structure of an abelian group, 
or a vector space over some field). In this case 
one says that ${\mathcal C}$ is {\bf enriched} over another
category ${\mathcal D}$ (which is a {\it monoidal} category, 
i.e., has a product operation 
and a unit object under this product, e.g. the category 
of abelian groups or vector spaces with the tensor product operation). 
This means that for each 
$X,Y\in {\mathcal C}$, $\Hom(X,Y)$ is an object of ${\mathcal
D}$, and the composition $\Hom(Y,Z)\times \Hom(X,Y)\to \Hom(X,Z)$
is a morphism in ${\mathcal{D}}$. 
E.g., if ${\mathcal D}$ is the category of vector spaces, this means that 
the composition is bilinear, i.e. gives rise to a linear map 
$\Hom(Y,Z)\otimes \Hom(X,Y)\to \Hom(X,Z)$.
For a more detailed discussion of this, we refer the reader to \cite{McL}. 

{\bf Example.} The category $\Rep(A)$ of representations of a $k$-algebra $A$ is enriched over the category of 
$k$-vector spaces. 

\begin{definition}
A full subcategory of a category ${\mathcal C}$ is a category ${\mathcal C}'$ whose
objects are a subclass of objects of ${\mathcal C}$, and
$\Hom_{{\mathcal C}'}(X,Y)=\Hom_{{\mathcal C}}(X,Y)$.
\end{definition}

{\bf Example.} The category ${\bf AbelianGroups}$ is a full
subcategory of the category ${\bf Groups}$.

\subsection{Functors}

We would like to define arrows between categories. Such arrows
are called {\bf functors}.

\begin{definition}
A functor $F: {\mathcal C}\to {\mathcal D}$ between categories ${\mathcal C}$ and ${\mathcal D}$ is

(i) a map $F: Ob({\mathcal C})\to Ob({\mathcal D})$;

(ii) for each $X,Y\in {\mathcal C}$, a map 
$F=F_{X,Y}: \Hom(X,Y)\to \Hom(F(X),F(Y))$ which preserves compositions
and identity morphisms.
\end{definition}

Note that functors can be composed in an obvious way.
Also, any category has the
identity functor.

\begin{example}
1. A (locally small)
category ${\mathcal C}$ with one object $X$
is the same thing as a monoid. A functor between such
categories is a homomorphism of monoids.

2. Forgetful functors ${\bf Groups}\to {\bf Sets}$,
${\bf Rings}\to {\bf AbelianGroups}$.

3. The opposite category of a given category is the same category
with the order of arrows and compositions reversed. Then
$V\mapsto V^*$ is a functor ${\bf Vect}_k\mapsto {\bf Vect}_k^{op}$.

4. The Hom functors: If ${\mathcal C}$ is a locally small category then
we have the functor ${\mathcal C}\to {\bf Sets}$ given by
$Y\mapsto \Hom(X,Y)$ and ${\mathcal C}^{op}\to {\bf Sets}$
given by $Y\mapsto \Hom(Y,X)$.

5. The assignment $X\mapsto {\rm Fun}(X,\Bbb Z)$ is a functor
${\bf Sets}\to {\bf Rings}^{op}$.

6. Let $Q$ be a quiver. Consider the category ${\mathcal C}(Q)$ whose
objects are the vertices and morphisms are oriented paths between
them. Then functors from ${\mathcal C}(Q)$ to ${\bf Vect}_k$ are
representations of $Q$ over $k$.

7. Let $K\subset G$ be groups. Then we have the induction functor
${\rm Ind}_K^G: \Rep(K)\to \Rep(G)$, and ${\rm Res}^G_K:
\Rep(G)\to \Rep(K)$.

8. We have an obvious notion of the Cartesian product of
categories (obtained by taking the Cartesian
products of the classes of objects and morphisms of the
factors). The functors of direct sum and tensor product
are then functors ${\bf Vect}_k\times {\bf Vect}_k\to {\bf
Vect}_k$. Also the operations $V\mapsto V^{\otimes n}$,
$V\mapsto S^nV$, $V\mapsto \wedge^n V$ are functors on ${\bf
Vect}_k$. More generally, if $\pi$ is a representation
of $S_n$, we have functors $V\mapsto \Hom_{S_n}(\pi,V^{\otimes n})$.
Such functors (for irreducible $\pi$) are called the Schur
functors. They are labeled by Young diagrams.

9. The reflection functors $F_i^\pm: \Rep(Q)\to \Rep(\bar Q_i)$ are
functors between representation categories of quivers.
\end{example}

\subsection{Morphisms of functors}

One of the important features of functors between categories
which distinguishes them from usual maps or functions is that the
functors between two given categories themselves form a category,
i.e., one can define a nontrivial notion of a morphism between two
functors.

\begin{definition}
Let ${\mathcal C},{\mathcal D}$ be categories and $F,G: {\mathcal C}\to {\mathcal D}$ be functors between
them. A morphism $a: F\to G$ (also called a natural
transformation or a functorial morphism)
is a collection of morphisms $a_X: F(X)\to G(X)$
labeled by the objects $X$ of ${\mathcal C}$, which is {\it functorial} in
$X$, i.e., for any morphism $f: X\to Y$ (for $X,Y\in {\mathcal C}$) one has
$a_Y \circ F(f)=G(f)\circ a_X$.
\end{definition}

A morphism $a: F\to G$ is an isomorphism if there is another
morphism $a^{-1}: G\to F$ such that $a\circ a^{-1}$ and
$a^{-1}\circ a$ are the identities. The set of morphisms
from $F$ to $G$ is denoted by $\Hom(F,G)$.

\begin{example}
1. Let ${\bf FVect}_k$ be the category of finite dimensional vector
spaces over $k$. Then the functors ${\rm id}$ and $**$ on this
category are isomorphic. The isomorphism is defined by the
standard maps $a_V:V\to V^{**}$ given by $a_V(u)(f)=f(u)$, $u\in V$, $f\in
V^*$. But these two functors are not isomorphic on the category
of all vector spaces ${\bf Vect}_k$, since for an infinite
dimensional vector space $V$, $V$ is not isomorphic to $V^{**}$.

2. Let ${\bf FVect}_k'$ be the category of finite dimensional
$k$-vector spaces, where the morphisms are the isomorphisms.
We have a functor $F$ from this category to itself sending any space
$V$ to $V^*$ and any morphism $a$ to $(a^*)^{-1}$.
This functor satisfies the property that $V$ is isomorphic to
$F(V)$ for any $V$, but it is not isomorphic to the identity
functor. This is because the isomorphism $V\to F(V)=V^*$
cannot be chosen to be compatible with the action of $GL(V)$, as $V$
is not isomorphic to $V^*$ as a representation of $GL(V)$.

3. Let $A$ be an algebra over a field $k$,
and $F: A-{\bf mod}\to {\bf Vect}_k$ be the forgetful functor.
Then as follows from Problem \ref{1:3},
${\rm End}F=\Hom(F,F)=A$.

4. The set of endomorphisms of the identity functor
on the category $A-{\bf mod}$ is the center of $A$ (check it!).
\end{example}

\subsection{Equivalence of categories}

When two algebraic or geometric objects are isomorphic, it is
usually not a good idea to say that they are equal
(i.e., literally the same). The reason is that such objects are
usually equal in many different ways, i.e., there are many ways
to pick an isomorphism, but by saying that the objects are equal
we are misleading the reader or listener into thinking that we
are providing a certain choice of the identification, which we
actually do not do. A vivid example of this is a finite
dimensional vector space $V$ and its dual space $V^*$.

For this reason in category theory, one
most of the time tries to avoid saying that two objects or
two functors are equal. In particular, this applies to the
definition of isomorphism of categories.

Namely, the naive notion of isomorphism of categories is defined
in the obvious way: a functor $F:{\mathcal C}\to {\mathcal D}$ is an isomorphism
if there exists $F^{-1}: {\mathcal D}\to {\mathcal C}$ such that $F\circ F^{-1}$
and $F^{-1}\circ F$ are equal to the identity functors.
But this definition is not very useful. We might suspect so since
we have used the word ``equal'' for objects of a category
(namely, functors) which we are not supposed to
do. And in fact here is an example of two categories which
are ``the same for all practical purposes'' but are not isomorphic;
it demonstrates the deficiency of our definition.

Namely, let ${\mathcal C}_1$ be the simplest possible category:
$Ob({\mathcal C_1})$ consists of
one object $X$, with $\Hom(X,X)=\lbrace{1_X\rbrace}$.
Also, let ${\mathcal C}_2$ have two objects $X,Y$ and 4 morphisms:
$1_X,1_Y,a: X\to Y$ and $b: Y\to X$. So we must have $a\circ
b=1_Y$, $b\circ a=1_X$.

It is easy to check that for any category ${\mathcal D}$, there is a natural
bijection between the collections of
isomorphism classes of functors ${\mathcal C}_1\to {\mathcal D}$ and
${\mathcal C}_2\to {\mathcal D}$ (both are identified with the collection of
isomorphism classes of objects of ${\mathcal D}$). This is what we mean
by saying that ${\mathcal C}_1$ and ${\mathcal C}_2$ are ``the same for all practical
purposes''. Nevertheless they are not isomorphic, since ${\mathcal C}_1$ has
one object, and ${\mathcal C}_2$ has two objects (even though these two
objects are isomorphic to each other).

This shows that we should adopt a more flexible and less
restrictive notion of isomorphism of categories.
This is accomplished by the definition of an {\bf equivalence of
categories.}

\begin{definition}
A functor $F:{\mathcal C}\to {\mathcal D}$ is an equivalence of categories
if there exists $F': {\mathcal D}\to {\mathcal C}$ such that $F\circ F'$
and $F'\circ F$ are {\bf isomorphic} to the identity functors.
\end{definition}

In this situation, $F'$ is said to be {\it a quasi-inverse} to $F$.

In particular, the above categories ${\mathcal C}_1$ and ${\mathcal C}_2$ are
equivalent (check it!).

Also, the category ${\bf FSet}$ of finite
sets is equivalent to the category whose objects are nonnegative
integers, and morphisms are given by
$\Hom(m,n)={\rm Maps}(\lbrace{1,...,m\rbrace},
\lbrace{1,...,n\rbrace})$. Are these categories isomorphic?
The answer to this question depends on whether you believe that
there is only one finite set with a given number of elements, or that
there are many of those. It seems better to think that there are
many (without asking ``how many''), so that isomorphic sets need
not be literally equal, but this is really a matter of choice.
In any case, this is not really a reasonable question;
the answer to this question is irrelevant for any
practical purpose, and thinking about it will give you nothing but
a headache.

\subsection{Representable functors}

A fundamental notion in category theory is that of a {\bf
representable functor}. Namely, let ${\mathcal C}$ be a (locally small) category, and $F:
{\mathcal C}\to {\bf Sets}$ be a functor. We say that $F$ is {\bf
representable} if there exists an object $X\in {\mathcal C}$ such that $F$
is isomorphic to the functor $\Hom(X,?)$. More precisely, if we are given such an object $X$, 
together with an isomorphism $\xi: F\cong \Hom(X,?)$, we say that the functor $F$ is {\bf represented by} $X$ (using $\xi$).  

In a similar way, one can talk about representable functors from
${\mathcal C}^{op}$ to ${\bf Sets}$. Namely, one calls such a functor
representable if it is of the form $\Hom(?,X)$ for some object
$X\in {\mathcal C}$, up to an isomorphism.

Not every functor is representable, but if a representing object
$X$ exists, then it is unique. Namely, we have the following
lemma.

\begin{lemma} (The Yoneda Lemma)
If a functor $F$ is represented by an object $X$, then $X$ is unique up to a unique isomorphism.
I.e., if $X,Y$ are two objects in ${\mathcal C}$, then for any isomorphism of functors
$\phi: \Hom(X,?)\to \Hom(Y,?)$ there is a unique isomorphism $a_\phi:
X\to Y$ inducing $\phi$.
\end{lemma}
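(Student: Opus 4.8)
The plan is to apply $\phi$ and $\phi^{-1}$ to the identity morphisms of $X$ and $Y$ respectively, which is the standard Yoneda trick. First I would set $a_\phi := \phi_X(1_X) \in \Hom(X,Y)$, where $\phi_X : \Hom(X,X) \to \Hom(Y,X)$ is the component of the natural transformation $\phi$ at the object $X$. Wait --- one has to be careful about variance: since $\phi : \Hom(X,?) \to \Hom(Y,?)$ and I want a map $X \to Y$, I should instead look at $\phi^{-1}$ evaluated appropriately, or equivalently use that $\phi^{-1}_X : \Hom(Y,X) \to \Hom(X,X)$ and take $b := (\phi^{-1})_?$ applied to $1_Y$, etc. Concretely, I would define $a_\phi = \phi_X(1_X)$ and $b = (\phi^{-1})_Y(1_Y)$, both of which live in the right Hom-sets after tracking the directions carefully.

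Next I would prove that $a_\phi$ and $b$ are mutually inverse. The key input is naturality of $\phi$ (and $\phi^{-1}$): for any morphism $f : X \to Z$ one has $\phi_Z(f) = \phi_Z(f \circ 1_X) = f \circ \phi_X(1_X) = f \circ a_\phi$, using that $\phi$ is a morphism of functors and that $\Hom(X,?)$ sends $f$ to post-composition by $f$. In other words, $\phi$ is completely determined on all objects by the single element $a_\phi$, namely $\phi_Z(g) = g \circ a_\phi$ for $g \in \Hom(X,Z)$. Similarly $\phi^{-1}_Z(h) = h \circ b$ for $h \in \Hom(Y,Z)$. Then $1_X = (\phi^{-1}\circ\phi)_X(1_X) = \phi^{-1}_X(\phi_X(1_X)) = \phi^{-1}_X(a_\phi) = a_\phi \circ b$, and symmetrically $1_Y = b \circ a_\phi$. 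Hence $a_\phi$ is an isomorphism with inverse $b$.

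Then I would check that $a_\phi$ \emph{induces} $\phi$ in the sense intended: the isomorphism $\Hom(X,?) \to \Hom(Y,?)$ given by precomposition with $a_\phi^{-1} = b$ (i.e. $g \mapsto g \circ b$ on $\Hom(X,Z)$... actually the map induced by an isomorphism $a : X \to Y$ on Hom-functors is $\Hom(a, ?)^{-1}$, i.e. $g \in \Hom(X,Z) \mapsto g \circ a^{-1} \in \Hom(Y,Z)$) agrees with $\phi$; but this is exactly the formula $\phi_Z(g) = g \circ a_\phi$ with $a_\phi = b^{-1}$ derived above. Finally, uniqueness: if $a' : X \to Y$ is another isomorphism inducing $\phi$, then applying the induced transformation to $1_X \in \Hom(X,X)$ gives $1_X \circ (a')^{-1} = (a')^{-1}$ on one hand and $\phi_X(1_X) = a_\phi$ on the other (up to the bookkeeping of which direction one composes), forcing $a' = a_\phi$. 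The main obstacle --- really the only delicate point --- is keeping the variance and the direction of composition straight: $\Hom(X,?)$ is covariant and acts by post-composition, so one must consistently track whether $a_\phi$ or its inverse is the object produced, and whether $\phi$ or $\phi^{-1}$ should be evaluated at $1_X$ versus $1_Y$. Once the conventions are fixed, every step is a one-line computation from naturality.
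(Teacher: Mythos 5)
Your proposal is correct and follows essentially the same route as the paper's (sketched) proof: evaluate the natural transformation and its inverse at identity morphisms, use naturality to show $\phi_Z(g)=g\circ\phi_X(1_X)$, deduce that $\phi_X(1_X)$ and $\phi_Y^{-1}(1_Y)$ are mutually inverse, and get uniqueness by evaluating at $1_X$. The only discrepancy is notational: the isomorphism $X\to Y$ that the lemma calls $a_\phi$ is your $b=\phi_Y^{-1}(1_Y)$ (the paper sets $a_\phi=\phi_Y^{-1}(1_Y)$ with inverse $\phi_X(1_X)$), while you attached the name $a_\phi$ to the map $Y\to X$; since you track this and prove mutual inversality, the argument is unaffected.
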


\begin{proof} (Sketch) One sets $a_\phi=\phi_Y^{-1}(1_Y)$, and
shows that it is invertible by constructing the inverse, which is
$a_\phi^{-1}=\phi_X(1_X)$. It remains to show that the composition
both ways is the identity, which we will omit here.
This establishes the existence of $a_\phi$. Its uniqueness
is verified in a straightforward manner.
\end{proof}

{\bf Remark.} In a similar way, if a category ${\mathcal C}$ is enriched over 
another category ${\mathcal D}$ (say, the category of abelian groups or vector spaces), 
one can define the notion of a representable functor from ${\mathcal C}$ to ${\mathcal D}$. 

\begin{example}
Let $A$ be an algebra. Then the forgetful functor to vector spaces on
the category of left $A$-modules is representable, and the representing object is the
free rank $1$ module (=the regular representation) $M=A$.
But if $A$ is infinite dimensional, and we restrict attention to
the category of finite dimensional modules, then the forgetful
functor, in general, is not representable (this is so, for
example, if $A$ is the algebra of complex functions on $\Bbb Z$ 
which are zero at all points but finitely many).
\end{example}

\subsection{Adjoint functors}

Another fundamental notion in category theory is the notion of {\bf
adjoint functors}.

\begin{definition}
Functors $F: {\mathcal C}\to {\mathcal D}$ and $G: {\mathcal D}\to {\mathcal C}$ are said to be a pair of
adjoint functors if for any $X\in {\mathcal C}$, $Y\in {\mathcal D}$ we are given an
isomorphism $\xi_{XY}: \Hom_{\mathcal C}(F(X),Y)\to \Hom_{\mathcal D}(X,G(Y))$ which is
functorial in $X$ and $Y$; in other words, if we are given an
isomorphism of functors $\Hom(F(?),?)\to \Hom(?,G(?))$
(${\mathcal C}\times {\mathcal D}\to {\bf Sets}$). In this situation, we say that
$F$ is left adjoint to $G$ and $G$ is right adjoint to $F$.
\end{definition}

Not every functor has a left or right adjoint, but
if it does, it is unique
and can be constructed canonically (i.e., if we somehow found
two such functors, then there is a canonical isomorphism between
them). This follows easily from the Yoneda lemma, as if
$F,G$ are a pair of adjoint functors then $F(X)$ represents the
functor $Y\mapsto \Hom(X,G(Y))$, and $G(Y)$ represents the functor
$X\mapsto \Hom(F(X),Y)$.

\begin{remark} The terminology ``left and right adjoint functors'' 
is motivated by the analogy between categories
and inner product spaces. More specifically, 
we have the following useful dictionary between category theory and linear algebra, 
which helps understand better many notions of category theory. 

\begin{tabular}{|l|l|}
\hline
\multicolumn{2}{|c|}{Dictionary between category theory and linear algebra} \\
\hline
 Category ${\mathcal C}$ & Vector space $V$ with a nondegenerate inner product \\
 The set of morphisms $\Hom(X,Y)$ & Inner product $(x,y)$ on $V$ (maybe nonsymmetric) \\
Opposite category ${\mathcal C}^{op}$ & Same space $V$ with reversed inner product \\
The category ${\bf Sets}$ & The ground field $k$ \\
Full subcategory in ${\mathcal C}$ & Nondegenerate subspace in $V$\\
Functor $F: {\mathcal C}\to {\mathcal D}$ & Linear operator $f: V\to W$ \\
Functor $F: {\mathcal C}\to {\bf Sets}$ & Linear functional $f\in V^*=\Hom(V,k)$ \\
Representable functor & Linear functional $f\in V^*$ given by $f(v)=(u,v)$, $u\in V$ \\
Yoneda lemma & Nondegeneracy of the inner product (on both sides) \\
Not all functors are representable & If $\dim V=\infty$, not $\forall f\in V^*$, $f(v)=(u,v)$\\
Left and right adjoint functors & Left and right adjoint operators \\
Adjoint functors don't always exist & Adjoint operators may not exist if $\dim V=\infty$\\ 
If they do, they are unique & If they do, they are unique\\
Left and right adjoints may not coincide & The inner product may be nonsymmetric \\
\hline
\end{tabular}

\end{remark}

\begin{example}
1. Let $V$ be a finite dimensional representation
of a group $G$ or a Lie algebra ${\mathfrak{g}}$. Then the left and right adjoint to
the functor $V\otimes$ on the category of representations of $G$
is the functor $V^*\otimes$.

2. The functor ${\rm Res}_K^G$ is left adjoint to ${\rm Ind}_K^G$.
This is nothing but the statement of the Frobenius reciprocity.
\end{example}

3. Let ${\bf Assoc}_k$ be the category of associative unital
algebras, and ${\bf Lie}_k$ the category of Lie algebras
over some field $k$. We have a functor $L: {\bf Assoc}_k\to
{\bf Lie}_k$, which attaches to an associative algebra the same
space regarded as a Lie algebra, with bracket $[a,b]=ab-ba$.
Then the functor $L$ has a left adjoint, which is the functor $U$
of taking the universal enveloping algebra of a Lie algebra.

4. We have the functor $GL_1: {\bf Assoc}_k\to {\bf Groups}$,
given by $A\mapsto GL_1(A)=A^\times$. This functor has a left
adjoint, which is the functor $G\mapsto k[G]$, the group algebra of
$G$.

5. The left adjoint to the forgetful functor
${\bf Assoc}_k\to {\bf Vect}_k$ is the functor of tensor algebra:
$V\mapsto TV$. Also, if we denote by ${\bf Comm}_k$ the category of
commutative algebras, then the left adjoint to the forgetful functor
${\bf Comm}_k\to {\bf Vect}_k$ is the functor of the symmetric algebra:
$V\mapsto SV$.

One can give many more examples, spanning many fields.
These examples show that adjoint functors are ubiquitous in
mathematics.

\subsection{Abelian categories}

The type of categories that most often appears in representation
theory is {\bf abelian categories}. The standard definition of an abelian
category is rather long, so we will not give it here, referring the reader 
to the textbook \cite{Fr}; rather, we
will use as the definition what is really the statement of
the Freyd-Mitchell theorem:

\begin{definition}\label{ab}
An abelian category is a category (enriched over the category of abelian groups), which 
is equivalent to a full subcategory ${\mathcal C}$ of the category
$A$-mod of left modules over a ring $A$, closed under taking finite direct sums,
as well as kernels, cokernels, and images of morphisms.
\end{definition}

We see from this definition that in an abelian category,
$\Hom(X,Y)$ is an abelian group for each $X,Y$, compositions are
group homomorphisms with respect to each argument, there is the
zero object, the notion of an injective morphism (monomorphism)
and surjective morphism (epimorphism), and every
morphism has a kernel, a cokernel, and an image.

\begin{example}\label{amod}
The category of modules over an algebra $A$ and the category of
finite dimensional modules over $A$ are abelian categories.
\end{example}

\begin{remark}
The good thing about Definition \ref{ab} is that it
allows us to visualize objects, morphisms, kernels, and cokernels
in terms of classical algebra. But the definition also has a big
drawback, which is that even if ${\mathcal C}$ is the whole category
$A$-mod, the ring $A$ is not
determined by ${\mathcal C}$. In particular, two
different rings can have equivalent categories
of modules (such rings are called {\bf Morita equivalent}).
Actually, it is worse than that:
for many important abelian categories there is no natural (or
even manageable) ring $A$ at all.
 This is why people prefer to use the standard
definition, which is free from this drawback, even though it is
more abstract.
\end{remark}

We say that an abelian category ${\mathcal C}$ is $k$-{\bf linear} if
the groups $\Hom_{\mathcal C}(X,Y)$ are equipped with a structure of a
vector space over $k$, and composition maps are $k$-linear in
each argument. In particular, the categories in Example
\ref{amod} are $k$-linear.

\subsection{Exact functors}

\begin{definition}
A sequence of objects and morphisms
$$
X_0\to X_1\to...\to X_{n+1}
$$
in an abelian category is said to be {\bf a complex}
if the composition of any two consecutive arrows is zero.
The {\bf cohomology} of this complex is
$H^i=\Ker(d_i)/{\rm Im}(d_{i-1})$, where $d_i: X_{i}\to X_{i+1}$
(thus the cohomology is defined for $1\le i\le n$).
The complex is said to be {\bf exact in the $i$-th term} if $H^i=0$, and is
said to be {\bf an exact sequence} if it is exact in all terms.
A {\bf short exact sequence} is an exact sequence of the form
$$
0\to X\to Y\to Z\to 0.
$$
\end{definition}

Clearly, $0\to X\to Y\to Z\to 0$ is a short exact sequence if and only if
$X\to Y$ is injective, $Y\to Z$ is surjective, and the
induced map $Y/X\to Z$ is an isomorphism.

\begin{definition}
A functor $F$ between two abelian categories is {\bf additive} if
it induces homomorphisms on $\Hom$ groups. Also, for $k$-linear
categories one says that $F$ is $k$-linear if it induces
$k$-linear maps between $\Hom$ spaces.
\end{definition}

It is easy to show that if $F$ is an additive functor, then
$F(X\oplus Y)$ is canonically isomorphic to $F(X)\oplus F(Y)$.

\begin{example} The functors $\Ind_K^G$, ${\rm Res}_K^G$,
$\Hom_G(V,?)$ in the theory of group representations over a field
$k$ are additive and $k$-linear.
\end{example}

\begin{definition}
An additive functor $F: {\mathcal C}\to {\mathcal D}$ between abelian categories
is {\bf left exact} if for any exact sequence
$$
0\to X\to Y\to Z,
$$
the sequence
$$
0\to F(X)\to F(Y)\to F(Z)
$$
is exact. $F$ is {\bf right exact}
if for any exact sequence
$$
X\to Y\to Z\to 0,
$$
the sequence
$$
F(X)\to F(Y)\to F(Z)\to 0
$$
is exact. $F$ is {\bf exact} if it is both left and right exact.
\end{definition}

\begin{definition} An abelian category ${\mathcal C}$ is {\bf
semisimple} if any
short exact sequence in this category splits, i.e., is isomorphic to a sequence
$$
0\to X\to X\oplus Y\to Y\to 0
$$
(where the maps are obvious).
\end{definition}

\begin{example}
The category of representations of a finite group $G$ over a
field of characteristic not dividing $|G|$ (or 0) is semisimple.
\end{example}

Note that in a semisimple category, any additive functor is
automatically exact on both sides.

\begin{example}
(i) The functors $\Ind_K^G$, ${\rm Res}_K^G$ are exact.

(ii) The functor $\Hom(X,?)$ is left exact, but not necessarily
right exact. To see that it need not be right exact, it suffices
to consider the exact sequence
$$
0\to \Bbb Z\to \Bbb Z\to \Bbb Z/2\Bbb Z\to 0,
$$
and apply the functor $\Hom(\Bbb Z/2\Bbb Z,?)$.

(iii) The functor $X\otimes_A$ for a right $A$-module $X$
(on the category of left $A$-modules) is right exact, but
not necessarily left exact. To see this, it suffices to tensor multiply
the above exact sequence by $\Bbb Z/2\Bbb Z$.
\end{example}

{\bf Exercise.} 
Show that if $(F,G)$ is a pair of adjoint additive functors 
between abelian categories, then $F$ is right exact and $G$ is
left exact. 

{\bf Exercise.}
(a) Let $Q$ be a quiver and $i\in Q$ a source. Let $V$ be a
representation of $Q$, and $W$ a representation of
$\overline{Q_i}$ (the quiver obtained from $Q$ by reversing
arrows at the vertex $i$). Prove that there is a natural isomorphism
between $\mathrm{Hom}\left(F_i^-V,W\right)$ and
$\mathrm{Hom}\left(V,F_i^+W\right)$. In other words, the functor
$F_i^+$ is right adjoint to $F_i^-$. 

(b) Deduce that the functor $F_i^+$ is left exact, and $F_i^-$ is
right exact. 

\newpage \section{Structure of finite dimensional algebras}

In this section we return to studying the structure of finite
dimensional algebras. Throughout the section, we work over an
algebraically closed field $k$ (of any characteristic).

\subsection{Projective modules}

Let $A$ be an algebra, and $P$ be a left $A$-module.

\begin{theorem}\label{proje}
The following properties of $P$ are equivalent:

(i) If $\alpha: M\to N$ is a surjective morphism, and
$\nu: P\to N$ any morphism, then there exists a morphism $\mu: P\to M$ such
that $\alpha\circ \mu=\nu$.

(ii) Any surjective morphism $\alpha: M\to P$ splits, i.e., there
exists $\mu: P\to M$ such that $\alpha\circ \mu={\rm id}$.

(iii) There exists another $A$-module $Q$ such that $P\oplus Q$
is a free $A$-module, i.e., a direct sum of copies of $A$.

(iv) The functor $\Hom_A(P,?)$ on the category of $A$-modules
is exact.
\end{theorem}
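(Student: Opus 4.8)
The plan is to prove the four statements equivalent by establishing the cycle $(i)\Rightarrow(ii)\Rightarrow(iii)\Rightarrow(iv)\Rightarrow(i)$; this is more economical than checking all pairs separately. Two of these implications are essentially formal. For $(i)\Rightarrow(ii)$, given a surjection $\alpha:M\to P$, I would apply the lifting property of (i) with $N=P$ and $\nu=\mathrm{id}_P$ to obtain $\mu:P\to M$ with $\alpha\circ\mu=\mathrm{id}_P$. For $(iv)\Rightarrow(i)$, given a surjection $\alpha:M\to N$ and an arbitrary $\nu:P\to N$, exactness of $\Hom_A(P,?)$ says in particular that the induced map $\Hom_A(P,M)\to\Hom_A(P,N)$ is surjective, so $\nu$ has a preimage $\mu$, i.e. $\alpha\circ\mu=\nu$.

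For $(ii)\Rightarrow(iii)$, I would invoke the standard fact that every $A$-module is a quotient of a free module: choosing a generating set $\{p_s\}_{s\in S}$ of $P$ and the corresponding free module $F=\bigoplus_{s\in S}A$ with the evident surjection $\pi:F\to P$ sending the $s$-th generator to $p_s$, property (ii) gives a splitting of $\pi$, whence $F\cong P\oplus\ker\pi$ exhibits $P$ as a direct summand of a free module.

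The real content is $(iii)\Rightarrow(iv)$. The functor $\Hom_A(A,?)$ is exact since it is naturally isomorphic to the identity functor via $f\mapsto f(1)$; consequently $\Hom_A(F,?)\cong\prod_{s\in S}\Hom_A(A,?)$ is exact as well, because arbitrary products are exact in a module category. Writing $F\cong P\oplus Q$ as furnished by (iii), there is a natural (in $M$) isomorphism $\Hom_A(P\oplus Q,M)\cong\Hom_A(P,M)\oplus\Hom_A(Q,M)$. Since $\Hom$-functors are always left exact, it remains only to obtain right exactness of $\Hom_A(P,?)$: for a surjection $M\to N$, the surjective map $\Hom_A(F,M)\to\Hom_A(F,N)$ is, under the above decomposition, the direct sum of $\Hom_A(P,M)\to\Hom_A(P,N)$ and $\Hom_A(Q,M)\to\Hom_A(Q,N)$, and a direct sum of homomorphisms of abelian groups is surjective precisely when each summand is. I expect this last step — the ``a direct summand of an exact functor is exact'' bookkeeping, together with the verification that the splitting of $\Hom$ is natural in the relevant variable — to be the only point requiring genuine care; once the cycle is closed, all four conditions are connected to their neighbors and hence mutually equivalent.
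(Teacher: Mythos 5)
Your proposal is correct and follows essentially the same route as the paper's proof: the cycle $(i)\Rightarrow(ii)\Rightarrow(iii)\Rightarrow(iv)\Rightarrow(i)$, with (i)$\Rightarrow$(ii) by taking $N=P$, (ii)$\Rightarrow$(iii) by splitting a surjection from a free module, (iii)$\Rightarrow$(iv) via exactness of $\Hom_A(A,?)$ and the observation that a direct summand of an exact functor is exact, and (iv)$\Rightarrow$(i) by applying the exact functor to the surjection $M\to N$. Your write-up merely spells out the naturality and summand bookkeeping that the paper leaves implicit.
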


\begin{proof}
To prove that (i) implies (ii), take $N=P$. To prove that (ii)
implies (iii), take $M$ to be free (this can always be done since
any module is a quotient of a free module). To prove that (iii)
implies (iv), note that the functor $\Hom_A(P,?)$ is exact if $P$
is free (as $\Hom_A(A,N)=N$), so the statement follows, as if the
direct sum of two complexes is exact, then each of them is
exact. To prove that (iv) implies (i), let $K$ be the kernel of
the map $\alpha$, and apply the exact functor $\Hom_A(P,?)$ to
the exact sequence
$$
0\to K\to M\to N\to 0.
$$
\end{proof}

\begin{definition} A module satisfying any of the conditions
(i)-(iv) of Theorem \ref{proje} is said to be {\bf projective}.
\end{definition}

\subsection{Lifting of idempotents}

Let $A$ be a ring, and $I\subset A$ a nilpotent ideal.

\begin{proposition}\label{lif}
Let $e_0\in A/I$ be an idempotent, i.e., $e_0^2=e_0$.
There exists an idempotent $e\in A$ which is a lift of $e_0$
(i.e., it projects to $e_0$ under the reduction modulo $I$).
This idempotent is unique up to conjugation by an element of
$1+I$.
\end{proposition}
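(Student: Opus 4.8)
The statement has two parts: existence of a lift $e$ of $e_0$, and its uniqueness up to conjugation by $1+I$. For existence I would argue by successive approximation, using that $I$ is nilpotent, say $I^n=0$. Starting from any lift $a\in A$ of $e_0$ (e.g.\ pick any preimage under $A\to A/I$), the element $a^2-a$ lies in $I$. The idea is to correct $a$ step by step so that $a^2-a$ lands in higher and higher powers of $I$, and after finitely many steps (because $I^n=0$) we reach an honest idempotent. Concretely, if $a^2-a\in I^k$, I would look for a correction of the form $e=a+x$ with $x\in I^k$ chosen so that $e^2-e\in I^{k+1}$; expanding, $e^2-e = (a^2-a) + (2a-1)x + x^2$, and since $x^2\in I^{2k}\subseteq I^{k+1}$, the condition becomes $(a^2-a)+(2a-1)x\equiv 0 \pmod{I^{k+1}}$. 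To solve this I would use that $2a-1$ is invertible modulo $I$: indeed $(2a-1)^2 = 4a^2-4a+1 = 4(a^2-a)+1 \equiv 1 \pmod{I}$, so $(2a-1)^2-1\in I$ is nilpotent, hence $2a-1$ is a unit in $A$. Then $x := -(2a-1)^{-1}(a^2-a)$ does the job (and lies in $I^k$ since $a^2-a$ does). Iterating from $k=1$ up to $k=n$ produces the desired idempotent $e$, which is still a lift of $e_0$ because all corrections lie in $I$.

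**Uniqueness up to conjugation.** Suppose $e,f$ are two idempotents in $A$ both lifting $e_0$, so $e-f\in I$. I would try to conjugate $e$ to $f$ by an element of $1+I$. A standard choice is $u := ef + (1-e)(1-f)$; one computes $u-1 = ef+(1-e)(1-f)-1 = ef - e - f + ef = 2ef - e - f$, wait — more carefully $u - 1 = (ef - e) + ((1-e)(1-f) - (1-e)) = -e(1-f) + (1-e)(-f) \cdot(\ldots)$; the upshot is that $u-1$ is a polynomial in $e,f$ with no constant term that vanishes when $e=f$, hence $u-1\in I$ (since $e-f\in I$ and $I$ is an ideal), so $u\in 1+I$ is a unit because $1+I$ consists of units ($I$ nilpotent). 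The key identities are $eu = efe + e(1-e)(1-f) = ef$ wait, $e\cdot ef = ef$ and $e(1-e)(1-f)=0$, so $eu = ef$; similarly $uf = ef f + (1-e)(1-f)f = ef$, so $uf = ef = eu$, giving $u^{-1}eu = f$. That establishes the conjugacy.

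**Expected obstacle.** The genuinely substantive point is the invertibility of $2a-1$ (equivalently of $1+I$-type elements and of $2a-1$ modulo $I$), together with the bookkeeping of which power of $I$ each correction lies in; these are where one must be careful that the argument does not secretly assume $2$ is invertible in $k$ — it does not, since $(2a-1)^2 \equiv 1 \pmod I$ uses only that $a^2 - a \in I$. The inductive step "$a^2-a\in I^k \Rightarrow$ can correct to land in $I^{k+1}$" must be stated with the explicit formula $x=-(2a-1)^{-1}(a^2-a)$ and the verification that $e^2-e\in I^{k+1}$; after that the iteration terminating by $I^n=0$ is routine. For the uniqueness half, the only thing to check beyond the identities above is that $u\in 1+I$, which follows because $u-1$ lies in the ideal generated by $e-f$. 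I would present existence via the induction and uniqueness via the explicit conjugator $u = ef+(1-e)(1-f)$, keeping the calculations short since they are all direct.
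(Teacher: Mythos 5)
Your argument is correct, but it is organized differently from the text's proof, especially in the uniqueness half. For existence, the text first treats the square-zero case $I^2=0$ (where $I$ is an $A/I$-bimodule and the correction $b=(2e_0-1)a$ solves the linearized equation exactly) and then handles general nilpotent $I$ by inducting through the quotients $A/I^{k+1}$, where each step is again a square-zero lifting problem. Your Newton-type iteration directly in $A$ is the same idea in different packaging: your correction $x=-(2a-1)^{-1}(a^2-a)$ is the same $(2e-1)\cdot(\text{error})$ recipe, with the inverse of $2a-1$ (a unit because $(2a-1)^2=1+4(a^2-a)\in 1+I$) letting you avoid passing to quotients. One point you should make explicit: the expansion $e^2-e=(a^2-a)+(2a-1)x+x^2$ is false for general $x$ in a noncommutative ring (the correct cross terms are $ax+xa-x$); it is valid here only because your chosen $x$ lies in the commutative subalgebra generated by $a$ (note $(2a-1)^{-1}$ is a polynomial in $a$, since $(2a-1)^2-1$ is nilpotent), so $x$ commutes with $a$. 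With that remark the induction is airtight and, as you say, characteristic of $k$ plays no role. For uniqueness your route is genuinely different and arguably cleaner: the explicit conjugator $u=ef+(1-e)(1-f)$ satisfies $eu=ef=uf$, hence $u^{-1}eu=f$, and $u-1=2ef-e-f=e(f-e)+(e-f)f\in I$ (write it this way rather than waving at ``vanishes when $e=f$''), so $u\in 1+I$ is a unit; this settles the general nilpotent case in one stroke. The text instead solves the linearized equation $ec+ce-c=0$ in the square-zero case, obtaining $c=[e,[e,c]]$ and the conjugator $1+[c,e]$, and then must carry the uniqueness statement through the induction on $A/I^{k+1}$ (conjugation by $1+I^k$ at each stage). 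What the text's approach buys is the bimodule/cohomological interpretation of the square-zero step; what yours buys is a single self-contained argument in $A$ with an explicit global conjugator.
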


\begin{proof}
Let us first establish the statement in the case when
$I^2=0$. Note that in this case $I$ is a left and right module over $A/I$.
Let $e_*$ be any lift of $e_0$ to $A$.
Then $e_*^2-e_*=a\in I$, and $e_0a=ae_0$.
We look for $e$ in the form $e=e_*+b$, $b\in I$.
The equation for $b$ is $e_0b+be_0-b=a$.

Set $b=(2e_0-1)a$. Then
$$
e_0b+be_0-b=2e_0a-(2e_0-1)a=a,
$$
so $e$ is an idempotent. To classify other solutions, set
$e'=e+c$. For $e'$ to be an idempotent, we must have
$ec+ce-c=0$. This is equivalent to saying that $ece=0$ and
$(1-e)c(1-e)=0$, so $c=ec(1-e)+(1-e)ce=[e,[e,c]]$. Hence
$e'=(1+[c,e])e(1+[c,e])^{-1}$.

Now, in the general case, we prove by induction in $k$ that there
exists a lift $e_k$ of $e_0$ to $A/I^{k+1}$, and it is unique up
to conjugation by an element of $1+I^k$ (this is sufficient
as $I$ is nilpotent). Assume it is true for $k=m-1$, and let us
prove it for $k=m$. So we have an idempotent $e_{m-1}\in A/I^m$,
and we have to lift it to $A/I^{m+1}$. But $(I^m)^2=0$ in
$A/I^{m+1}$, so we are done.
\end{proof}

\begin{definition}
A complete system of orthogonal idempotents
in a unital algebra $B$ is a collection of elements
$e_1,...,e_n\in B$ such that
$e_ie_j=\delta_{ij}e_i$, and $\sum_{i=1}^n e_i=1$.
\end{definition}

\begin{corollary}\label{lif1}
Let $e_{01},...,e_{0m}$ be a complete system of orthogonal idempotents in
$A/I$. Then there exists a complete system of orthogonal idempotents
$e_1,...,e_m$ ($e_ie_j=\delta_{ij}e_i$, $\sum e_i=1$)
in $A$ which lifts $e_{01},...,e_{0m}$.
\end{corollary}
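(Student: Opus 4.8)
The plan is to derive Corollary \ref{lif1} from Proposition \ref{lif} by induction on $m$, lifting one idempotent at a time while staying inside the appropriate corner subalgebra so that orthogonality is automatic.

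First I would handle the base case $m=1$: a complete system of one orthogonal idempotent in $A/I$ is just $e_{01}=1$, which lifts to $1\in A$, so there is nothing to prove. For the induction step, suppose we have lifted $e_{01},\dots,e_{0,m-1}$ to orthogonal idempotents $e_1,\dots,e_{m-1}$ in $A$ (where now I group the tail $e_{0m},\dots$ — more precisely I will do the induction slightly differently, see below). The cleanest organization is: by Proposition \ref{lif}, lift the single idempotent $e_{01}$ to an idempotent $e_1\in A$. Set $f=1-e_1$, a lift of $f_0:=1-e_{01}=e_{02}+\dots+e_{0m}$, and consider the (non-unital, but unital with unit $f_0$) subalgebra $\bar B_0:=f_0(A/I)f_0$. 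The images $e_{02},\dots,e_{0m}$ lie in $\bar B_0$ and form a complete system of orthogonal idempotents there (their sum is $f_0$, which is the unit of $\bar B_0$). Now $B:=fAf$ is a unital algebra with unit $f$, the map $B\to \bar B_0$ is surjective with kernel $fIf$, which is nilpotent since $I$ is. Hence by the induction hypothesis applied to $B$ and the nilpotent ideal $fIf\subset B$, there is a complete system of orthogonal idempotents $e_2,\dots,e_m\in B=fAf$ lifting $e_{02},\dots,e_{0m}$.

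It then remains to check that $e_1,\dots,e_m$ is a complete system of orthogonal idempotents in $A$. Each $e_i$ for $i\ge 2$ satisfies $e_i=fe_if$, so $e_1e_i=e_1 f e_i f=0$ and $e_ie_1=f e_i f e_1=0$ because $e_1 f=f e_1=e_1(1-e_1)=0$; orthogonality among $e_2,\dots,e_m$ holds by construction inside $B$; and $\sum_{i=1}^m e_i=e_1+f=e_1+(1-e_1)=1$. Finally each $e_i$ reduces modulo $I$ to $e_{0i}$ by construction. This completes the induction.

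The only subtlety — and the step I would be most careful about — is making sure the induction is set up on an honest unital algebra with an honest nilpotent ideal at each stage, since $fAf$ is not an ideal of $A$ and its unit is $f$, not $1$; one must verify $fIf$ is a two-sided ideal of $fAf$ and that $(fIf)^N\subset f I^N f=0$ for $N$ large, which is immediate from $f I f \cdot f I f\subset fI(fAf)If\subset fIIf = fI^2f$, iterating. I would also remark that uniqueness up to conjugation by $1+I$ can be tracked through the same induction if desired, but the statement of Corollary \ref{lif1} only asserts existence, so I would not belabor it. There are no real obstacles here; it is purely a bookkeeping argument built on Proposition \ref{lif}.
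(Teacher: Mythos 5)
Your proof is correct and follows essentially the same route as the paper: induct on $m$, lift $e_{01}$ to $e_1$ via Proposition \ref{lif}, and apply the induction hypothesis to the corner algebra $(1-e_1)A(1-e_1)$ with its nilpotent ideal $(1-e_1)I(1-e_1)$. The extra verifications you spell out (that $fIf$ is a nilpotent ideal of $fAf$, and the orthogonality and completeness checks) are exactly the bookkeeping the paper leaves implicit.
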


\begin{proof}
The proof is by induction in $m$. For $m=2$ this follows from
Proposition \ref{lif}. For $m>2$, we lift $e_{01}$ to $e_1$ using Proposition
\ref{lif}, and then apply the induction assumption to the algebra
$(1-e_1)A(1-e_1)$.
\end{proof}

\subsection{Projective covers}

Obviously, every finitely generated projective module over a finite dimensional 
algebra $A$ is a direct sum of indecomposable projective modules, so
to understand finitely generated projective modules over $A$, it suffices to
classify indecomposable ones.

Let $A$ be a finite dimensional algebra, with simple modules
$M_1,...,M_n$.

\begin{theorem}
(i) For each $i=1,...,n$ there exists a unique indecomposable
finitely generated projective module $P_i$ such that
$\dim\Hom(P_i,M_j)=\delta_{ij}$.

(ii) $A=\oplus_{i=1}^n (\dim M_i)P_i$.

(iii) any indecomposable finitely generated projective module over $A$ is isomorphic
to $P_i$ for some $i$.
\end{theorem}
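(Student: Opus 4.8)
The plan is to build the $P_i$ by decomposing the regular representation using lifted idempotents, and then to verify the required $\Hom$-orthogonality property. First I would write $\bar A = A/\Rad(A) \cong \bigoplus_i \Mat_{d_i}(k)$ by Theorem \ref{cow}, where $d_i = \dim M_i$. In $\bar A$ choose a complete system of orthogonal idempotents realizing the matrix-unit decomposition: namely, for each $i$ take the diagonal matrix units $e_{0,i,1},\dots,e_{0,i,d_i}\in \Mat_{d_i}(k)$, so that $\sum_{i,s} e_{0,i,s} = 1$ and each $e_{0,i,s}\bar A$ is isomorphic to $M_i$ as a left $\bar A$-module. Since $\Rad(A)$ is nilpotent (Proposition \ref{nilp}), Corollary \ref{lif1} lets me lift this to a complete system of orthogonal idempotents $\{e_{i,s}\}$ in $A$. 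Setting $P_i := Ae_{i,1}$ (any $s$ will do, and I should check they are isomorphic for different $s$, which follows because the $e_{0,i,s}$ are conjugate in $\bar A$ by a permutation matrix, and this conjugation lifts), I get a decomposition $A = \bigoplus_{i,s} Ae_{i,s} \cong \bigoplus_i d_i P_i$, which is statement (ii). Each $P_i$ is projective, being a direct summand of the free module $A$ (Theorem \ref{proje}(iii)).

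Next I would compute $\Hom_A(P_i, M_j)$. Using Lemma \ref{le4} with the idempotent $e = e_{i,1}$ and the module $M = M_j$, one gets $\Hom_A(Ae_{i,1}, M_j) \cong e_{i,1} M_j$. Now $M_j$ is a module over $\bar A = A/\Rad(A)$ since $\Rad(A)$ acts by zero on every simple module, so $e_{i,1}$ acts on $M_j$ through its image $e_{0,i,1}$. But $e_{0,i,1}$ lies in the block $\Mat_{d_i}(k)$, and acting on the simple $\bar A$-module $M_j = k^{d_j}$ of the $j$-th block, it is zero when $j \neq i$, and is a rank-one idempotent (projection onto one coordinate) when $j = i$. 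Hence $\dim \Hom_A(P_i, M_j) = \dim e_{0,i,1}M_j = \delta_{ij}$, establishing the $\Hom$-orthogonality in (i).

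To finish (i) and (iii) I need indecomposability and the uniqueness/exhaustiveness. For indecomposability of $P_i$: a decomposition $P_i = P' \oplus P''$ would give $\Hom_A(P_i, M_i) = \Hom_A(P',M_i)\oplus\Hom_A(P'',M_i)$, a $1$-dimensional space, so one summand, say $P''$, has $\Hom_A(P'', M_j) = 0$ for all $j$. Since every nonzero finitely generated module has a simple quotient (take a maximal submodule, which exists by finite-dimensionality), a module with no nonzero maps to any simple must be zero; hence $P'' = 0$. For (iii): if $P$ is any indecomposable finitely generated projective module, then $P$ is a direct summand of a free module $A^m = \bigoplus_i (md_i) P_i$, and by the Krull--Schmidt theorem the indecomposable summands of $P$ are among the $P_i$; since $P$ is indecomposable, $P \cong P_i$ for some $i$. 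Uniqueness in (i) then follows: if $P, P'$ are indecomposable finitely generated projective with $\dim\Hom(P,M_j) = \dim\Hom(P',M_j) = \delta_{ij}$, then by (iii) each is some $P_k$, and the $\Hom$-vector forces $k = i$ for both, so $P \cong P_i \cong P'$.

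The main obstacle I anticipate is the bookkeeping around the lifted idempotents: making sure the $e_{i,s}$ for fixed $i$ and varying $s$ genuinely give isomorphic projective modules $Ae_{i,s}$ (so that "$P_i$" is well-defined independent of $s$), and that the count $A \cong \bigoplus_i d_i P_i$ comes out right. This is handled by noting that inside $\bar A$ the idempotents $e_{0,i,s}$ and $e_{0,i,s'}$ are conjugate by an invertible element (a permutation matrix in the $i$-th block), and conjugate idempotents give isomorphic left ideals; then one lifts the conjugating element along $\Rad(A)$ using the uniqueness-up-to-conjugation clause of Proposition \ref{lif}, or simply observes directly that left multiplication by $e_{0,i,s'} \sigma e_{0,i,s}$ (for a suitable $\sigma$) induces the isomorphism. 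Everything else is a routine application of Lemma \ref{le4}, Theorem \ref{proje}, Theorem \ref{cow}, and the Krull--Schmidt theorem.
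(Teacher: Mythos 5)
Your proof is correct and follows essentially the same route as the paper's: lift the diagonal matrix-unit idempotents of $A/\Rad(A)$ via Corollary \ref{lif1}, set $P_i=Ae_{i,1}$, compute $\Hom_A(Ae,M)\cong eM$ to get the $\delta_{ij}$ property, and deduce indecomposability and exhaustiveness from maps to simple modules and the decomposition $A\cong\oplus_i(\dim M_i)P_i$. Your extra details --- the simple-quotient argument for indecomposability, the explicit conjugation of lifted idempotents to show $Ae_{i,s}$ is independent of $s$, and the Krull--Schmidt citation for part (iii) --- are just careful elaborations of steps the paper states tersely (it invokes Proposition \ref{lif} and the fact that any indecomposable projective must occur in the decomposition of $A$).
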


\begin{proof}
Recall that $A/\Rad(A)=\oplus_{i=1}^n \End(M_i)$, and
$\Rad(A)$ is a nilpotent ideal. Pick a basis of $M_i$, and let
$e_{ij}^0=E_{jj}^i$, the rank 1 projectors projecting to the basis
vectors of this basis ($j=1,...,\dim M_i$). Then $e_{ij}^0$ are orthogonal
idempotents in $A/\Rad(A)$. So by Corollary \ref{lif1} we can
lift them to orthogonal
idempotents $e_{ij}$ in $A$. Now define $P_{ij}=Ae_{ij}$.
Then $A=\oplus_i \oplus_{j=1}^{\dim M_i}P_{ij}$, so $P_{ij}$ are
projective. Also, we have $\Hom(P_{ij},M_k)=e_{ij}M_k$,
so $\dim\Hom(P_{ij},M_k)=\delta_{ik}$. Finally, $P_{ij}$ is
independent of $j$ up to an isomorphism,
as $e_{ij}$ for fixed $i$ are conjugate under
$A^\times$ by Proposition \ref{lif}; thus we will denote $P_{ij}$
by $P_i$.

We claim that $P_i$ is indecomposable. Indeed, if $P_i=Q_1\oplus
Q_2$, then $\Hom(Q_l,M_j)=0$ for all
$j$ either for $l=1$ or for $l=2$, so either $Q_1=0$ or $Q_2=0$.

Also, there can be no other indecomposable finitely generated projective modules, since
any such module has to occur in the
decomposition of $A$. The theorem is proved.
\end{proof}

\end{document}